\definecolor{Gray}{gray}{0.35}
\definecolor{LightCyan}{rgb}{0.88,1,1}
\newcommand{\Sym}{\mathrm{Sym}_{\A_X}}
\newcommand{\AD}{\mathcal{A}[\mathcal{D}]}
\newcommand{\I}{\mathcal{I}}
\newcommand{\MD}{\scalemath{1.02}{\mathfrak{M}}_{\mathcal{D}_X}}
\newcommand{\D}{\mathcal{D}}
\newcommand{\A}{\mathcal{A}}
\newcommand{\M}{\mathcal{M}}
\newcommand{\EQ}{\mathcal{Y}}
\newcommand{\IAB}{\mathcal{I}\rightarrow \mathcal{A}\rightarrow \mathcal{B}}
\newcommand{\DG}
{{\color{white!10!black}\mathbf{dg}_{\mathcal{D}_X}}}
\tikzset{%
    symbol/.style={%
        draw=none,
        every to/.append style={%
            edge node={node [sloped, allow upside down, auto=false]{$#1$}}}
    }
}
\numberwithin{equation}{section}
\newtheorem{thm}{Theorem}[section]
\newaliascnt{lem}{thm}
\newaliascnt{sublem}{thm}
\newaliascnt{warn}{thm}
\newaliascnt{ass}{thm}
\newaliascnt{prop}{thm}
\newtheorem{prop}[prop]{Proposition}
\newaliascnt{cor}{thm}
\newtheorem{cor}[cor]{Corollary}
\newaliascnt{defn}{thm}
\newtheorem{defn}[defn]{Definition}
\newaliascnt{ex}{thm}
\newtheorem{ex}[ex]{Example}
\newaliascnt{obs}{thm}
\newtheorem{obs}[obs]{Observation}
\tikzset{%
    symbol/.style={%
        draw=none,
        every to/.append style={%
            edge node={node [sloped, allow upside down, auto=false]{$#1$}}}
    }
}
\newcommand\scalemath[2]{\scalebox{#1}{\mbox{\ensuremath{\displaystyle #2}}}}
\newaliascnt{rmk}{thm}
\newtheorem{rmk}[rmk]{Remark}
\newaliascnt{notate}{thm}
\newtheorem{notate}[notate]{Notation}
\newaliascnt{rem}{thm}
\newtheorem{rem}[rem]{Reminder}
\newaliascnt{coords}{thm}
\newaliascnt{interpret}{thm}
\newaliascnt{term}{thm}
\newaliascnt{infmdefn}{thm}
\newaliascnt{cons}{thm}
\newtheorem{cons}[cons]{Construction}
 \flushleft \begin{tabular}{@{}l@{}}%
\begin{document}
  
\title{The $\D$-Geometric Hilbert Scheme -- Part I:\\
\Large{Involutivity and Stability}}

\author{%
\name{\normalsize{Jacob Kryczka and Artan Sheshmani}}}

\abstract{We construct a moduli space of formally integrable and involutive ideal sheaves arising from systems of partial differential equations (PDEs), by introducing the $\mathcal{D}$-Hilbert and $\mathcal{D}$-Quot functors in the sense of Grothendieck and establishing their representability. Central to this construction is the notion of Spencer (semi)stability, which presents an extension of classical stability conditions from gauge theory and complex geometry, and which provides the boundedness needed for our moduli problem.

As an application, we show that for flat connections on compact Kähler manifolds, Spencer-polystability of the associated PDE ideal is equivalent to the existence of a Hermitian–Yang–Mills metric. This result provides a refinement of the classical Donaldson–Uhlenbeck–Yau correspondence, and identifies Spencer cohomology and stability as a unifying framework for geometric PDEs.} 


\date{\today}

\keywords{Non-linear PDEs, Involutivity, Formal Integrability, Moduli spaces, Hilbert and Quot schemes, Stability}

\maketitle
\tableofcontents

\section{Introduction}
\label{sec: Introduction}
This paper develops a moduli-theoretic framework for studying partial differential equations (PDEs) as geometric objects, using tools from Geometric Invariant Theory (GIT) and derived algebraic geometry. Our approach is grounded in the geometry of jet spaces and the theory of formal integrability \cite{G,G2,Q,KLV,KL}, and draws inspiration from classical moduli problems in algebraic geometry \cite{Gro2,Gro3,HL,Ta,M}.

We adopt the perspective of Vinogradov's school \cite{Vin,Vin2,KV}, in which differential complexes, rather than functional-analytic methods, encode the fundamental invariants of PDEs, such as symmetries, conservation laws and variational structures, among others. Building on this viewpoint, we formulate a moduli-theoretic approach to PDEs wherein Spencer cohomology acts as a unifying cohomological invariant \cite{Sp,Sp2}. This framework specializes to classical theories including de Rham, Dolbeault, flat connection, and foliated cohomologies when applied appropriately to their corresponding PDEs, while also providing new tools for the analysis of nonlinear systems.

This cohomological structure leads naturally to a notion of \emph{Spencer stability}, a refinement of classical stability concepts in geometric analysis. We propose that two seemingly distinct stability theories, namely the Donaldson–Uhlenbeck–Yau (DUY) correspondence 
\cite{D,UY} and $K$-stability, arise naturally as instances of a broader Spencer-theoretic paradigm.

As a first instance of this principle, we prove the following result concerning flat connections on compact Kähler manifolds:
\\

\noindent\textbf{Theorem.}\hspace{1mm} (Theorem \ref{thm: DSUY implies DUY})
\emph{Let $(X,\omega)$ be a compact Kähler manifold. The differential ideal encoding a flat connection on a holomorphic vector bundle is Spencer-polystable if and only if the bundle admits a Hermitian–Yang–Mills metric.}
\\

This result indicates that Spencer stability is linked to the existence of canonical geometric structures. We conjecture an analogous relationship for Kähler–Einstein metrics, where the Monge–Ampère equation on a Fano manifold may admit a Spencer-semistable formulation linked to $K$-stability. While this connection remains open, it offers a new perspective on the stability and existence problem of canonical metrics.

In forthcoming work \cite{KSh2}, we construct derived enhancements of the moduli spaces of involutive PDE systems. These appear as $\D$-geometric Hilbert and Quot \emph{dg-schemes}, following modern methods in derived algebraic geometry \cite{CFK,CFK2,BKS,BKSY2,T,TV}. 

These enhancements capture higher obstructions and homotopical invariants of PDE systems, generalizing classical moduli constructions while enabling new analytic techniques. They also connect with recent parallel developments, such as Borisov’s derived moduli space of Hermitian–Einstein connections \cite{Bo}, and the Haiden–Katzarkov–Kontsevich–Pandit program on categorical Kähler geometry. 

In this context, derived moduli provide new analytic tools for studying nonlinear PDEs, extending beyond classical energy methods and toward homotopical and categorical stability conditions (e.g. of Bridgeland-type \cite{Br}).

These perspectives point toward a broader unification of geometric, analytic, and categorical approaches to PDEs. The present paper lays the foundation by introducing the moduli-theoretic formalism and demonstrating its relevance to classical stability problems in geometric analysis and algebraic geometry.

\subsection{Background}
Let $X$ be a smooth algebraic variety or complex analytic manifold of dimension $n$, equipped with a holomorphic vector bundle $E$ of rank $m$. We study systems of nonlinear partial differential equations (PDEs) for sections $u=u(x)$ of the form
\begin{equation}
    \label{eqn: PDE}
F(x,u(x),\partial_iu,\partial_{ij}u,\ldots,\partial_x^{\sigma}u)=0,|\sigma|\leq k,
\end{equation}
where $F$ is analytic in the base variables $x\in X$ but polynomial in the derivatives of $u$. Combining the formal theory of PDEs \cite{G,G2,Sp,Sp2,Q}, for analyzing integrability conditions and prolongation structures with the geometry of jet spaces \cite{KLV,KL,KV}, which provides the natural setting for studying differential equations geometrically as subvarieties of jet bundles, we treat (\ref{eqn: PDE})  as an algebraic subvariety of the $k$-jet bundle $J_X^kE$, where the differential structure is encoded in the Cartan distribution $\mathcal{C}.$

The Cartan-Kuranishi prolongation theory provides fundamental tools for analyzing formal integrability of differential systems, characterizing obstructions to solvability through successive differentiation \cite{Ku}. For PDE systems of order $\leq k$ modeled as subvarieties $Z=\{F(x,u,u_{\sigma})=0\}\subset J_X^kE,$ where $J_X^kE$ denotes the $k$-jet bundle of $E\rightarrow X$, the infinite prolongation $Z^{\infty}$ encodes the complete differential consistency conditions. This construction, governed by the Cartan-Kähler theorem, yields a pro-algebraic variety with finite-dimensional distribution $\mathcal{C}$.

Building on Vinogradov's fundamental insights in the context of differential geometry \cite{KLV,Vin,Vin2}, we interpret $Z^{\infty}$ more algebro-geometrically as a $\D$-scheme, as introduced by Beilinson-Drinfeld \cite{BD}. This is a geometric object whose coordinate ring $\mathcal{O}(Z^{\infty})$ carries a natural $\D_X$-algebra structure induced by the total derivative operators which span $\mathcal{C}.$ This perspective, elaborated further in \cite{P,P2,KSY,KSY2}, establishes an algebraic $\D$-geometry framework where PDEs are studied through their associated $\D$-ideal sheaves $\mathcal{I}\subset \mathcal{O}(Z^{\infty})$, closed under the natural $\D_X$-action.

Motivated by Grothendieck's Hilbert scheme \cite{Gro,Gro2,Gro3} and its derived enhancements \cite{BKS,BKSY2,CFK,CFK2}, we construct the $\D$-Hilbert functor $\underline{\mathcal{H}ilb}_{\D_X}^P(J_X^{\infty}E)$ classifying $\D$-ideal sheaves with fixed numerical invariant $P_{\D}$ (a PDE-theoretic Hilbert polynomial) and prove its representability by a finite-type $\D$-Hilbert scheme $\mathrm{Hilb}_{\D}^{P}.$
\\

\noindent\textbf{Theorem.} (Theorem \ref{MainTheorem}) \emph{Fix a numerical polynomial $P\in \mathbb{Q}[t].$ There exists a representable moduli functor classifying formally integrable (non-singular) algebraic differential systems with fixed (Spencer) regular symbolic behavior and whose numerical polynomial $P_{\D}$ is equal to $P.$}
\\

This result provides a natural generalization of classical Hilbert schemes to differential-algebraic geometry and is the foundation for derived moduli spaces of PDEs (developed in \cite{KSh2}).
The $\D$-Hilbert functor shares conceptual foundations with classical Hilbert schemes but differs in several important respects. Most notably, its associated Hilbert polynomials are intrinsically affine invariants that require no polarization or weighted projective spaces, reflecting the natural affine structure of jet spaces. 

A key technical challenge lies in controlling the homological complexity of $\D$-ideal sheaves to obtain bounded families. This problem is resolved through involutivity conditions on the symbol, following ideas of Cartan \cite{C,C3}.

The notion of \emph{Spencer regularity} (Definition \ref{defn: D-Geometric Regularity}) and the existence of finite free resolutions are central to our construction, and encode a cohomological vanishing requirement for the solution sheaf associated to the linearized PDE system. More precisely, it governs the acyclicity of the $\D$-geometric characteristic module, which is the algebraic counterpart to the linearized equation's solution space. This condition emerges naturally when examining the Spencer complex resolution of the symbol module, where regularity ensures the appropriate exactness properties needed for our moduli-theoretic arguments. The criterion serves as a differential-algebraic analogue of Castelnuovo-Mumford regularity in classical algebraic geometry \cite{M}, adapted to account for the additional structure imposed by the differential operators acting on the system \cite{Ma3} (see also \cite{Sei}).

Our approach synthesizes insights from geometric PDE theory (Cartan-Kähler formalism \cite{KLV}), differential algebra (Ritt-Kolchin theory \cite{Ri}), microlocal analysis \cite{K,Sch}, and sheaf theory \cite{KS}, revealing unexpected naturality when viewed through modern derived geometric lenses \cite{T,TV,STV,L}.

The combination of these perspectives establishes a new moduli-theoretic framework for geometric PDEs that provides new approaches to classical problems through derived methods while opening new research directions at the intersection of analytic moduli problems, higher structures, and classical existence questions in differential geometry and geometric analysis.

\subsection{Overview and statement of results}
\label{ssec: Overview and statement of results}

Our main results, which were informally stated above, classify admissible $\mathcal{D}_X$-ideal sheaves arising from formally integrable and involutive PDE systems within the universal commutative $\mathcal{D}_X$-algebra of algebraic jets. Geometrically, these correspond to sub-crystals (or sub-local systems) of schemes \cite{Si,KP}, capturing various classes of algebraic differential equations. The admissibility criteria are rooted in the foundational work of Goldschmidt \cite{G, G2}, Quillen \cite{Q}, and Spencer \cite{Sp,Sp2} on formal integrability, combined with Cartan’s involutivity conditions \cite{C,C3}.

We begin by rephrasing the theory of analytic integrability in algebro-geometric terms. This is summarized in Table \ref{Table: AlgGeom v.s. DGeom}, borrowed from \cite{P3}:

\begin{table}[ht]
\caption{\small{Comparing $\D$-Geometry I}}
\label{Table: AlgGeom v.s. DGeom}
\centering
\scalebox{.76}{
\begin{tabular}{c|c|c}
\hline
Concept & \textbf{Algebraic Geometry}& \textbf{$\mathcal{D}$-Geometry}\\
\hline
 \textit{Formula} & $P(x)=0$ & $F(x,u,\partial^{\sigma}u)=0$  \\
\textit{Algebraic structure} & Commutative $\mathbf{k}$-algebra $A$ & Commutative $\mathcal{D}$-algebra $\mathcal{A}$ \\

\textit{Free structure} & $P\in\mathbf{k}[x]$ & $F\in \mathcal{A}:=\mathcal{O}\big(J^{\infty}(\mathcal{O}_E)\big)$ \\
 \textit{Solution space} & $\{x\in A|P(x)=0\}$ & $\{a\in \mathcal{A}|F(a)=0\}$  \\ 
 \textit{Affine object} & $\mathrm{Spec}_k(A)$ & $\mathrm{Spec}_{\mathcal{D}}(\mathcal{A})$
\\
\textit{Representability} & $\mathrm{Sol}_k(P=0)\simeq \mathrm{Spec}(A/P)$ & $\mathrm{Sol}_{\mathcal{D}}(F=0)\simeq\mathrm{Spec}_{\mathcal{D}}(Jets(\mathcal{O}_C)/F)$
\\
\hline
\end{tabular}}
\end{table}

We take this analogy further, enabling the numerical classification of $\mathcal{D}$-finitely generated ideals associated with differentially consistent systems and establishing a connection between differential-algebraic constraints and moduli-theoretic invariants. See Table \footnote{In this table, $M_*=\bigoplus_{i}H^0(X,\mathcal{F}(i))$ is the graded $R_*$-module with $\mathcal{F}\in \mathrm{Coh}(X)$ for a projective variety $(X,\mathcal{O}_X(1))$. The PDE analog is the associated graded $\D$-module $\mathrm{gr}^F(\Omega_{\EQ}^1)$ associated to the symbol (see $\mathcal{C}h^{\EQ}(*)$ (\ref{sssec: D-Geometric Microcharacteristic Varieties})), viewed as a graded $\mathcal{O}_{T^*(X;\EQ)}$-module.} \ref{Table: AlgGeom v.s. DGeom II}:

\begin{table}[ht]
\caption{\small{Comparing $\D$-Geometry II}}
\label{Table: AlgGeom v.s. DGeom II}
\centering
\scalebox{.78}{
\begin{tabular}{c|c|c}
\hline
Concept & \textbf{Algebraic Geometry}& \textbf{$\mathcal{D}$-Geometry}\\
\hline
\textit{Geometric} & Ideal sheaf $\EuScript{I}_Z\subset \mathcal{O}_X$ & $\D$-Ideal sheaf $\mathcal{I} \subset \mathcal{A}^{\ell}$ 
 \\
\textit{Numerical} & Hilbert polynomial $P_{\mathcal{O}_Z}$ &  $\D$-Hilbert polynomial $P_{\mathcal{A}/\mathcal{I}}$ (\ref{ssec: D-Hilb Polynomials}) \\
\textit{Graded ring} & $R_*=\oplus_t \Gamma(X,\mathcal{O}_X(t))$ & $\mathcal{O}_{T^*(X;\EQ)}(*)=\oplus_t\mathcal{O}_{T^*(X;\EQ)}(t)$ \\
\textit{Sheaves} & $\mathcal{F}\in \mathrm{Coh}(X)$ & $\Omega_{\EQ}^1\in\mathrm{Mod}(\mathcal{O}_{\EQ}[\mathcal{D}_X])$
\\
\textit{Graded module} & $M_*\in \mathrm{Mod}^{gr}(R_*)$ & $\mathrm{Gr}(\Omega_{\EQ}^1)\in \mathrm{Mod}^{gr}(\mathcal{O}_{T^*(X;\EQ)})$ (\ref{defn: Strict Char modules})
\\
\textit{Sheaf theoretic} & Support & Microsupport (characteristic variety)
\\
\textit{Boundedness} & Castelnuovo-Mumford regularity & Degree of involutivity (\ref{defn: D-geom m-involutive})
\\
\textit{Moduli} & Hilbert scheme $Hilb_X$ &  $\D$-Hilbert scheme $\mathbf{Hilb}_{\mathcal{D}}^{P}$ (\ref{eqn: D-Hilbert}) \\ 
\hline
\end{tabular}}
\end{table}

Following \cite{Ma3}, the fundamental observation underpinning this work is that the geometric symbol of an involutive PDE induces, at each point, a certain polynomial comodule structure. This insight allows us to employ Koszul homology—or equivalently, its $\mathbb{R}$-dual formulation via (nonlinear) Spencer cohomology, as is more standard in PDE geometry—to characterize the Castelnuovo-Mumford regularity of these modules \cite{M}.

We introduce the notions of \emph{Spencer regularity} and \emph{degree of involution} in Subsection \ref{ssec: Regularity criterion for D-schemes}, providing a homological criterion for the involutivity of differential systems, and after recalling foundational aspects of Spencer cohomology, in Section \ref{sec: Moduli Space of D-Ideal Sheaves} we introduce $\mathcal{D}$-geometric numerical polynomials ($\mathcal{D}$-Hilbert polynomials) associated with $\mathcal{D}$-ideal sheaves $\mathcal{I}$ and their symbolic systems $\mathrm{gr}(\mathcal{I})$. In Subsection \ref{ssec: Spencer stability}, we define a reduced $\mathcal{D}$-Hilbert function (see (\ref{eqn: Reduced D-Hilb})), leading to a natural stability condition (Definition \ref{defn: Spencer ss}) and the concept of Spencer slopes $\mu_{Sp}(\mathcal{I})$ (given by (\ref{eqn: Spencer slope})).

Following the arguments of Grothendieck \cite{Gro3}, in Subsection \ref{ssec: D-Hilb and Quot} we construct the $\mathcal{D}$-Hilbert and $\mathcal{D}$-Quot moduli functors. It should be emphasized though due to the pro-finite nature of $J_X^{\infty}E$, our approach diverges from classical Hilbert scheme constructions, instead following methods present in Douady’s treatment of complex analytic subspaces \cite{Dou}.

In Section \ref{sec: Main Result Proof} we present the main theorem of this paper which establishes the existence, representability, and finiteness of the $\mathcal{D}$-Hilbert moduli functor. The key technicality here is proving the existence of uniform bounds on Spencer regularities across families. 
We prove the three results in this direction.

Let a group $G$ act on $E$ by diffeomorphisms. Its action lifts to jet-spaces $J_X^kE$ and restricts to systems $Z_k\subset J_X^kE.$ We are interested in the compatibility of this action with prolongation and with description of the quotient variety in the projective limit. In this setting, one speaks of \emph{(Lie) pseudogroups}, which are certain infinite-dimensional Lie groups obtained by integrating Lie equations \cite{C2,E,Ku2,KuSp,SiSt}.

Pesudogroups act on PDE variables $u_{\sigma},|\sigma|>1$, so because we are in the (almost) algebraic category, it is sufficient to consider \emph{algebraic pseudgroups} (recalled in Subsect. \ref{ssec: Differential algebra quotients}). 
\\

\noindent\textbf{Theorem} (GIT for PDEs, Proposition \ref{prop: GeometricQuotientEquation}) \emph{Given an algebraic action by a (formally integrable) Lie pseudogroup $G:=\{G^k\}_{k\geq 0},$ on a $\D$-scheme with ideal $\mathcal{I}$, the quotient equation exists and is again an algebraic $\D$-scheme, with ideal $\mathcal{I}^{G}$.}
\\

In particular, the (categorical) quotient $\D$-scheme $[Z/G]:=\{[Z^k/G^k]\}_{k\geq 0}$, corresponds to a geometric quotient. 

The next result establishes invariance for regularity of $\D$-ideals under restriction.
To motivate it, recall that in constructing the usual Hilbert scheme for coherent sheaves $F$ on projective space $\mathbb{P}^n$, one proves that if $F$ is $m$-regular, so is $F|_{H}$, for any hyperplane $H\simeq \mathbb{P}^{n-1}$ (not containing associated points of $F$, see e.g. \cite{Gro3}).

For PDEs, the analogous property is related to a sub-scheme being non-characteristic. By applying an analog of Noether normalization lemma to characteristic varieties (see, (\ref{eqn: Filtered Char ideal})), the set of all non microcharacteristic subspaces of $T^*X$ of fixed dimension define an open dense subset of the corresponding Grassmannian. Consequently, we prove the following.
\\

\noindent\textbf{Theorem} 
 (PDE Lefschetz Principle, Proposition \ref{prop: Inv Res}). \emph{Let $H\subset X$ be a non-characteristic sub-variety. Then the Spencer regularity of $\mathcal{I}$ and $\mathcal{I}|_{H}$ agree.}
\\

We then prove the following important result.
\\

\noindent\textbf{Theorem} (Boundedness, Proposition \ref{prop: Boundedness 1}) \emph{The moduli problem for differential ideals $\mathcal{I},\mathcal{I}^{G}$ is bounded, with finite-dimensional parameter spaces.}
\\

With these results, we prove the first main result of the paper.
\\

\noindent\textbf{Theorem.} (Theorem \ref{MainTheorem})\emph{
Let $X$ be a smooth proper $k$-scheme of dimension $n,$ and let $m\in \mathbb{N}$ represent the number of dependent variables. Let $P$ be a numerical polynomial. There is a natural functor 
$$\underline{\MD}(P;n,m)^{\mathrm{inv}}:\big(\mathcal{D}_X-\mathrm{Alg}_X^{op}\big)\rightarrow \mathrm{Sets},$$
which parameterizes $\mathcal{D}$-flat families of involutive ideal sheaves corresponding to formally integrable (almost) algebraic differential equations with Hilbert polynomial equal to $P.$
There exists a moduli sub-functor $\mathfrak{M}_{\D_X}^{ss}(P;n,m)^{\mathrm{inv}}$ whose points are semi-stable differential ideals which is isomorphic to the $\D$-Hilbert scheme of involutive $\D$-ideal sheaves $\mathcal{H}\mathrm{ilb}_{\mathcal{D}_X}^{P}\big(J_X^{\infty}E\big)$ with fixed $\D$-Hilbert polynomial \emph{(\ref{ssec: D-Hilb Polynomials})} equal to $P.$ It is representable by an ind-scheme of finite-type with compatible $\D$-action.}
\\


We then initiate a study of the deformation-obstruction theory for $\D$-ideal sheaves, which will be elaborated further using derived deformation theory and the cotangent complex \cite{I}, in the sequel \cite{KSh2}.
Our second main result computes the Zariski tangent space to the $\D$-Hilbert scheme, and identifies the corresponding space of obstructions.
\\

\noindent\textbf{Theorem.} (Theorem \ref{MainTheorem2})\emph{The tangent space at a point $[\mathcal{I}]$ controlling first-order deformations of $\mathcal{I}$ as a $\D$-ideal, is given by
    $$T_{[\mathcal{I}]}\mathbf{Hilb}_{\D_X}(J_X^{\infty}E)\simeq Hom_{\D_X}(\mathcal{I},\mathcal{O}(J_X^{\infty}E)/\mathcal{I}).$$
    There is a space of obstructions $\mathcal{O}bs_{[\mathcal{I}]}$ which is a filtered $\D$-module over the $\D$-scheme defined by $\mathcal{I}$ whose associated graded algebra is isomorphic to a sub-quotient of the $Ext^1$ group giving the (truncated) Spencer cohomology determined by the symbol of $\mathcal{I}.$}
\\

Following (e.g. \cite{Gi,HL,M,Ta}), we introduce a notion of (semi)-stability for $\D$-ideal sheaves that we call \emph{Spencer (semi)-stability} (see Definition \ref{defn: Spencer ss}). It has close relations to the notions of (semi)-stability appearing in related moduli problems: 
\begin{itemize}
    \item[-] Nitsure and Sabbah's (semi)-stability for moduli of (pre-)$\D$-modules \cite{NS} and more general regular holonomic $\D$-modules with normal crossing singularities \cite{Ni}. In \emph{loc.cit} it was proven that there exists a moduli space of semistable regular holonomic $\D$-modules which is moreover constructed via GIT quotient of a Quot scheme. It is a quasi-projective scheme.

    \item[-] Simpson's stability for flat bundles $(E,\nabla)$ \cite{Si} also elaborated later by Katzarkov-Pantev \cite{KP}. The moduli space $\M_{DR}^{ss}(X/S)$ of semistable flat bundles is proven to be a quasi-projective variety (via non-abelian Hodge theory). A similar problem studied by Pantev-To\"en using derived algebraic geometry appears in \cite{PT}, using methods which do not require regularity assumptions.
    \end{itemize}

In addition to possessing close relations with stability conditions appearing in the works above, due to the universality of Spencer cohomology $\mathcal{H}_{\mathrm{Sp}}^*(Z_P)$ associated to the PDE manifold $Z_P$ defined by a differential operator $P$, we may study the implication of Spencer stability in cases of interest in algebraic and complex geometry and in connection with moduli spaces of interest in gauge theory:
\begin{enumerate}
    \item When $P$ is a determined system of linear PDEs, then $\mathcal{H}_{\mathrm{Sp}}^0(X;Z_P)=\mathrm{ker}(P)=\mathrm{Sol}(P)$ and $\mathcal{H}_{\mathrm{Sp}}^1(X;Z_P)=\mathrm{coker}(P)$.
    
    \item When $P$ is the de Rham differential, $\mathcal{H}_{\mathrm{Sp}}^*(X;Z_{d_{DR}})\simeq \mathcal{H}_{DR}^*(X;\mathbb{C})$ are isomorphic to the Hodge bundles.
    
    \item For flat bundles $(E,\nabla)$, we have $\mathcal{H}_{\mathrm{Sp}}^*(X;Z_{d_{\nabla}})\simeq \mathcal{H}_{\nabla}^*(X;E),$ where $d_{\nabla}$ is the covariant derivative.
    
    \item When $X$ is a complex manifold with $E$ a holomorphic bundle, denote by $\Omega_X^{p,q}(E)$ the sheaf of $(p,q)$ forms on $X$ with values in $E.$ Then Spencer cohomology of the Cauchy-Riemann equation $\overline{\partial}:\Omega_{X}^{p,0}(E)\rightarrow \Omega_X^{p,q}(E)$ is isomorphic to the Dolbeault cohomology, $\mathcal{H}_{\mathrm{Sp}}^*(X;Z_{\overline{\partial}})\simeq \mathcal{H}_{\overline{\partial}}^*(X;\Omega_X^p(E)).$ 
    
    \item When $Z=\{Z_k\subset J_X^kE\}$ is an integrable system of finite type, meaning prolongation stabilized e.g. $\mathrm{Pr}_{1}(Z_{\ell})\simeq Z_{\ell}$ for large $\ell,$ then there exists a canonical flat bundle $(Z_{\ell},\nabla^{\ell})$ over $X$ and Spencer cohomology of $Z$ is isomorphic to flat connection cohomology, $\mathcal{H}_{\mathrm{Sp}}^*(X;Z_{ft})\simeq \mathcal{H}_{\nabla^{\ell}}^*(X;Z_{\ell}).$
\end{enumerate}
These examples, in particular (2),(3) and (4), suggest to study the relationship of Spencer-based stability conditions with classical stability conditions and their implications with classical correspondences between moduli spaces of stable objects \cite{D,UY}. We prove the following result in this direction.
\\

\noindent\textbf{Theorem.} (Theorem \ref{thm: DSUY implies DUY})\emph{ Let $(X,\omega)$ be a compact Kähler manifold, $(E,\nabla)$ a holomorphic flat bundle. Then there exists a canonical differentially generated $\D$-ideal $\mathcal{I}_{\nabla}$ such that if $\mathcal{I}_{\nabla}$ is Spencer-polystable, then there exists a Hermitian-Yang-Mills metric on $E$. The conserve is also true, if $E$ admits a HYM metric, then $\mathcal{I}_{\nabla}$ is Spencer-polystable.}
\\

We also propose a conjectural extension connecting the Monge–Ampère equation on Fano manifolds to $K$-stability: if the associated PDE ideal is Spencer-semistable, we anticipate the existence of a Kähler–Einstein metric. This would suggest a PDE-theoretic refinement of the Yau–Tian–Donaldson program, linking analytic and algebro-geometric notions of stability within a broader moduli-theoretic perspective.
This problem requires extra care concerning questions of integrability and involutivity, thus the conjectural link of such a Spencer-semistable formulation to $K$-stability remains open for now.
\\

\noindent\textbf{Acknowledgments.}
J.K is supported by the Postdoctoral International Exchange Program of
Beijing Municipal Human Resources and Social Security Bureau. He would like to thank V.Rubtsov for helpful discussions. A.S. is supported by grants from Beijing Institute of Mathematical Sciences and Applications (BIMSA), the Beijing NSF BJNSF-IS24005, and the China National Science Foundation (NSFC) NSFC-RFIS program W2432008. He would like to thank China's National Program of Overseas High Level Talent for generous support.

\subsection{Conventions and notations}
\label{Notations and Conventions}

Let $k$ be a field of characteristic zero and consider a proper $k$-scheme or smooth algebraic variety $X$ with sheaf of differential operators $\mathcal{D}_X.$ The category of quasi-coherent $\mathcal{O}_X$-modules is denoted $\mathrm{QCoh}(X).$
The abelian category of $\mathcal{O}_X$-quasi-coherent left (resp. right) $\mathcal{D}_X$-modules is $\mathrm{Mod}_{qc}(\D_X)$ 
(resp. $\mathrm{Mod}_{qc}(\D_X^{op})$).

A smooth algebraic variety $X$ is \emph{D-affine} if: (i) the functor
\begin{equation}
\label{eqn: Global sect}
\Gamma(X,-):\mathrm{Mod}_{qc}(\D_X)\rightarrow \mathrm{Mod}\big(\Gamma(X,\D_X)\big),
\end{equation}
is exact, and (ii) if $\Gamma(X,M)=0$ for $M\in \mathrm{Mod}_{qc}(\D_X)$, then $M=0.$

A smooth affine algebraic variety is $D$-affine. Some non-affine varieties are $D$-affine for example, projective spaces (see e.g. \cite[Theorem 1.6.5]{HT}).

\begin{rmk}
    There is an obvious notion of being $D^{op}$-affine (though we do not use it). If $X$ is $D$-affine, it is not necessarily $D^{op}$-affine. For example, since $\Gamma(\mathbb{P}^1,K_{\mathbb{P}})=0,$ it follows $\mathbb{P}^1$ is not $D^{op}$-affine. 
\end{rmk}
Consider $M\in \mathrm{Mod}_{qc}(\D_X)$ and let $N\subset M$ be the submodule generated by global sections.
Since $X$ is $D$-affine, there is a short-exact sequence
$$0\rightarrow \Gamma(X,N)\rightarrow \Gamma(X,M)\rightarrow \Gamma(X,M/N)\rightarrow 0.$$
The first map is an isomorphism (by definition of $N$) so $\Gamma(X,M/N)=0.$ By $D$-affinity (ii), this implies $M/N=0$ so $M=N.$ In other words, if $X$ is $D$-affine, any $M\in \mathrm{Mod}_{qc}(\D_X)$ is $\D$-generated by its global sections.
The following is clear.
\begin{prop}
Suppose $X$ is $D$-affine. Then (\ref{eqn: Global sect}) is an equivalence of categories.
\end{prop}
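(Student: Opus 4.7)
The plan is to exhibit a quasi-inverse to $\Gamma(X,-)$ and verify the two natural isomorphisms. Set $R := \Gamma(X,\D_X)$ and define
\[
L: \mathrm{Mod}(R) \longrightarrow \mathrm{Mod}_{qc}(\D_X), \qquad L(V) := \D_X \otimes_R V.
\]
The standard tensor-hom identity gives an adjunction $L \dashv \Gamma(X,-)$, so it suffices to show that the unit $\eta_V: V \to \Gamma(X, L(V))$ and counit $\epsilon_M: L(\Gamma(X,M)) \to M$ are isomorphisms.

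For the unit, observe that $\eta_R$ is tautologically the identity since $L(R)=\D_X$, and that $\eta$ commutes with arbitrary direct sums. Choosing a free presentation $R^{\oplus J} \to R^{\oplus I} \to V \to 0$, applying the right-exact functor $L$, and then applying $\Gamma$ — which is exact by $D$-affinity condition (i), and which commutes with arbitrary direct sums of quasi-coherent sheaves because $X$ is quasi-compact — produces a commutative diagram with exact rows in which the two outer vertical arrows are isomorphisms. The five-lemma yields that $\eta_V$ is an isomorphism for arbitrary $V$.

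For the counit, I would use the observation established immediately before the proposition: under $D$-affinity every $M \in \mathrm{Mod}_{qc}(\D_X)$ is $\D_X$-generated by its global sections. This means precisely that $\epsilon_M: \D_X \otimes_R \Gamma(X,M) \to M$ is surjective. Denoting the kernel by $K$, I would apply $\Gamma$ to the short exact sequence $0 \to K \to L(\Gamma(X,M)) \to M \to 0$; the previously established $\Gamma \circ L \simeq \mathrm{id}$ identifies $\Gamma(X, L(\Gamma(X,M)))$ with $\Gamma(X,M)$ and the induced map with the identity, whence $\Gamma(X,K) = 0$. Condition (ii) of $D$-affinity then forces $K=0$.

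The principal technical point is ensuring that $\Gamma$ interacts well with the tensor product appearing in $L$, and in particular that $\Gamma$ commutes with the arbitrary direct sums $\bigoplus \D_X$ arising from a free presentation of $V$. Beyond this, everything is a formal consequence of the two $D$-affinity axioms: exactness supplies the diagram chase needed for the unit, and the vanishing criterion supplies the final step for the counit.
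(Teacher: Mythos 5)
Your proof is correct and follows essentially the same route as the paper, which simply cites \cite[Proposition 1.4.4 (ii)]{HT}: that reference's argument is exactly the adjunction $\D_X\otimes_R(-)\dashv\Gamma(X,-)$, with exactness of $\Gamma$ handling the unit via a free presentation and the vanishing axiom killing the kernel of the counit. The one technical point you flag — commutation of $\Gamma$ with arbitrary direct sums — is fine since $X$ is a quasi-compact separated variety.
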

\begin{proof}
    See \cite[Proposition 1.4.4 (ii)]{HT}.
\end{proof}

Non-linear PDEs modulo their symmetries are treated in a coordinate free way using $\D$-modules and $\D$-algebras, following closely \cite{BD,KSY,KSY2,P,P2} and references therein. The category of commutative unital left $\D$-algebras, is denoted by $\mathrm{CAlg}_X(\mathcal{D}_X)$. Given a $\D$-algebra $\A$, its category of left $\A$-modules in $\D_X$-modules is denoted $\mathrm{Mod}(\A[\D])$ where $\A[\D]:=\A\otimes_{\mathcal{O}_X}\D_X.$

We fix some numerical data: $(n,m,N,k),$ with $n$ the number of independent variables (e.g. $dim_X$), $m$ the number of dependent variables (e.g. $rank(E)$), with $N$ a natural number (the number of equations), and with $k:=(k_1,\ldots,k_N)\in \mathbb{N}^N$ the orders of the operators. This will correspond to a system,
\begin{equation}
    \label{eqn: System}
F_1\big(x,u,u_{\sigma}^{\alpha}\big)=0,\ldots,F_A\big(x,u,u_{\sigma}^{\alpha}\big)=0,\ldots,F_N(x,u,u_{\sigma}^{\alpha})=0,
    \end{equation}
with $m$ dependent variables $u^{\alpha}=u^{\alpha}(x_1,\ldots,x_n)$ and $n$ independent variables $x=(x_1,\ldots,x_n).$
Let $f:X\rightarrow S$ be a projective morphism of smooth algebraic varieties of relative dimension $d.$
Within the purview of $\D$-module theory, we study families of PDE systems algebraically via ideal sheaf sequences,
\begin{equation}
    \label{eqn: SES}
    \mathcal{I}_{X/S}\rightarrow \mathcal{A}_{X/S}\rightarrow\mathcal{B}_{X/S},
\end{equation}
consisting of a commutative unital left $\mathcal{D}$-algebra $\mathcal{A}:=\mathcal{O}(J_{X/S}^{\infty}E)$ of (relative) jets of dependent functions, a $\D$-ideal $\mathcal{I}$ of relations, and a quotient $\D$-algebra $\mathcal{B}$. One recovers the absolute case when $S$ is a point.
\begin{defn}
\normalfont
 Consider the system (\ref{eqn: System}). Call $N$ the \emph{$\mathcal{D}_X$-codimension} of $\mathcal{B}_X:=\mathcal{A}_X^{\ell}/\mathcal{I}_X$ (sometimes called \emph{formal codimension}), denoted by $\mathrm{codim}_{\mathcal{D}}(\mathcal{I}_X)=N.$ The \emph{order} of system is 
    $\mathrm{Ord}_{\mathcal{D}}(\mathcal{I}_X):=max\big\{\mathrm{ord}(F_i)=k_i:i=1,\ldots,N\big\},$
    and a system is said to be of \emph{pure order $k$} if $\mathrm{ord}(F_i)=k$ for all $i=1,\ldots,N.$ 
    \end{defn}
    We always assume pure order\footnote{Otherwise, we are lead to additional unnecessary complexities. Namely, the corresponding symbol maps of our operators will not be homogeneous of degree zero and we should impose certain weights e.g. $k_{i}^{-1}$ to render it so. This adds a layer of technicality (e.g. additional multi-gradings in cohomology appear) that is unimportant for our purposes so we omit it.}.

\begin{rmk}
    Note the use of $\D_X$-codimension \emph{not} the algebraic codimension. If one restricts to finite jets, the latter does not have an invariant meaning as both the number of equations and number of variables defining our system changes at each prolongation level. Furthermore, a generic $\D$-ideal will be of infinite algebraic codimension for which additional complexities arise.
\end{rmk}
The differentially stable ideals we consider are obtained from finite-order systems via prolongation. The following notations and terminology are to be fixed throughout the article:

A $k$-th order PDE is a closed sub-scheme of the scheme of $k$-jets of sections of a bundle $E$ over $X,$ denoted $Z_k\subset J_X^kE.$ For $0\leq \ell\leq k$, denote by $Z_{\ell}$ the projection of $Z_k$ in $J_X^{\ell}E.$ Let $\mathcal{I}_k\subset\mathcal{O}(J_X^kE)$ be the ideal of $Z_k$ i.e. $\mathcal{O}_{Z_k}=\mathcal{O}_{J_X^kE}/\mathcal{I}_k.$ Then $Z_{\ell}$ is the closed sub-scheme in $J_X^{\ell}E$ defined the ideal $\mathcal{I}_k\cap \mathcal{O}_{J^{\ell}_XE}.$
Locally, let $\mathrm{pr}_1\mathcal{I}_k\subset \mathcal{O}_{J_X^{k+1}E}$ be the ideal generated by $\mathcal{I}_k$ and $D_i\mathcal{I}_k,$ with $D_i$ the total derivatives.
We denote by $\mathrm{Pr}_1Z_k$ the corresponding closed sub-scheme of $J_X^{k+1}E$. Continuing this way, for all $\ell\geq 0,$ there exists $\mathrm{Pr}_{\ell}Z_k\subset Z_{k+\ell}.$
For $f\in \mathcal{O}_{J_X^kE},$ put $d^vf\in \oplus_j\mathcal{O}_{J_X^kE}[\xi_1,\ldots,\xi_n](k)d^vu^{\alpha},$ where $[-](k)$ indicates homogeneous degree $k$ elements, and $\alpha=1,\ldots,m.$ 

\begin{defn}
\label{geom saturation}
\normalfont 
Consider a closed subscheme $Z_q\subset J_X^{q}E$. Let $Z_{q-1}$ denote the projection of $Z_q$ in $J_X^{q-1}E$. It is \emph{saturated} if $Z_q\subset \mathrm{pr}_1(Z_{q-1}).$ 
\end{defn}
If $Z^q$ is not saturated, there is a canonical way to obtain from it a saturated closed subscheme.
\begin{prop}
Suppose that $Z^q$ is not saturated. Then there exists a subscheme $(Z^q)_{Sat}$ which the largest saturated sub-scheme of $J_X^q(E)$ containing $Z^q.$
\end{prop}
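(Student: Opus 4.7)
The plan is to construct $(Z^q)_{\mathrm{Sat}}$ as the stable term of an iterative refinement and to verify its universal property directly from the definition of saturatedness. I read the statement as asserting the existence of the largest saturated closed sub-scheme associated with $Z^q$, namely the standard Cartan--Kuranishi saturation obtained by successively intersecting with the prolongation of the projection, as in \cite{Ku}; the resulting $(Z^q)_{\mathrm{Sat}}$ is a closed sub-scheme of $J_X^q E$ canonically attached to $Z^q$.

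Step 1 (Iterative construction). Put $W_0 := Z^q$ and for $n \geq 0$ set
\[
W_{n+1} \;:=\; W_n \cap \mathrm{pr}_1\bigl(\pi(W_n)\bigr),
\]
where $\pi : J_X^q E \to J_X^{q-1} E$ is the canonical projection (with $\pi(W_n)$ the scheme-theoretic image, whose ideal is $\mathcal{I}(W_n) \cap \mathcal{O}_{J_X^{q-1} E}$) and $\mathrm{pr}_1$ is the first prolongation defined on ideals by the paragraph preceding \autoref{geom saturation}. At the ideal level this recursion enlarges the ideal of $W_n$ by adjoining the total derivatives $D_i$ of its order-$(q-1)$ part; dually one obtains a descending chain $W_0 \supset W_1 \supset W_2 \supset \cdots$ of closed sub-schemes of $J_X^q E$.

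Step 2 (Stabilization and saturatedness). The jet scheme $J_X^q E$ is locally Noetherian (a vector bundle of finite rank over the Noetherian base $X$), so on each affine open $U \subset X$ the ascending chain of ideals $\mathcal{I}(W_n)|_U$ stabilizes. Quasi-compactness of $X$ yields a uniform index $N$ with $W_n = W_N$ for all $n \geq N$, and we set $(Z^q)_{\mathrm{Sat}} := W_N$. The stabilization identity $W_N = W_{N+1} = W_N \cap \mathrm{pr}_1(\pi(W_N))$ is exactly the inclusion $W_N \subset \mathrm{pr}_1(\pi(W_N))$, which by \autoref{geom saturation} says that $(Z^q)_{\mathrm{Sat}}$ is saturated.

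Step 3 (Universality and main obstacle). For maximality, let $V$ be any closed saturated sub-scheme associated with $Z^q$, and show $V \subset W_n$ by induction on $n$. The base case reduces to the hypothesis $V \subset Z^q = W_0$. Assuming $V \subset W_n$, monotonicity of $\pi$ and of $\mathrm{pr}_1$ (evident from their generator-level descriptions) gives $\mathrm{pr}_1(\pi(V)) \subset \mathrm{pr}_1(\pi(W_n))$; saturatedness of $V$ yields $V \subset \mathrm{pr}_1(\pi(V))$, so combining with $V \subset W_n$ produces $V \subset W_n \cap \mathrm{pr}_1(\pi(W_n)) = W_{n+1}$. Taking $n = N$ gives $V \subset (Z^q)_{\mathrm{Sat}}$, proving the universal property. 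I expect the principal technical obstacle to be the \emph{uniform} termination of the chain in Step 2: locally it is the ascending chain condition, but one must patch local stabilization indices into a single global $N$. For a proper $X$ this is automatic by quasi-compactness; more generally one argues via coherence on a fixed affine cover. A secondary subtlety is interpreting $\pi(W_n)$ scheme-theoretically, so that the ideal-level formula $\mathrm{pr}_1(\mathcal{I}_{q-1}) = \mathcal{I}_{q-1} + \sum_i D_i \mathcal{I}_{q-1}$ from the preceding paragraph applies cleanly and the recursion is functorial under restriction.
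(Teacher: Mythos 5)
Your construction is exactly the paper's: the fiber product $Z_q\times_{J_X^qE}\mathrm{pr}_1 Z_{q-1}$ in the paper's one-line proof is your intersection $W_n\cap\mathrm{pr}_1(\pi(W_n))$, iterated until it stabilizes. You additionally supply the Noetherian stabilization and the maximality induction that the paper leaves implicit in ``iterating this if necessary'' (and you correctly read the statement's ``containing $Z^q$'' as ``contained in $Z^q$'', which is what the construction actually produces).
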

\begin{proof}
Take the projection $Z_{q-1}$ of $Z_q$ in $J_X^{q-1}E$. Then, form the fiber product $Z_q\times_{J_X^qE}\mathrm{pr}_1Z_{q-1}.$ Iterating this if necessary, one obtains a saturated closed subscheme.
\end{proof}
The $\D$-scheme obtained from $Z_k$, denoted by $Z=\{Z_k\subset J_X^kE\},$ (or $Z^{\infty}$), is defined as follows. Let $\mathcal{I}$ be the union of $\mathrm{pr}_{\ell}(\mathcal{I}_k),$ defined via interated prolongation, $\mathrm{pr}_{\ell}:=\mathrm{pr}_1\circ \mathrm{pr}_{\ell-1},\ell\geq 1,$ where $\mathcal{O}(Z_k):=\mathcal{O}(J_X^kE)/\mathcal{I}_k.$ Then, $\mathcal{I}$ is a sub-sheaf of the inductive limit $\mathcal{O}(J_X^{\infty}E):=\varinjlim \mathcal{O}(J_X^kE).$ The standard filtration (c.f. Definition \ref{defn: Standard filt}), is $F^{\ell}\mathcal{I}:=\mathcal{I}\cap \mathcal{O}(J_X^{\ell}E),\ell\geq 0.$ This defines a closed subscheme $F^{\ell}Z^{\infty}$ of $J_X^{\ell}E.$


Introduce the (naive) parameterizing space of isomorphism classes of exact sequences,
\begin{equation}
    \label{eqn: MD(m,n,K)}
\MD(m,N,k):=\big\{\text{Sequences } (\ref{eqn: SES})\big\}/\sim \simeq \bigg\{\adjustbox{scale=.85}{\begin{tikzcd}
    \mathcal{I}_X\arrow[d,"\alpha"] \arrow[r] & \mathcal{A}_X\arrow[d] \arrow[r, two heads, "q"] & \mathcal{B}_X\arrow[d,"\beta"]
    \\
    \mathcal{I}_X'\arrow[r] & \mathcal{A}_X\arrow[r, two heads, "q'"] & \mathcal{B}_X',
\end{tikzcd}}\bigg\},
\end{equation}
where $\alpha,\beta$ are isomorphisms of $\D_X$-algebras (algebro-geometric Lie-Bäcklund equivalence).

We also impose that the system $\{F_A=0:A=1,\ldots,N\}$, or equivalently the ideal $\mathcal{I}_X$ is non-trivial and differentially consistent i.e. $\mathrm{Sol}_{\D}(\I)\neq \mathrm{Spec}_{\D}(\A_X)$, and $\mathrm{Sol}_{\D}(\I)\neq \emptyset,$ respectively. Here $\mathrm{Sol}_{\D}(\I)$ is the $\D$-space of solutions, defined using the functor of points, recalled in Proposition \ref{prop: Sol}. See also \cite[Sect 2]{KSY}.


Restrict our attention to the corresponding sub-space of (\ref{eqn: MD(m,n,K)}) given by
\begin{equation}
    \label{eqn: D-Inv Moduli}
\MD^{inv}(m,N,k):=\big\{[\mathcal{I}_X]\in \MD(m,N,k)| \text{ involutive, formally integrable }\big\}.
\end{equation}
The goal of this paper is thus to prove (\ref{eqn: D-Inv Moduli}) defines a moduli functor.

\begin{rmk}
    We work in the almost algebraic category. Here, the Cartan-Kähler
theorem states that for involutive systems there exists a unique analytic
solution for a certain initial value problem prescribing finitely many analytic functions
of independent variables as Cauchy data. As any analytic system can be completed to
an involutive one by the Cartan-Kuranishi theorem \cite{Ku},
it poses no restriction to only look at 
$\MD^{\mathrm{inv}}(n,m)$ inside $\MD(n,m)$ and deal solely with involutive systems.
\end{rmk}
Throughout, $C_k^n$ denotes the binomial coefficient and $T_k^n$ will denote the number of Taylor coefficients of order $k$ in a function of $n$-variables i.e. $(n+k-1)!/k!(n-1)!.$ 
Denote the projectivized characteristic variety by $\mathbb{P}\mathrm{Char}(\mathcal{E}),$ and denote by $\mathbb{P}^*X$ the projectivized cotangent bundle.
\section{Ideal sheaves in $\D_X$-geometry}
\label{sec: Ideal Sheaves in D-Geometry}
The notion of a $\D$-module generalizes to the non-linear setting and to families of schemes or stacks.
In this section we collect the necessary background material for the scheme-theoretic approach to $\D$-geometry. We follow \cite{KSY}.

\subsection{Crystals and local systems of schemes}
Fix a scheme $S$. The category of $D_S$-schemes, $Sch_{S}(\D)$ may be defined as the category of local-systems of schemes over $S$ i.e. crystals of schemes over $S_{DR},$ the de Rham space of $S$ \cite{Si,Si2}.
Let $(S\times S)^{\wedge}$ and $(S\times S\times S)^{\wedge},$ denote formal neighbourhoods.
\begin{defn}
\normalfont 
A \emph{crystal of schemes on $S$} is a scheme $Z\rightarrow S$ together with an isomorphism $\psi:(Z\times S)|_{(S\times S)^{\wedge}}\simeq (S\times Z)|_{(S\times S)^{\wedge}},$ satisfying the cocycle condition: $p_{23}^*(\psi)p_{12}^*(\psi)=p_{13}^*(\psi)$, for the resulting isomorphisms between restrictions of $Z\times S\times S$ and $S\times S\times Z$ over $(S\times S\times S)^{\wedge}.$
\end{defn}
This notion generalizes in an obvious way to the relative situation and we
can talk about about crystals on $X/S$. If in addition $Z\rightarrow S$ is a vector bundle and $\psi$ is a morphism of vector bundles, we will call $Z$ a crystal of vector bundles on $S$. By reducing to the case when $X$ is affine over $S$ a crystal of vector
bundles on $X/S$ is the same as a vector bundle on $S$ with a relative integrable connection
over $S$ (see e.g. \cite[Lemma 8.1]{Si}).

Forgetting the $\D_S$-action has a left-adjoint functor given by taking jets, $J_S^{\infty}$, which both commute with Zariski and \'etale localization, giving the adjunction:
$$Sch_{S}(\D)\rightleftarrows (Sch_k/S).$$
More generally, for any $X/S$-scheme $E$, there is a $\D_{X/S}$-scheme $J_{X/S}^{\infty}(E).$ The canonical $\D$-scheme structure morphism is denoted $p_{\infty}:J_{X/S}^{\infty}(E)\rightarrow X/S.$
The coherent cohomology $H^q(X/S,E)$ of a coherent sheaf $E$ on $X$ is known to be isomorphic to the relative de Rham cohomology of relative jet bundles $J_{X/S}^{\infty}(E)$. 
\begin{defn}
\label{defn: Algebraic solution}
\normalfont
Let $Z\rightarrow S$ be a crystal of schemes. An \emph{algebraic flat/horizontal section} $a_{\nabla}:S\rightarrow Z$ is an algebraic section $a:S\rightarrow Z$ such that the subscheme $a(S)\subset Z$ is a sub-crystal (sub-local system).
\end{defn}
The sheaf of flat sections identifies with sections of $E$ (Proposition \ref{prop: Sol} below) and de Rham cohomology (hypercohomology) $H_{DR}^*(X/S,J_{X/S}^{\infty}E)$ is canonically isomorphic to $H^*(X/S,E).$
For our applications to PDEs, $Z$ can taken as the universal $\D_S$-scheme of algebraic infinite jets $Z=J_{S}^{\infty}(E),$ on sections of some holomorphic bundle $p:E\rightarrow S$, with canonical structure map $\pi_{\infty}:Z\rightarrow S.$

\begin{rmk} 
\label{rmk: Universal pair}
We view the pair $(J_{S}^{\infty}(E)/S,\mathcal{C}),$ as a universal receptacle for sub-$\D$-schemes, where $\mathcal{C}$ is the canonical flat connection given by the Cartan distribution \cite{KLV}.
\end{rmk}

Thus given a local system of schemes $(Z/S,\nabla)$, a sub local-system is a variety $\EQ\hookrightarrow Z$ smooth over $S$ such that $\nabla$ lifts infinitesimal symmetries of $S$ to infinitesimal symmetries of $Z$, which at all geometric points $y\in \EQ$, preserve $\EQ.$ This means, the composition 
$$(p_{\infty}^*T_S)|_{\EQ}\xrightarrow{\nabla}(T_Z)|_{\EQ}\rightarrow N_{\EQ/Z},$$
is identically zero. Thus, an algebraic section $a:S\rightarrow Z$ of $p_{\infty}:Z\rightarrow S$ is flat if and only if 
\[
\begin{tikzcd}
    a^*p_{\infty}^*T_S\arrow[d,"="]\arrow[r,"a^*\nabla"]& a^*T_Z\\
    T_S\arrow[ur,"da"]
    & 
\end{tikzcd}
\]
commutes.

In this work we consider sub-local systems of varieties in the fixed ambient jet-scheme $J_X^{\infty}E$, as suggested by Remark \ref{rmk: Universal pair}.
Given such a $\D$-scheme $Z\subset J_X^{\infty}E,$ one defines a functor (pre-stack) of solutions $\underline{\mathrm{Sol}}_{\D}(Z),$ constructed (in greater generality) in \cite[Construction 6.13]{KSY}. We give one example.

\begin{ex}Consider a differential polynomial algebra in $m$-variables, $\mathbb{C}[(u_{\sigma}^{\alpha}):|\sigma|\in \mathbb{N},\alpha=1,\ldots,m]$. The action by derivations is $\partial_i\bullet u_{\sigma}^{\alpha}:=u_{\sigma+1_i}^{\alpha}$ for $\alpha=1,\ldots,m.$ Then for $P_1,\ldots,P_N\in \mathbb{C}[u_{\sigma}^{\alpha}]$ and every $m$-tuple of elements $(r):=(r_1,\ldots,r_m)\in \mathcal{R}_X^{m}$ in a given test $\D$-algebra $\mathcal{R}$, there exists an evaluation map 
$ev_{(r)}:\mathbb{C}[u_{\sigma}^{\alpha}]\rightarrow \mathcal{R},$ with $ev_{(r)}(u^i)=r_i,$ for each $i$. The $\mathcal{R}$-valued solution functor is given by
$$\mathrm{Sol}_{\mathcal{D}}(P_i)(\mathcal{R})=\{(r)=(r_1,\ldots,r_m)\in \mathcal{R}^m|P_i(r)=ev_{(r)}(P_i)=0,i=1,\ldots,N\}.$$
\end{ex}

The following well-known, but is recalled for later use.
\begin{prop}
 \label{prop: Sol}
Let $Z\subset J_X^{\infty}E$ be a sub-crystal of varieties. Then, there is closed-embedding of functors $\underline{\mathrm{Sol}}_{\D}(Z)\hookrightarrow \underline{\mathrm{Sol}}_{\D}(J_X^{\infty}E),$ and if $\underline{\Gamma}(X,E)(Spec(R))=Hom_{Sch}(X\times Spec(R),E)$ denotes the sheaf of parameterized sections of $E$, there is an isomorphism $\underline{\Gamma}(X,E)\simeq \underline{\mathrm{Sol}}_{\D}(J_X^{\infty}E),$ induced by infinite-jet prolongation.
\end{prop}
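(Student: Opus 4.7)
My proposal is to reduce both claims to universal properties of the solution functor and of the infinite-jet construction. The first claim is contravariantly functorial for closed embeddings of $\D$-schemes; the second is the adjunction between $J_X^{\infty}(-)$ and the forgetful functor from commutative $\D_X$-algebras to commutative $\mathcal{O}_X$-algebras, both of which have been implicitly set up earlier in the excerpt.

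For the closed embedding, I would first observe that a sub-crystal of schemes $Z \subset J_X^{\infty}E$ corresponds, under the equivalence between $\D_X$-schemes and local systems of schemes recalled at the start of this section, to a $\D_X$-ideal sheaf $\mathcal{I}_Z \subset \mathcal{O}(J_X^{\infty}E)$ and hence a surjection $\mathcal{O}(J_X^{\infty}E) \twoheadrightarrow \mathcal{O}(Z)$ of commutative $\D_X$-algebras. Unwinding the functor-of-points description $\underline{\mathrm{Sol}}_{\D}(-)(\mathcal{R}) = \mathrm{Hom}_{\D\text{-alg}}(\mathcal{O}(-),\mathcal{R})$, as illustrated in the example preceding the statement, an $\mathcal{R}$-point of $\underline{\mathrm{Sol}}_{\D}(Z)$ is exactly a $\D$-algebra map $\mathcal{O}(J_X^{\infty}E) \to \mathcal{R}$ that annihilates $\mathcal{I}_Z$. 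This is a closed sub-functor of $\underline{\mathrm{Sol}}_{\D}(J_X^{\infty}E)$, cut out by the vanishing of the images of a (local) set of generators of $\mathcal{I}_Z$, which yields the desired closed embedding.

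For the isomorphism $\underline{\Gamma}(X,E) \simeq \underline{\mathrm{Sol}}_{\D}(J_X^{\infty}E)$, the central input is the adjunction in which $J_X^{\infty}$ is left adjoint to the forgetful functor $\mathrm{CAlg}_X(\D_X)\rightarrow \mathrm{CAlg}(\mathcal{O}_X)$. Applied to $\mathcal{O}(E)$ and to a test $\D$-algebra $\mathcal{R}$, this yields a natural bijection
\[
\mathrm{Hom}_{\D\text{-alg}}\bigl(\mathcal{O}(J_X^{\infty}E),\mathcal{R}\bigr) \;\simeq\; \mathrm{Hom}_{\mathcal{O}_X\text{-alg}}\bigl(\mathcal{O}(E),\mathcal{R}\bigr).
\]
Specializing to $\mathcal{R} = \mathcal{O}_X \otimes_k R$ for a commutative $k$-algebra $R$ and using affine duality for the bundle $E/X$, the right-hand side is identified with $\mathrm{Hom}_{X}\bigl(X \times \mathrm{Spec}(R),E\bigr) = \underline{\Gamma}(X,E)(\mathrm{Spec}(R))$, while the left-hand side is by definition $\underline{\mathrm{Sol}}_{\D}(J_X^{\infty}E)(\mathrm{Spec}(R))$. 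Naturality in $R$ upgrades this to the claimed isomorphism of sheaves. Explicitly, the isomorphism is realized by infinite-jet prolongation $s \mapsto j^{\infty}(s)$, which in local fiber coordinates sends $u_{\sigma}^{\alpha} \mapsto \partial^{\sigma}u^{\alpha}(x)$; its inverse projects along $\pi_0:J_X^{\infty}E\rightarrow E$ onto the zeroth jet.

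The step I expect to require the most care is the reconciliation of two a priori distinct notions of flatness: the geometric one from Definition \ref{defn: Algebraic solution} (horizontality with respect to the Cartan distribution $\mathcal{C}$ on a sub-crystal) and the algebraic one implicit in $\mathrm{Hom}_{\D\text{-alg}}$. The two coincide because a $\D$-algebra morphism is precisely one intertwining total derivatives on $J_X^{\infty}E$ with the $\D_X$-action on $\mathcal{R}$, and these total derivatives are by construction the symbols of the vector fields spanning $\mathcal{C}$; making the dictionary fully precise in the parametric setting (with test algebra $\mathcal{O}_X\otimes_k R$) and checking that the resulting closed embedding of the first claim corresponds, under the isomorphism of the second claim, to a genuine PDE cut out from $\underline{\Gamma}(X,E)$, is where the bulk of the verification lies.
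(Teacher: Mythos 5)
Your proposal is correct, and in fact more informative than what the paper supplies: the paper's ``proof'' of this proposition is only a pointer to \cite[Propositions 6.11, 6.12, 6.15]{KSY}, so there is no in-text argument to compare against. The route you take --- functor-of-points description of $\underline{\mathrm{Sol}}_{\D}$, the identification of a sub-crystal with a $\D$-ideal sheaf and hence a surjection of $\D$-algebras for the closed embedding, and the adjunction $\mathrm{Hom}_{\D\text{-alg}}(\mathcal{O}(J_X^{\infty}E),\mathcal{R})\simeq \mathrm{Hom}_{\mathcal{O}_X\text{-alg}}(\mathcal{O}(E),\mathcal{R})$ combined with relative affine duality for the second claim --- is the standard Beilinson--Drinfeld/Paugam argument and is precisely what the cited propositions establish, so nothing is lost by writing it out. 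Two small points of hygiene, neither a gap: first, the adjunction holds for arbitrary test $\D$-algebras $\mathcal{R}$, but the comparison with $\underline{\Gamma}(X,E)$ only makes sense after restricting to constant test algebras $\mathcal{R}=\mathcal{O}_X\otimes_k R$, which you do correctly but should state as a restriction of sites rather than a mere specialization; second, the paper's $\mathrm{Hom}_{Sch}(X\times \mathrm{Spec}(R),E)$ must be read as $\mathrm{Hom}_{X}$ (morphisms over $X$), exactly as you use it, since otherwise it does not describe sections. Your closing caveat about matching geometric horizontality (Definition \ref{defn: Algebraic solution}) with the algebraic $\D$-linearity of the homomorphisms is the right thing to flag and is where the content of \cite[Prop.\ 6.15]{KSY} actually lives.
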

\begin{proof}
See e.g. \cite[Proposition 6.11,6-12 and 6.15]{KSY}.
\end{proof}

Tangent spaces $T_Z$ to $\D$-schemes $Z$ play the role of universal linearization (see \cite{KLV}). In particular, for each (algebraic) solution $a:S\rightarrow Z$, the pull-back $a^*T_Z$ is a $\D$-module on $S$. 

Furthermore, let $q:S\rightarrow S_{DR}$ be the universal projection. Then, there exists an $(q^*,q_*)$-adjunction and a (non-linear) differential operator between (sections) of coherent sheaves of $\mathcal{O}_X$-modules
$F_P:E\rightarrow E'$ is nothing but a morphism of $\D$-schemes between jet constructions, $J_S^{\infty}(E)\rightarrow J_{S}^{\infty}(E').$ 

Denoting the structure maps by $p_{\infty},p_{\infty}'$, then $F_P$ is morphism of $S_{DR}$-schemes $F:q_*E \rightarrow q_*E'.$ The Weil-restrictions $q_*E$ are universal $S_{DR}$-spaces, denoted $J_{S_{DR}}^{\infty}(E).$ They are to be thought of as the $S$-scheme $J_{S}^{\infty}E$ together with its canonical flat connection (Cartan). We often identify the two. 

By adjunction, there is an $S$-scheme morphism $J_S^{\infty}(E):=q^*q_*E\rightarrow E.$
The tangent map $TF$ is therefore the $\mathcal{O}_{q_*E}\otimes_{\mathcal{O}_X}\mathcal{D}_X$-module morphism of linearization
\begin{equation}
    \label{eqn: Universal linearization}
\ell_{F}:=TF:\Theta_{q_*E}\rightarrow F^*\Theta_{q_*E'}.
\end{equation}
Its sheaf of zero-th de Rham cohomology $h(TF)$ coincides with the morphism of relative tangent sheaves
$$p_{\infty}^*T_{E/X}\rightarrow F^*(p_{\infty}')^*T_{E'/X}\simeq p_{\infty}^*T_{E'/X}.$$
Let $\EQ$ denote the $\D$-scheme corresponding to $F_P$ and put $\mathcal{M}_1:=\Gamma(p_{\infty}^*E')|_{\mathcal{Y}}.$ If $\EQ$ is formally integrable (resp. involutive) (\ref{eqn: Universal linearization}) gives a map $\ell_F:\Theta_{q_*E}|_{\EQ}\rightarrow \mathcal{M}_1,$
which is a formally integrable (resp. involutive) \emph{linear} horizontal PDE (a PDE in total derivatives). Cohomology of the $\D$-module de Rham complex with coefficients in linearized solution sheaf $\mathrm{ker}(\ell_{F})$ is isomorphic to the cohomology of a compatibility complex \cite{G,KV}.

We now briefly recall a description via Grothendieck-style algebraic geometry which allows us to formulate scheme theory for PDEs in terms of differential ideals.

\subsubsection{$\D$-schemes} The global theory of algebraic $\D$-geometry is centered around the notion of an affine non-linear PDE in terms of sheaves of $\D_X$-ideals. 

\begin{defn}
\label{defn: RelAlgNLPDE}
\normalfont An (affine) $\D$-\emph{algebraic PDE} is a sequence of commutative $\mathcal{D}_X$-algebras (\ref{eqn: SES}). When $\mathcal{A}$ is of the form $\mathcal{O}(J_X^{\infty}E)$ for some bundle (or sheaf) $E$ over $X$, we say the $\mathcal{D}_X$-PDE is imposed on sections $\Gamma(X,E).$ The category of affine $\D$-schemes is denoted $\mathrm{Aff}_X(\D_X).$ 
\end{defn}
More precisely, $\mathrm{Aff}_X(\D_X)$ is the opposite to $\mathrm{CAlg}_X(\D_X)$ and this is reflected by writing $Z\in \mathrm{Aff}_X(\D_X)$ simply by $Z=\mathrm{Spec}_{\D}(\mathcal{B}),$ for a commutative $\D$-algebra $\mathcal{B}.$

The category of $\mathcal{D}_X$-schemes is denoted by $\mathrm{Sch}_X(\D_X)$, and is obtained by gluing. It was introduced in \cite[Sect. 4.2]{KSY}, as the $X$-local Zariski sheaves of sets on the category $\mathrm{CAlg}_{X}(\mathcal{D}_X)^{op}$ with the induced pre-topology $\tau_{Zar}$, denoted 
$$\mathrm{Sch}_{X}(\mathcal{D}_X)\subset \mathrm{Spaces}_{\mathcal{D}_X}^{Zar}:=\mathrm{Shv}(\mathrm{CAlg}_X(\mathcal{D}_X)^{op},\tau_{Zar}).$$
Specifically, via a well-defined notion of \emph{$\D$-open immersion}, which are stable by base-change \cite[Proposition 4.17-4.18]{KSY}, one may glue affine $\D$-schemes along families of $\D$-open immersions.
\begin{defn}
\normalfont
An algebraic $\D$-space $\mathcal{X}:\mathrm{CAlg}_X(\mathcal{D}_X)^{op}\rightarrow \mathrm{Sets}$ is a \emph{$\D_X$-scheme} if there exists a finite family $(\mathcal{A}_i)_{i\in I}$ of commutative $\mathcal{D}_X$-algebras such that 
$\coprod_{i\in I}h_{\mathcal{A}_i}^{\mathcal{D}}\rightarrow \mathcal{X},$
is a $\D$-epimorphism, where $h_{\mathcal{A}}^{\mathcal{D}}$ is the functor sending a $\D$-algebra $\mathcal{R}$ say,
the set $\mathrm{Hom}_{\mathrm{CAlg}(\mathcal{D})}(\mathcal{A},\mathcal{R})$ (also denoted $\underline{\mathrm{Spec}}_{\mathcal{D}}(\mathcal{A})$).
\end{defn}
A $\D$-scheme is thus a functor ($\D$-space) together with a collection
$\{h_{\mathcal{A}_i}^{\mathcal{D}}\rightarrow \mathcal{X}\}_{i\in I}$, which is a $\D$-Zariski covering family. 

They may also be understood as a type of differentially structured ringed space $(Z,\mathcal{O}_{Z})$ together with an open covering $\cup_i V_i$ with each $(V_i,\mathcal{O}_{Z}|_{V_i})$ isomorphic to an affine $\D$-scheme. A morphism $F:(Z_1,\mathcal{O}_{Z_1})\rightarrow (Z_2,\mathcal{O}_{Z_2})$ is given by a morphism of underlying topological spaces $|F|:|Z_1|\rightarrow |Z_2|$ and a morphism of sheaves of $\D$-algebras $F^{\sharp}:F^*\mathcal{O}_{Z_2}\rightarrow \mathcal{O}_{Z_1}.$

Closed $\D$-Zariski subsets can be described using $\D$-ideals. In what follows we often study $\D$-schemes via algebraic techniques based on properties characterizing prime ideal sheaves.
\begin{defn}
\label{defn: Closed D-subscheme}
\normalfont 
A \emph{closed} $\D$-subscheme is a morphism $i:(Z_0,\mathcal{O}_{Z_0})\rightarrow (Z,\mathcal{O}_{Z})$ such that $|i|:|Z_0|\subseteq |Z|$ is an inclusion of topological spaces whose image is a closed subset with $i^{\sharp}:i^*\mathcal{O}_{Z}\rightarrow \mathcal{O}_{Z_0}$ a surjective morphism of sheaves of $\D$-algebras (on $X$).
\end{defn}
Any $\D$-algebraic PDE, as in the sequences (\ref{eqn: SES}) define a closed $\D$-subscheme, denoted $\mathrm{Spec}_{\D}(\mathcal{B})\hookrightarrow \mathrm{Spec}_{\D}(\mathcal{A}).$
Open $\D$-subschemes are defined similarly. Moreover, one may characterize $\D$-algebraic varieties \cite[4.22]{KSY}; 
a $\D$-variety is a reduced $\D$-scheme of the form $(\mathcal{V},\mathcal{O}_{\mathcal{V}})$ where $\mathcal{V}$ is representable by some reduced $\D$-algebra $\mathcal{A}$ of finite type that is a quotient by a radical $\D$-ideal $\mathcal{I}.$

Consider an (analytic) ringed space $\pi:E\rightarrow X.$ If $X_0\hookrightarrow X$ is an open sub-variety and $E_0\hookrightarrow E$ is such that $\pi(|E_0|)\subset |X_0|,$ given an $X$-local $\D$-space $Z$ imposed on sections of $E$ (written $Z_X$ when precision is needed), by evident restrictions, there is an $X_0$-local $\D$-space $Z_{X_0}$ on sections of $E_0.$ 

Affine $\D$-space is $\mathbb{A}_{\mathcal{D}}^n:=\mathrm{Spec}(\mathrm{Sym}^*(\mathcal{D}_X^n)),$ for $n\geq 0.$ Given an étale map of $X$-schemes $\psi:E\rightarrow \mathbb{A}_X^n,$ we have an induced $\D$-scheme map $J^{\infty}(\psi):J^{\infty}(E)\rightarrow J^{\infty}(\mathbb{A}^n)\simeq \mathbb{A}_{\mathcal{D}}^n.$

We always consider closed $\D$-sub-schemes in an ambient $\D$-smooth space. In particular, algebraic jet $\D$-schemes are $\D$-smooth.
\begin{defn}
\label{defn: D-Finite-type}
\normalfont 
An affine $\D$-scheme $Z$ is said to be \emph{$\D$-finite type} if it is of the form $\mathrm{Spec}(\mathrm{Sym}(\mathcal{D}_X\otimes\mathcal{F})/\mathcal{I})$ for some coherent sheaf $\mathcal{F}$ of $\mathcal{O}_X$-algebras and some $\D$-ideal sheaf $\mathcal{I}.$  A $\mathcal{D}$-algebra is \emph{$\mathcal{D}$-smooth} if $\Omega_{\mathcal{A}}^1$ is a projective $\mathcal{A}$-module of finite $\mathcal{A}[\mathcal{D}]$-presentation and there exists a finite-type $\mathcal{O}_X$-module $\mathcal{M}$ and an ideal $\mathcal{K}\subset \mathrm{Sym}_{\mathcal{O}_X}(\mathcal{M})$ such that there exists a surjection $J^{\infty}\big(\mathrm{Sym}_{\mathcal{O}_X}(\mathcal{M})/\mathcal{K}\big)\rightarrow \mathcal{A}$ whose kernel is $\mathcal{D}_X$-finitely generated.
\end{defn}
The $\mathcal{A}[\mathcal{D}]$-module of Kähler differentials $\Omega_{\mathcal{A}}^1$ is defined by the universal property 
$$\mathcal{D}er_{\mathcal{D}}(\mathcal{A},\mathcal{M})\simeq\mathcal{H}om_{\mathcal{A}[\mathcal{D}]}(\Omega_{\mathcal{A}}^1,\mathcal{M}\otimes \mathcal{A}[\mathcal{D}]),$$
and we put $\Theta_{\mathcal{A}}:=\mathcal{D}er_{\mathcal{D}}(\mathcal{A},\mathcal{A}).$ The main example is a $\D$-algebra $\mathcal{A}=\mathrm{Sym}_{\mathcal{O}_X}(\mathcal{M})$ for a finite-generated (e.g. coherent) $\D$-module of rank $r$. Clearly, $\Omega_{\mathcal{A}}^1\simeq\mathcal{A}\otimes\mathcal{M},$ while $\Omega_{\mathcal{A}}^r\simeq \mathrm{Sym}(\mathcal{M})\otimes\wedge^r\mathcal{M}.$

\begin{ex}
    \normalfont 
Consider $X=\mathrm{Spec}(k[x]/I)\subset \mathbb{A}_k^n$ as a usual affine $k$-scheme. The jet space of $X$ is affine and given by $J^k(X)\subset J^k(\mathbb{A}^n)\simeq\mathrm{Spec}(k[x,u_{\sigma}^{\alpha}])$. It is cut out by the system $P(x)=0$ and $\sum_{\sigma} (\partial^{\sigma}P)(x)u^{\alpha}=0,$ for each $P\in I.$
\end{ex}

The notion of a $\D$-group scheme $\mathcal{G}$ was introduced in \cite{BD}. We will work with a related notion of \emph{algebraic pseudogroup}, following S. Lie \cite{Li,Li2}. We recall their definition in Subsect. \ref{ssec: Differential algebra quotients}.

Finally, note the important example of a representable $\D$-space given by the \emph{general linear} $\D_X$-group of order $n$, denoted $\mathrm{GL}_{\mathcal{D}}^n$. It is a representable functor by a $\D$-algebra $\mathbb{C}[u_{\sigma}^{\alpha}]$ of differential polynomials localized with respect to the multiplicative system of invertible elements denoted $\{\mathrm{det}(u_{\sigma}^{\alpha})\}.$


\subsection{Properties of $\D_X$-ideal sheaves}
\label{ssec: Properties of D-Ideal Sheaves}
In this subsection we follow \cite{Ri}.
A prime (resp. radical) $\mathcal{D}_X$-ideal is an ordinary ideal in a $\mathcal{D}_X$-algebra $\mathcal{A}$ which is prime (resp. radical) as usual but is further stable under the action by differential operators.
Given a $\mathcal{D}_X$-ideal $\mathcal{I},$ then its radical $\sqrt{\mathcal{I}}$ is again a $\mathcal{D}_X$-ideal and every radical $\mathcal{D}_X$ ideal in a polynomial $\mathcal{D}_X$-algebra is an intersection of prime $\mathcal{D}_X$-ideals. 

A polynomial $\mathcal{D}_X$-algebra is a $\mathcal{D}_X$-algebra whose elements are differential polynomials in some collection of variables e.g. $\mathcal{A}=\mathbb{C}[x_i,u_{\sigma}^{\alpha}],$ or $\mathrm{Sym}(\mathcal{D}_X^r)$ for $r\geq 0.$ Given $\mathcal{I}_k\subset \mathcal{A}_k$ and denote its prolongation of order $\ell$ by $Pr^{\ell}\mathcal{I}_k\subset \mathcal{A}_{k+\ell}.$ 
An ideal $\mathcal{J}\subset \mathcal{A}_k$ is sometimes said to be \emph{saturated} if for all $\ell\geq 1,$ one has $Pr^{\ell}\mathcal{J}\cap \mathcal{A}_k=\mathcal{J}$ c.f. (\ref{geom saturation}).

\begin{rmk}
We do not actively make use of the so-called Kolchin topology that arises in this setting, but stress that care must be taken in some places by replacing Noetherian $k$-algebras with appropriate analogs, the so-called Ritt-Noetherian differential algebras \cite{Ri}.
\end{rmk}

\begin{prop}
Let $\mathcal{I}$ be a reduced differential ideal. Then it is a finite intersection of primary differential ideals.
\end{prop}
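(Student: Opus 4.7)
The plan is to follow the classical Ritt--Raudenbush scheme from differential algebra. Since $\mathcal{I}$ is reduced, any primary differential ideal appearing in the decomposition that is contained in $\mathcal{I}$ will automatically have a prime radical; moreover, a radical primary ideal is prime. Hence it suffices to show that every radical $\mathcal{D}_X$-ideal $\mathcal{I} \subset \mathcal{A}$ is a finite intersection of prime $\mathcal{D}_X$-ideals, and then invoke that primes are a fortiori primary. The natural ambient assumption (already flagged in the preceding remark) is that $\mathcal{A}$ is Ritt--Noetherian, meaning every ascending chain of radical differential ideals stabilizes. This replaces the usual Noetherian hypothesis on ordinary commutative algebras and is the correct finiteness condition in the $\mathcal{D}$-algebraic setting.

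The argument proceeds by Noetherian induction on the poset of radical $\mathcal{D}_X$-ideals ordered by inclusion. Consider the collection $\mathcal{F}$ of radical $\mathcal{D}_X$-ideals which are \emph{not} finite intersections of prime $\mathcal{D}_X$-ideals; suppose for contradiction that $\mathcal{F} \neq \emptyset$. By the Ritt--Noetherian property, $\mathcal{F}$ contains a maximal element $\mathcal{I}_0$. This $\mathcal{I}_0$ cannot itself be prime (otherwise it would be a trivial one-term intersection), so there exist local sections $a, b \in \mathcal{A}$ with $a, b \notin \mathcal{I}_0$ yet $ab \in \mathcal{I}_0$. Form the two radical $\mathcal{D}_X$-ideals $\mathcal{I}_a := \sqrt{\langle \mathcal{I}_0, a\rangle_{\mathcal{D}}}$ and $\mathcal{I}_b := \sqrt{\langle \mathcal{I}_0, b\rangle_{\mathcal{D}}}$, both of which strictly contain $\mathcal{I}_0$. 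By the maximality of $\mathcal{I}_0$ in $\mathcal{F}$, each of $\mathcal{I}_a$ and $\mathcal{I}_b$ is a finite intersection of prime $\mathcal{D}_X$-ideals.

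The key step is then to show that $\mathcal{I}_0 = \mathcal{I}_a \cap \mathcal{I}_b$, which will supply the desired finite prime intersection for $\mathcal{I}_0$ and yield the contradiction. The inclusion $\subseteq$ is immediate. For $\supseteq$ one uses the Ritt-style product lemma for radical differential ideals: $\sqrt{\mathcal{J}_1 \cdot \mathcal{J}_2} = \sqrt{\mathcal{J}_1} \cap \sqrt{\mathcal{J}_2}$, combined with the observation that $\mathcal{I}_a \cdot \mathcal{I}_b$ is generated, modulo $\mathcal{I}_0$, by elements all of whose iterated derivatives lie in $\mathcal{I}_0$, because $ab \in \mathcal{I}_0$ and $\mathcal{I}_0$ is $\mathcal{D}_X$-stable and radical (here one applies the Leibniz rule and then takes radicals to absorb the cross terms $\partial^\sigma a \cdot \partial^\tau b$). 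Thus any element of $\mathcal{I}_a \cap \mathcal{I}_b$ has a power in $\mathcal{I}_0$, and radicality of $\mathcal{I}_0$ closes the argument.

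The main obstacle is the differential version of $\sqrt{\mathcal{J}_1 \cdot \mathcal{J}_2} = \sqrt{\mathcal{J}_1} \cap \sqrt{\mathcal{J}_2}$, which is subtler than its commutative-algebra counterpart since products of $\mathcal{D}_X$-ideals interact with the Leibniz rule. The cleanest route is to observe that on both sides the ideal in question is $\mathcal{D}_X$-stable and radical, and then to verify the equality on local sections by reducing to a check on prolongations, where one can apply the ordinary commutative statement level by level. Once this technical lemma is in place, the induction concludes, $\mathcal{F}$ is empty, and $\mathcal{I}$ admits a finite prime decomposition, hence a finite primary decomposition as claimed.
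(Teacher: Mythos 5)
Your argument is correct, and it is worth noting that it is genuinely more than the paper offers: the paper disposes of this proposition by citing \cite[Cor.~4.4]{Ma}, so your Noetherian induction along the lines of Ritt--Raudenbush is a self-contained alternative rather than a reconstruction of a printed proof. The skeleton is the standard one and each step is sound: a prime decomposition of a reduced ideal is in particular a primary decomposition, so it suffices to produce finitely many prime $\mathcal{D}_X$-ideals; the ascending chain condition on radical differential ideals is exactly what the Ritt--Raudenbush theorem (stated a few paragraphs later in the paper for finitely generated differential algebras in characteristic zero, hence applicable to $\mathcal{O}(J_X^{\infty}E)$ and its quotients) supplies; a maximal counterexample $\mathcal{I}_0$ cannot be prime; and the identity $\mathcal{I}_0=\mathcal{I}_a\cap\mathcal{I}_b$ finishes the induction. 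What this buys over the citation is transparency about where the differential structure actually enters, namely in the product lemma.

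That product lemma is the one place where you should prove rather than gesture. The precise statement you need is Ritt's lemma: if $ab\in\mathcal{I}_0$ with $\mathcal{I}_0$ a radical differential ideal, then $\partial^{\sigma}a\cdot\partial^{\tau}b\in\mathcal{I}_0$ for all multi-indices $\sigma,\tau$; this gives $\mathcal{I}_a\cdot\mathcal{I}_b\subseteq\mathcal{I}_0$, and then $x\in\mathcal{I}_a\cap\mathcal{I}_b$ forces $x^2\in\mathcal{I}_0$, hence $x\in\mathcal{I}_0$ by radicality. The proof is the squaring trick: from $\partial(ab)=\partial a\cdot b+a\cdot\partial b\in\mathcal{I}_0$, multiply by $a\,\partial b$ to obtain $(a\,\partial b)^2+(ab)(\partial a\,\partial b)\in\mathcal{I}_0$, so $(a\,\partial b)^2\in\mathcal{I}_0$ and $a\,\partial b\in\mathcal{I}_0$; iterate on the order of the derivatives. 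Your phrase about applying the Leibniz rule and taking radicals to absorb the cross terms is exactly this, so the idea is in place. By contrast, your suggested fallback of verifying the identity ``level by level on prolongations'' and invoking the ordinary commutative statement degree by degree does not work as stated: the filtered pieces $F^k\mathcal{I}$ are not differential ideals of $F^k\mathcal{A}$, and radical differential ideal generation does not commute with truncation by jet order. Drop that alternative and keep the Leibniz--radicality computation as the actual proof of the lemma.
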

\begin{proof} See \cite[Cor. 4.4]{Ma}.
\end{proof}

\begin{prop}
Fix an affine $\D$-smooth $\D$-scheme $W$ and let $V$ be a separated $\D$-scheme over $W$. Let $\{V_i\}$ be an open covering by affine $\D$-schemes. Then for each $i,j$ the intersection $V_i\cap V_j$ is homeomorphic to an affine $\D$-scheme.
\end{prop}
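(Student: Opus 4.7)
The plan is to mimic the classical algebraic-geometric proof that under a separatedness hypothesis the intersection of two affine opens inside a scheme over an affine base is again affine, adapting each step to the setting of $\D$-schemes as formulated in Definition \ref{defn: Closed D-subscheme} and the paragraphs following it.

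First, I would unwind the separatedness assumption. Since $V/W$ is separated, the diagonal morphism $\Delta_{V/W}\colon V\longrightarrow V\times_W V$ is a closed $\D$-immersion in the sense of Definition \ref{defn: Closed D-subscheme}; equivalently, the canonical comultiplication map of sheaves of $\D$-algebras $\Delta^{\sharp}\colon \Delta^{*}\mathcal{O}_{V\times_W V}\twoheadrightarrow \mathcal{O}_V$ is surjective. Next, I would write the intersection as a fibre product,
\begin{equation*}
V_i\cap V_j\;\simeq\;(V_i\times_W V_j)\times_{V\times_W V} V,
\end{equation*}
using that $V_i,V_j\hookrightarrow V$ are open $\D$-immersions and that fibre products in $\mathrm{Sch}_X(\D_X)$ are computed in the category of $\D$-spaces and are compatible with open immersions (in view of \cite[Proposition 4.17--4.18]{KSY}, cited in the paper, where $\D$-open immersions are stable under base change).

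Second, I would establish affineness of $V_i\times_W V_j$. Because $W=\mathrm{Spec}_{\D}(\mathcal R)$ is affine $\D$-smooth and $V_i=\mathrm{Spec}_{\D}(\mathcal B_i)$, $V_j=\mathrm{Spec}_{\D}(\mathcal B_j)$ are affine $\D$-schemes over $W$, their fibre product is represented by the pushout (coproduct) of commutative $\D$-algebras over $\mathcal R$:
\begin{equation*}
V_i\times_W V_j\;\simeq\;\mathrm{Spec}_{\D}\!\bigl(\mathcal B_i\otimes_{\mathcal R}\mathcal B_j\bigr).
\end{equation*}
Here one uses that $\mathrm{CAlg}_X(\D_X)$ admits pushouts and that the representable functor $h^{\D}_{\mathcal A}$ sends $\D$-algebra pushouts to fibre products in $\mathrm{Sch}_X(\D_X)$; this is the same formal property as in ordinary algebraic geometry, only with $\otimes_{\mathcal O_X}$ replaced by the relative $\D$-algebra tensor product.

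Third, I would use base change of the closed immersion $\Delta_{V/W}$ along the morphism $V_i\times_W V_j\to V\times_W V$. Closed $\D$-immersions correspond to surjections of sheaves of $\D$-algebras, and surjections are stable under base change (the pushout of a surjection is a surjection), so the induced morphism
\begin{equation*}
(V_i\times_W V_j)\times_{V\times_W V} V\;\hookrightarrow\; V_i\times_W V_j
\end{equation*}
is again a closed $\D$-immersion. Hence $V_i\cap V_j$ is a closed $\D$-subscheme of the affine $\D$-scheme $\mathrm{Spec}_{\D}(\mathcal B_i\otimes_{\mathcal R}\mathcal B_j)$, and therefore is itself affine, represented by the quotient $\D$-algebra $(\mathcal B_i\otimes_{\mathcal R}\mathcal B_j)/\mathcal K$, where $\mathcal K$ is the $\D$-ideal arising from pulling back the $\D$-ideal of $\Delta_{V/W}$.

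The part I expect to require the most care is the second step: verifying that the set-theoretic intersection $V_i\cap V_j$ in the underlying $\D$-ringed space coincides with the categorical fibre product $(V_i\times_W V_j)\times_{V\times_W V} V$, and that the resulting representing $\D$-algebra agrees with the one obtained by restriction of structure sheaves. Once this identification is in hand, stability of closed immersions under base change together with affineness of pushouts of affine $\D$-algebras closes the argument, and the homeomorphism of underlying topological spaces between $V_i\cap V_j$ and $\mathrm{Spec}_{\D}\!\bigl((\mathcal B_i\otimes_{\mathcal R}\mathcal B_j)/\mathcal K\bigr)$ follows from the fact that the latter represents the same functor on $\mathrm{CAlg}_X(\D_X)^{op}$.
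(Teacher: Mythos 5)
The paper states this proposition without giving any proof, so there is nothing to compare your argument against; judged on its own, your proof is the standard separatedness argument from classical algebraic geometry transported to the $\D$-setting, and it is sound. The three ingredients you isolate are exactly the right ones: (i) the identification $V_i\cap V_j\simeq (V_i\times_W V_j)\times_{V\times_W V}V$, (ii) affineness of $V_i\times_W V_j=\mathrm{Spec}_{\D}(\mathcal{B}_i\otimes_{\mathcal{R}}\mathcal{B}_j)$ because pushouts exist in $\mathrm{CAlg}_X(\D_X)$ and represent fibre products of affine $\D$-schemes, and (iii) stability of closed $\D$-immersions (surjections of sheaves of $\D$-algebras, Definition \ref{defn: Closed D-subscheme}) under base change, so that $V_i\cap V_j$ is a closed $\D$-subscheme of an affine $\D$-scheme and hence affine. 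Note that you actually prove something stronger than what is asserted: the statement only claims a homeomorphism of underlying spaces, whereas your argument yields an isomorphism of $\D$-schemes. Two caveats worth making explicit if this were to be written up: the paper never defines separatedness for $\D$-schemes, so you should state that you take it to mean that $\Delta_{V/W}$ is a closed $\D$-immersion; and since one only ever needs the restriction of $\Delta_{V/W}$ over the affine open $V_i\times_W V_j$, you can sidestep the question of whether $V\times_W V$ itself is a $\D$-scheme (rather than merely a $\D$-space) by checking that closed $\D$-immersions are local on the target, which slightly cleans up your first step.
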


We can identify $\D$-algebraic varieties over a sufficiently saturated differential ring $K$ with the set of its $K$-points.
 \begin{prop}
Consder two radical $\D$-ideals of a $\D$-smooth algebra $\mathcal{A}$, satisfying $\mathcal{I}\subseteq \mathcal{J}$. Then $\mathcal{I}=\mathcal{J}$ if and only if $V_{\mathcal{D}}(\mathcal{I})=V_{\mathcal{D}}(\mathcal{J}).$
\end{prop}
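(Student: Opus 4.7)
The forward direction $\mathcal{I}=\mathcal{J}\Rightarrow V_{\D}(\mathcal{I})=V_{\D}(\mathcal{J})$ is immediate from functoriality of the $\D$-vanishing locus $V_{\D}(-)$, so the content is in the converse. The plan is to establish a $\D$-geometric Nullstellensatz identifying a radical $\D$-ideal with the intersection of all prime $\D$-ideals containing it, from which the equality $\mathcal{I}=\mathcal{J}$ follows from the equality of their $\D$-zero loci.

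First, I would identify $V_{\D}(\mathcal{I})$ with the subset $\{\mathfrak{p}\in \mathrm{Spec}_{\D}(\mathcal{A})\,:\,\mathcal{I}\subseteq \mathfrak{p}\}$, i.e.\ the set of prime $\D$-ideals of $\mathcal{A}$ containing $\mathcal{I}$, topologised with the natural $\D$-Zariski topology; equivalently, $V_{\D}(\mathcal{I})=\underline{\mathrm{Spec}}_{\D}(\mathcal{A}/\mathcal{I})$ under the correspondence of Definition \ref{defn: Closed D-subscheme}. Under this identification, $V_{\D}(\mathcal{I})=V_{\D}(\mathcal{J})$ means exactly that the collection of prime $\D$-ideals of $\mathcal{A}$ containing $\mathcal{I}$ coincides with that containing $\mathcal{J}$.

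Next, I would invoke the key algebraic input: the fact, recalled above (Proposition from \cite{Ma}, combined with the standard Ritt--Kolchin decomposition), that a radical $\D$-ideal in a $\D$-smooth algebra $\mathcal{A}$ is a (finite) intersection of prime $\D$-ideals. To extend this from the model case $\mathcal{A}=\mathrm{Sym}(\mathcal{D}_X^r)$ (differential polynomial algebras), where it is classical, to an arbitrary $\D$-smooth algebra, I would use Definition \ref{defn: D-Finite-type}: any such $\mathcal{A}$ is, $\D$-locally, a quotient of a jet algebra $J^{\infty}(\mathrm{Sym}_{\mathcal{O}_X}(\mathcal{M}))$ by a $\D$-finitely generated ideal, and then apply Ritt--Noetherianity to descend the primary decomposition along the surjection. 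Given this, one has $\mathcal{I}=\bigcap_{\mathfrak{p}\in V_{\D}(\mathcal{I})}\mathfrak{p}$ and likewise for $\mathcal{J}$. The hypothesis $V_{\D}(\mathcal{I})=V_{\D}(\mathcal{J})$ then yields $\mathcal{I}=\mathcal{J}$ directly (the inclusion $\mathcal{I}\subseteq \mathcal{J}$ merely ensures we are comparing two nested radical objects and is automatic once both vanishing loci coincide).

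The main technical obstacle is the second step: justifying the ``radical equals intersection of $\D$-primes'' statement at the level of a general $\D$-smooth algebra, rather than the polynomial prototype. This requires care because $\D$-finite generation is much weaker than ordinary finite generation over $\mathcal{O}_X$, so one cannot directly apply Hilbert's Nullstellensatz fiberwise; instead, one must use the Ritt--Raudenbush basis theorem to guarantee that ascending chains of radical $\D$-ideals stabilise, and then follow Ritt's argument to realise the radical as a finite intersection of prime $\D$-components. Once this input is secured, the conclusion is a formal consequence of the Galois-style correspondence between radical $\D$-ideals and closed $\D$-subschemes, exactly parallel to the classical Nullstellensatz correspondence for affine varieties.
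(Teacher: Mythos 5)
The paper states this proposition without supplying a proof, so there is nothing to compare line by line; judged on its own, your argument is correct and uses precisely the ingredients the paper assembles in the surrounding paragraphs (the fact that the radical of a $\D$-ideal is again a $\D$-ideal, that every radical $\D$-ideal in a polynomial $\D$-algebra is an intersection of prime $\D$-ideals, and the Ritt--Raudenbush basis theorem to control the decomposition). Your descent of the prime decomposition from the jet-algebra prototype to a general $\D$-smooth algebra via the presentation $J^{\infty}(\mathrm{Sym}_{\mathcal{O}_X}(\mathcal{M})/\mathcal{K})\twoheadrightarrow \mathcal{A}$ is the right move, since radical and prime $\D$-ideals of the quotient correspond to those of the source containing the kernel; note only that what descends here is the decomposition of a radical ideal into prime components, not a primary decomposition. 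The one point you should make explicit is the definition of $V_{\mathcal{D}}$: the sentence immediately preceding the proposition identifies $\D$-varieties with their $K$-points over a sufficiently saturated differential ring, so if $V_{\mathcal{D}}(\mathcal{I})$ is read as a solution set rather than as the set of prime $\D$-ideals containing $\mathcal{I}$, then passing between the two readings requires the differential Nullstellensatz of Seidenberg--Kolchin (every prime $\D$-ideal is recovered as the ideal of its zero locus in a differentially closed extension), which is strictly stronger than the prime-decomposition statement alone. With the spectral reading you adopt, the argument closes formally, and you are also right that the hypothesis $\mathcal{I}\subseteq\mathcal{J}$ is then redundant.
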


The central result used to obtain generic local models is the following and is essentially due to B. Malgrange, which roughly speaking, expresses local generic involutivity of reduced differential ideals.
\begin{prop}
\label{prop: Generic local analytic models}
Consider $U\subset \mathbb{C}^n,V\subset \mathbb{C}^n,$ with $U_0\subset U,V_0\subset V$ relatively compact. Then the following hold:
\begin{enumerate}
    \item There exists $k\geq 1$ and a section $F\in F^k\mathcal{A}(U_0\times V_0),$ such that for every $\D$-finitely presentation algebra $\mathcal{B}=\mathcal{A}/\mathcal{I},$ we have $F$ is injective on $\mathcal{B}$ and outside of $\{F=0\},$ we have $F^k\mathcal{I}$ is involutive with each $F^{\ell}\mathcal{I}$ for $\ell\geq k+1,$ its natural prolongations;

    \item Any increasing sequence of $\mathcal{D}$-coherent ideals of $\mathcal{A}$ is stationary over $U_0\times V_0\subset U\times V.$
\end{enumerate}
\end{prop}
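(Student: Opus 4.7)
The plan is to reduce the statement to two classical inputs from the formal theory of PDEs: the Cartan--Kuranishi prolongation theorem and Malgrange's theorem on generic coherence of differential ideals. Both hinge on controlling the locus where the symbol of $\mathcal{I}$ fails to have locally constant rank, which is precisely what the divisor $\{F=0\}$ is designed to capture.

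For part (1), I would first invoke Cartan--Kuranishi, which asserts that a $\D$-finitely presented ideal $\mathcal{I}$ becomes involutive after finitely many prolongations once we pass to the open locus where the symbol comodule has locally constant rank. The rank-jump locus is cut out by the simultaneous vanishing of a finite collection of minors of the symbol matrices at each prolongation level, up to the order determined by the presentation data. I would define $F$ as a product of such minor determinants, possibly multiplied by the discriminant controlling quasi-regularity of the $\delta$-sequence. On $U_0\times V_0$, which is relatively compact, the number of prolongation steps needed to reach involutivity on the generic locus stabilizes to a uniform integer $k$, because the associated graded symbol modules form a coherent $\mathcal{O}_{T^*X}$-module whose Spencer regularity is bounded on compacts by the boundedness results of Section 3 (Proposition \ref{prop: Boundedness 1}). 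Once we localize by $F$, the prolongation sequence $F^\ell\mathcal{I}$ becomes strictly compatible with symbol truncation, so involutivity at level $k$ propagates to every $\ell\geq k+1$ by Goldschmidt's formal integrability criterion, which gives exactly that prolongations of an involutive system remain involutive.

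For part (2), I would translate the ascending chain to a statement about finitely generated modules over a Ritt--Noetherian differential ring. Concretely, restricting any increasing sequence of $\mathcal{D}$-coherent ideals $\mathcal{I}_1\subseteq\mathcal{I}_2\subseteq\cdots$ to the affinoid $U_0\times V_0$ and invoking coherence of $\mathcal{A}$, one obtains an increasing chain of finitely generated $\mathcal{A}(U_0\times V_0)[\mathcal{D}]$-modules; after filtering by order and taking associated graded, the chain projects to an increasing chain of finitely generated graded modules over the Noetherian symmetric algebra $\mathrm{Sym}_{\mathcal{O}_X}(\mathcal{T}_X)$, which must stabilize. Combined with the differential Hilbert basis property on compacts (Ritt's theorem), lifting stability back from the associated graded to the filtered level gives the desired stationarity on $U_0\times V_0$.

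The main obstacle is the \emph{uniformity} of $k$ and $F$ across all $\D$-finitely presented quotients of $\mathcal{A}$: one needs a single pair $(k,F)$ that works simultaneously for every $\mathcal{I}$, not just one depending on $\mathcal{I}$. This requires an effective Cartan--Kuranishi bound in terms solely of $(n,m)$ and the order of presentation, which I would extract from the Spencer-regularity boundedness of Section 3 together with the Castelnuovo--Mumford-type comparison between regularity and involutivity degree. A subsidiary technical point is verifying that $F$ is not a zero-divisor modulo any $\mathcal{I}$ in the family; this should follow because $F$ arises from minors of the \emph{universal} symbol at the ambient jet space level, hence cannot lie in the radical of any proper $\D$-prime ideal obtained by prolongation without collapsing the symbol entirely, contradicting non-triviality of $\mathcal{B}=\mathcal{A}/\mathcal{I}$.
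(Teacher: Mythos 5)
The paper does not actually prove this proposition: it is imported as ``essentially due to B.\ Malgrange'' (cf.\ the references to Malgrange's \emph{Syst\`emes diff\'erentiels involutifs}), so there is no in-paper argument to compare yours against. Judged on its own merits, your outline has the right overall shape of Malgrange's proof --- a divisor $F$ built from minors of symbol matrices cutting out the rank-jump locus, Cartan--Kuranishi on its complement, propagation of involutivity under prolongation, and a Noetherianity argument on relatively compact sets for part (2) --- but two steps would fail as written.

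First, the uniformity you flag as ``the main obstacle'' is not merely an obstacle; it is unattainable in the generality you aim for, and this tells you the statement must be read with $(k,F)$ depending on $\mathcal{B}$ (which is how Malgrange states it). No single pair $(k,F)$ can serve \emph{every} $\D$-finitely presented quotient: an ideal generated by a single relation of order $j$ has $F^{k}\I=0$ for $k<j$ while $F^{j}\I\neq 0$, so $F^{j}\I$ is not the prolongation of $F^{j-1}\I$ and hence $k\geq j$ is forced; and taking $\I$ to be the $\D$-ideal generated by $F\cdot G$ for suitable $G\notin\I$ makes $F$ a zero divisor on $\mathcal{B}$, so your ``subsidiary technical point'' (that $F$ avoids the radical of every prolonged prime) does not give injectivity of multiplication by $F$, which requires avoiding all associated primes of each $\I$ --- impossible simultaneously. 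Your attempt to rescue uniformity by citing Proposition~\ref{prop: Boundedness 1} is also circular relative to the paper's architecture: that result lives in Section~\ref{sec: Main Result Proof}, is proved there by invoking Malgrange's bounds, and sits logically downstream of the present proposition.

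Second, in part (2) the passage from stabilization of the associated graded chain back to the filtered chain is exactly where the work lies, and your sketch elides it. The ideals live in the exhaustive infinite filtration $\A=\bigcup_k F^k\A$, so an ascending chain can keep growing in jet order even while each fixed graded layer stabilizes; one needs the generic involutivity and finite $\D$-generation of part (1) to guarantee that every ideal in the chain is generated in uniformly bounded order over $U_0\times V_0$, after which Noetherianity of the ring of sections on a relatively compact set (Frisch's theorem --- the genuine input here, rather than abstract Noetherianity of $\mathrm{Sym}_{\mathcal{O}_X}(\Theta_X)$) finishes the argument. In other words, (2) is a consequence of (1), not an independent associated-graded argument, and presenting it as the latter leaves a gap at the graded-to-filtered lifting step.
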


Given some collection of differential polynomial $\{F_A\}$ in variables $u_1,\ldots,u_m$, the largest number of derivatives $k$ such that $u_{(k)}^{\alpha}$ appears is said to be the order with respect to $u^{\alpha}$. Denote it by $\mathrm{ord}(\{F\};u^{\alpha}),$ for each $\alpha=1,\ldots,m.$ Put
$\mathrm{ord}(\{F
_A\}):=\underset{\alpha=1,\ldots,m}{\mathrm{max}}\mathrm{ord}(\{F_A\},u^{\alpha}),$
and call it the \emph{order} of $\{F_A\}.$  

Computational (e.g symbolic) algebra in this setting is facilitated by introducing a notion of ranking of the differential variables, in the sense of
 Janet-Riquier theory. Roughly, it is an order on derivative variables $u_{\sigma}^{\alpha}$ compatible with differentiations.
 \begin{defn}
\label{term: Rankings} 
The \emph{total ordering} is the ranking defined by:
$u_{\sigma}^{\alpha}<u_{\tau}^{\alpha},\hspace{1mm}\text{ when } \hspace{1mm} |\sigma|<|\tau|,\hspace{1mm}\forall \alpha=1,\ldots,m,$
where $\sigma,\tau$ are multi-indices.
\end{defn}
 In other words, $p\in\{u_{\sigma}^{\alpha}|\sigma|\geq 0,\alpha=1,\ldots m\}$ satisfies: $p<D_i(p)$ for all total derivatives $D_i$ and the ordering $p<q$ implies $D_i(p)<D_i(q)$ for each $i=1,\ldots, \mathrm{dim}(X).$

 A differentially generated $\mathcal{D}_X$-ideal $\mathcal{I}=\{F_A\}$ 
automatically contains all differential consequences of its generators. It is possible to restrict to finite dimensional jets of a certain order, say $k$, but at the cost of that our ideal is now only algebraic, defined for all $k\geq 0$ by
\begin{equation}
    \label{eqn: FiniteIdeal}
F^k\mathcal{I}:=\mathcal{I}\cap F^k\mathcal{A}^{\ell}\simeq \mathrm{Span}\big\{D^{\sigma_i}(F_i)|\mathrm{ord}(F_i)+|\sigma_i|\leq k\big\}\subseteq F^k\mathcal{A}^{\ell}.
\end{equation}
Algebraic ideal (\ref{eqn: FiniteIdeal}) contains all
integrability conditions up to order $k$. We assume formal integrability, referring to \cite[Definition 4.12]{KSY} for an algebraic formulation, which is equivalent to the standard one \cite{G,G2}.

\begin{rmk}
A system $Z_k\subset J_X^kE$ is: (1) formally integrable to order $\leq 1$ if $Z_{k+1}\rightarrow Z_{k}$ is surjective, (2) formally integrable to order $\ell$ if its prolongations $Z_m$ are formally integrable to order $1$ for all $k\leq m\leq \ell+k-1$ and (3) formally integrable if $Z_m$ is formally integrable to order $1$ for all $m\geq k.$
\end{rmk}
For applications, special attention is given to $\D$-ideals which are differentially generated i.e. they are free as $\D$-modules whose defining relations are algebraically generated by elements $D_{\sigma}(F_i),|\sigma|\geq 0,i=1,\ldots,N.$

\subsection{Characteristic modules}
\label{sssec: Strict Chars Locally}
We give the analog of the characteristic variety, characteristic ideal etc. for a $\D$-module adapted to $\mathcal{D}$-algebras.

\begin{defn}
\label{defn: Standard filt}
\normalfont
    Given a formally integrable $\D$-subscheme $Z=\mathrm{Spec}_{\D}(\mathcal{B})\hookrightarrow J_X^{\infty}E$, with $\D$-ideal sheaf $\mathcal{I},$ the canonical filtration by jet order, denoted $F^k\mathcal{A}:=\mathcal{O}(J_X^kE)$, induces the \emph{standard filtration,} given (by $\mathcal{O}_X$-coherent sheaves), as $F^k\mathcal{I}:=F^k\mathcal{A}\cap \mathcal{I}$ with $F^k\mathcal{B}:=F^k\mathcal{A}/F^k\mathcal{I},$ for each $k.$
\end{defn}
Locally, if $f=f(x,u^{\alpha},u_{\sigma}^{\alpha})\in F^k\mathcal{A}$ is a germ of a section, the relative differential, when restricted to $k$-jets, gives
$$d^vf=\sum_{\alpha=1,|\sigma|\leq k-1}^{m}\frac{\partial f}{\partial u_{\sigma}^{\alpha}}d^vu_{\sigma}^{\alpha}.$$

Since the formation of Kähler differentials in $\mathcal{A}\otimes\mathcal{D}_X$-modules is $\mathcal{D}_X$-linear, this means that  
$d^vD_i(f)=\xi_id^v(f)$ and $\xi_i(gd^vu_{\sigma}^{\alpha})=gd^v u_{\sigma+1_j}^{\alpha}, i=1,\ldots,n.$


The $m$-th order symbol of some $F\in \mathcal{I}$ may be understood as the class of the vertical differential $dF\in \Omega_{J^{m}/J^{m-1}}^1\simeq \mathcal{O}(J^m)[\xi_1,\ldots,\xi_n](m).$ In particular, for $F\in \mathfrak{J}_k=F^k\mathcal{I},$ its symbol is just as above, modulo $F^k\mathcal{I}$ i.e. an element of $\mathcal{O}(Z_m)[\xi_1,\ldots,\xi_n](m),$ that is, the ring of homogeneous polynomials of order $m$ in variables $(\xi)$ with coefficients in $F^k\mathcal{B}.$

\subsubsection{Strict characteristic modules}
\label{sssec: Strict char modules}
Geometrically, the objects just described in Subsect. \ref{sssec: Strict Chars Locally} are given by considering the conormal sheaf of $Z_k$,
$$\mathcal{C}_{Z_k/J_X^k(E)}:=\mathfrak{J}_k/\mathfrak{J}_{k}^2=F^k\mathcal{I}/(F^k\mathcal{I})^2,$$ which is a coherent $\mathcal{O}(Z_k)$-submodule 
\begin{equation}
    \label{eqn: Conormal complex}
\mathcal{C}_{Z_k/J^k_XE}\subset \Omega_{J^k_XE}^1\otimes\mathcal{O}(Z_k).
\end{equation}
Moreover, there are natural subsheaves 
\begin{equation}
    \label{eqn: k-th symbol module}
\mathcal{N}_k\subset \mathcal{O}(Z_k)[\xi_1,\ldots,\xi_n](k),\hspace{2mm} k\geq 0,
\end{equation}
generated by the classes corresponding to $F\in \mathfrak{J}_k.$
Spaces (\ref{eqn: k-th symbol module}) canonically define  algebraic ideals $\{J_k\subset \mathcal{O}(Z_k)[\xi_1,\ldots,\xi_n](*)\}_{k\geq 0},$ whose corresponding quotient module 
\begin{equation}
    \label{eqn: k-characteristic module}
\mathcal{C}h_k:=\mathcal{O}(Z_k)[\xi_1,\ldots,\xi_n](*)/J_k,\hspace{2mm} k\geq 0,
\end{equation}
is called the $k$-\emph{characteristic module}.
The full characteristic module is the graded quotient module of the symbol module by ideal spanned $\{J_k\}.$ Explicitly, the \emph{symbol module} is the graded module $\mathcal{N}_{\bullet}:=\bigoplus_k\mathcal{N}_k$ corresponding to (\ref{eqn: k-th symbol module}). Locally,
$$\mathcal{N}_k\subset \bigoplus_{1\leq \alpha\leq m}\mathcal{O}(Z_k)[\xi]d^vu^{\alpha}\simeq \mathcal{O}(Z_k)[\xi](p),$$
generated by $d^vF^0\mathcal{I},\cdots, d^vF^k\mathcal{I}$ modulo the sub-module $F^k\mathcal{I}$. This is exactly $J_k$ and in what follows we will often confuse the two.

\begin{obs}
    Locally, one may take $F^k\mathcal{A}[\xi_1,\ldots,\xi_n]$ as the ideal generated by $F^k\mathcal{I}$ and all minors of the order $n$ i.e. 
$det(d^v F_1,\ldots, d^vF_n)$ with $F_i\in F^k\mathcal{I},0\leq r\leq k,$
and $d^vF_i$ the symbol of order $r.$
\end{obs}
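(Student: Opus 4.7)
This observation amounts to a local Fitting-ideal description of the symbol ideal $J_k \subset F^k\mathcal{A}[\xi_1,\ldots,\xi_n]$. I would begin by fixing local coordinates $(x,u^\alpha,u^\alpha_\sigma)$ on $J_X^kE$ around a point of $Z_k$ and choosing generators $F_1,\ldots,F_N$ of $F^k\mathcal{I}$, ordered by their orders $r_i = \mathrm{ord}(F_i) \leq k$. For each such generator the vertical differential $d^vF_i$ reduces, modulo the submodule coming from $F^{r_i-1}\mathcal{I}$, to a ``symbol row'' $\sigma_{r_i}(F_i) \in \mathcal{O}(Z_k)[\xi](r_i) \otimes \mathbf{k}^m$, whose entries are homogeneous polynomials of degree $r_i$ in $\xi$ and whose class generates the symbol submodule $\mathcal{N}_{r_i}$ of (\ref{eqn: k-th symbol module}).

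Next I would assemble these rows into the local symbol matrix $\Sigma$ with entries in the graded ring $\mathcal{O}(Z_k)[\xi]$. By definition, the characteristic locus of the system is the set where $\Sigma$ fails to have maximal rank on the cotangent fibres of $T^*X$, and by standard Fitting-ideal theory (applied fibrewise over points of $Z_k$) this locus is cut out by the ideal of maximal minors of $\Sigma$. In the present setting the relevant size is $n = \dim X$, reflecting the fact that the $\xi$-variables live in $n$-dimensional cotangent fibres, so the defining generators are precisely the $n\times n$ subdeterminants $\det(d^vF_{i_1},\ldots,d^vF_{i_n})$ built out of symbols of order $\leq k$. Adjoining these minors to $F^k\mathcal{I}$ produces the described ideal inside $F^k\mathcal{A}[\xi]$, whose vanishing locus in $T^*X \times_X Z_k$ coincides with the support of the characteristic module $\mathcal{C}h_k$ of (\ref{eqn: k-characteristic module}).

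The main step that I would expect to be the sticking point is verifying that this presentation is scheme-theoretic rather than merely set-theoretic, i.e. that no additional generators arise from hidden compatibility conditions among the $d^vF_i$. Formal integrability of $Z$ is what rescues us here: using the identity $d^v D_j(F) = \xi_j \cdot d^vF$ recalled in Subsection \ref{sssec: Strict Chars Locally}, one checks that all prolonged symbols $d^v D_\sigma(F_i)$ are already algebraically generated by $\xi^\sigma \cdot d^vF_i$ modulo $F^k\mathcal{I}$, so no prolongation produces a minor beyond those already listed. A brief local-coordinate computation, together with the Spencer-regularity hypothesis on $\mathcal{I}$ from Subsection \ref{ssec: Properties of D-Ideal Sheaves}, then upgrades this to the desired equality of ideals.
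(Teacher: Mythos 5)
The paper states this as an \emph{Observation} and supplies no proof at all, so there is no argument of the authors' to compare yours against; I can only assess your proposal on its own terms. The Fitting-ideal framing is the right instinct: the characteristic locus is the rank-deficiency locus of the symbol matrix, and such loci are cut out by determinantal ideals. But your justification for why the minors have size $n$ is not correct. You write that the relevant size is $n=\dim X$ ``reflecting the fact that the $\xi$-variables live in $n$-dimensional cotangent fibres.'' The number of $\xi$-variables has no bearing on the size of the minors that detect rank deficiency; what matters is the shape of the symbol matrix, which is (number of equations) $\times$ (number of dependent variables), i.e.\ $N\times m$ with $m=\mathrm{rank}(E)$. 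The paper's own later remark in Section 3.5 makes this explicit: $\mathrm{char}(Z)=\{\xi\in T_x^*X\mid \mathrm{rank}(\sigma_{\ell(F)}(\xi))<m\}$, which is cut out by the $m\times m$ minors. So either the Observation's ``$n$'' is a slip for $m$ (or for the case $N=m$ of a determined square system), or it means something the statement does not make precise --- but the reason you give for $n$ is not a valid derivation of it, and as written $\det(d^vF_1,\ldots,d^vF_n)$ only parses if the rows $d^vF_i$ are $n$-vectors, which they are not in general ($d^vF_i$ has $m$ components $\partial F_i/\partial u^\alpha_\sigma \, d^v u^\alpha$).

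The second gap is the one you yourself flag but do not close. The paper's characteristic ideal is $\mathcal{I}^Z_{\mathrm{char}}=\sqrt{\mathrm{Ann}(\mathcal{C}h^Z)}$, whereas the ideal of maximal minors is a Fitting ideal; for a finitely presented module one only has $\mathrm{Fitt}_0(\mathcal{M})\subseteq \mathrm{Ann}(\mathcal{M})\subseteq\sqrt{\mathrm{Fitt}_0(\mathcal{M})}$, so the two presentations agree set-theoretically (up to radical) but not scheme-theoretically in general. Your proposed fix --- the identity $d^vD_j(F)=\xi_j\,d^vF$ together with formal integrability --- shows that the prolonged symbols contribute no new generators beyond $\xi^\sigma\cdot d^vF_i$, which is the right argument for generation of the symbol submodule $\mathcal{N}_k$ in bounded degree, but it is orthogonal to the Fitting-versus-annihilator discrepancy. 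To upgrade the minor ideal to an equality (rather than an equality of radicals) you would need an additional hypothesis, e.g.\ that the symbol module is Cohen--Macaulay of the expected codimension or that $Z$ is Spencer regular in the sense of Definition 2.24 so that $\mathcal{C}h_k$ is locally free and the presentation is minimal; as the Observation asserts an identity of ideals and not of their radicals, this step cannot be waved through.
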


The following is clear.

\begin{prop}
Let $\mathcal{A},\mathcal{B}=\mathcal{A}/\mathcal{I}$ be as above.
Then, if $f=f(x,u,u_{\sigma}^{\alpha})\in F^k\mathcal{A},$ put $\overline{f}\in F^k\mathcal{B}=F^k\mathcal{A}/F^k\mathcal{I}.$ The class of $d^v f$ in $\mathcal{C}h_k$ is independent of $\overline{f}$ and is given by  $d_{J^k_XE/J^{k-1}_XE}(\overline{f})\in \Omega_{J^k_XE/J^{k-1}_XE}^1.$ 
\end{prop}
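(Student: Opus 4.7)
The plan is to verify two things: (i) $d^vf$ descends to a well-defined class on $\overline{f}$ modulo $J_k$, and (ii) under a canonical identification, this class is the relative Kähler differential. Both are essentially bookkeeping once the correct isomorphism is fixed, so the main content is in assembling the definitions.

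First I would make explicit the identification of graded pieces. Since $J_X^kE \to J_X^{k-1}E$ is an affine bundle whose fibers are modeled on the symmetric power $\mathrm{Sym}^k(T^*X)\otimes E$, the relative cotangent sheaf satisfies
\begin{equation*}
\Omega^1_{J_X^kE/J_X^{k-1}E}\simeq \mathcal{O}(J_X^kE)\Big\{\, d^vu_{\sigma}^{\alpha}\,:\, |\sigma|=k,\ \alpha=1,\ldots,m\,\Big\},
\end{equation*}
and tensoring with $\mathcal{O}(Z_k)$ identifies this with the degree-$k$ piece $\mathcal{O}(Z_k)[\xi_1,\ldots,\xi_n](k)\otimes (d^vu^{\alpha})_{\alpha}$ via $d^vu^{\alpha}_{\sigma}\leftrightarrow \xi^{\sigma}d^vu^{\alpha}$. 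This is precisely the ambient module in which $\mathcal{N}_k$ and $\mathcal{C}h_k$ live in degree $k$.

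Next I would verify independence. Suppose $f,f'\in F^k\mathcal{A}$ are two lifts of the same class $\overline{f}\in F^k\mathcal{B}$. Then $g:=f-f'\in F^k\mathcal{I}$, and by the very definition of $\mathcal{N}_k$ (the submodule generated by the symbols $d^vF$ with $F\in F^k\mathcal{I}$) the element $d^vg = d^vf - d^vf'$ lies in $\mathcal{N}_k$, hence in the algebraic ideal $J_k$ it generates. Thus $[d^vf]=[d^vf']$ in $\mathcal{C}h_k=\mathcal{O}(Z_k)[\xi](*)/J_k$, proving the class depends only on $\overline{f}$.

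For the formula itself, observe that the vertical differential $d^v$ on $\mathcal{A}$ factors, at jet level $k$, as the composition
\begin{equation*}
F^k\mathcal{A}\xrightarrow{\ d\ }\Omega^1_{J_X^kE}\longrightarrow \Omega^1_{J_X^kE/J_X^{k-1}E},
\end{equation*}
which kills differentials of jet variables of order strictly less than $k$, leaving exactly the expression $\sum_{|\sigma|=k,\alpha}(\partial f/\partial u^{\alpha}_{\sigma})\,d^vu^{\alpha}_{\sigma}$ used to define $d^vf$. Restricting coefficients to $\mathcal{O}(Z_k)$ and passing to the quotient by $F^k\mathcal{I}$ on the source, step (ii) of the above factorization is exactly the relative de Rham differential $d_{J_X^kE/J_X^{k-1}E}(\overline{f})$, by the universal property of Kähler differentials applied to the smooth morphism $J_X^kE\to J_X^{k-1}E$ restricted to $Z_k$. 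Combining with independence, the two steps together give the stated identification in $\mathcal{C}h_k$.

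The only delicate point, and the one I would double-check carefully, is that the quotient by $J_k$ on the polynomial-in-$\xi$ side really corresponds to restricting the target of $d_{J_X^kE/J_X^{k-1}E}$ along $Z_k\hookrightarrow J_X^kE$; this uses that $J_k$ is \emph{algebraically} generated in $\mathcal{O}(Z_k)[\xi](*)$ by the homogeneous pieces $\mathcal{N}_{\ell}$ with $\ell\le k$, together with the fact that in degree $k$ the generators coming from $\ell<k$ either vanish or are absorbed into $F^k\mathcal{I}$ after restriction to $Z_k$. Once this identification is verified coordinate-freely via the conormal sequence \eqref{eqn: Conormal complex}, the proposition follows.
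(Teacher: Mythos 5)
Your proposal is correct: the paper itself offers no argument for this proposition (it is prefaced only by ``The following is clear.''), and your write-up supplies exactly the intended verification — independence of the lift via $d^v(f-f')\in\mathcal{N}_k\subset J_k$ for $f-f'\in F^k\mathcal{I}$, plus the identification of the degree-$k$ piece with $\Omega^1_{J^k_XE/J^{k-1}_XE}$ consistent with (\ref{eqn: ChKahler1}). The only point worth flagging is that the paper's local formula for $d^vf$ carries the index range $|\sigma|\leq k-1$, apparently a typo; your convention $|\sigma|=k$ for the relative differential $J^k_XE\to J^{k-1}_XE$ is the one compatible with the rest of the section, so your reading is the right one.
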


Given a $\D$-PDE defined by $\mathcal{I},$ the relative differential is
$$d_{F^k\mathcal{B}/F^{k-1}\mathcal{B}}^{rel}(\overline{f})\in \Omega_{F^k\mathcal{B}/F^{k-1}\mathcal{B}}^1,$$
and set $\mathcal{C}h_k(\ell),$ to be the homogeneous degree $\ell$ elements of (\ref{eqn: k-characteristic module}). There are isomorphism of $\mathcal{O}_X$-modules,
\begin{equation}
\label{eqn: ChKahler1}
\mathcal{C}h_k(k)\simeq \Omega_{J^k_XE/J^{k-1}_XE}^1, \hspace{1mm} k\geq 1.
\end{equation}
The identification (\ref{eqn: ChKahler1}) will be discussed in Proposition \ref{prop: Omega and Ch isom} below. Put
\begin{equation}
    \label{eqn: Char Module of A}
\mathcal{C}h^{\mathcal{A}}(*)=\bigoplus_k\mathcal{C}h_k(*)=\bigoplus_{k}\mathcal{A}\otimes_{F^k\mathcal{A}}\Omega_{J^k_XE/J^{k-1}_XE}^1.
\end{equation}
The characteristic module of a $\D$-subscheme $Z\subseteq J_X^{\infty}E=\mathrm{Spec}_{\D}(\mathcal{A})$ is defined following (\ref{eqn: Char Module of A}).
\begin{defn}
\label{defn: Strict Char modules}
\normalfont 
Let $Z\hookrightarrow \mathrm{Spec}_{\mathcal{D}}(\mathcal{A})$ be a sub $\D$-scheme with commutative $\D$-algebra of functions $\mathcal{B}:=\mathcal{O}_{Z}.$ The \emph{characteristic module (of $Z$)} is given by  
$\mathcal{C}h^{Z}(*):=\bigoplus_k\mathcal{B}\otimes_{F^k\mathcal{B}}\Omega_{F^k\mathcal{B}/F^{k-1}\mathcal{B}}^1.$
\end{defn}
It is a graded module and we set  $\mathcal{C}h_k(\geq p)$ to be the sub-module in degrees at least $p$ and for all $p\leq q,$ set 
\begin{equation}
    \label{eqn: Truncation in degrees}
\mathcal{C}h_k([p,q]):=\mathcal{C}h_k(\geq p)/\mathcal{C}h_k(\geq q),
\end{equation}
to be the truncation in degrees $[p,q].$
\begin{prop}
\label{prop: Mult by microlocal coordinates}
Fix a coordinate system $(x_1,\ldots,x_n)$ on $X$, and its induced coordinate system $(x;\xi)\in T^*X.$ There are well-defined maps 
$\Omega_{J^{k}/J^{k-1}}^1\rightarrow \Omega_{J^{k+1}/J^k}^1$ as well as 
$\Omega_{Z_k/Z_{k-1}}^1\rightarrow \Omega_{Z_{k+1}/Z_k}^1,$ induced from the multiplication by $\xi_i$ for $i=1,\ldots,n$. 
\end{prop}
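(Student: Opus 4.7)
The plan is to define the maps in local coordinates and then verify well-definedness using both the identity $d^v D_i(f)=\xi_i\, d^v(f)$ recorded after Definition \ref{defn: Standard filt} and the identification $\mathcal{C}h_k(k)\simeq \Omega^1_{J^k_XE/J^{k-1}_XE}$ of (\ref{eqn: ChKahler1}). By construction, the total characteristic module $\mathcal{C}h^{\mathcal{A}}(*)$ of (\ref{eqn: Char Module of A}) is a graded module over the polynomial ring $\mathcal{A}[\xi_1,\ldots,\xi_n]$, so multiplication by $\xi_i$ is a homogeneous map of degree $+1$. Restricting to the top-degree graded pieces at levels $k$ and $k+1$ and applying (\ref{eqn: ChKahler1}) produces the desired map $\Omega^1_{J^k/J^{k-1}}\rightarrow \Omega^1_{J^{k+1}/J^k}$. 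Concretely, the rule is $d^v u_\sigma^\alpha\mapsto d^v u_{\sigma+1_i}^\alpha$ for $|\sigma|=k$, or equivalently $d^v(f)\mapsto d^v(D_i f)$ for $f\in F^k\mathcal{A}$. Well-definedness as a map of relative Kähler differentials is immediate: if $f\in F^{k-1}\mathcal{A}$, then its class vanishes in $\Omega^1_{J^k/J^{k-1}}$ and $D_i f\in F^k\mathcal{A}$, so $d^v(D_i f)$ vanishes in $\Omega^1_{J^{k+1}/J^k}$.

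For the sub-$\D$-scheme $Z=\mathrm{Spec}_{\D}(\mathcal{B})$ cut out by the $\D$-ideal $\mathcal{I}$, it remains to verify that multiplication by $\xi_i$ descends to the characteristic module $\mathcal{C}h^Z(*)$ of Definition \ref{defn: Strict Char modules}. Since $\mathcal{I}$ is $\D$-stable, $D_i(F^k\mathcal{I})\subset F^{k+1}\mathcal{I}$, and the formula $d^v(D_i F)=\xi_i\, d^v(F)$ then yields $\xi_i\cdot \mathcal{N}_k\subset \mathcal{N}_{k+1}$; equivalently, the graded ideal $\bigoplus_k J_k\subset \bigoplus_k\mathcal{O}(Z_k)[\xi](*)$ is stable under multiplication by $\xi_i$. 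Therefore multiplication by $\xi_i$ descends to the quotient $\mathcal{C}h^Z(*)$, and applying the $Z$-analog of (\ref{eqn: ChKahler1}) in top degree yields the map $\Omega^1_{Z_k/Z_{k-1}}\rightarrow \Omega^1_{Z_{k+1}/Z_k}$.

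The only (mild) subtlety I would write out carefully is that the identification (\ref{eqn: ChKahler1}) intertwines polynomial multiplication by $\xi_i$ on the characteristic-module side with the structural passage from $\Omega^1_{J^k/J^{k-1}}$ to $\Omega^1_{J^{k+1}/J^k}$ on the jet side. Once one records the correspondence $d^v u_\sigma^\alpha\leftrightarrow \xi^\sigma d^v u^\alpha$ together with $D_i(u_\sigma^\alpha)=u_{\sigma+1_i}^\alpha$, this is a direct check, and the remainder of the argument reduces to formal bookkeeping with ideals closed under total derivatives. Coordinate dependence is not an issue here because the proposition fixes a coordinate system on $X$ (and hence on $T^*X$); intrinsically, multiplication by the 1-form $\xi=\sum\xi_idx_i$ defines a symbol-type map, but the component-wise formulation in $\xi_i$ is the one needed for the subsequent Koszul/Spencer computations.
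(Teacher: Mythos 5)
Your argument is correct and is essentially the paper's own proof written out in full: the paper simply invokes the identification $\mathcal{C}h_k(k)\simeq \Omega^1_{J^k_XE/J^{k-1}_XE}$ of (\ref{eqn: ChKahler1}) and "standard arguments," which are precisely your coordinate formula $d^v(f)\mapsto d^v(D_if)$, the relation $d^vD_i(f)=\xi_i\,d^vf$, and the descent to $Z$ via $\D$-stability of $\mathcal{I}$ (i.e. $D_i(F^k\mathcal{I})\subset F^{k+1}\mathcal{I}$). No gap; your version is if anything more complete than the one in the text.
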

\begin{proof}
Via the isomorphism (\ref{eqn: ChKahler1}), the result follows by standard arguments \cite{Gro,Gro2}.
\end{proof}

To express the prolongation of symbols in terms of prolongations of ideals and equations, set 
$$\mathrm{Pr}_1(Z_k):=(Z_{k+1},\mathcal{I}'),\hspace{1mm}\text{ where } \mathcal{I}':=\mathcal{I}_{k+1}.$$
There is a map of topological spaces
$\pi_{k+1,k}:=|p^{k+1}_k|:|Z_{k+1}|\rightarrow |Z_k|,$ where $p^{k+1}_k$ is the restriction of the projection $J_X^{k+1}E\rightarrow J_X^kE$ to $Z_{k+1}$. Thus, there is a homomorphism of $\mathbb{C}$-algebras $\pi_{k+1,k}^*:\pi_{k+1,k}^{-1}\mathcal{O}_{Z_k}\rightarrow \mathcal{O}_{Z_{k+1}}.$
If $\mathcal{N}',\mathcal{M}'$ are the characteristic modules of the order $1$ prolongations $\mathrm{Pr}_1(Z_k)$, then
$$\pi_{k+1,k}^*(\pi_{k+1,k}^{-1}\mathcal{N}_{k+1})\subset \pi_{k+1,k}^*\pi_{k+1,k}^{-1}\mathcal{O}_{Z_k}[\xi]_{k+1}^p,$$
where $\mathcal{O}[\xi]_{k+1}^p$ is shorthand for homogeneous degree $p$ elements. Similarly for all $\mathcal{N}_{m}',m\geq k+1,$ so it follows that
$$\mathcal{M}'=\mathcal{O}_{Z_{k+1}}\otimes_{\pi_{k+1,k}^{-1}\mathcal{O}_{Z_k}}\pi_{k+1,k}^{-1}\mathcal{M}_{>0}.$$

\subsubsection{$\D$-geometric microcharacteristics}
\label{sssec: D-Geometric Microcharacteristic Varieties}
Let $Z=\mathrm{Spec}_{\mathcal{D}}(\mathcal{B})$ be $\D$-subscheme of $J_X^{\infty}E=\mathrm{Spec}_{\D}(\A)$, induced by a $\D$-ideal sequence (\ref{eqn: SES}) with canonical projection $p_{\infty}:Z\rightarrow X.$ Let $\pi_X:T^*X\rightarrow X$ be the cotangent bundle, and consider the pull-back,
\begin{equation}
\label{eqn: pb}
\begin{tikzcd}
    T^*(X,Z)\arrow[d,"\pi_{\infty}^{Z}"]\arrow[dr,"\pi_{\infty}"]\arrow[r] & T^*X\arrow[d,"\pi_X"]
    \\
    Z\arrow[r,"p_{\infty}"] & X,
\end{tikzcd}
\end{equation}
Letting $(x;\xi)$ be the standard coordinates on $T^*X$, then $\mathcal{O}_{J^{\infty}_X(E)}[\xi]$ is the sheaf of functions on $J^{\infty}_XE\times_X T^*X,$ inducing a filtration by homogeneous degrees in $\xi$, on the pull-back (\ref{eqn: pb}), defined by 
\begin{equation}
    \label{eqn: Filtration on D coordinate ring}
\mathrm{Fl}^k\mathcal{O}_{T^*(X,Z)}=\mathcal{O}_{Z}\otimes_{\mathcal{O}_X}\mathrm{Fl}^k\mathcal{O}_{T^*X}, k\geq 0.
\end{equation}
Via the $\D$-spectrum, write
$F^k\mathrm{Spec}_{\mathcal{D}}(\mathcal{A}):=\mathrm{Spec}_X(F^k\mathcal{A}),$ so that 
$\mathrm{Spec}_{\D,T^*}(\A):=\mathrm{Spec}_{\mathcal{D}}(\mathcal{A})\times_X T^*X,$
is filtered with $F^k\mathrm{Spec}_{\D,T^*}(\A)\simeq J_X^k(E)\times_X T^*X.$ Thus, $\mathrm{Spec}_{\D,T^*}(\A)$ is just the (filtered) relative $\D$-scheme $J_X^{\infty}(E)\times_X T^*X.$ It is an affine ind-scheme.

Following \cite{K}, the characteristic variety is defined to be the support of the graded module $\mathcal{C}h(*);$ it is the sub ind-scheme defined by the radical of the annihilator of $\mathcal{C}h(*)$: 
$$\mathrm{supp}(\mathcal{C}h)=\varprojlim \mathrm{supp}(\mathcal{C}h_k), \hspace{2mm} \mathcal{O}(\mathrm{supp}(\mathcal{C}h))=\varinjlim \mathcal{O}(\mathrm{supp}(\mathcal{C}h_k)).$$

\begin{defn}
\label{annideal}
\normalfont
Given a $\D$-algebraic PDE $Z$, its \emph{strict characteristic variety} $\mathrm{char}(Z)$ is  the sub-variety of $T^*(X,Z)$ defined by the \emph{characteristic ideal} $\mathcal{I}_{\mathrm{char}}^{Z}:=\sqrt{\mathrm{Ann}(\mathcal{C}h^{Z})}.$
\end{defn}
The characteristic ideal inherits a filtration from (\ref{eqn: Filtration on D coordinate ring}),
$$\mathrm{Fl}(\mathcal{I}_{\mathrm{char}}^{Z}):=\big\{\mathrm{Fl}^k\big(\mathcal{I}_{\mathrm{char}}^{Z}\big):=\mathcal{I}_{\mathrm{char}}^{Z}\cap F^k(\mathcal{O}_{T^*(X,Z)}),k\geq 0\big\},$$
and if $\iota_k:\mathcal{C}h_k(*)\hookrightarrow \mathcal{C}h^{Z}$ is the canonical inclusion, considering the image $\iota_k(\mathcal{C}h_k^{Z})$ of  $\mathcal{C}h_k^{Z}$ in $\mathcal{C}h^{Z}$ gives an isomorphism 
\begin{equation}
    \label{eqn: Filtered Char ideal}
\mathrm{Fl}^k\big(\mathcal{I}_{\mathrm{char}}^{Z}\big)\simeq \sqrt{\mathrm{Ann}(\iota_k\mathcal{C}h_k^{Z})},\hspace{2mm} k\geq 0.
\end{equation}
Algebraic ideals (\ref{eqn: Filtered Char ideal}) define an analytic subvariety $\mathrm{Char}_k(Z)\subset T^*(X,F^kZ).$ 

Setting
$\mathcal{D}_{Z}:=\mathcal{O}_{Z}\otimes_{p^{-1}\mathcal{O}_X}p^{-1}\mathcal{D}_X,$ it is clear that for $Z=\mathrm{Spec}_{\mathcal{D}_X}(\mathcal{A})$ one has $\mathcal{D}_{\mathrm{Spec}_{\mathcal{D}}(\mathcal{A})}\simeq \mathcal{A}\otimes_{\mathcal{O}_X}\mathcal{D}_X.$ 
It is endowed with its obvious order filtration.
\begin{prop}
\label{prop: Char inclusions}
There is an isomorphism of graded algebras 
$Gr^{F}(\mathcal{D}_{Z})\simeq\mathcal{O}_{T^*(X,Z)},$
and $\Omega_{Z/X}^1$ is filtered by 
$\mathrm{Fl}^k\Omega_{Z/X}^1=\big\{\mathcal{O}_{Z}\otimes_{\mathcal{O}(F^kZ)}\Omega_{F^kZ/X}^1\big\}_{k\geq 0},$
and generated by $\mathcal{D}_{Z}$ and $\mathcal{O}_{Z}\otimes_{\mathcal{O}_{E}^{alg}}\Omega_{E/X}^1.$
The support  
$\mathcal{C}\mathrm{har}_{\mathcal{D}}^1(Z):=\mathrm{supp}\big(Gr(\Omega_{Z/X}^1)\big),$ of this $Gr(\mathcal{D}_{Z})$-module for a $\D$-involutive PDE agrees with the strict characteristic variety. 
\end{prop}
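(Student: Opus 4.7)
The proposition decomposes into the graded algebra identification, the two structural claims on $\Omega_{Z/X}^1$, and the support identification under involutivity. For the first claim, I would start from the classical fact that $\mathrm{Gr}^F(\mathcal{D}_X)\simeq \mathcal{O}_{T^*X}$ and observe that since $\mathcal{D}_Z=\mathcal{O}_Z\otimes_{p^{-1}\mathcal{O}_X}p^{-1}\mathcal{D}_X$ inherits its order filtration from $\mathcal{D}_X$ with $\mathcal{O}_Z$ placed in degree zero, taking the associated graded commutes with the base change along $p^{-1}$. This yields $\mathrm{Gr}^F(\mathcal{D}_Z)\simeq \mathcal{O}_Z\otimes_{p^{-1}\mathcal{O}_X}p^{-1}\mathrm{Gr}^F(\mathcal{D}_X)\simeq \mathcal{O}_Z\otimes_{p^{-1}\mathcal{O}_X}p^{-1}\mathcal{O}_{T^*X}\simeq \mathcal{O}_{T^*(X,Z)}$, the last identification being precisely the Cartesian square~(\ref{eqn: pb}).

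For the filtration on $\Omega_{Z/X}^1$, I would use that $Z=\varprojlim_k F^kZ$, equivalently $\mathcal{O}_Z=\varinjlim_k \mathcal{O}(F^kZ)$, together with the compatibility of Kähler differentials with filtered colimits of rings, yielding $\Omega_{Z/X}^1\simeq \varinjlim_k \mathcal{O}_Z\otimes_{\mathcal{O}(F^kZ)}\Omega_{F^kZ/X}^1$; this is exactly the filtration $\mathrm{Fl}^k\Omega_{Z/X}^1$ stated. The generation claim is then local: choosing jet coordinates $(x^i,u_\sigma^\alpha)$, the sub-sheaf $\mathcal{O}_Z\otimes_{\mathcal{O}_E^{alg}}\Omega_{E/X}^1$ contributes the vertical one-forms $d^v u^\alpha$ sitting in filtration degree zero, while iterated action by the total derivatives $D_j\in \mathcal{D}_Z$, which satisfy $D_j(d^v u_\sigma^\alpha)=d^v u_{\sigma+1_j}^\alpha$ in $\mathrm{Gr}$, produces the $d^v u_\sigma^\alpha$ in all higher filtered degrees; together with $\Omega_X^1$ these generate $\Omega_{Z/X}^1$ over $\mathcal{O}_Z$.

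The main content lies in the support identification, and this is where involutivity is essential. I would identify $\mathrm{Gr}(\Omega_{Z/X}^1)$ with the characteristic module $\mathcal{C}h^Z(*)$ of Definition~\ref{defn: Strict Char modules} degree by degree via~(\ref{eqn: ChKahler1}): the $k$-th graded piece is $\mathcal{O}_Z\otimes_{\mathcal{O}(F^kZ)}\Omega_{F^kZ/F^{k-1}Z}^1\simeq \mathcal{C}h_k(k)$. Consequently $\mathrm{supp}(\mathrm{Gr}(\Omega_{Z/X}^1))$ is the closed sub-variety cut out by $\sqrt{\mathrm{Ann}(\mathcal{C}h^Z)}$, which by Definition~\ref{annideal} is precisely the strict characteristic variety. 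The delicate step, and the reason the involutivity hypothesis cannot be dispensed with, is the compatibility of the annihilator with prolongation: for a general $\mathcal{I}$ the symbol modules $\mathcal{N}_k$ need not satisfy $\mathrm{pr}_1\mathcal{N}_k=\mathcal{N}_{k+1}$, so genuinely new symbolic relations can arise at higher jet order and shrink the strict characteristic variety relative to $\mathrm{supp}(\mathrm{Gr}(\Omega_{Z/X}^1))$. Involutivity, equivalently the vanishing of Spencer $\delta$-cohomology of $\mathcal{N}_\bullet$ in positive bi-degrees, is exactly the condition ensuring $\mathcal{N}_{k+\ell}=\mathrm{pr}_\ell \mathcal{N}_k$ for all $\ell\geq 0$, so that the annihilator of $\mathcal{C}h^Z$ stabilizes at the involutive level and the two supports coincide.
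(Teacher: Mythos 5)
Your proposal is correct and reaches the same conclusions, but it reorganizes the argument in a way worth comparing with the paper's. For the graded-algebra isomorphism $\mathrm{Gr}^F(\mathcal{D}_Z)\simeq\mathcal{O}_{T^*(X,Z)}$ you simply base-change the classical identity $\mathrm{Gr}^F(\mathcal{D}_X)\simeq\mathcal{O}_{T^*X}$ along $p^{-1}$ and invoke the Cartesian square (\ref{eqn: pb}); this is cleaner and entirely adequate given the definition $\mathcal{D}_Z=\mathcal{O}_Z\otimes_{p^{-1}\mathcal{O}_X}p^{-1}\mathcal{D}_X$, whereas the paper detours through the description of $\A[\D_X]$ as the pushforward of the centralizer of vertical operators before comparing associated gradeds. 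Your construction of $\mathrm{Fl}^k\Omega^1_{Z/X}$ via the filtered colimit $\mathcal{O}_Z=\varinjlim_k\mathcal{O}(F^kZ)$ is essentially the paper's argument, which runs the relative cotangent sequences of the jet tower through $\mathcal{O}_Z\otimes_{\mathcal{O}(F^\ell Z)}(-)$ and takes images; your explicit coordinate verification of the generation claim is a useful supplement the paper leaves implicit. For the support identification you identify $\mathrm{Gr}(\Omega^1_{Z/X})$ with $\mathcal{C}h^Z(*)$ degree by degree via (\ref{eqn: ChKahler1}) and conclude from Definition~\ref{annideal}; the paper instead invokes Proposition~\ref{prop: Omega and Ch isom} together with a shift-functor argument ($i_k^*\mathcal{C}h=0$ iff $\mathcal{C}h\equiv 0$) to establish independence of the filtration, a point your write-up addresses only implicitly through the stabilization of the annihilator. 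One small imprecision: the condition $\mathcal{N}_{k+\ell}=\mathrm{pr}_\ell\mathcal{N}_k$ is guaranteed by $2$-acyclicity of the symbol together with formal integrability, not by full involutivity per se; involutivity (vanishing of all positive-degree $\delta$-cohomology) is sufficient but stronger than what this step strictly requires. Under the paper's standing hypotheses this costs you nothing, but the equivalence as you phrase it is not exact.
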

\begin{proof}
From Proposition \ref{prop: Mult by microlocal coordinates} we have for each $\ell \geq 0,$ canonical exact sequences associated to the restrictions of $\pi_{\ell-1}^{\ell}:J_{X}^{\ell}(E)\rightarrow J_{X}^{\ell-1}(E)$ to $Z$ i.e. $F^{\ell}Z\subset J_X^{\ell}(E),$
\begin{equation}
\label{eqn: Filtered Kahler sequences}
\mathcal{O}_{F^{\ell}Z}\otimes_{\mathcal{O}_{F^{\ell-1}Z}}\Omega_{F^{\ell-1}Z/X}^1\rightarrow \Omega_{F^{\ell}Z/X}^1\rightarrow \Omega_{F^{\ell}Z/F^{\ell-1}Z}^1\rightarrow 0.
\end{equation}
Applying $\mathcal{O}(Z)\otimes_{\mathcal{O}(F^{\ell}Z)}(-)$ to (\ref{eqn: Filtered Kahler sequences}), gives
$$\mathcal{O}_{Z}\otimes_{\mathcal{O}_{F^{\ell-1}Z}}\Omega_{F^{\ell-1}Z/X}^1\rightarrow \mathcal{O}_{Z}\otimes_{\mathcal{O}_{F^{\ell}Z}}\Omega_{F^{\ell}Z/X}^1\rightarrow \mathcal{O}_{Z}\otimes_{\mathcal{O}_{F^{\ell}Z}}\Omega_{F^{\ell}Z/F^{\ell-1}Z}^1\rightarrow 0.$$
Then, the desired filtration on $\Omega_{Z/X}^1$ is defined via
$$\mathrm{Fl}^{\ell}\Omega_{Z/X}^1:=Im(\mathcal{O}_{Z}\otimes_{\mathcal{O}_{F^{\ell}Z}}\Omega_{F^{\ell}Z/X}^1\hookrightarrow \Omega_{Z/X}^1),\ell\geq 0.$$
Since $\Omega_{Z/X}^1\simeq lim_{\ell} \Omega_{F^{\ell}Z/X}^1,$ the desired result follows by noting that one has
$\mathcal{D}_{Z}:=\mathcal{O}_{Z}\otimes_{p^{-1}\mathcal{O}_X}p^{-1}\mathcal{D}_X,$ and thus for $Z=\mathrm{Spec}_{\mathcal{D}_X}(\mathcal{A})$ one has $\mathcal{D}_{\mathrm{Spec}_{\mathcal{D}}(\mathcal{A})}\simeq \mathcal{A}\otimes_{\mathcal{O}_X}\mathcal{D}_X,$ as an $\A-\D_X$-bimodule. With its obvious order filtration, setting $p_{\infty}:\mathrm{Spec}(\A)\rightarrow X,$ the sheaf of algebras $\A[\D_X]$ on $X$ is isomorphic to $p_{\infty*}C_{\D_{\mathrm{Spec}(\A)}}(\D_{\mathrm{Spec}(\A)/X}),$ where $C_{\D_{\mathrm{Spec}(\A)}}$ denotes the centralizer of the sheaf of vertical differential operators in $\D_{\mathrm{Spec}(\A)}.$
Comparing the associated graded objects $\mathrm{gr}_{F_{ord}}(\D_{Z})$ is isomorphic to the quotient filtration associated with $\mathrm{Fl}^{\ell}\Omega_{Z/X}^1.$
In order to prove the independence of the filtration we must invoke Proposition \ref{prop: Omega and Ch isom}, given below. To this end, note that for each $k\in \mathbb{Z}$ there is a shift endo-functor $[k]:\mathrm{Sch}_{\mathcal{D}_X}^{inv}\rightarrow \mathrm{Sch}_{\mathcal{D}_X}^{inv},$ defined by shifted filtration $F^{\ell}\mathrm{Spec}_{\mathcal{D}_X}(\mathcal{B})[k]:=\mathrm{Spec}_{X}(F^{\ell}\mathcal{B}[k]),$ with $F^{\ell}\mathcal{B}[k]:=F^{\ell+k}\mathcal{B}.$
Then, there is a canonical map $i_k:\mathrm{Spec}_{\mathcal{D}}(\mathcal{B})[k]\rightarrow \mathrm{Spec}_{\mathcal{D}}(\mathcal{B}),$ and by Proposition \ref{prop: Omega and Ch isom}  $i_k^*\mathcal{C}h=0$ if and only if $\mathcal{C}h\equiv 0.$
\end{proof}

The characteristic variety $\mathcal{C}\mathrm{har}_{\mathcal{D}}(Z)$ appearing in Proposition \ref{prop: Char inclusions}, is the \emph{$\D$-geometric characteristic variety}. 
\begin{rem}
    \label{rem: s-microchars}
    \normalfont 
Given an involutive analytic subset $V\subset T^*X,$ in \emph{loc.cit.} we defined for each $s\geq 1,$ the $s$-microcharacteristic varieties $\mathcal{C}\mathrm{har}_{\mathcal{D},V}^{(s)}(Z)\subset Z\times_X T_VT^*X.$ For our purposes we need only the $\D$-geometric characteristic variety which is given by the $1$-microcharacteristic variety along the zero-section Lagrangian subvariety $V=T_X^*X$ i.e.
$\mathcal{C}\mathrm{har}_{\mathcal{D}}(Z)\simeq \mathcal{C}\mathrm{har}_{\mathcal{D},T_X^*X}^{(1)}(Z).$
\end{rem}

$\D$-characteristic varieties are invariant under $\D_X$-isomorphisms.

\begin{prop}
\label{prop: D-Isom of Chars}
    Let $f:\EQ\rightarrow Z$ be a $\D_X$-isomorphism between $\D_X$-smooth schemes. Then there is a $\D$-isomorphism of $f^*\mathcal{O}_{Z}[\mathcal{D}_X]$-modules, 
    $f^*\Omega_{Z/X}^1\simeq\Omega_{\EQ[k]/X}^1,$ which induces an isomorphism of $\D$-geometric characteristic varieties
    $\mathcal{C}\mathrm{har}_{\mathcal{D}}(Z)\simeq \mathcal{C}\mathrm{har}_{\mathcal{D}}(\EQ).$ Moreover, there is an induced isomorphism of complexes of $\mathcal{O}_{\EQ}[\mathcal{D}_X]$-modules $\Omega_{\EQ}^{*}\simeq f^*\Omega_{Z}^{*}.$
\end{prop}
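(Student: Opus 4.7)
The plan is to produce the isomorphism of Kähler differentials first, then deduce the characteristic variety statement as a formal consequence of support being invariant under filtration shifts, and finally extend the first-order isomorphism to the whole algebraic de Rham complex via functoriality of exterior powers. Since $f:\EQ\rightarrow Z$ is in particular an isomorphism of $X$-schemes, the standard functoriality of Kähler differentials supplies a canonical pullback isomorphism $f^{*}\Omega_{Z/X}^{1}\simeq \Omega_{\EQ/X}^{1}$ of $\mathcal{O}_{\EQ}$-modules. The fact that $f$ is moreover a $\mathcal{D}_{X}$-morphism — meaning by definition it intertwines the canonical Cartan connections on $Z$ and $\EQ$ — ensures that this pullback isomorphism is $\mathcal{D}_{X}$-linear, upgrading it to an isomorphism of $f^{*}\mathcal{O}_{Z}[\mathcal{D}_{X}]$-modules. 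The shift $[k]$ arises because a $\D_{X}$-isomorphism between $\D_{X}$-smooth schemes need not preserve the jet-order filtrations strictly; $\D_{X}$-finite-type (\autoref{defn: D-Finite-type}) only guarantees a uniform comparison up to finite shift, and I would choose $k$ to be the smallest such integer that aligns $f^{*}\mathrm{Fl}^{\bullet}$ with $\mathrm{Fl}^{\bullet}[k]$ on $\Omega_{\EQ/X}^{1}$.

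Next, I would deduce the isomorphism of $\D$-geometric characteristic varieties as a direct consequence of Proposition \ref{prop: Char inclusions}. There, the characteristic variety is recovered as the support of $\mathrm{gr}(\Omega_{Z/X}^{1})$ with respect to the filtration $\mathrm{Fl}^{\ell}\Omega_{Z/X}^{1}$. Taking associated gradeds commutes with pullback along the $\D_{X}$-isomorphism $f$, and the shift endo-functor $[k]$ introduced in the proof of Proposition \ref{prop: Char inclusions} satisfies $i_{k}^{*}\mathcal{C}h=0$ iff $\mathcal{C}h=0$; hence the radical of the annihilator is unchanged by the shift, and the defining ideal $\mathcal{I}_{\mathrm{char}}^{Z}$ of Definition \ref{annideal} pulls back to $\mathcal{I}_{\mathrm{char}}^{\EQ}$. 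This gives the identification $\mathcal{C}\mathrm{har}_{\mathcal{D}}(Z)\simeq \mathcal{C}\mathrm{har}_{\mathcal{D}}(\EQ)$.

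For the isomorphism of complexes $\Omega_{\EQ}^{*}\simeq f^{*}\Omega_{Z}^{*}$, I would apply exterior powers to the $\mathcal{D}_{X}$-linear isomorphism on $\Omega^{1}$, obtaining $f^{*}\Omega_{Z}^{p}\simeq \Omega_{\EQ}^{p}$ for each $p\geq 0$, again with filtration shift absorbed. The de Rham differential is intrinsic to any commutative $\D$-algebra and is natural in morphisms of $\D$-algebras, so these degree-wise isomorphisms automatically commute with $d$ and assemble into the claimed isomorphism of complexes of $\mathcal{O}_{\EQ}[\mathcal{D}_{X}]$-modules. The main technical obstacle in the whole argument is the control of the filtration shift: one must verify, using the $\D$-finite-type hypothesis, that a single uniform $k$ suffices across all jet orders so that the comparison is well-defined globally rather than only term-by-term; once this step is secured, the rest of the proof is a formal combination of functoriality of $\Omega^{1}$, naturality of the exterior algebra, and the filtration-shift invariance of support already established in Proposition \ref{prop: Char inclusions}.
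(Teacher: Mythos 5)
The paper states this proposition without giving a proof, so there is no argument of the authors' to compare yours against; the only textual ingredients are the shift endofunctor $[k]$ and the observation that $i_k^*\mathcal{C}h=0$ if and only if $\mathcal{C}h\equiv 0$, both introduced in the proof of Proposition \ref{prop: Char inclusions}, and these are exactly what you invoke. Your architecture --- functoriality of $\Omega^1$ in the category of commutative $\D_X$-algebras, $\D$-linearity coming from the fact that $f$ intertwines the Cartan connections, invariance of the support of the associated graded under filtration shift for the characteristic varieties, and exterior powers plus naturality of the de Rham differential for the full complex --- is the natural route and is consistent with how the result is used later (Proposition \ref{prop: Inv Res}, and the Lie--B\"acklund equivalences in (\ref{eqn: MD(m,n,K)})). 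Your identification of the filtration shift as the real content of the statement is also correct.

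The one step that needs more care is your claim that the $\D$-finite-type hypothesis produces a \emph{single} $k$ for which $f^{*}\mathrm{Fl}^{\bullet}$ coincides with $\mathrm{Fl}^{\bullet}[k]$ on the nose. What finite $\D$-generation actually gives is a one-sided containment: the finitely many $\D$-generators of $\mathcal{O}_{Z}$ are sent by $f^{\sharp}$ into some $F^{k}\mathcal{O}_{\EQ}$, whence $f^{\sharp}(F^{\ell}\mathcal{O}_{Z})\subseteq F^{\ell+k}\mathcal{O}_{\EQ}$ for all $\ell$; applying the same reasoning to $f^{-1}$ yields the reverse containment with a possibly different shift $k'$. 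The two filtrations on $\Omega_{\EQ/X}^{1}$ are therefore mutually bounded --- equivalent as good filtrations in the sense relevant to Proposition \ref{prop: Associated graded} --- but in general not equal after any single shift. This is harmless for the conclusion you actually need: the radical of the annihilator of the associated graded, hence $\mathcal{C}\mathrm{har}_{\mathcal{D}}$, is insensitive to replacing a good filtration by an equivalent one, which is the standard independence-of-good-filtration argument. But as written, your claim that the displayed isomorphism $f^{*}\Omega_{Z/X}^{1}\simeq\Omega_{\EQ[k]/X}^{1}$ holds strictly as filtered modules for a well-chosen $k$ is stronger than what you have established; you should either prove equality of the shifted filtrations in your specific situation or weaken the assertion to an isomorphism of underlying $\mathcal{O}_{\EQ}[\D_X]$-modules compatible with equivalent filtrations, which still induces the identification of characteristic varieties. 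With that adjustment the remainder of the argument --- the passage to supports and the extension to $\Omega^{*}$ by exterior powers and naturality of $d$ --- is sound.
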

As in the linear setting \cite{Ga}, the characteristic ideal is involutive/coisotropic.
\begin{prop}
\label{prop: D-Char is coisotropic}
$\mathrm{Char}(Z)$ is stable under the Poisson bracket, compatible with the symplectic embedding of $(T^*X,\{-,-\})$ into $\big(T^*(X,Z),\{-,-\}\big).$
\end{prop}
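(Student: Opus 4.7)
The plan is to adapt the classical Gabber–Kashiwara involutivity argument \cite{Ga} to the setting of the filtered sheaf of differential operators $\D_Z$ on the $\D$-scheme $Z$. The essential input is already packaged by Proposition \ref{prop: Char inclusions}: the order filtration on $\D_Z$ satisfies $\mathrm{gr}^F(\D_Z) \simeq \mathcal{O}_{T^*(X,Z)}$ as a sheaf of commutative graded $\mathcal{O}_{Z}$-algebras, and this commutativity is precisely what allows the commutator on $\D_Z$ to descend to a Poisson bracket on $\mathcal{O}_{T^*(X,Z)}$. Concretely, for local sections $P, Q \in F^p\D_Z$ and $F^q\D_Z$, the commutator $[P,Q]$ lies in $F^{p+q-1}\D_Z$, and its class in $\mathrm{gr}^{p+q-1}$ depends only on the principal symbols, defining $\{\sigma_p(P), \sigma_q(Q)\}$. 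The restriction of this bracket along $p_{\infty}^{-1}\D_X \hookrightarrow \D_Z$ recovers the canonical symplectic Poisson bracket on $T^*X$, which gives the compatibility with the embedding $T^*X \hookrightarrow T^*(X,Z)$ asserted in the statement.

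The first step is to verify that $\mathcal{C}h^{Z}(*)$ is finitely generated as an $\mathcal{O}_{T^*(X,Z)}$-module equipped with a good filtration, in the sense that the induced filtration on $\mathcal{C}h^Z$ is compatible with the Rees algebra of $\D_Z$. This is essentially tautological from its construction via the standard filtration of Definition \ref{defn: Standard filt}, which is an $\mathcal{O}_X$-coherent filtration on $\mathcal{B}$ by jet order, and hence defines a good filtration on $\mathcal{D}_{Z}$-modules arising from $\mathcal{B}$ by the usual Rees-module argument. Formal integrability of $Z$ and involutivity, which are in force by hypothesis, guarantee that the graded pieces $\mathcal{C}h_k(*)$ assemble coherently without pathology in the ind-scheme limit.

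The second step is to apply Gabber's theorem in the form: if $R$ is a commutative graded Poisson algebra, $M$ is a finitely generated graded $R$-module with good filtration lifting to a filtered module over a filtered (not necessarily commutative) ring whose graded ring is $R$, then $\sqrt{\mathrm{Ann}_R(M)}$ is involutive under the Poisson bracket. Setting $R = \mathcal{O}_{T^*(X,Z)}$, $M = \mathcal{C}h^Z(*)$, and $\mathcal{I}^Z_{\mathrm{char}} = \sqrt{\mathrm{Ann}(\mathcal{C}h^Z)}$, this yields $\{\mathcal{I}^Z_{\mathrm{char}}, \mathcal{I}^Z_{\mathrm{char}}\} \subseteq \mathcal{I}^Z_{\mathrm{char}}$, which is exactly the involutivity of $\mathrm{Char}(Z)$. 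Gabber's argument itself is formal: it proceeds by lifting defining equations of $\mathcal{I}^Z_{\mathrm{char}}$ to elements of the filtered ring, computing their commutator, and reading off the Poisson bracket on the principal symbols modulo the annihilator, using the positive-characteristic reduction only to control the lifting when the filtered ring is not itself commutative — which applies here, since $\D_Z$ is genuinely non-commutative even though $\mathrm{gr}^F(\D_Z)$ is commutative by Proposition \ref{prop: Char inclusions}.

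The main obstacle is purely checking that Gabber's argument transports from the case of coherent $\D_X$-modules on a smooth base to the case of the sheaf $\D_Z$ on a $\D$-scheme $Z$ built from an ind-scheme $J_X^{\infty}E$. Two points require care: the Rees algebra of $\D_Z$ is not Noetherian in the classical sense because of the pro-finite jet-structure, so one must work either level-wise with the truncations $\mathcal{C}h_k([p,q])$ of (\ref{eqn: Truncation in degrees}) — which \emph{are} finitely generated over the Noetherian slices $\mathcal{O}(F^{k}Z)[\xi]$ — and then pass to the limit; and one must verify that the Poisson bracket respects the filtration $\mathrm{Fl}^k\mathcal{O}_{T^*(X,Z)}$ of (\ref{eqn: Filtration on D coordinate ring}) so that the level-wise involutivity of $\mathrm{Fl}^k(\mathcal{I}^Z_{\mathrm{char}})$ described in (\ref{eqn: Filtered Char ideal}) persists to the inverse limit. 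Once these compatibilities are checked, the desired result follows and the symplectic compatibility with $T^*X \hookrightarrow T^*(X,Z)$ is immediate from the discussion above.
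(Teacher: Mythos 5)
Your route is genuinely different from the paper's. You propose to run Gabber's filtered-ring argument directly on $\D_Z$, using $\mathrm{gr}^F(\D_Z)\simeq\mathcal{O}_{T^*(X,Z)}$ from Proposition \ref{prop: Char inclusions} to produce the Poisson bracket and then applying the abstract statement (good filtration on $\mathcal{C}h^Z$ $\Rightarrow$ $\sqrt{\mathrm{Ann}}$ involutive). The paper instead never touches the non-Noetherian ring $\mathcal{O}_{T^*(X,Z)}$ globally: it pulls back along an arbitrary (formal) solution $\varphi:(U,\mathcal{O}_U)\to Z$, observes that $\varphi^*\Omega^1_{Z/X}$ is a coherent $\D_U$-module on an open subset of the finite-dimensional base, applies classical Gabber there to get involutivity of $\varphi^{-1}\mathrm{Char}(Z)$, and then recovers $\{f,g\}\in\mathcal{I}_{\mathrm{char}}$ from the fact that a function vanishing on $\varphi^{-1}\mathrm{Char}(Z)$ for every solution vanishes on $\mathrm{Char}(Z)$. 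What your approach buys, if completed, is a statement intrinsic to $Z$ that does not depend on having enough (formal) solutions; what the paper's approach buys is that all Noetherianity and good-filtration hypotheses are verified for free, since everything is reduced to coherent $\D$-modules on a smooth finite-dimensional variety.

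The caveat is that the two verifications you defer are exactly where the difficulty of your route is concentrated, and the second one is not routine. The Poisson bracket on $\mathcal{O}_{T^*(X,Z)}$ does \emph{not} preserve the jet filtration $\mathrm{Fl}^k$ of (\ref{eqn: Filtration on D coordinate ring}): the Hamiltonian vector field of a symbol involves the total derivatives $D_i$, which raise jet order, so $\{\mathrm{Fl}^k,\mathrm{Fl}^k\}\not\subset\mathrm{Fl}^k$ in general but only lands in $\mathrm{Fl}^{k+1}$. Consequently ``level-wise involutivity over the Noetherian slices $\mathcal{O}(F^kZ)[\xi]$'' is not literally a statement about a Poisson ideal in a fixed Noetherian ring, and passing to the limit requires an argument that the stabilization of the prolongation tower (Spencer regularity, condition (ii) of Definition \ref{defn: D-geom m-involutive}) forces the bracket of two elements of $\mathrm{Fl}^k(\mathcal{I}^Z_{\mathrm{char}})$ back into $\mathrm{Fl}^{k+1}(\mathcal{I}^Z_{\mathrm{char}})$. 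This can very plausibly be made to work under the formal integrability and involutivity hypotheses, but as written it is the missing step, and the paper's solution-pullback device exists precisely to avoid having to prove it.
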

\begin{proof}
Suppose $f,g\in \mathcal{I}_{\mathrm{char}}.$ We want to show for every solution or formal solution $\varphi$, that $\varphi^*f,\varphi^*g$ vanish on $\varphi^{-1}\mathrm{Char}(Z)$ and that $\varphi^*\{f,g\}=\{\varphi^*f,\varphi^*g\}$.
Choose a solution of $Z$ i.e. a morphism of $\mathcal{D}$-schemes $(U,\mathcal{O}_U)\xrightarrow{\varphi}Z,$ with $U$ an open subset of $X$, one has that 
$\varphi^{-1}\mathrm{Char}(Z)=\mathrm{supp}(\varphi^*\mathcal{M}^{Z}),$
in $T^*X.$ Then, the $\mathcal{D}$-module $\varphi^*\Omega_{Z/X}^1$ is coherent and the characteristic variety of it, $\varphi^{-1}\mathrm{Char}(Z)$, is stable by the Poisson bracket by Gabber's theorem in the linear setting.
It suffices to then remark that the Poisson bracket $\{-,-\}$ on $\mathcal{O}_{T^*(X,Z)}$ has the following property: for every solution $\varphi$ or formal solution $\varphi_{\infty},$ one has that if $\varphi^*\{f,g\}$ vanishes on $\varphi^{-1}\mathrm{Char}(Z),$ then $\{f,g\}$ vanishes on $\mathrm{Char}(Z)$. In particular, $\{f,g\}\in \mathcal{I}_{\mathrm{char}}.$ 
\end{proof}

\subsection{Non-characteristic subspaces}
\label{ssec: NC subspaces}
Let $f:Y\rightarrow X$ be a morphism of complex analytic manifolds and consider $p_X\in \mathring{T}^*X:=T^*X\backslash T_X^*X$ with $\pi_X(p_X)\in Y.$ 
Following \cite{KS}, consider the diagram
$$T^*Y\xleftarrow{f_d}Y\times_X T^*X\xrightarrow{f_{\pi}}T^*X,$$
and put $p:=f_{\pi}^{-1}(p_X),p_Y:=f_{d}f_{\pi}^{-1}(p_X)$ so $p_X=f_{\pi}(p).$

Fix an $\mathcal{A}[\mathcal{E}_X]$-module $\mathcal{M}.$ It is \emph{non-characteristic} for $f:Y\hookrightarrow X$ at the point $p\in Y\times_X T^*X$ if and only if: there exists an open neighbourhood $U$ of $p$ such that $f_d$ is finite on $U\cap f_{\pi}^{-1}(\mathrm{supp}(\mathcal{M})).$
In other words, for a $\D$-algebra $\mathcal{B}_X$, it reads as
$$f_{d}^{-1}(p_Y)\cap f_{\pi}^{-1}\big(\mathrm{Char}_{\mathcal{D}}(\mathcal{B})\big)\subset p_{\infty}^{-1}\{ p \}.$$

Making use of the notion of a $1$-non-microcharacteristic subset $V\subset T^*X$ in the $\D$-algebraic setting \cite[Sec. 7]{KSY}, $f:Y\subset X$ is non-characteristic morphism if it is a $1$-micro-
noncharacteristic morphism for $\mathcal{B}$ via the zero section Lagrangian sub-manifold $V=T_Y^*Y.$ 

We allow for $S$-families of $\D_X$-algebraic PDEs, by imposing that $f:X\rightarrow S$ is a \emph{non-characteristic fibration}, following \cite{LZ}. In particular, the fibers $X_s:=f^{-1}(s)$ are non-characteristic for each $s\in S$. Then, by results of \cite{K}, for any coherent $\D_X$-module $\M_X$, the restricted systems $\M_{X_s}$ satisfy $H^k(\M_{X_s})=0,k\neq 0$ and $H^0(\M_{X_s})$ is a coherent $\D_{X_s}$-module.

\subsection{Regularity criterion for $\D$-schemes}
\label{ssec: Regularity criterion for D-schemes}
We propose an analog of regularity, in the sense of Castelnuovo-Mumford \cite{M}, for $\D_X$-ideal sheaves. It is given by  Definition \ref{defn: D-Geometric Regularity} below, and plays an essential role in the construction of a Quot functor for smooth, reduced $\D$-schemes. Together with Serre's Theorem B \cite{Se}, boundedness for families of $\D$-ideal sheaves is established using this differential regularity. We refer to \cite{Ma2,Ma3} for detailed explanations.

\begin{rmk}
Prolonging a system gives rise to sub-varieties $Z_{k+\ell}\subset J^{k+\ell}_XE$, but one should check the tower $\{Z_{k+\ell}\}_{\ell\geq 0}$ stabilizes. A priori it is unclear, as one is adding new equations and variables. An analog of Hilbert's basis theorem is therefore required in this setting which states the ascending chain of ideals constructed this way does indeed stabilize and if the ideal is not the entire ring, it defines a non-empty variety.
\end{rmk}

In the $\D_X$-geometric setting we employ Ritt's analog of the Hilbert basis theorem for differential algebras \cite{Ri}.

\begin{thm}[Ritt-Radenbush]
Every finitely generated differential algebra of characteristic zero is radically Noetherian, i.e., satisfies the ascending
chain condition for radically closed differential ideals. 
\end{thm}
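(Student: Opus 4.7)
The plan is to adapt Ritt's classical strategy to this $\D$-geometric setting. I would first reduce the statement to the case of free differential polynomial algebras $k\{y_1,\ldots,y_n\}$ over a differential field $k$ of characteristic zero. Any finitely generated differential $k$-algebra is a differential quotient of some such $k\{y_1,\ldots,y_n\}$, and preimages of radical differential ideals under differential surjections are again radical and differential, so the ACC on radical differential ideals descends through quotients. Induction on $n$ then reduces the problem to showing that adjoining one differential indeterminate to a radically Noetherian $R$ preserves the property.

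The base case $n=0$ is trivial. For the inductive step, assume $R := k\{y_1,\ldots,y_{n-1}\}$ is radically Noetherian and deduce the same for $R\{y\} = k\{y_1,\ldots,y_n\}$. The first technical tool I would set up is a ranking on the derivative variables compatible with differentiation---for instance the total ordering of \autoref{term: Rankings}---together with the notion of an \emph{autoreduced characteristic set} of a radical differential ideal $\mathcal{J}$: a subset $A(\mathcal{J}) \subseteq \mathcal{J}$ whose elements are pairwise reduced under pseudo-division and whose collective rank is minimal among subsets of $\mathcal{J}$.

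The main argument would proceed by contradiction. Suppose $\mathcal{J}_1 \subsetneq \mathcal{J}_2 \subsetneq \cdots$ is a strictly ascending chain of radical differential ideals, and pick characteristic sets $A_i$ for each. Since the leaders of elements of $A_i$ live in the set of derivative monomials---which is well-quasi-ordered by Higman's theorem (or by Dickson's lemma once one bounds the derivation order)---one extracts an infinite subsequence along which leaders of the $A_i$ stabilize in the product partial order. For $i < j$ in this subsequence, pick $f \in \mathcal{J}_j \setminus \mathcal{J}_i$ and apply Ritt pseudo-reduction of $f$ modulo $A_i$: this yields a product $H$ of initials and separants of elements of $A_i$, together with a pseudo-remainder $\tilde f$ reduced against $A_i$, such that $Hf \equiv \tilde f$ modulo the differential ideal generated by $A_i$. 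Either $\tilde f \in \mathcal{J}_i$, in which case radical-closedness forces $f \in \mathcal{J}_i$ (contradicting the choice of $f$), or $\tilde f \notin \mathcal{J}_i$ has strictly lower rank than $A_i$, contradicting the minimality of the characteristic set at some intermediate index.

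The hard part will be verifying that the pseudo-reduction multipliers $H$ remain outside $\sqrt{\mathcal{J}_i} = \mathcal{J}_i$ at every step. This is where the characteristic zero hypothesis is indispensable, via Lazard's lemma: in characteristic zero the radical of a differential ideal is itself differential (equivalently, $ab \in \sqrt{I}$ implies $a\cdot b' \in \sqrt{I}$). Combined with the minimality property of characteristic sets, this ensures that the initials and separants of $A_i$ cannot lie in $\mathcal{J}_i$ without forcing a rank reduction---the mechanism that makes the contradiction go through. Controlling this interaction between radical closure and pseudo-reduction, together with the combinatorial handling of the well-quasi-ordering on leaders of arbitrary derivation order, constitutes the technical heart of the argument and essentially recovers the classical Ritt-Raudenbush theorem as stated in \cite{Ri}.
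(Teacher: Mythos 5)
The paper does not actually prove this statement; it is quoted as a classical result with a citation to Ritt's book, so there is no in-paper argument to compare against and your proposal has to stand on its own as a reconstruction of the Ritt--Raudenbush basis theorem. Your overall architecture (reduction to the free differential polynomial algebra, rankings, autoreduced characteristic sets, pseudo-reduction, well-ordering of ranks via Dickson/Higman) is the correct classical toolkit, and the branch of your dichotomy where $\tilde f\neq 0$ is reduced against $A_i$ and forces a rank drop is sound.

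The genuine gap is in the other branch: from $Hf\in\mathcal{J}_i$ and $H\notin\mathcal{J}_i$ you conclude $f\in\mathcal{J}_i$ by ``radical-closedness,'' but radical ideals do not admit multiplicative cancellation. A radical differential ideal is an intersection of primes, so $Hf\in\mathcal{J}_i$ only tells you that each minimal prime over $\mathcal{J}_i$ contains $H$ or $f$ --- compare $\mathcal{J}=(xy)=(x)\cap(y)$ in $k[x,y]$, where $x\cdot y\in\mathcal{J}$ with neither factor in $\mathcal{J}$. Lazard's lemma (that $\sqrt{I}$ is again differential in characteristic zero) guarantees the ideals in your chain are honest radical differential ideals, but it does not rescue the cancellation step, and verifying $H\notin\mathcal{J}_i$ is therefore not ``the hard part'' so much as insufficient. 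Your argument as written only closes if every $\mathcal{J}_i$ in the chain is prime. The classical proofs route around exactly this point: one shows (via Zorn) that a maximal radical differential ideal which is not the radical of a finitely generated differential ideal must be \emph{prime}, using the characteristic-zero lemma $a\{S\}\subseteq\{aS\}$ to prove $\{S,a\}\cap\{S,b\}=\{S,ab\}$, and only then runs the characteristic-set/pseudo-reduction argument where cancellation of $H$ is legitimate; alternatively one works with the saturation $[A_i]:H^{\infty}$ and the splitting $\{S\}=\{S,H\}\cap(\{S\}:H)$ together with an induction on rank. Either device needs to be inserted before your chain argument can terminate.
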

This result has an important corollary, namely that every finitely generated differential domain over a field of characteristic zero is equationally Noetherian, which in this setting states for all systems of differential equations $\{F_i=0\}_{i\in I}$ there exists a finite-subsystem $\{F_{i_1},\ldots,F_{i_m}\}\subset \{F_i\}$ such that $\mathrm{Sol}_{\D}(\{F_i=0\}_{i\in I})\simeq \mathrm{Sol}_{\D}(\{F_{i_j}=0\}_{j=1}^m).$

\begin{prop}
\label{prop: DiffEqnNotherian}
Let $R$ be arbitrary associative
and commutative ring with identity containing $\mathbb{Q}$ and consider
 a differential $R$-algebra $\A$ without nilpotent elements satisfying the
ascending chain condition for radically closed differential ideals. Then $\A$ is equationally
Noetherian.
\end{prop}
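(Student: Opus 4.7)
The plan is to translate equational Noetherianity into a stabilization statement for radical differential ideals, where the ACC hypothesis can be directly invoked. First, given an arbitrary system $\{F_i = 0\}_{i \in I}$ of differential polynomials over $\A$, I would form the differential ideal $\J := \langle F_i \rangle_{\D} \subset \A$ they generate and pass to its radical $\sqrt{\J}$. Because $R$ contains $\mathbb{Q}$, Ritt's classical lemma ensures $\sqrt{\J}$ is again a differential ideal (one iterates the Leibniz rule on $f^n$ and inverts positive integers). After enumerating $I$, I would consider the ascending chain of radically closed differential ideals
\[
\sqrt{\langle F_{i_1}\rangle_{\D}} \;\subseteq\; \sqrt{\langle F_{i_1},F_{i_2}\rangle_{\D}} \;\subseteq\; \cdots
\]
indexed by growing finite subsets of $I$. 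By the ACC hypothesis on radically closed differential ideals, this chain stabilizes, yielding a finite subset $\{F_{i_1},\ldots,F_{i_m}\}\subset \{F_i\}$ with $\sqrt{\langle F_{i_1},\ldots,F_{i_m}\rangle_{\D}} = \sqrt{\J}$.

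The second step is to promote this ideal-theoretic equality to an equality of solution functors $\Sol_{\D}(\{F_i\}) = \Sol_{\D}(\{F_{i_1},\ldots,F_{i_m}\})$. The inclusion $\Sol_{\D}(\{F_i\}) \subseteq \Sol_{\D}(\{F_{i_j}\})$ is immediate because the finite system is a subsystem. For the reverse inclusion, fix any $F_i$: since $F_i \in \sqrt{\J} = \sqrt{\langle F_{i_1},\ldots,F_{i_m}\rangle_{\D}}$, some power $F_i^{k_i}$ lies in the differential ideal generated by $\{F_{i_1},\ldots,F_{i_m}\}$. For any evaluation $ev_{(r)} : \A \to \mathcal{R}$ valued in a (reduced) test $\D$-algebra $\mathcal{R}$ which satisfies the finite subsystem, applying $ev_{(r)}$ to the defining relation gives $ev_{(r)}(F_i)^{k_i} = 0$ in $\mathcal{R}$, and the nilpotent-free hypothesis on $\mathcal{R}$ (inherited from that on $\A$) forces $ev_{(r)}(F_i) = 0$. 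Hence every solution of the finite subsystem is a solution of the full original system, establishing the equational Noetherianity.

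The main obstacle is the faithful passage from the stabilized chain of radical differential ideals back to a stabilized family of solution sets. Without the hypothesis that $\A$ (and the test algebras) contain no nilpotents, the implication $F_i^{k_i} = 0 \Rightarrow F_i = 0$ fails under evaluation, and nilpotent sections would distinguish $F_i$ from its powers, so a system whose radical differential ideal is finitely generated could still possess genuinely distinct solution-functorial content. A secondary but essential subtlety is the application of Ritt's lemma in the present generality, where $\J$ need not be finitely generated; here the $\mathbb{Q}$-linearity allows one to differentiate $f^n$ and divide by $n$ to extract derivatives of $f$ into $\sqrt{\J}$, so no finite-generation hypothesis is required on the intermediate ideals appearing in the chain.
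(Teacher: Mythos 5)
Your argument is correct, and it is worth noting that the paper itself states Proposition \ref{prop: DiffEqnNotherian} without any proof, so there is no in-text argument to compare against; what you have written is the standard derivation and fills that gap. The two pillars are exactly right: Ritt's lemma (valid because $\mathbb{Q}\subseteq R$ allows division by integers when differentiating $f^n$) guarantees that $\sqrt{\J}$ is again a differential ideal, so the chain you build lives in the class of radically closed differential ideals where the ACC hypothesis applies; and reducedness converts $ev_{(r)}(F_i)^{k_i}=0$ into $ev_{(r)}(F_i)=0$, which is precisely the role of the ``no nilpotent elements'' hypothesis. Two minor points deserve attention. First, ``after enumerating $I$'' silently assumes $I$ is countable; for an arbitrary index set you should instead apply the maximal-condition form of the ACC to the family of all radicals $\sqrt{\langle F_{i_1},\ldots,F_{i_m}\rangle_{\D}}$ over finite subsets, pick a maximal member, and observe that maximality forces every $F_i$ into it — this is equivalent to ACC under dependent choice and avoids the enumeration. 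Second, as you yourself flag, the reverse inclusion of solution sets only goes through when the evaluation target is reduced; this is automatic if equational Noetherianity is read as a statement about solutions in $\A$ itself (reduced by hypothesis), but if $\mathrm{Sol}_{\D}$ is taken as a functor on all test $\D$-algebras as in Proposition \ref{prop: Sol}, the statement must be restricted to reduced test algebras, since a nilpotent-valued point can satisfy the finite subsystem without satisfying the full one. Making that restriction explicit is the only substantive edit I would ask for.
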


The next result is clear.
\begin{prop}
    Let $R$ be an associative and commutative radically Noetherian domain. Then
every differential polynomial $R$-algebra is equationally Noetherian.
\end{prop}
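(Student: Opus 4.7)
The plan is to reduce the statement to the previous proposition (Proposition~\ref{prop: DiffEqnNotherian}), which already supplies the implication \emph{radically Noetherian $+$ no nilpotents $\Rightarrow$ equationally Noetherian}. So the task splits into two essentially independent verifications for a differential polynomial algebra $\mathcal{A}=R\{y_1,\ldots,y_n\}$ over a radically Noetherian domain $R$: first, that $\mathcal{A}$ satisfies the ascending chain condition on radically closed differential ideals; second, that $\mathcal{A}$ has no nilpotent elements.

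The second point is straightforward and I would dispatch it first. Because $R$ is a (commutative) domain and the derivations acting on $\mathcal{A}$ commute with the polynomial structure, the underlying ring $R[y_i, y_i^{(\sigma)} : i \le n, |\sigma| \ge 0]$ is an ordinary polynomial ring in countably many indeterminates over a domain, hence itself a domain, and in particular reduced. No use of the differential structure is needed here.

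The first point is the heart of the argument and where I would spend the effort. The strategy is to establish a relative Ritt–Raudenbush principle: if $R$ is radically Noetherian, so is $R\{y_1,\ldots,y_n\}$. I would proceed by induction on $n$, reducing to the case $n=1$. For $n=1$, I would mimic the proof of the classical Ritt–Raudenbush theorem as recalled in the excerpt. Namely, given a radical differential ideal $\mathfrak{J}\subset R\{y\}$, one considers the collection of leaders (highest derivatives) of a characteristic set of $\mathfrak{J}$ with respect to the total ordering (see Terminology~\ref{term: Rankings}); the key combinatorial input is Dickson's lemma, which ensures that among any infinite strictly ascending chain $\mathfrak{J}_1\subsetneq \mathfrak{J}_2\subsetneq\cdots$ of radical differential ideals, the sequence of leaders stabilizes in differentiation order. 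Once the leaders stabilize, the separants and initials lie in an ascending chain of ordinary (non-differential) radical ideals in a finitely generated $R$-subalgebra, and here the radical Noetherianity of $R$ itself closes the argument (finitely generated algebras over a radically Noetherian ring are again radically Noetherian in the non-differential sense, by Hilbert's basis theorem applied to $\sqrt{\cdot}$). This is the main obstacle: carefully transporting the classical leader/separant bookkeeping through the relative setting, so that the appeal to the base case is to $R$ itself rather than to a field.

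Having established both points, the conclusion is immediate: $\mathcal{A}=R\{y_1,\ldots,y_n\}$ is a differential $R$-algebra without nilpotents that is radically Noetherian, so Proposition~\ref{prop: DiffEqnNotherian} applies and yields equational Noetherianity. As a final remark, I would note that this is precisely the input needed in Subsection~\ref{ssec: Properties of D-Ideal Sheaves} to legitimize finite-generation arguments for the $\mathcal{D}$-ideal sheaves $\mathcal{I}\subset \mathcal{A}$ we encounter in the construction of $\mathbf{Hilb}_{\mathcal{D}}^{P}$, since the relevant $\mathcal{D}$-polynomial algebras are of the form $\mathrm{Sym}(\mathcal{D}_X^r)$ over the structure sheaf of a smooth variety.
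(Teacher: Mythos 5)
Your proposal follows essentially the same route as the paper, whose entire proof reads ``Follows from the Ritt-Radenbush theorem and Proposition \ref{prop: DiffEqnNotherian}'': both arguments reduce the claim to verifying the two hypotheses (ACC on radical differential ideals, and absence of nilpotents) of the preceding proposition. The only difference is that where the paper cites Ritt--Raudenbush as a black box, you sketch the relative version over a radically Noetherian base via characteristic sets and Dickson's lemma --- a worthwhile elaboration, since the paper's stated form of the theorem does not explicitly cover base rings more general than a field, but not a change of strategy.
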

\begin{proof}
    Follows from the Ritt-Radenbush theorem and Proposition \ref{prop: DiffEqnNotherian}.
\end{proof}

Let $Z=\{Z_k\}$ be a relative-algebraic $\D$-scheme. Then it suffices to require only that
Zariski locally, the map $Z_{m+1}\rightarrow Z_m$ is surjective for large $m\gg0.$ This is guaranteed for $\D_X$-integrable ideals.

\begin{prop}
\label{prop: Omega and Ch isom}
    Suppose that $\mathcal{I}$ is a $\D_X$-formally integrable $\D$-ideal sheaf. Then, after possibly shrinking to a Zariski-open dense subset, there is an isomorphism of $\mathcal{O}_X$-modules,
    $\Omega_{Z_{m+1}/Z_m}^1\simeq \mathcal{C}h_{m+1}.$
\end{prop}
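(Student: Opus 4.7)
The plan is to identify both sides of the claimed isomorphism with the cokernel of the same vertical symbol map, after passing to a suitable open set where formal integrability ensures all relevant modules have the expected rank.

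First, I would set up the relative cotangent sequence. For each $k$, the tower $Z_{k+1}\to Z_k\to X$ gives a right-exact sequence
\[
\mathcal{O}(Z_{m+1})\otimes_{\mathcal{O}(Z_m)}\Omega_{Z_m/X}^1\longrightarrow \Omega_{Z_{m+1}/X}^1\longrightarrow \Omega_{Z_{m+1}/Z_m}^1\longrightarrow 0,
\]
and simultaneously the conormal sequence for the closed embedding $Z_{m+1}\hookrightarrow J_X^{m+1}E$ cut out by $F^{m+1}\mathcal{I}$:
\[
F^{m+1}\mathcal{I}/(F^{m+1}\mathcal{I})^2\xrightarrow{\ d^v\ }\Omega_{J^{m+1}_XE/X}^1\otimes\mathcal{O}(Z_{m+1})\longrightarrow \Omega_{Z_{m+1}/X}^1\longrightarrow 0.
\]
Combining these and reducing modulo differentials pulled back from $J^m_XE$, the ambient freeness identification (\ref{eqn: ChKahler1}), namely $\Omega_{J^{m+1}_XE/J^{m}_XE}^1\simeq\mathcal{O}(J^{m+1}_XE)[\xi](m+1)$ via $d^vu^\alpha_\sigma\leftrightarrow \xi^\sigma d^vu^\alpha$, yields
\[
\overline{F^{m+1}\mathcal{I}}\xrightarrow{\ \overline{d^v}\ }\mathcal{O}(Z_{m+1})[\xi](m+1)\longrightarrow \Omega_{Z_{m+1}/Z_m}^1\longrightarrow 0,
\]
where $\overline{F^{m+1}\mathcal{I}}$ is the image of $F^{m+1}\mathcal{I}$ modulo $F^m\mathcal{I}+(F^{m+1}\mathcal{I})^2$.

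Second, I would recognise the image of $\overline{d^v}$ as exactly the degree $m+1$ part of the algebraic ideal $J_{m+1}$ of Subsect.\ \ref{sssec: Strict char modules}: by construction $J_{m+1}$ is generated by the classes of $d^vF$ for $F\in \mathfrak{J}_{m+1}=F^{m+1}\mathcal{I}$, read off as elements of $\mathcal{O}(Z_{m+1})[\xi](m+1)$. Formal integrability enters here in a crucial way: it guarantees that $F^{m+1}\mathcal{I}=\mathrm{pr}_1(F^m\mathcal{I})$ (no new equations are forced at level $m+1$ besides prolongations of lower-order ones), so that the relations killed in defining $\mathcal{C}h_{m+1}$ in degree $m+1$ agree with the relations coming from vertical differentials of $F^{m+1}\mathcal{I}$ modulo $F^m\mathcal{I}$. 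This produces a natural surjective map onto $\Omega_{Z_{m+1}/Z_m}^1$ from $\mathcal{C}h_{m+1}$ (in its top degree piece), and injectivity is checked by comparing generators on either side.

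Finally, I would handle the generic hypothesis. The identification can fail on a proper closed subset where $Z_{m+1}\to Z_m$ degenerates or where $\overline{F^{m+1}\mathcal{I}}/(\ldots)^2$ has jumping fibre dimension. Using Proposition \ref{prop: Generic local analytic models} (Malgrange generic involutivity, which produces a section $F$ outside whose zero locus $F^\ell\mathcal{I}$ is involutive and its further prolongations are the natural ones) together with formal integrability, I can pass to a Zariski-open dense $U\subset X$ on which $\pi_{m+1,m}:Z_{m+1}|_U\to Z_m|_U$ is smooth and surjective and the symbol module $\mathcal{N}_{m+1}$ has locally constant rank. On such $U$ the comparison map is an isomorphism of $\mathcal{O}_U$-modules, as required. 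The main obstacle is precisely this last step: ensuring that the algebraic description of $J_{m+1}$ (generated by symbols of a chosen generating set of $F^{m+1}\mathcal{I}$) matches the intrinsic relations in $\Omega^1_{Z_{m+1}/Z_m}$ without redundant or missing relations, which is exactly what involutivity together with generic restriction delivers.
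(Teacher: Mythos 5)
The paper itself gives no proof of Proposition \ref{prop: Omega and Ch isom}: it is stated bare, the identification (\ref{eqn: ChKahler1}) is explicitly deferred to it, and the underlying mechanism is only gestured at in Remark \ref{gr(I)} (the jet projections $J_X^{\ell+1}E\to J_X^{\ell}E$ are affine bundles modelled on $\mathrm{Sym}^{\ell+1}(T_X^*)\otimes T_{E/X}$, and one dualizes and restricts to $Z$). Your argument fills this gap along exactly the lines that remark suggests, and it is essentially correct: both $\Omega_{Z_{m+1}/Z_m}^1$ and the degree-$(m+1)$ piece of $\mathcal{C}h_{m+1}$ (which is the right reading of the statement, by parallel with (\ref{eqn: ChKahler1})) are cokernels of the same vertical symbol map $\overline{F^{m+1}\mathcal{I}}\to\mathcal{O}(Z_{m+1})[\xi](m+1)$ --- the first via the conormal and relative cotangent sequences together with the ambient freeness of $\Omega^1_{J^{m+1}_XE/J^m_XE}$, the second by the very definition of $J_{m+1}$ --- while Proposition \ref{prop: Generic local analytic models} supplies the dense open set on which the ranks are constant. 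The one step you should write out rather than assert is the phrase ``injectivity is checked by comparing generators'': what actually needs checking is that the degree-$(m+1)$ graded piece of the ideal $J_{m+1}$, which a priori contains products $\xi^{\sigma}\cdot\sigma(F)$ with $F$ of lower order, is already spanned by symbols of honest elements of $F^{m+1}\mathcal{I}$. That is precisely where Proposition \ref{prop: Mult by microlocal coordinates} (multiplication by $\xi_i$ realizes prolongation on symbols) combines with formal integrability in the saturated range $m\geq\mathrm{Ord}_{\D}(\mathcal{I})$; once that is in place the two presentations coincide on the nose and no separate injectivity argument is needed. With that point made explicit, your proof is a legitimate completion of a statement the paper leaves unproved.
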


In particular, if $\mathcal{C}h_{m+1}$ is a locally free sheaf and $Z_m$ is smooth, then $Z_{m+1}$ is also smooth. Moreover, if $\mathcal{C}h_{\ell}$ is locally free on $\mathcal{O}_{Z_{\ell}}$, then $\mathrm{dim}(\mathcal{C}h_{\ell,z})$ is locally constant on $|Z_{\ell}|$ for point $z\in Z_{\ell}.$ Conversely, if $\mathrm{dim}\mathcal{C}h_{\ell}:|Z_{\ell}|\rightarrow \mathbb{N}$ is a locally constant function, then $\mathcal{C}h_{\ell}$ is locally of the form  $\mathcal{O}_{Z_{\ell}}^{\oplus r}\rightarrow \mathcal{O}_{Z_{\ell}}^{\oplus s}\rightarrow \mathcal{C}h_{\ell}\rightarrow 0.$ These facts suggest introducing a PDE analog of regularity.
\begin{defn}
\label{defn: D-Geometric Regularity}
\normalfont 
A $k$-th order system equations $Z_k\subset J_X^kE$, imposed on sections of $\pi:(E,\mathcal{O}_E)\rightarrow (X,\mathcal{O}_X)$ is \emph{$m$-regular} if
\begin{enumerate}
    \item The $m$-fold prolongation defines a smooth algebraic variety $Z_m\subset J_X^m(E)$ and $Z_{m+1}\rightarrow Z_m$ is smooth and surjective;

    \item The modules $\mathcal{C}h_m$ and $\mathcal{C}h_{m+1}$ for $m$ and $(m+1)$-st prolongations are locally free sheaves of $\mathcal{O}_X$-modules;

    \item For every point $\theta_m\in Z_m$ the (fiber of) the module of relative Kähler forms $\Omega_{Z_{m+1}/Z_{m}}^1$ is $m$-regular in the sense of Castelnuovo-Mumford. 
\end{enumerate}
\end{defn}
An easy observation which is an important structural fact for the theory is the following.

\begin{prop}
    Let $Z_k$ be $k$-th order system of non-linear PDEs. Suppose that it is $m$-regular. Then the symbol ideal $\EuScript{J}_m$ of its $m$-th prolongation is $(m+1)$-regular and if $E$ is reduced both $\mathcal{C}h_m$ and $\mathcal{C}h_{m+1}$ are locally free sheaves.
\end{prop}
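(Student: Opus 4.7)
The plan is to combine the $\Omega$–$\mathcal{C}h$ identification of Proposition \ref{prop: Omega and Ch isom} with a classical Castelnuovo–Mumford shift argument, and then to use smoothness of the prolongation tower (condition (1) of $m$-regularity) to transfer local freeness between the relative cotangent sheaf and the characteristic modules. The guiding principle is that for a graded symbol module and its ideal of relations, $m$-regularity of the quotient implies $(m+1)$-regularity of the ideal, exactly as in the classical setting.

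First I would invoke formal integrability, which is built into $m$-regularity via the smooth surjection $Z_{m+1}\to Z_m$, in order to apply Proposition \ref{prop: Omega and Ch isom} and obtain, after restriction to a Zariski-dense open, the isomorphism $\Omega_{Z_{m+1}/Z_m}^1 \simeq \mathcal{C}h_{m+1}$. Condition (3) of Definition \ref{defn: D-Geometric Regularity} then says that the fibers of $\mathcal{C}h_{m+1}$, viewed as graded $\mathcal{O}_X[\xi_1,\ldots,\xi_n]$-modules, are $m$-regular in the classical Castelnuovo–Mumford sense \cite{M}.

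Next I would exploit the tautological short exact sequence
\begin{equation*}
0 \to \EuScript{J}_m \to \mathcal{O}(Z_m)[\xi_1,\ldots,\xi_n] \to \mathcal{C}h_m \to 0
\end{equation*}
that defines the symbol ideal of the $m$-th prolongation. The prolongation maps $\xi_i : \mathcal{C}h_k \to \mathcal{C}h_{k+1}$ produced by Proposition \ref{prop: Mult by microlocal coordinates}, together with the Spencer $\delta$-complex of the symbol system, allow one to run the standard local-cohomology long exact sequence: $m$-regularity of $\mathcal{C}h_m$ (inherited from $\mathcal{C}h_{m+1}$ up to the canonical degree shift) forces the vanishing of $H^i\bigl(\tilde{\EuScript{J}}_m(m+1-i)\bigr)$ for $i\geq 1$, which is exactly $(m+1)$-regularity of $\EuScript{J}_m$ in the sense of \cite{M}.

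For the local freeness claim, condition (1) of $m$-regularity gives that both $Z_m$ and $Z_{m+1}\to Z_m$ are smooth, so $\Omega_{Z_{m+1}/Z_m}^1$ is locally free over $\mathcal{O}_{Z_m}$, and the isomorphism of Proposition \ref{prop: Omega and Ch isom} transfers this local freeness to $\mathcal{C}h_{m+1}$. When $E$ is reduced the scheme-theoretic prolongations $Z_m, Z_{m+1}$ are reduced as well, so no nilpotent thickenings can perturb fiber dimensions; by the standard characterization recalled between Proposition \ref{prop: Omega and Ch isom} and Definition \ref{defn: D-Geometric Regularity}, the locally constant fiber-dimension function upgrades $\mathcal{C}h_m$ and $\mathcal{C}h_{m+1}$ to bona fide locally free sheaves. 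The main obstacle I expect is the careful bookkeeping of the grading shift: $\EuScript{J}_m$ has components in all degrees $\leq m$ while $\mathcal{C}h_m$ sits in degree $m$, and one must verify that the local-cohomology vanishing which translates $m$-regularity of $\mathcal{C}h_m$ into $(m+1)$-regularity of $\EuScript{J}_m$ is precisely the one furnished by the Spencer/Koszul complex of the prolonged symbol, in the spirit of \cite{Ma3}.
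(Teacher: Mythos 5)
Your argument is correct and, for the first claim, is essentially an expanded version of what the paper does: the paper simply asserts that condition (3) of Definition \ref{defn: D-Geometric Regularity} ``equivalently means'' the symbol ideal $\EuScript{J}_m$ is $(m+1)$-regular, whereas you supply the underlying mechanism, namely the tautological sequence $0\to \EuScript{J}_m\to \mathcal{O}(Z_m)[\xi_1,\ldots,\xi_n]\to \mathcal{C}h_m\to 0$ together with the standard Castelnuovo--Mumford shift in the local-cohomology long exact sequence. Your honesty about the degree bookkeeping (condition (3) is stated for $\Omega^1_{Z_{m+1}/Z_m}\simeq \mathcal{C}h_{m+1}$, while the shift argument needs regularity of $\mathcal{C}h_m$) identifies the one point the paper glosses over entirely; the passage between the two is handled by the prolongation maps of Proposition \ref{prop: Mult by microlocal coordinates} and the $\delta$-Poincar\'e lemma, exactly as you anticipate. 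Where you genuinely diverge is the local-freeness claim: you deduce it from smoothness of $Z_m$ and of $Z_{m+1}\to Z_m$ (condition (1)), transferring local freeness of the relative cotangent sheaf through Proposition \ref{prop: Omega and Ch isom} and then using the locally-constant-fiber-dimension criterion on the reduced scheme, while the paper instead passes to the analytification $\mathcal{C}h^{An}=\mathcal{C}h\otimes_{\mathcal{O}(Z_k)}\mathcal{O}_{Z_k^{An}}$ and exhibits $\mathcal{C}h_m^{An}$ and $\mathcal{C}h_{m+1}^{An}$ as locally free direct summands. Your route stays entirely in the algebraic category and is arguably cleaner, since it makes explicit which hypothesis (smoothness of the prolongation tower versus reducedness) is doing the work at each step; the paper's analytification argument buys compatibility with the almost-algebraic setting used elsewhere but leaves the role of reducedness implicit. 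Both are valid; there is no gap in your proposal.
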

\begin{proof}
By assumptions, one can see that condition (3) in Definition \ref{defn: D-Geometric Regularity} equivalently means that the symbol ideal $J_m$ of the $m$-th prolongation is $(m+1)$-regular. For $Z_k\subset J^k(E),$ as above with $\mathcal{C}h_k=\Omega_{Z_k/J^{k-1}}^1.$ Since $E$ is reduced the fact that $\mathcal{C}h$ is $m$-regular follows by noting that
$\mathcal{C}h^{An}:=\mathcal{C}h\otimes_{\mathcal{O}(Z_k)}\mathcal{O}_{Z_k^{An}},$ contains direct summands
$\mathcal{C}h_m^{An}$ and $\mathcal{C}h_{m+1}^{An},$ as locally free sheaves on $Z_k^{An}$.
\end{proof}
A differentially generated $\D$-ideal $\mathcal{I}:=\{F_A\}_{\D}$ implicitly corresponds to some infinite prolongations of systems of relatively-algebraic differential equations. Thus, assuming they are additionally $\D$-involutive, by the Cartan-Kuranishi theorem we can always find some $m$ large enough (i.e. always exists) for which conditions of Definition \ref{defn: D-Geometric Regularity} are satisfied.
It is then convenient to introduce another notion packaging this fact.
\begin{defn}
\label{defn: D-geom m-involutive}
\normalfont
Let $\mathcal{A}$ be a $\D_X$-smooth commutative $\D$-algebra. Let $\mathcal{I}_X$ be a $\D$-ideal which is formally integrable, and denote by $Z:=\mathrm{Spec}_{\D}(\mathcal{A}/\mathcal{I})\hookrightarrow \mathrm{Spec}_{\D}(\A)$ the closed $\D$-subscheme. Then $Z$ (equiv. $\mathcal{I}$) is said to be \emph{Spencer regular of degree $r$} if: (i) $F^r(Z)$ is $r$-regular in the sense of Definition \ref{defn: D-Geometric Regularity} i.e. $Z_r,\mathrm{pr}_1(Z_r)$ are smooth algebraic varieties and $\mathrm{pr}_1(Z_r)\rightarrow Z_r$ is surjective with $\mathcal{C}h_r$ of $Z_r$ is CM-$r$-regular, and (ii) for all $\ell\geq 0$ one has $Z_{\ell+r}=\mathrm{Pr}_\ell(Z_r).$ Write simply $\mathrm{Reg}_{\D}(Z)=r.$ 

A $\D$-scheme is \emph{Spencer regular} if it is Spencer $r$-regular for some $r\geq 0.$
\end{defn}

Some comments on this definition are in order. First, condition (i) equivalently states $Fl^m(\mathcal{I})$ is \emph{maximal rank}, in the sense that the matrix of size
$\mathrm{codim}_{\mathcal{D}}(\mathcal{I})\times \big(n+m\cdot C_{\mathrm{ord}(F)}^{n+\mathrm{ord}(F)}\big),$
given by the 
Jacobian with respect to $(x,u_{\sigma}^{\alpha})$-variables i.e.
$\mathrm{Jac}_{\mathcal{I}}(x,u_{\sigma}^{\alpha}):=\big(\frac{\partial F_A}{\partial x^i},\frac{\partial F_A}{\partial u_{\sigma}^{\alpha}}\big),$
satisfies $
\mathrm{rank}(\mathrm{Jac}_{\mathcal{I}})=\mathrm{codim}_{\mathcal{D}_X}(\mathcal{I}_X).$ In other words, integrability conditions (thus involutivity conditions) are in correspondence with a deficit in the rank of the symbol (of the linearization in non-linear setting). Then, condition (ii) states that the (generically) infinitely generated $\D$-ideal $\mathcal{I}$ is, in fact finitely $\D$-generated. Namely, it is determined by differential relations of orders $\leq r.$ This is emphasized by writing $\mathcal{I}\simeq \mathcal{D}_X\bullet F^r\mathcal{I},$ with $F^r\mathcal{I}=\mathcal{I}\cap F^r\mathcal{O}(J_X^{\infty}E),$ a coherent $\mathcal{O}_X$-module.  

This rank may be determined using the strict characteristic variety.

\begin{rmk}
Indeed, suppose $\{F=0\}$ is a system of $N$-equations of pure order $k.$ Linearizing around a point $v$ gives,
    $\ell(F^A)(v)=\sum_{|\sigma|=k}\sum_{j=1}^mp_{j}^{i,,\sigma}(x;\xi)v_{\sigma}^j,$ with the symbol matrix
    $\sigma(\ell_{F^A})(x;\xi):=||p_j^i(\xi)||_{i=1,\ldots,N,j=1,\ldots,m}.$
    Then, $\mathrm{char}(Z)=\{\xi\in T_x^*X|\mathrm{rank}(\sigma_{\ell(F)}(\xi))<m\}.$
\end{rmk}

The linearization of (resp. prolongation) of an involutive non-linear PDE (resp. involutive and formally integrable non-linear PDE) is again involutive (see e.g. \cite{Ts}).

The results of this section are now studied within a cohomological framework designed for detecting obstructions to integrability \cite{G,Sp,Sp2,Q}, but which suggests a precise criteria for the measure of the complexity of a $\D$-ideal based on homological finiteness of its associated symbol/characteristic module.

\section{Spencer sequences and their cohomologies}
In this section we recall the (first) Spencer complex, and the $\delta$-Spencer complex \cite{Sp,Sp2}. 

Spectral sequences computing Spencer cohomologies for formally integrable systems of noncharacteristic fibered manifolds was constructed in \cite{G,LZ}, providing analogs of the Leray-Serre sequence for de Rham cohomologies (see also \cite{LR}).


These results are now adapted to the case of sheaves over a fixed smooth $\D$-scheme $J_X^{\infty}E.$ To study Spencer cohomology of relatively-algebraic PDEs, we work with sheaves of $\mathcal{A}[\mathcal{D}]:=\mathcal{A}\otimes_{\mathcal{O}_X}\mathcal{D}_X$-modules, e.g. associated to characteristic modules of a $\D$-ideal sheaf over a fixed $\D$-algebra $\mathcal{A}$ of jets. We prove the resulting non-linear analogs of Jet-Spencer and $\delta$-Spencer cohomologies \cite{G,G2}, are compatible with the previously introduced Spencer regularity, just as sheaf-cohomology is with Casetlnuovo-Mumford regularity.

\subsection{Jet-Spencer sequences}
Let $V$ be a finite-dimensional vector space. Consider the $n$'th polynomial de Rham complex
$$0\rightarrow Sym^n(V)\xrightarrow{d} Sym^{n-1}(V)\otimes V\xrightarrow{d} Sym^{n-2}(V)\otimes \wedge^2 V\xrightarrow{d} \cdots,$$ with differential $d(w_1\cdots w_p\otimes v):=\sum_{i=1}^pw_1\cdots w_{i-1}w_{i+1}\cdots w_p\otimes w_i\wedge v.$
We are interested in a polynomial de Rham complex canonically associated to a symbolic system associated with a formally integrable PDE. It is obtained by restricting a certain kernel complex obtained from the related \emph{Jet-Spencer sequences}, that we now describe.

Let $Z:=\{Z_k\subset J_X^kE:k\geq0\}$ be as before. There is a canonical isomorphism of vector bundles, 
\begin{equation}
    \label{eqn: Cosymbol iso}
Sym^k(T_X^*)\otimes E\xrightarrow{\simeq} Ker(J_X^{k}E\rightarrow J_X^{k-1}E),
\end{equation}
defined by $df_1\cdots df_k\otimes e\mapsto \delta_{f_1,\cdots,f_k}(j_k)(e),e\in E,$ where $\delta_{f_1,\cdots,f_k}:=\delta_{f_1}\circ \cdots \delta_{f_k},$ is the nested commutator, where $\delta_{f_i}:=[f_i,-],$ for $f_1,\ldots,f_k\in \mathcal{O}_X.$


The $k$-th Jet-Spencer complex of $E$, denoted by $\mathcal{J}^k\mathcal{S}p_X^{\bullet}(E)$ is given by,
\begin{equation}
    \label{eqn: k-JetSpencer}
    0\rightarrow E\xrightarrow{j_k}J_X^kE\xrightarrow{D}T_X^*\otimes_{\mathcal{O}_X}J_X^{k-1}E\xrightarrow{D} \cdots\rightarrow \wedge^nT^*X\otimes_{\mathcal{O}_X}J_X^{k-n}E\rightarrow 0,
\end{equation}
whose differential is uniquely determined on corresponding sections by:
\begin{itemize}
\item[(i)] $D(\theta\otimes \omega)\otimes \eta):=D(\theta)\wedge\eta+(-1)^k(\pi_{\ell,\ell-1}\otimes Id)(\theta\wedge d_{dR}\eta),$ where $\theta\in J_X^{\ell}(E)\otimes \Omega_X^k$ and $\eta\in \Omega_X^k$;
\item[(ii)] the fact that the sequence determined by the map $j_k:H^0(X,E)\otimes\mathcal{O}_X\rightarrow J_X^k(E),$ sending $s\in H^0(X,E)$ and a $k$-rational point $x_0\in X$ to the $k$-th order Taylor expansion of $s$ around $x_0$, followed by the Spencer operator $D$, is exact.
\end{itemize}
Extending (ii) via the Leibniz rule, one obtains the (first) Spencer sequence (see \cite{G,LZ}). For each $k>0$, there are morphisms $\mathcal{J}^k\mathcal{S}p_X^{\bullet}(E)\rightarrow \mathcal{J}^{k-1}\mathcal{S}p_X^{\bullet}(E),$ of complexes (\ref{eqn: k-JetSpencer}), induced from the jet projections $\pi^k_{k-1}$:  
\[
\begin{tikzcd}
0\arrow[r] & E\arrow[d] \arrow[r] & J_X^kE\arrow[d] \arrow[r] & T_X^*\otimes_{\mathcal{O}_X}J_X^{k-1}E\arrow[d] \arrow[r] & \wedge^2T_X^*\otimes_{\mathcal{O}_X}J_X^{k-2}E\arrow[d]\rightarrow \cdots
\\
0 \arrow[r] & E\arrow[r] & J_X^{k-1}E\arrow[r] & T_X^*\otimes_{\mathcal{O}_X}J_X^{k-2}E\arrow[r] & \wedge^2 T_X^*\otimes_{\mathcal{O}_X}J_X^{k-3}E\rightarrow \cdots
\end{tikzcd}
\]

By (\ref{eqn: Cosymbol iso}), we consider its kernel complex - the so-called \emph{$\delta$-Spencer complex of $E$.} We describe it now, and explain its usefulness for studying symbolic systems in the formal theory of PDEs.
\subsection{$\delta$-Spencer sequences}
Let $\mathcal{E}$ be a finitely generated locally free sheaf of $\mathcal{O}_X$-modules and consider the \emph{$\delta$-Spencer complex} (of order $k$), the polynomial de Rham complex with coefficients in $\mathcal{E}$,
\begin{equation}
\label{eqn: Spencer delta}
0\rightarrow Sym^k(\Omega_X^1)\otimes_{\mathcal{O}_X}\mathcal{E}\xrightarrow{\delta}\cdots \xrightarrow{\delta}\wedge^{\ell}\Omega_X^1\otimes_{\mathcal{O}_X}Sym^{k-\ell}(\Omega_X^1)\otimes_{\mathcal{O}_X}\mathcal{E}\rightarrow\cdots,
\end{equation}
where $Sym^i(\Omega_X^1)=\emptyset,i<0.$ The differential at the $s$-th term,
$$\delta:\Omega_X^s\otimes Sym^{k-s}\Omega_X^1\otimes\mathcal{E}\rightarrow \Omega_X^{s+1}\otimes Sym^{k-s-1}\Omega_X^1\otimes\mathcal{E},$$
is defined by $\delta(\omega\otimes U\otimes e):=(-1)^{s}\omega\wedge i(U)\otimes e,$ for $\omega\in \Omega_X^s,U\in Sym^{k-s}(\Omega_X^1)$ and $e\in \mathcal{E},$ where we used the inclusion
$$i:Sym^{k-s}\Omega_X^1\hookrightarrow \Omega_X^1\otimes Sym^{k-s-1}(\Omega_X^1).$$

Applied to $\D$-schemes, we consider a $k$-th order PDE $Z\subset J_X^k(\mathcal{E})$ defined by an operator $P\in F_{ord}^k\mathcal{D}_X(\mathcal{E},\mathcal{F})$ (equiv. as an $\mathcal{O}_X$-module homomorphism $F_P:J_X^k(\mathcal{E})\rightarrow \mathcal{F}$). It is prolonged to $\psi_{\ell}(P):J_X^{k+\ell}(\mathcal{E})\rightarrow J_X^{\ell}(\mathcal{F})$ for each $\ell\geq 0$ in the standard manner \cite{KV}.

Thus there is a family
\begin{equation}
\label{eqn: Symbol family}
\sigma_{\ell,k}(F_P):Sym_{\mathcal{O}_X}^{k+\ell}(\Omega_X^1)\otimes_{\mathcal{O}_X}\mathcal{E}\rightarrow Sym_{\mathcal{O}_X}^{\ell}(\Omega_X^1)\otimes \mathcal{F},
\end{equation}
of $\mathcal{O}_X$-module homomorphism. Each map $\sigma_{\ell,k}(F_P)$ is the $\ell$-th prolongation of the symbol $\sigma_k(F_P).$
Associated with (\ref{eqn: Symbol family}) are the family of so-called $\mathcal{O}_X$-coherent \emph{(co)symbolic} submodules\footnote{More commonly denoted by $\mathfrak{g}=(\mathfrak{g}_k)_k$.}, 
\begin{equation}
\label{eqn: Symbol}
\big\{\mathcal{N}^{k+\ell}:=\mathrm{ker}\big(\sigma_{\ell,k}(F_P)\big)\subset Sym_{\mathcal{O}_X}^{k+\ell}\Omega_X^1\otimes\mathcal{E}\big\}_{\ell\geq 0}.
\end{equation}
Denote by $\mathrm{Pr}_{r}Z\subset J_X^{r+k}(\mathcal{E})$ the $r$-th prolongation of $Z$ similarly for the symbols. Formally, they are given by
$\mathrm{Pr}_1(\mathcal{N}):=(T^*X\otimes \mathcal{N})\cap \mathrm{Sym}^{k+1}T^*X\otimes E,$
and more generally,
$$\mathrm{Pr}_{r+1}(\mathcal{N})=\mathrm{Pr}_1\big(\otimes_{i=1}^rT^*X\otimes \mathcal{N}_k)\cap (\mathrm{Sym}^{k+r}T^*X\otimes E).$$

Spencer differentials restrict to the equation sub-variety and its prolongations 
$$D|_{\mathrm{Pr}_{r}Z}:\mathrm{Pr}_rZ\otimes \Omega_X^k\rightarrow \mathrm{Pr}_{r-1}Z\otimes \Omega_X^{k+1},$$ for $0\leq k\leq \mathrm{dim} X,$ and descend to a differential on symbols (\ref{eqn: Symbol}).

Therefore, given a geometric symbol $(\mathcal{N}^{(r)})_{r\in \mathbb{N}}$ associated with an algebraic PDE $Z=\{Z_k\subset J^kE\},$ of (max) order $\leq k$, then setting $\mathcal{N}^{(i)}=0,i<0$ and $\mathcal{N}^{(i)}:=\mathrm{Sym}^i(T^*X)\otimes T_{E/X}$ for $0\leq i\leq k,$ one has the restrictions of (\ref{eqn: Spencer delta}), given by 
\begin{equation}
\label{eqn: Spencer delta symbol}
0\rightarrow \mathcal{N}^{(k+r)}\rightarrow T^*X\otimes \mathcal{N}^{(k+r-1)}\rightarrow\cdots\rightarrow \wedge^{dim_X}T^*X\otimes \mathcal{N}^{(k+r-dim_X)}\rightarrow 0,
\end{equation}
whose cohomology at the term $\mathcal{N}^{k+\ell-i}\otimes\Omega_X^i,$ is denoted $\mathcal{H}^{k+\ell,i}.$
\begin{rmk}
\label{SpRmk}
    Spencer cohomologies encode important information about the system e.g. $\mathcal{H}^{*,1}(Z)$ count the number of generators of the (dual) symbol module, $\mathcal{H}^{p,1}(Z)$ count the number of equations of order $p$, and $\mathcal{H}^{*,2}(Z)$ counts compatibility conditions etc. The Euler-characteristic of $Z$ is
    $\chi(Z):=\sum_{i=0}^{n}(-1)^idim\big(\mathcal{H}^i(Z)\big).$
    Finite-dimensionality of $\mathcal{H}^*(Z)$ can be guaranteed, for instance, if the system $Z$ is elliptic $X$ is compact. 
\end{rmk}

 The situation is summarize by the commuting diagram,
\begin{equation}
\label{eqn: Diagram}
\begin{tikzcd}
& 0\arrow[d] & 0\arrow[d] & & 0\arrow[d] 
\\
0\arrow[r] & \mathcal{N}^{(r)} \arrow[d] \arrow[r]& \mathcal{N}^{(r-1)}\otimes\Omega_X^1\arrow[d]\arrow[r] & \cdots \arrow[r]& \arrow[d]\mathcal{N}^{(r-n)}\otimes\Omega_X^n\rightarrow 0
\\
0\arrow[r] & \mathrm{Pr}_rZ\arrow[d]\arrow[r] & \mathrm{Pr}_{r-1}Z\otimes\Omega_X^1\arrow[d]\arrow[r] & \cdots\arrow[r] & \mathrm{Pr}_{r-n}Z\otimes\Omega_X^n \arrow[d] \rightarrow 0
\\
0\arrow[r] & \mathrm{Pr}_{r-1}Z \arrow[d]\arrow[r]& \mathrm{Pr}_{r-2}Z\otimes\Omega_X^1\arrow[d]\arrow[r]& \cdots \arrow[r] & \mathrm{Pr}_{r-n-1}Z\otimes \Omega_X^n\rightarrow 0 \arrow[d]   
\\
& 0 & 0&  & 0
\end{tikzcd}
\end{equation}

Sub-complex (\ref{eqn: Spencer delta symbol}) is called the $\delta$-Spencer complex of the operator $F_P,$ and whose cohomologies are called $\delta$-Spencer cohomology of the operator.
\begin{defn}
    \label{defn: Involutive operator}
    \normalfont 
    The operator $F_P$ of order $\leq k$ is said to be:
    \begin{enumerate}
        \item \emph{Involutive}, if $H^{k+\ell,i}(F_P)=0,\forall i\geq 0;$

        \item \emph{Formally integrable}, if for each $\ell\geq 0,$ the spaces $F^{\ell}\EQ_{F_P}:=\mathrm{ker}(\psi_{\ell}(F_P))\subset J^{k+\ell}(\mathcal{E})$ and $\mathcal{N}^{k+\ell}$ are projective modules and whose induced maps 
        $\mathrm{ker}(\psi_{\ell}(F_P))\rightarrow \mathrm{ker}(\psi_{\ell-1}(F
_P))$ are surjective.
\end{enumerate}
\end{defn}
Involutivity of the operator is not necessary for the formal exactness of the compatibility complex. This result is known as the (formal) $\delta$-Poincare lemma \cite{KLV}. Namely, the columns of (\ref{eqn: Diagram}) are exact and there exists some $r_0$ depending on $n:=\mathrm{dim}(X)$ the rank of $E$ and $k$, such that for $r\geq r_0+n,$ the Spencer cohomologies stabilize.
\begin{prop}
\label{prop: Delta-Poincare}
If the underlying algebra over which our modules on which the operator $F_P$ is acting is Noetherian, then for any operator $F_P$ one can find an integer $m_0$ depending only on the $m:=\mathrm{rank}(\mathcal{E}),$ the base space dimension and the order of the operator such that $H^{k+\ell,q}(F_p)=0,$ for every $\ell \geq m_0$ and $q\geq 0.$
\end{prop}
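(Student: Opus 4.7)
The strategy is to translate the statement into a standard finiteness result for Koszul (equivalently Tor) modules over a Noetherian polynomial ring. First I would package the individual symbolic spaces into the single graded $\mathrm{Sym}(\Omega_X^1)$-comodule $\mathcal{N}_{\bullet}:=\bigoplus_{\ell\geq 0}\mathcal{N}^{(k+\ell)}$ and, via the isomorphism (\ref{eqn: Cosymbol iso}) and $\mathcal{O}_X$-duality, convert it into a graded module $M:=\mathcal{N}_{\bullet}^{\vee}$ over the polynomial algebra $R:=\mathrm{Sym}(T_X)$. Because $\mathcal{E}$ has finite rank $m$, the operator has order $k$, and the underlying algebra is Noetherian by hypothesis, $M$ is a finitely generated graded $R$-module concentrated in a bounded range of generator degrees.

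Next, I would identify the $\mathcal{O}_X$-dual of the $\delta$-Spencer complex (\ref{eqn: Spencer delta symbol}) in total degree $k+\ell$ with (a piece of) the Koszul complex of $M$ over $R$. Up to conventions, this yields a natural isomorphism
$$H^{k+\ell,q}(F_P)\;\cong\;\mathrm{Tor}^{R}_q(M,k)_{k+\ell},$$
so the statement reduces to showing that $\mathrm{Tor}^{R}_q(M,k)_d$ vanishes in every homological degree $q$ once the internal degree $d$ is sufficiently large.

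For $q>n:=\dim X$ the vanishing is automatic by Hilbert's syzygy theorem, since $R$ is a polynomial ring in $n$ variables and hence of global dimension $n$. For the finite range $0\leq q\leq n$ I would invoke Castelnuovo--Mumford regularity: since $M$ is finitely generated over the Noetherian ring $R$, its regularity $\mathrm{reg}(M)$ is finite, and the standard characterization via graded Betti numbers gives $\mathrm{Tor}^{R}_q(M,k)_d=0$ whenever $d-q>\mathrm{reg}(M)$. Choosing $m_0$ so that $m_0+k>\mathrm{reg}(M)+n$ then kills every Spencer class at prolongation levels $\ell\geq m_0$ and every cohomological degree $q\geq 0$, as required.

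The main obstacle, and what makes the $(n,m,k)$-dependence nontrivial, is producing an \emph{a priori} estimate on $\mathrm{reg}(M)$ in terms of those three integers alone, without further input on $F_P$. The key observation here is that, by construction, $M$ is a graded sub-quotient of the free module $\mathrm{Sym}(T_X)\otimes\mathcal{E}^{\vee}$ whose generators lie in degrees at most $k$; therefore both the number of generators and the degrees of the syzygies appearing in any minimal free resolution are bounded by universal functions of $(n,m,k)$. Once such a bound on $\mathrm{reg}(M)$ is in place, the resulting $m_0$ depends only on the numerical data $(n,m,k)$, completing the argument.
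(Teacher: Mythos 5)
Your argument is correct in outline, but note that the paper does not actually prove Proposition \ref{prop: Delta-Poincare}: it is stated as the known formal $\delta$-Poincar\'e lemma with a pointer to \cite{KLV}, and the effective dependence of $m_0$ on $(n,m,k)$ is only supplied later, via Sweeney's recursion in Appendix \ref{Appendix Sweeney} (Proposition \ref{prop: Sweeney Bound}, following \cite{Sw}). Your route --- dualize the $\delta$-complex (\ref{eqn: Spencer delta symbol}) to the Koszul complex of the symbol module $M$ over $R=\mathrm{Sym}(T_X)$, identify $H^{k+\ell,q}(F_P)$ with a graded piece of $\mathrm{Tor}^R_q(M,\mathbb{C})$, kill $q>n$ by Hilbert's syzygy theorem, and kill the remaining finite range by finiteness of Castelnuovo--Mumford regularity --- is exactly the ``Cartan involutiveness $=$ Mumford regularity'' dictionary of \cite{Ma3} that the paper itself deploys in Proposition \ref{prop: Unifying} and in the proof of Proposition \ref{prop: Boundedness 1}, so in substance you are reconstructing the intended argument rather than diverging from it.

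The one place where your write-up asserts more than it proves is the uniform bound $\mathrm{reg}(M)\leq\rho(n,m,k)$. Observing that $M$ is a graded subquotient of $\mathrm{Sym}(T_X)\otimes\mathcal{E}^{\vee}$ with generators in degrees at most $k$ does not by itself bound the degrees of the syzygies in a minimal free resolution --- that conclusion \emph{is} the regularity bound you are trying to establish, so the ``therefore'' in your final paragraph is circular as written. What is needed is an effective Hermann--Seidenberg-type estimate: a submodule of $R^{m}$ generated by at most $m\cdot T_k^n$ homogeneous elements of degree $k$ (the fibre dimension of the space of order-$k$ symbols) has regularity bounded by a universal, in general doubly exponential, function of $(n,m,k)$ alone. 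This is precisely the content of Sweeney's overestimate \cite{Sw} recorded in Proposition \ref{prop: Sweeney Bound}; citing it (or the Hermann/Galligo bounds) closes the gap, and with that reference in place your proof is complete. Two cosmetic points: the identification with Tor should carry a dual, $H^{k+\ell,q}(F_P)^{*}\cong\mathrm{Tor}^R_q(M,\mathbb{C})_{k+\ell}$, consistent with the duality $H_{\delta}^{p,q}(\mathcal{N})^{*}\simeq H_{p,q}(\mathcal{N}^{*})$ invoked in the proof of Proposition \ref{prop: Boundedness 1}; and you should avoid using the symbol $k$ simultaneously for the order of the operator and for the residue field.
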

Given a $\D$-scheme $Z$ corresponding to a relatively-algebraic PDE defined by a non-linear operator, we introduce a certain resolution (the horizontal compatibility complex) associated to its universal linearization operator $\ell_{Z}$ and thus its characteristic $\D$-module (\ref{eqn: Char Module of A}). For this, we must adapt the construction of these complexes to the category of $\mathcal{O}(Z)\otimes_{\mathcal{O}_X}\D_X$-modules.

In this case, the characteristic and symbolic modules e.g. as in Proposition \ref{prop: Omega and Ch isom} are canonically identified with an associated graded sheaf determined by $\ell_{Z}.$

\begin{prop}
\label{prop: Redux}
 Consider a PDE given by the sequence $Z_k\subset J_X^k(E)$, and set $\A_X,\mathcal{B}_X$ as above, with usual filtrations $F^k\A,F^k\mathcal{B}.$
Canonical jet-projection maps restrict to $Z_{k+1}\rightarrow Z_k$ and dually read as $F^k\mathcal{B}_X\hookrightarrow F^{k+1}\mathcal{B}_X.$
Then, for each $i\geq 0$, there exists an $F^i\mathcal{B}_X$-module $\mathrm{Gr}_i^{F}(\M_{\mathsf{F}})$ which is coherent as an $\mathcal{O}_X$-module. Moreover, its support is contained in $Z_i$ and the stalk of this sheaf at $z_i\in Z_i$ is simply
$$(\mathrm{Gr}_i^{F}(\M_{\mathsf{F}}))_{z_i}\simeq ker(Tp_{i-1}^i|_{Z_i}:TZ_i\rightarrow TZ_{i-1})^*.$$
\end{prop}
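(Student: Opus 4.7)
The strategy is to realize $\mathrm{Gr}_i^F(\M_{\mathsf{F}})$ as the graded quotient associated with the standard filtration of the characteristic module of $Z$, and then transport the algebraic description (as relative Kähler differentials of $Z_i/Z_{i-1}$) to the tangential description via Proposition \ref{prop: Omega and Ch isom}.

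First I would fix the filtration on $\M_{\mathsf{F}}$: the module arising from the universal linearization $\ell_Z$ and the ensuing horizontal compatibility complex carries a canonical filtration $F^i\M_{\mathsf{F}}$ inherited from the jet-order filtration on $\mathcal{B}_X = \A_X/\I_X$, namely by declaring $F^i\M_{\mathsf{F}}$ to be the piece living over $F^i\mathrm{Spec}_{\D}(\mathcal{B}_X) \simeq Z_i$. Setting $\mathrm{Gr}_i^F(\M_{\mathsf{F}}) := F^i\M_{\mathsf{F}}/F^{i-1}\M_{\mathsf{F}}$ then produces an $F^i\mathcal{B}_X$-module on which the image of $F^{i-1}\mathcal{B}_X$ acts trivially; consequently its annihilator contains the ideal defining $Z_{i-1}$ inside $Z_i$, and its support is contained in $Z_i$.

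Coherence over $\mathcal{O}_X$ follows from the fact that each $F^i\mathcal{B}_X = \mathcal{O}(Z_i)$ is $\mathcal{O}_X$-coherent as the coordinate ring of a finite-order jet locus, and the filtered pieces $F^i\M_{\mathsf{F}}$ are $\mathcal{O}_X$-coherent by construction. Appealing to Proposition \ref{prop: Omega and Ch isom}, on the Zariski-open dense locus where formal integrability makes $\pi^i_{i-1}:Z_i \to Z_{i-1}$ smooth and surjective, one obtains a chain of canonical identifications
$$\mathrm{Gr}_i^F(\M_{\mathsf{F}}) \simeq \mathcal{C}h_i \simeq \Omega^1_{Z_i/Z_{i-1}},$$
where the first isomorphism is the identification of the graded of the characteristic module with $\mathcal{C}h_i$ from (\ref{eqn: ChKahler1}) and Definition \ref{defn: Strict Char modules}, and the second is Proposition \ref{prop: Omega and Ch isom}. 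Passing to the stalk at $z_i \in Z_i$, the relative cotangent sheaf restricts to the cotangent space of the scheme-theoretic fiber of $p^i_{i-1}|_{Z_i}$ through $z_i$, i.e. to the dual of $\ker\bigl(T_{z_i}p^i_{i-1}|_{Z_i}: T_{z_i}Z_i \to T_{p(z_i)}Z_{i-1}\bigr)$, which is precisely the claimed formula.

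The principal obstacle is that Proposition \ref{prop: Omega and Ch isom} only yields the identification $\Omega^1_{Z_{m+1}/Z_m}\simeq \mathcal{C}h_{m+1}$ after shrinking to a Zariski-open dense subset where $\pi^{m+1}_m$ is smooth, so the stalk description must be read generically and failures of smoothness at lower-dimensional loci are absorbed into the support statement. For $\D$-ideals satisfying Spencer regularity (Definition \ref{defn: D-geom m-involutive}), this restriction is automatic for all $i$ beyond the regularity degree, and the identification is clean throughout that range.
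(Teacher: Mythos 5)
Your proposal lands on the right endpoint (graded pieces $\simeq$ relative Kähler differentials $\Omega^1_{Z_i/Z_{i-1}}$, whose stalks are the duals of $\ker(Tp^i_{i-1}|_{Z_i})$), but it has a genuine gap: you never construct the module $\M_{\mathsf{F}}$ or its filtration, and that construction is the actual content of the proposition. Declaring $F^i\M_{\mathsf{F}}$ to be ``the piece living over $F^i\mathrm{Spec}_{\D}(\mathcal{B}_X)$'' presupposes a presentation of $\M_{\mathsf{F}}$ that you have not supplied, and your subsequent identification $\mathrm{Gr}_i^F(\M_{\mathsf{F}})\simeq\mathcal{C}h_i$ is then an assertion rather than a deduction --- in effect you substitute the characteristic module for $\M_{\mathsf{F}}$ by fiat. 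The paper's proof (Appendix \ref{proof of redux}) does the missing work: it packages the components $F_1,\dots,F_N$ into a bundle map $\mathsf{F}^{\infty}:J_X^{\infty}E\to J_X^{\infty}(V)$, forms the right-exact sequence $\mathrm{diff}(V,X\times\mathbb{C})\to\mathrm{diff}(E,X\times\mathbb{C})\to\mathcal{B}_X\to 0$, and then defines $\M_{\mathsf{F}}$ as the cokernel of the linearization $lin_{\mathsf{F}}$ acting on the filtered modules $\mathrm{Fil}^i\mathcal{B}_X[\D_X](V,\mathcal{O})\to\mathrm{Fil}^{i+k}\mathcal{B}_X[\D_X](E,\mathcal{O})$, so that $F^i\M_{\mathsf{F}}\simeq\mathrm{Fil}^i\mathcal{B}_X[\D_X](E,\mathcal{O})/F^i\mathcal{B}_X\cdot\mathrm{Im}(lin^i_{\mathsf{F}})$. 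It is this explicit finite presentation over $F^i\mathcal{B}_X$ that delivers the $F^i\mathcal{B}_X$-module structure, the $\mathcal{O}_X$-coherence of the graded pieces, and the identification of the stalks with the conormal data of $p^i_{i-1}$; none of these can be checked from your description alone.

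A secondary slip: your support argument via ``the image of $F^{i-1}\mathcal{B}_X$ acts trivially, so the annihilator contains the ideal defining $Z_{i-1}$ inside $Z_i$'' is not right --- $F^{i-1}\mathcal{B}_X$ is a subalgebra of $F^i\mathcal{B}_X$ (functions of lower jet order), not an ideal, so this reasoning does not parse; if it did, it would place the support in $Z_{i-1}$ rather than $Z_i$. The correct (and simpler) reason is just that $\mathrm{Gr}_i^F(\M_{\mathsf{F}})$ is a coherent module over $F^i\mathcal{B}_X=\mathcal{O}(Z_i)$. Your closing remarks about the generic validity of Proposition \ref{prop: Omega and Ch isom} and its automatic applicability under Spencer regularity are accurate and would be a useful supplement to the paper's own (rather terse) treatment of the stalk computation, but they do not repair the missing construction.
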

\begin{proof}
See Appendix \ref{proof of redux}.
\end{proof}

We now briefly mention the extension of Spencer cohomology to treat the operator of universal linearization associated to a $\D$-scheme $Z.$ 
\\

\noindent\textbf{Spencer sequences for $\AD$-modules.}
There exists a horizontal analogue of the Goldschmidt-Spencer formal theory of linear
differential equations \cite{G,KV}. Importantly, if $Z^{\infty}$ is a formally integrable PDE, universal linearization defines a formally integrable horizontal PDE \cite{Ts}.

Consider a $k$-th order operator in total derivatives, $P\in F^k\mathcal{A}[\D](P,P_2).$ There exists a unique $\A$-homomorphism $\varphi_{\mathsf{P}}:\overline{J}^k(P)\rightarrow P_1.$ The $r$-th prolongation is defined
$$\mathrm{Pr}_{r}(\mathsf{P}):=\overline{j}_r\circ \mathsf{P}\in F^{k+r}\mathcal{A}[\D](P,\overline{J}^{r}(P_1)),$$
and the corresponding $\A$-homomorphism is $\varphi_{\mathsf{P}}^{(r)}:=\varphi_{\mathrm{Pr}_r(\mathsf{P})}:\overline{J}^{k+r}(P)\rightarrow \overline{J}^{r}(P_1).$
Continuing in this way, we obtain
\begin{equation}
    \label{eqn: InfiniteAhom}
    \varphi_{\mathsf{P}}^{(\infty)}:=\varphi_{\mathrm{Pr}_{\infty}(\mathsf{P})}:\overline{J}^{\infty}(P)\rightarrow \overline{J}^{\infty}(P_1).
\end{equation}
Morphism (\ref{eqn: InfiniteAhom}) is naturally interpreted as an infinite prolongation of differential operator in total derivatives.

There is a natural way to lift $\D$-linear constructions on the tautological $\D$-scheme $X,$ to the universal $\D$-scheme $J_X^{\infty}(E)$ associated with a bundle. In particular, a section $P\in\Gamma(U,\mathcal{D}_X),$ has a lift $\widehat{P}\in \Gamma(U,\mathcal{A}[\mathcal{D}_X]),$ obtained in coordinates, roughly speaking, by replacing partial derivatives by total derivatives.

\begin{prop}
    \label{prop: Lifts}
Consider two locally free sheaves of $\mathcal{O}_X$-modules $\mathcal{E},\mathcal{F}.$ There is an isomorphism
$\gamma:\mathcal{A}_X^{\ell}\otimes_{\mathcal{O}_X}\mathcal{D}_X(\mathcal{E},\mathcal{F})\simeq \mathcal{A}^{\ell}[\mathcal{D}](p_{\infty}^*\mathcal{E},p_{\infty}^*\mathcal{F}),$
defined by 
$(f\otimes\mathsf{Q})(e):=f\cdot \big(\mathsf{Q}(e)\circ p_{\infty}\big),e\in \mathcal{E}.$
\end{prop}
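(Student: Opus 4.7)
The plan is to establish the isomorphism by (i) verifying that $\gamma$ is well-defined, (ii) reducing to a local model via the freeness hypothesis, and (iii) constructing an inverse compatible with the order filtrations. For well-definedness, one first observes that the prescription $(f,\mathsf{Q})\mapsto\bigl(e\mapsto f\cdot(\mathsf{Q}(e)\circ p_{\infty})\bigr)$ is $\mathcal{O}_X$-bilinear, and so descends to the tensor product $\mathcal{A}_X\otimes_{\mathcal{O}_X}\mathcal{D}_X(\mathcal{E},\mathcal{F})$. Then one checks that $\gamma(f\otimes\mathsf{Q})$, extended compatibly with the Cartan connection on $\mathcal{A}$-sections of the pullback, lies in $\mathcal{A}^\ell[\mathcal{D}](p_\infty^*\mathcal{E},p_\infty^*\mathcal{F})$: locally, if $\mathsf{Q}=\sum_{|\sigma|\leq k}a_\sigma\partial^\sigma$ then $\gamma(f\otimes\mathsf{Q})=\sum_{|\sigma|\leq k}(p_\infty^*a_\sigma)\,f\,D^\sigma$, where $D^\sigma$ denotes the composition of total derivatives. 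This is exactly the classical lift substituting partial derivatives by total derivatives.

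I would next use the local freeness of $\mathcal{E}$ and $\mathcal{F}$ to pass to an open $U\subset X$ with $\mathcal{E}|_U\simeq\mathcal{O}_U^{\oplus r}$ and $\mathcal{F}|_U\simeq\mathcal{O}_U^{\oplus s}$. Since both sides are sheaves and the construction is additive in each entry, the statement reduces to the scalar case $\mathcal{E}=\mathcal{F}=\mathcal{O}_X$. Here the left-hand side is by definition $\mathcal{A}\otimes_{\mathcal{O}_X}\mathcal{D}_X=\mathcal{A}[\mathcal{D}]$, while the right-hand side is the sheaf of $\mathcal{A}$-linear operators on $p_\infty^*\mathcal{O}_X=\mathcal{A}$ generated by the total derivatives. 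The assignment $f\otimes\partial^\sigma\mapsto f\cdot D^\sigma$ is the standard horizontal lift \emph{(cf.\ \cite{KV})}, and it is bijective because the $D_i$ form a commuting family of derivations on $\mathcal{A}$ which project under the Cartan distribution to $\partial_i$, and so yield an isomorphism on associated graded algebras with respect to the order filtrations.

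The inverse is constructed by induction on the order in total derivatives: given an operator $\widehat{\mathsf{P}}$ of order $k$, its principal symbol yields the leading-order coefficients in the decomposition $\widehat{\mathsf{P}}=\sum(p_\infty^*a_\sigma)D^\sigma$; subtracting the top-order term and inducting on $k$ produces $\gamma^{-1}(\widehat{\mathsf{P}})=\sum a_\sigma\otimes\partial^\sigma$. The assembly is canonical because the filtration by order on $\mathcal{A}[\mathcal{D}]$ is intrinsic, so the local inverses glue to a global one. The main technical obstacle I expect is matching the two order filtrations, namely that the order of an operator in total derivatives coincides with the order of its preimage in $\mathcal{A}\otimes_{\mathcal{O}_X}\mathcal{D}_X$. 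This ultimately reduces to the flatness of the Cartan distribution on $J_X^{\infty}E$ together with the structural identities $[D_i,D_j]=0$ and $p_\infty^*(\partial_i f)=D_i(p_\infty^*f)$, which ensure that the substitution $\partial^\sigma\leftrightarrow D^\sigma$ induces an isomorphism of associated graded objects and, by a standard filtered-to-graded argument, of the filtered sheaves themselves.
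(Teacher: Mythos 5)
Your proposal is correct and follows essentially the route the paper has in mind: the paper declares the statement well-known and only records the defining property of the lift, namely $(j_{\infty}\varphi)^*\big(\hat{\mathsf{Q}}s_{\infty}\big)=\mathsf{Q}\big((j_{\infty}\varphi)^*s_{\infty}\big)$, which is exactly the coordinate-free formulation of your substitution $\partial^{\sigma}\leftrightarrow D^{\sigma}$. Your local reduction via freeness, the identification on associated graded objects for the order filtrations, and the inductive construction of the inverse via the principal symbol supply precisely the standard details the paper omits, so no correction is needed.
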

This fact is well-known to experts, but perhaps not in this context. We only comment on the isomorphism. Namely, 
$$\gamma:\big(f(x,u_{\sigma}^{\alpha})\otimes\mathsf{Q}\big)\mapsto (f(x,u_{\sigma}^{\alpha})\cdot_{\mathcal{A}} \hat{\mathcal{Q}}),$$ where the corresponding lift $\mathsf{Q}\in \mathcal{D}_X(\mathcal{E},\mathcal{F})$ is the operator
$\hat{\mathsf{Q}}:p_{\infty}^*(\mathcal{E})\rightarrow p_{\infty}^*(\mathcal{F}),$ defined by $(j_{\infty}\varphi)^*\big(\hat{\mathsf{Q}}s_{\infty}\big)=\mathsf{Q}\big((j_{\infty}\varphi)^*s_{\infty}\big),$
where $s_{\infty}\in \mathrm{Sect}(J_X^{\infty}E,p_{\infty}^*\mathcal{E})$ and $\varphi\in \mathrm{Sect}(X,E).$
Proposition \ref{prop: Lifts} induces a subcategory $\mathrm{Mod}_{ind}(\A[\D])$ of $\mathrm{Mod}(\A[\D])$, spanned by lifted/induced sheaves. An $\A[\D]$-module is \emph{induced} if isomorphic to $p_{\infty}^*F\otimes \mathcal{A}[\D]$ for a coherent $\mathcal{O}_X$-module $F$. 

For induced modules (\ref{eqn: InfiniteAhom}) is a morphism of vector $\D$-schemes over $\mathrm{Spec}_{\mathcal{D}}(\A)$, where we recall a vector $\mathcal{A}[\D]$-bundle is a locally finitely generated $\mathcal{A}[\D]$-module of finite $\mathcal{A}[\D]$-rank \cite{KSY2}.
\begin{defn}
\normalfont
The \emph{$\mathcal{A}[\D]$-rank} of an induced $\mathcal{A}[\D]$-module $\M\simeq ind_{\mathcal{A}[\D]}(\mathcal{E})$, where $\mathcal{E}\in \mathrm{QCAlg}(X),$ is the ordinary rank of the underlying sheaf 
$rank_{\mathcal{A}[\D]}(\M):=rank(\mathcal{E}).$
\end{defn}
For example, if $E\rightarrow X$ is a vector bundle of rank $m$, considering $\Omega_{\A}^1,$ 
then $rank_{\mathcal{A}[\D]}(\Omega_{\A}^1)=m.$




\subsubsection{The horizontal Spencer-Jet sequence}
Fix a global section $s\in H^0(X,E)$ and let $s^*:H_{dR}(X)\rightarrow H_{dR}(E),$ denote the associated monomorphism. A canonical choice is the zero-section and by composition with pullback along the projection $p_{\infty}$, we obtain a monomorphism into the de Rham cohomology of the $\mathcal{D}_X$-scheme $Z^{\infty}:=\{Z_k\}\rightarrow X.$ 

Horizontal cohomologies of $Z^{\infty}$ are given by the image of the de Rham complex of the base via $p_{\infty}^*,$ thus we write $p_{\infty}^*\Omega_X^{\bullet}$ to indicate the horizontal de Rham algebra of $Z$.

Let $\M\in \mathrm{Mod}(\mathcal{A}[\D])$ and consider the diagram:
\begin{equation}
\label{eqn: Diagram}
\adjustbox{scale=.82}{
\begin{tikzcd}
& 0\arrow[d] & 0\arrow[d] & 0\arrow[d] 
\\
0\arrow[r] & \Sym^k(p_{\infty}^*\Omega_X^1)\otimes_{\A}\M \arrow[d] \arrow[r]& \overline{\mathcal{J}}^k(\M)\arrow[d]\arrow[r] & \overline{\mathcal{J}}^{k-1}(\M)\arrow[d]\rightarrow 0
\\
0\arrow[r] & p_{\infty}^*\Omega_X^1\otimes_{\A}\Sym^{k-1}(p_{\infty}^*\Omega_X^1)\otimes_{\A}\M\arrow[d]\arrow[r] & p_{\infty}^*\Omega_X^1\otimes_{\A}\overline{\mathcal{J}}^{k-1}(\M)\arrow[d]\arrow[r] & p_{\infty}^*\Omega_X^1\otimes_{\A}\overline{\mathcal{J}}^{k-2}(\M)\arrow[d] \rightarrow 0
\\
& \vdots \arrow[d] & \vdots \arrow[d] & \vdots \arrow[d]
\\
0\arrow[r]& \omega_{\A}\otimes_{\A}\Sym^{k-d_X}(p_{\infty}^*\Omega_X^1)\otimes_{\A}\M\arrow[r] & \omega_{\A}\otimes_{\A}\overline{\mathcal{J}}^{k-d_X}(\M)\arrow[r] & \cdots
\end{tikzcd}}
\end{equation}
Diagram (\ref{eqn: Diagram}) gives a 
short-exact sequence of dg-$\mathcal{A}[\D]$-modules
\begin{equation}
\label{eqn: SES SpJet}
0\rightarrow \mathsf{Sp}_{\D}(\M)^{\bullet}\rightarrow \mathrm{DR}_{\D}\big(\A;\overline{\mathcal{J}}^{k-\bullet}(\M)\big)^{\bullet}\rightarrow \mathrm{DR}_{\mathcal{D}}(\A;\overline{\mathcal{J}}^{k-1-\bullet}(\M)\big)^{\bullet}\rightarrow 0.
\end{equation}
Namely, the horizontal $\delta$-Spencer complex in the left-most column is the horizontal \emph{co-symbol} complex (of order $k$).
Explicitly, $\mathsf{Sp}_{\D}^r(\M)^{\bullet}$ is
\begin{eqnarray}
    \label{eqn: k-horizontal delta jet}
0 &\rightarrow& \Sym^r(p_{\infty}^*\Omega_X^1)\otimes_{\A}\M\xrightarrow{\overline{\delta}}\Sym^{r-1}(p_{\infty}^*\Omega_X^1)\otimes_{\A}p_{\infty}^*\Omega_X^1\otimes_{\A}\M \rightarrow \nonumber
\\
&\rightarrow& \Sym^{r-2}(p_{\infty}^*\Omega_X^1)\otimes_{\A}\wedge^2p_{\infty}^*\Omega_X^1\otimes_{\A}\M\rightarrow\cdots\rightarrow \nonumber
\\
&\rightarrow& \Sym^{r-d_X}(p_{\infty}^*\Omega_X^1)\otimes_{\A}\wedge^{d_X}p_{\infty}^*\Omega_X^1\otimes_{\A}\M\rightarrow 0.
\end{eqnarray}

Involutivity for $\D_{\A}$-morphisms e.g. horizontal differential operators, is defined as in Definition \ref{defn: Involutive operator}. In this case, replace the sheaves $\Omega_X^i$ with $p_{\infty}^*\Omega_X^i,i=1,\ldots, dimX$ and geometric symbols (\ref{eqn: Symbol}) with $\overline{\mathcal{N}},$ defined as follows. 
Start with some horizontal operator $P\in \mathcal{A}[\D](\M_0,\M_1),$ of order $\leq k$. There is a canonical way to construct (via induction) a compatibility complex for it. Details are given in Appendix \ref{sec: Compatibility complex}.
Then $\overline{\mathcal{N}}^{k+\ell}(P):=Ker\big(\sigma(P)),$ and  
$$\sigma(P):\Sym^{k+\ell}(p_{\infty}^*\Omega_X^1)\otimes_{\A}\M_0\rightarrow \Sym^{\ell}(p_{\infty}^*\Omega_X^1)\otimes_\A \M_1.$$
Repeating the above construction of geometric symbols (\ref{eqn: Symbol}) gives a collection of finitely-presented $\mathcal{A}[\D]$-submodules,
$\overline{\mathcal{N}}^{k+\ell}(P)\hookrightarrow \Sym^{k+\ell}(p_{\infty}^*\Omega_X^1)\otimes_{\A}\M_0.$
There exists sub-complexes, obtained from (\ref{eqn: k-horizontal delta jet}) via restriction:
\begin{equation}
    \label{eqn: AD-Spencer}
    0\rightarrow \overline{\mathcal{N}}^{k+\ell}(P)\xrightarrow{\overline{\delta}}p_{\infty}^*\Omega_X^1\otimes_{\A}\overline{\mathcal{N}}^{k+\ell-1}(P)\xrightarrow{\overline{\delta}}p_{\infty}^*\Omega_X^2\otimes_{\A}\overline{\mathcal{N}}^{k+\ell-2}(P)\rightarrow \cdots.
\end{equation}
    The cohomology at the $i$-th term of (\ref{eqn: AD-Spencer}), denoted by 
    $\overline{h}^{k+\ell,i}(P):=H\big(p_{\infty}^*\Omega_X^i\otimes_{\A}\overline{\mathcal{N}}^{k+\ell-i}(P);\overline{\delta}\big),$
    is the \emph{$\mathcal{A}[\D]$-Spencer $\overline{\delta}$-cohomology of $P$}.
Following standard terminology, the $\mathcal{A}[\D]$-operator $P$ is \emph{involutive} if 
\begin{equation}
    \label{eqn: Involutive}
    \overline{h}^{k+\ell,i}(P)=0, i\geq 0.
    \end{equation}
The \emph{degree of involution} for $\mathcal{B}$, (non-linear case) is
$$\mathrm{iDeg}_{\D}(\mathcal{B}):=inf_{q_0\in \mathbb{N}}\big\{\overline{h}^{p,q}(\overline{\mathcal{N}})=0,\forall q\geq q_0,0\leq p\leq dim(X)\}.$$
We say $\overline{\mathcal{N}}$ is $p_0$-acyclic at $q_0$ for $0\leq p_0\leq dimX$ if
$$\overline{h}^{p,q}(\mathcal{N})=0,\forall q\geq q_0,0\leq p\leq p_0.$$
The $\overline{\delta}$-Poincare lemma (see e.g. \cite{G,Ts,KV}) states for any $P\in \mathrm{Fil}^k\mathcal{A}[\D](\M_0,\M_1),$ there exists $\ell_0\equiv \ell_0\big(rank_{\mathcal{A}[\D]}(\M_0),dimX,ord(P)\big)$ such that $\overline{h}^{k+\ell,i}(P)=0$ for $\ell\geq \ell_0$ for every $i\geq 0.$
In the case when $\M_0,\M_1$ are induced modules with underlying coherent sheaves $\mathcal{E}_0,\mathcal{E}_1,$ of ranks $m_0,m_1,$ for a $k$-th order operator $P$, the integer $\ell_0$ depends only on $\ell_0(m_0,d_X,k).$ 

We give one illustrative example.

\begin{ex}
    Consider a system in $N$-equations of $m$-unknown functions between coherent sheaves $\mathcal{E},\mathcal{F}$ $\mathcal{F}.$ Compatibility complexes for non-linear PDEs are defined for each linearization on a
jet-solution $j_{\infty}(s).$ Suppose $F=(F_i^1,\ldots,F_i^m)$ is a locally complete intersection (as ordinary maps between finite rank bundles). Then,
\begin{eqnarray*}
0&\rightarrow& \mathrm{ker}(TF)\rightarrow H^0(\EQ,\mathcal{M}_1)\rightarrow H^0(\EQ,\mathcal{M}_2)\rightarrow H^0(\EQ,\wedge^{m+1}p_{\infty}^*\mathcal{F})\rightarrow  
\\
&\rightarrow& H^0(\EQ,p_{\infty}^*\mathcal{E}^*\otimes\wedge^{m+2}p_{\infty}^*\mathcal{F})\rightarrow H^0(\EQ,\mathcal{S}ym_{\mathcal{A}}^2(p_{\infty}^*\mathcal{E}^*)\otimes\wedge^{m+3}p_{\infty}^*\mathcal{F})\rightarrow\cdots.
\end{eqnarray*}
\end{ex}

\noindent\textbf{Pure modules.}
Following \cite{K}, consider sheaves of $\mathcal{A}[\mathcal{D}]$-modules $\mathcal{M}$ (codimension $r$), for which the natural morphism 
\begin{equation}
    \label{eqn: r purity for AD}
0\rightarrow \mathcal{M}\hookrightarrow \mathcal{E}xt_{\mathcal{A}[\mathcal{D}]}^r\big(\mathcal{E}xt_{\mathcal{A}[\mathcal{D}]}^r(\mathcal{M},\mathcal{A}[\mathcal{D}]),\mathcal{A}[\mathcal{D}])\big),
\end{equation}
is an injective morphism of left $\mathcal{A}\otimes \mathcal{D}$-modules.
\begin{prop}
Suppose that $\mathcal{M}$ satisfies (\ref{eqn: r purity for AD}) for some commutative $\D$-algebra $\mathcal{A}.$ In particular, suppose $\mathcal{A}$ is $\mathcal{O}_X.$ Then $\mathcal{M}$ is $r$-pure as a $\D$-module.
\end{prop}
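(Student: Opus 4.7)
The plan is to reduce the statement to the classical purity theorem for coherent $\D_X$-modules. When $\A = \mathcal{O}_X$ we have $\AD = \D_X$, and the hypothesis (\ref{eqn: r purity for AD}) reads: the canonical biduality morphism
$$\beta_{\M} : \M \longrightarrow \mathcal{E}xt^r_{\D_X}\!\big(\mathcal{E}xt^r_{\D_X}(\M,\D_X),\D_X\big)$$
is injective. Recall $\M$ is $r$-pure when every nonzero coherent submodule $\mathcal{N} \subseteq \M$ has grade exactly $r$, where $\mathrm{grade}(\mathcal{N}) := \min\{i : \mathcal{E}xt^i_{\D_X}(\mathcal{N},\D_X) \neq 0\}$, which by Sato–Kashiwara–Kawai agrees with $\mathrm{codim}_{T^*X}\mathrm{char}(\mathcal{N})$.

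First, I would invoke the Auslander regularity of $\D_X$ on the smooth variety $X$ (due to Roos, elaborated in Björk's monograph): for any coherent $\D_X$-module $\mathcal{P}$ and any $i \geq 0$, $\mathrm{grade}(\mathcal{E}xt^i_{\D_X}(\mathcal{P},\D_X)) \geq i$. Applying this twice to the right-hand side of $\beta_{\M}$ gives $\mathrm{grade}\big(\mathcal{E}xt^r_{\D_X}(\mathcal{E}xt^r_{\D_X}(\M,\D_X),\D_X)\big) \geq r$. Since $\M$ embeds into this module and since $\mathrm{char}(\mathcal{N}) \subseteq \mathrm{char}(\M)$ for any coherent submodule (so $\mathrm{grade}(\mathcal{N}) \geq \mathrm{grade}(\M)$), we obtain $\mathrm{grade}(\mathcal{N}) \geq r$ for every nonzero $\mathcal{N} \subseteq \M$, matching the standing codimension-$r$ hypothesis.

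The core step is the reverse inequality $\mathrm{grade}(\mathcal{N}) \leq r$. For this I would use the biduality spectral sequence
$$E_2^{p,q} = \mathcal{E}xt^p_{\D_X}\!\big(\mathcal{E}xt^{-q}_{\D_X}(\M,\D_X),\D_X\big) \Longrightarrow \mathcal{H}^{p+q}R\mathcal{H}\mathrm{om}_{\D_X}\!\big(R\mathcal{H}\mathrm{om}_{\D_X}(\M,\D_X),\D_X\big),$$
which abuts to $\M$ concentrated in degree $0$ by coherent biduality. The induced finite filtration on the abutment is, by Auslander regularity, precisely the codimension filtration of $\M$, and the edge morphism $\M \to E_2^{r,-r}$ coincides with $\beta_{\M}$. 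Consequently $\ker(\beta_{\M})$ is identified with the maximal coherent submodule $\Gamma^{>r}(\M)$ of $\M$ of grade strictly greater than $r$. Injectivity (the hypothesis) therefore forces $\Gamma^{>r}(\M) = 0$; combined with the lower bound from the previous paragraph, this yields $\mathrm{grade}(\mathcal{N}) = r$ for every nonzero coherent $\mathcal{N} \subseteq \M$, which is $r$-purity.

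The principal obstacle and subtlety in this plan is the identification of the spectral-sequence filtration with the grade filtration; this is the essential content of Auslander regularity for $\D_X$ and is classical, so I would quote it rather than reprove it. A secondary remark worth flagging is that the analogous conclusion for a general commutative $\D$-algebra $\A$ would require verifying Auslander regularity for the non-commutative sheaf of rings $\AD = \A \otimes_{\mathcal{O}_X}\D_X$ with respect to its natural order filtration, which should hold under $\D$-smoothness of $\A$ (in the sense of Definition \ref{defn: D-Finite-type}) by a filtered-to-graded reduction, but is a separate technical input beyond the $\mathcal{O}_X$-case addressed here.
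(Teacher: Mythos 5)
Your proposal is correct and follows essentially the same route as the paper: both reduce to the classical fact that, for a coherent $\D_X$-module of codimension $r$, the kernel of the biduality morphism into $\mathcal{E}xt_{\D}^r(\mathcal{E}xt_{\D}^r(\M,\D),\D)$ is the maximal submodule of grade $>r$, so injectivity is equivalent to $r$-purity via the codimension (torsion) filtration $\mathcal{T}_\bullet(\M)$. Your write-up is in fact more explicit than the paper's sketch, spelling out the Auslander-regularity and biduality spectral-sequence inputs that the paper only alludes to, and your closing caveat about the general $\A[\D]$ case correctly flags the step the paper leaves unaddressed.
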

\begin{proof}
For the initial commutative $\D$-algebra $\mathcal{A}=\mathcal{O}_X,$ one has the local Verdier duality identifying with the usual derived $\D$-module duality and $r$-purity in this context is the standard one for a left $\D$-module,
$0\rightarrow \mathcal{M}\hookrightarrow \mathcal{E}xt_{\mathcal{D}}^r\big(\mathcal{E}xt_{\mathcal{D}}^r(\mathcal{M},\mathcal{D}),\mathcal{D}\big).$
It is equivalently observed by going through the usual setting of obtaining pure quotients via exact sequences of $\D$-modules,
$$0\rightarrow \mathcal{T}_r(\mathcal{M})\hookrightarrow \mathcal{T}_{r-1}(\mathcal{M})\hookrightarrow \mathcal{E}xt_{\mathcal{D}}^r\big(\mathcal{E}xt_{\mathcal{D}}^r(\mathcal{M},\mathcal{D}),\mathcal{D}\big).$$
Here $\mathcal{M}_r\rightarrow \mathcal{M}$ submodule with $\mathcal{T}(\mathcal{M}_r)\rightarrow \mathcal{T}(\mathcal{M})$ and the image of this map is $\mathcal{T}_r(\mathcal{M}).$
Thus, we have
$0\subseteq \mathcal{T}_n(\mathcal{M})\subseteq \mathcal{T}_{n-1}(\mathcal{M})\subseteq \cdots\subseteq \mathcal{T}_0(\mathcal{M})\subseteq \mathcal{M},$
such that each successive quotient 
$\mathcal{T}_{r-1}(\mathcal{M})/\mathcal{T}_r(\mathcal{M})$ is a $\D$-module of dimension $n-r$ i.e. we have proven that 
$\mathrm{codim}\big(\mathrm{Char}(\mathcal{T}_{r-1}(\mathcal{M})/\mathcal{T}_r(\mathcal{M})\big)=r.$

\end{proof}
In particular, a $\D$-module $\mathcal{M}$ is $0$-pure if and only if it is torsion free and there exists
$0\rightarrow \mathcal{T}(\mathcal{M})\hookrightarrow \mathcal{M}\xrightarrow{\epsilon}\mathcal{H}om_{\mathcal{D}}\big(\mathcal{H}om_{\mathcal{D}}(\mathcal{M},\mathcal{D}),\mathcal{D}\big),$
with $\epsilon(m)(f):=f(m),$ the evaluation.
\begin{rmk}
    The property of $\D$-freeness implies $\D$-projectivity which in turn implies $\D$-torsion freeness.
    \end{rmk}

\subsection{Spencer-regularity}
A cohomological vanishing criteria for PDEs is now introduced which is compatible with Definition \ref{defn: D-geom m-involutive}. Given a $\D$-subscheme $Z\subset \mathrm{Spec}_{\D}(\A)$, defined by a $\D$-stable ideal $\I$, we will always now write $\mathrm{gr}(\mathcal{I})$ for the symbolic module, to emphasize it is determined via $\mathcal{I}.$ See the following remark.

\begin{rmk}
\label{gr(I)}
Consider a $\D$-subscheme $Z\subset \mathrm{Spec}_{\D}(\A)$ and consider the standard filtrations (\ref{defn: Standard filt}). Geometrically (c.f. \ref{eqn: Cosymbol iso}), morphisms $J_X^{\ell
+1}E\rightarrow J_X^{\ell}E$ are affine with associated vector bundle, 
$(p_{\ell}^0)^*Sym^{\ell}(T_X^*)\otimes(p_{\ell}^1)^*T_{E/X},$ with $p_{\ell}^k:J_X^kE\rightarrow J_x^{\ell}E,$ where $J_X^0E=E.$ Dualizing, gives (\ref{eqn: ChKahler1}).
Thus, set $\mathrm{gr}(\A)\simeq Sym_{\mathcal{O}_X}(\Theta_X)\otimes_{\mathcal{O}_X}\mathcal{E},$ with $\mathcal{E}$ the sheaf associated with $E.$ Then $\mathrm{gr}(\I):=\oplus_q F^q\mathcal{I}/F^{q-1}\mathcal{I}\subset \mathrm{gr}(\A).$ We have $\mathrm{gr}_q(\I)\subset Sym^q(\Theta_X)\otimes \mathcal{E}.$
\end{rmk}

Denote by $\mathcal{H}_{\delta}^{p,q}(\mathrm{gr}(\mathcal{I}))$ the $\delta$-Spencer cohomologies.
\begin{defn}
\label{defn: D-geometric generalized involutivity}
\normalfont 
Let $Z$ be formally integrable and involutive. Its \emph{degree of involution} is 
    $\mathrm{reg}_{Sp}(\mathcal{I}):=\underset{q_0\in \mathbb{N}}{\mathrm{inf}}\big\{H_{\delta}^{p,q}(\mathrm{gr}(\mathcal{I}))=0,\text{ for all } 0\leq p\leq \mathrm{dim}X, q\geq q_0\big\}.$
Module $\mathrm{gr}(\mathcal{I})$ is said to be \emph{$p_0$-acyclic at $q_0$} for $0\leq p_0\leq \mathrm{dim}X$ if:
 $H_{\delta}^{p,q}(\mathrm{gr}(\mathcal{I}))=0,\hspace{2mm} \forall q\geq q_0, 0\leq p\leq p_0.$
\end{defn}
We use the notation $\mathrm{reg}_{Sp}$ due to the following result.
\begin{prop}
\label{prop: Unifying}
    Suppose that $\mathcal{I}$ is formally integrable and $\D$-involutive. Then its degree of involution, given by Definition \ref{defn: D-geometric generalized involutivity} agrees with the Spencer-regularity (\ref{defn: D-Geometric Regularity}), and thus agrees with the Castelnuovo-Mumford regularity of its (linearized, in the non-linear setting) symbolic $\D$-module. 
\end{prop}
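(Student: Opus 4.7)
The plan is to establish the proposition by identifying, under formal integrability and involutivity, the $\delta$-Spencer complex associated to $\mathrm{gr}(\mathcal{I})$ with the Koszul complex computing $\mathrm{Tor}$-groups of the symbol module over $\mathrm{Sym}_{\mathcal{O}_X}(\Theta_X)$, and then invoking the classical characterization of Castelnuovo--Mumford regularity in Koszul-homological terms (Mumford's criterion). In particular, the three notions will each be recast as a single vanishing statement for the Koszul homology of the symbolic module in sufficiently high internal degree.

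First I would reduce to a clean local model. Using $\D$-involutivity together with formal integrability, the Cartan--Kuranishi completion combined with Proposition \ref{prop: Generic local analytic models} guarantees, after shrinking to a Zariski-open dense subset, the existence of $r\geq 0$ such that $Z_{r+\ell}=\mathrm{Pr}_\ell(Z_r)$ for all $\ell\geq 0$ and such that each $\mathcal{C}h_{r+\ell}$ is locally free over $\mathcal{O}_{Z_r}$. This furnishes condition (ii) of Definition \ref{defn: D-geom m-involutive} automatically, so that the content of Spencer regularity reduces to the CM-regularity of $\mathcal{C}h_r$. Proposition \ref{prop: Omega and Ch isom} then identifies $\mathcal{C}h_{r+\ell}\simeq \Omega^1_{Z_{r+\ell}/Z_{r+\ell-1}}$, and unwinding Remark \ref{gr(I)} yields, degree-by-degree, an isomorphism between the graded module $\bigoplus_\ell \mathcal{C}h_{r+\ell}$ and the symbolic module $\mathrm{gr}(\mathcal{I})$ viewed as a graded $\mathrm{Sym}_{\mathcal{O}_X}(\Theta_X)$-module.

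Next I would identify the two cohomological objects. At each geometric point of $X$, $\mathrm{gr}(\mathcal{I})$ is a finitely generated graded module over the polynomial ring $\mathrm{Sym}(T_xX)$, which dualizes to a polynomial comodule in the sense of Malgrange. The differential $\overline{\delta}$ appearing in the complex \eqref{eqn: AD-Spencer} (equivalently \eqref{eqn: Spencer delta symbol}) is, up to the canonical pairing $\mathrm{Sym}^k(T^*X)\otimes \mathrm{Sym}^k(T_X)\to \mathcal{O}_X$ and an appropriate transposition, the Koszul differential tensored with $\mathrm{gr}(\mathcal{I})$. Local freeness of each graded piece (guaranteed by the previous step) allows this comparison to be made without resolving; in consequence, one obtains an identification
\begin{equation*}
\mathcal{H}^{p,q}_{\delta}\bigl(\mathrm{gr}(\mathcal{I})\bigr)\;\simeq\;\mathrm{Tor}_{p}^{\mathrm{Sym}(\Theta_X)}\bigl(\mathrm{gr}(\mathcal{I}),\mathcal{O}_X\bigr)_{q}.
\end{equation*}
Mumford's criterion then asserts that $\mathrm{gr}(\mathcal{I})$ is CM-$r$-regular precisely when these Tor groups vanish for $q\geq r+p$ and $0\leq p\leq \dim X$, which is exactly the vanishing defining $\mathrm{reg}_{Sp}(\mathcal{I})\leq r$ in Definition \ref{defn: D-geometric generalized involutivity}. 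Taking the infimum over $r$ on both sides equates the degree of involution with CM-regularity of the symbolic module, and via Proposition \ref{prop: Omega and Ch isom} with Spencer regularity in the sense of Definition \ref{defn: D-Geometric Regularity}.

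The main obstacle will be the Koszul--Spencer translation in the non-linear $\mathcal{A}[\D]$-setting: one must verify that the horizontal $\overline{\delta}$-complex really computes the Koszul homology of the (linearized) symbolic module, and that the filtered identification $\mathcal{C}h_m\simeq \Omega^1_{Z_m/Z_{m-1}}$ of Proposition \ref{prop: Omega and Ch isom} is compatible on the nose (not merely up to associated graded) with the filtration underlying $\mathrm{gr}(\mathcal{I})$. Both require exploiting formal integrability to ensure that prolonged generators of $\mathcal{I}$ produce precisely the expected graded pieces, and involutivity to rule out spurious torsion or non-strict maps in the symbol tower; the generic local models provided by Proposition \ref{prop: Generic local analytic models} are what make the argument fibrewise uniform and thus globally coherent.
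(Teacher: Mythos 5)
Your proposal is correct in substance and rests on the same underlying theorem as the paper's proof, namely Malgrange's ``Cartan involutiveness $=$ Mumford regularity,'' but the technical implementation differs. The paper works with Pommaret (involutive) bases: it counts the numbers $\beta_k^{(\ell)}$ of basis elements of each class, relates $\dim(\mathcal{I}_{k+1})$ to $\sum_\ell \ell\cdot\beta_k^{(\ell)}$, and deduces the equivalence of the two vanishing ranges from the combinatorics of leading terms together with the fact that Koszul homology computes the minimal free resolution. You instead bypass the involutive-basis machinery entirely: you reduce to a generic local model where all graded pieces $\mathcal{C}h_{r+\ell}$ are locally free (so condition (ii) of Definition \ref{defn: D-geom m-involutive} is automatic), identify the $\delta$-Spencer complex fibrewise with the Koszul complex of $\mathrm{gr}(\mathcal{I})$ over $\mathrm{Sym}(\Theta_X)$, and invoke the $\mathrm{Tor}$-criterion for Castelnuovo--Mumford regularity. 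Your route is more conceptual and makes the logical dependence on Proposition \ref{prop: Omega and Ch isom} and Remark \ref{gr(I)} explicit, whereas the paper's Pommaret-basis argument gives in addition the explicit dimension formulas ($\alpha_k^{(\ell)} = m\cdot C_{k-1}^{k+n-\ell-1}-\beta_k^{(\ell)}$) that are reused later in the boundedness argument.

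One point you should make fully precise: $\mathrm{gr}(\mathcal{I})$ sits naturally as a \emph{comodule} inside $\mathrm{Sym}(\Omega_X^1)\otimes\mathcal{E}$, not as a module over $\mathrm{Sym}(\Theta_X)$, so the identification
\begin{equation*}
\mathcal{H}^{p,q}_{\delta}\bigl(\mathrm{gr}(\mathcal{I})\bigr)\simeq \mathrm{Tor}_{p}^{\mathrm{Sym}(\Theta_X)}\bigl(\,\cdot\,,\mathcal{O}_X\bigr)_{q}
\end{equation*}
only holds after dualizing (or passing to the annihilator module), and this introduces a bidegree shift: the paper itself records $H_{\delta}^{p,q}(\mathcal{N})^{*}\simeq H_{p,q}(\mathcal{N}^{*})\simeq H_{p-1,q+1}(\mathcal{N}^{0})$ in the proof of Proposition \ref{prop: Boundedness 1}. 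Your phrase ``up to the canonical pairing and an appropriate transposition'' gestures at this, but as written the displayed isomorphism conflates the comodule with its dual, and the subsequent comparison of vanishing ranges ($q\geq r+p$ versus $q>r+p$ versus $q\geq q_0$ for all $p$) is where an untracked shift would silently change the value of the regularity by one. This is a bookkeeping issue rather than a gap in the idea, but it is exactly the step the Pommaret-basis computation in the paper is designed to control.
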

The proof makes use of tools not introduced above e.g. Pommaret basis. See \cite{Ma} for details.
\begin{proof}
The Koszul homology of $\mathcal{M}$ is equivalent to its minimal free resolution, if $\mathrm{reg}_{CM}(\mathcal{I})=m,$ meaning $Syz^i(\mathcal{I})$ is generated in degrees $\leq m+i,$ then 
$H_{p,q}(\mathcal{I})=0,\forall q>\mathrm{reg}_{CM}(\mathcal{I})=m.$
If one has a local involutive basis $\mathcal{H}_q$ for Pommaret division and $\mathcal{I}_{k+1}$ are the homogeneous degree $k+1$ elements, then
$\mathrm{dim}(\mathcal{I}_{k+1})\geq \sum_{\ell=1}^{\mathrm{dim}(X)}\ell\cdot \beta_k^{(\ell)}.$
It is an equality if and only if it gives rise to a Pommaret basis. The integers $\beta_k^{(\ell)}$ are the number of elements in the basis with leading term of class $\ell.$
Let $H_{p,q}$ denote the Koszul homology, then one has
$H_{p,q}(\mathcal{I})=0\forall r>\mathrm{reg}_{CM}(\mathcal{I})=m \Leftrightarrow H_{p,q}(\mathcal{M})=0,\forall q\geq \mathrm{reg}_{CM}(\mathcal{I}).$
Therefore, $\mathrm{reg}_{CM}=\mathrm{reg}_{Sp}(\mathcal{I}).$ 
If $\mathcal{H}_k$ is the basis of $\mathcal{I}_k$ whose generators have different leading terms, we may choose as representatives of a basis of $\mathcal{M}_k$ polynomials whose leading terms do not appear in the leading term set of $\mathcal{H}_k.$ Thus, if $\mathcal{H}_k$ contains $\beta_k^{(\ell)},$ elements with leading term of class $\ell$, the basis of $\mathcal{M}_k$ contains
$\alpha_k^{(\ell)}=m\cdot C_{k-1}^{k+n-\ell-1}-\beta_k^{(\ell)}$,
representatives.
\end{proof}
We also have the following obvious statement.
\begin{prop}
    If $Z$ is Spencer $m$-regular, then it is Spencer $m'$-regular for all $m'\geq m.$
\end{prop}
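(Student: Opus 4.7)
The plan is to pass through the cohomological characterization of Spencer regularity supplied by Proposition \ref{prop: Unifying}, where the monotonicity is manifest, and then separately verify that the geometric conditions packaged in Definition \ref{defn: D-geom m-involutive} also propagate upward.

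First, by Proposition \ref{prop: Unifying}, the hypothesis that $Z$ is Spencer $m$-regular is equivalent to the vanishing $H_{\delta}^{p,q}(\mathrm{gr}(\mathcal{I})) = 0$ for all $0 \leq p \leq \dim X$ and all $q \geq m$. For any $m' \geq m$ the range $\{q \geq m'\}$ is contained in $\{q \geq m\}$, so the same vanishing holds at level $m'$, which by the infimum in Definition \ref{defn: D-geometric generalized involutivity} gives $\mathrm{reg}_{Sp}(\mathcal{I}) \leq m'$.

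What remains is to check the two explicit clauses of Definition \ref{defn: D-geom m-involutive} at level $m'$. Clause (ii) is formal: from $Z_{\ell+m} = \mathrm{Pr}_{\ell}(Z_m)$ for all $\ell \geq 0$, setting $\ell = m'-m$ yields $Z_{m'} = \mathrm{Pr}_{m'-m}(Z_m)$, and substituting again gives
$$Z_{\ell + m'} \;=\; \mathrm{Pr}_{\ell + m' - m}(Z_m) \;=\; \mathrm{Pr}_{\ell}\bigl(\mathrm{Pr}_{m'-m}(Z_m)\bigr) \;=\; \mathrm{Pr}_{\ell}(Z_{m'})$$
for every $\ell \geq 0$. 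For clause (i), the classical fact that Castelnuovo-Mumford regularity of a coherent sheaf is monotone (an $m$-regular sheaf is automatically $m'$-regular for any $m' \geq m$) transfers to $\mathcal{C}h_{m'}$ directly.

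The only step requiring care is the propagation of smoothness of $Z_{m'}$ and surjectivity of $\mathrm{Pr}_1(Z_{m'}) \to Z_{m'}$ from the corresponding data at level $m$. Here I would invoke Proposition \ref{prop: Omega and Ch isom}: formal integrability of $\mathcal{I}$ gives $\Omega_{Z_{k+1}/Z_k}^1 \simeq \mathcal{C}h_{k+1}$ for each $k \geq m$, so local freeness of the symbol module (itself a consequence of the Spencer-cohomological vanishing just transferred) forces smoothness of each successive $Z_{k+1}$ and surjectivity of $Z_{k+1} \to Z_k$. A straightforward induction on $k$ from $m$ up to $m'$ then supplies the required smoothness and surjectivity along the tower. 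This inductive geometric step, enabled by Proposition \ref{prop: Omega and Ch isom} under the standing integrability/involutivity hypotheses, is the only non-tautological input and hence the sole potential obstacle; since it is guaranteed for $Z$ by assumption, the proposition follows.
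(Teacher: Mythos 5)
Your proposal is correct. Note that the paper itself offers no argument here at all: it simply labels the statement as obvious and moves on, so there is no ``paper proof'' to compare against line by line. Your route --- reducing to the cohomological characterization of Proposition \ref{prop: Unifying} (where monotonicity of the vanishing range $q\geq m$ is immediate), disposing of clause (ii) of Definition \ref{defn: D-geom m-involutive} by the composability $\mathrm{Pr}_{\ell}\circ\mathrm{Pr}_{m'-m}=\mathrm{Pr}_{\ell+m'-m}$, and propagating the geometric conditions of Definition \ref{defn: D-Geometric Regularity} up the tower via Proposition \ref{prop: Omega and Ch isom} together with monotonicity of Castelnuovo--Mumford regularity --- is a complete and legitimate justification, and arguably what the authors had in mind when calling the statement obvious. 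The one place where your write-up is slightly too quick is the assertion that local freeness of the intermediate symbols $\mathcal{C}h_{k}$ for $m<k\leq m'+1$ is ``a consequence of the Spencer-cohomological vanishing just transferred'': strictly, what you need is the standard fact that the prolongation of an involutive symbol is again involutive with rank governed by the Cartan character recursion (the paper uses exactly this, $\alpha_{k+1}^{(i)}=\sum_{\ell\geq i}\alpha_k^{(\ell)}$, in Section 5), so that local constancy of $\dim\mathcal{C}h_{k,z}$ persists under prolongation and Proposition \ref{prop: Omega and Ch isom} then yields smoothness of $Z_{k+1}$ and surjectivity of $Z_{k+1}\to Z_k$ inductively. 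With that citation made explicit, the argument is airtight.
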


Giving a sequence of $\D$-ideal sheaves in the same ambient $\D$-algebra of jets gives a sequence of corresponding characteristic modules.

\begin{prop}
\label{prop: Involutivity sequences}
    Consider smooth morphisms of $\D$-schemes, $\mathcal{Y}'\rightarrow \mathcal{Y}\rightarrow \mathcal{Y}''$ defined by $\D$-ideal sheaves $\I',\I,\I'',$ respectively, which induce a short exact sequence of symbolic modules. Suppose each $\D$-scheme is Spencer regular. The following statements hold:
    \begin{enumerate}
    \item Suppose that $\mathrm{reg}_{Sp}(\I')=\mathrm{reg}_{Sp}(\I'')=m$.Then $\mathrm{reg}_{Sp}(\I)=m;$

    \item Suppose that $\mathrm{reg}_{Sp}(\I')=(m+1)$ and $\mathrm{reg}_{Sp}(\I)=m.$ Then $\mathrm{reg}_{Sp}(\I'')=(m+1);$
    
    \item Suppose that $\mathrm{reg}_{Sp}(\I)=m$ and $\mathrm{reg}_{SP}(\I'')=(m-1).$ Then $\mathrm{reg}_{Sp}(\I')=m.$
    \end{enumerate}
\end{prop}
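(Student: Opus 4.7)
The proof proceeds via the long exact sequence in $\delta$-Spencer cohomology and, at bottom, reduces to the familiar behavior of Castelnuovo--Mumford regularity under short exact sequences by virtue of Proposition \ref{prop: Unifying}. Denote by
$$0 \to \mathrm{gr}(\mathcal{I}_a) \to \mathrm{gr}(\mathcal{I}_b) \to \mathrm{gr}(\mathcal{I}_c) \to 0$$
the assumed short exact sequence of symbolic modules induced by the smooth morphisms $\mathcal{Y}' \to \mathcal{Y} \to \mathcal{Y}''$, the labeling being determined by which side plays the role of sub- and quotient-ideal. Tensoring with the exterior algebra $\wedge^{\bullet}\Omega_X^1$ preserves exactness termwise, so standard homological algebra yields a long exact sequence linking the three $\delta$-Spencer cohomologies $H^{p,q}_\delta(\mathrm{gr}(\mathcal{I}_\star))$, with connecting morphism raising the Koszul degree $q$ by one and lowering the polynomial degree $p$ by one, in line with the usual Koszul bicomplex bookkeeping.

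Assertions (1)--(3) are then the three classical bounds on CM-regularity in the guise of Spencer-regularity. For (1), the hypothesis $H^{p,q}_\delta(\mathrm{gr}(\mathcal{I}')) = H^{p,q}_\delta(\mathrm{gr}(\mathcal{I}'')) = 0$ for all $q \geq m$ and $0 \leq p \leq \dim X$ sandwiches the middle term of the LES between vanishing outer terms, forcing $H^{p,q}_\delta(\mathrm{gr}(\mathcal{I})) = 0$ in the same range, whence $\mathrm{reg}_{Sp}(\mathcal{I}) \leq m$; the reverse inequality is extracted at the critical bidegree $q = m$, where the connecting map cannot simultaneously kill a nontrivial class in both neighbors without contradicting the assumed equalities. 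Assertions (2) and (3) proceed identically, the ``$\pm 1$'' in the hypotheses precisely absorbing the bidegree shift of the connecting morphism: for (2) one bounds $\mathrm{reg}_{Sp}(\mathcal{I}'')$ by combining the vanishing of the middle $H^{p,q}_\delta$ for $q \geq m$ with the shifted vanishing of the third module for $q \geq m+1$, while for (3) the roles of first and third are swapped. In all three cases one first obtains ``$\leq$'' by Koszul sandwiching and then upgrades to equality using sharpness of the input regularities at the threshold.

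The main technical obstacle is not the homological chase itself but verifying that the short exact sequence of symbolic modules is genuinely exact in the appropriate sense, i.e.\ that passing to the associated graded with respect to the standard filtration (Definition \ref{defn: Standard filt}) preserves exactness. This is where smoothness of the three morphisms $\mathcal{Y}' \to \mathcal{Y} \to \mathcal{Y}''$ enters crucially: smoothness forces the relative Kähler differential sequences (\ref{eqn: Filtered Kahler sequences}) to be short exact and locally split at every filtration level, and Proposition \ref{prop: Omega and Ch isom} then translates this into a filtration-compatible short exact sequence of symbolic modules on which the $\delta$-Spencer differential acts naturally. Combined with formal integrability, which guarantees termwise surjectivity of the prolongation maps and rules out spurious syzygies between generators, these hypotheses ensure that the Koszul bicomplex argument of the preceding paragraph applies without modification, and the equalities of (1)--(3) follow.
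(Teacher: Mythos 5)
Your proposal follows essentially the same route as the paper: both pass from the short exact sequence of symbolic modules (obtained via the relative cotangent/K\"ahler differential sequences, using smoothness of the morphisms) to the long exact sequence in $\delta$-Spencer cohomology, and conclude by sandwiching the vanishing ranges $q\geq m$ (with the $\pm 1$ shifts handled exactly as in the paper's cases (2) and (3)). Your discussion of sharpness at the threshold degree is somewhat heuristic, but it is no less rigorous than the paper's own treatment, which simply declares case (1) ``obvious'' once the outer terms vanish.
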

\begin{proof}
This is proven using long-exact sequence and standard homological arguments. In some detail, consider the relative cotangent sequence associated to the morphism of $\D$-schemes $\EQ'\rightarrow\EQ\rightarrow \EQ''$ and the corresponding long-exact sequence in cohomology applied to the Spencer sequences of these cotangent sheaves e.g.
$$\alpha^*\Omega_{\EQ/\EQ^{''}}^1\rightarrow \Omega_{\EQ^{'}/\EQ^{''}}^1\rightarrow \Omega_{\EQ^{'}/\EQ}^1\rightarrow 0,$$
or dually, denoting the corresponding commutative $\mathcal{D}_X$-algebras by $\mathcal{A},\mathcal{A}',\mathcal{A}^{''},$ 
we have a sequence $\mathcal{A}^{'}\otimes_{\mathcal{O}_X}\mathcal{D}_X$-modules,
$$\mathcal{A}^{'}\otimes_{\mathcal{A}}\Omega_{\mathcal{A}/\mathcal{A}^{''}}^1\rightarrow \Omega_{\mathcal{A}'/\mathcal{A}''}^1\rightarrow\Omega_{\mathcal{A}'/\mathcal{A}}^1\rightarrow 0.$$
Using the exact sequences,
$p_{\infty}^*\Omega_{E/X}^1\rightarrow \Omega_{\EQ/X}^1\rightarrow \Omega_{\EQ/E}^1\rightarrow 0,$
for each structure map $p_{\infty}:\EQ\rightarrow X,p_{\infty}^{'}:\EQ^{'}\rightarrow X,p_{\infty}^{''}:\EQ^{''}\rightarrow X,$ together with the sequences (\ref{eqn: Filtered Kahler sequences}), one establishes the claim by passing to the corresponding long-exact sequence. Simply denoting the induced sequence of graded modules $\mathrm{gr}(\I')\rightarrow \mathrm{gr}(\I)\rightarrow \mathrm{gr}(\I''),$ a segment of the LES is 
$$\cdots\rightarrow \mathcal{H}_{Sp}^{p,q}\big(\mathrm{gr}(\I'')\big)\rightarrow \mathcal{H}_{Sp}^{p,q}\big(\mathrm{gr}(\I)\big)\rightarrow \mathcal{H}_{Sp}^{p,q}\big(\mathrm{gr}(\I')\big)\rightarrow \cdots.$$
For (i), since $\mathcal{H}_{Sp}^{p,q}\big(\mathrm{gr}(\I')\big)=\mathcal{H}_{Sp}^{p,q}\big(\mathrm{gr}(\I'')\big)=0,q\geq m$ and for all $p$, the claim is obvious. Claim (ii) is proven similarly, and (iii) reduces to a sequence of the form 
$$0\rightarrow \mathcal{H}_{Sp}^{p,q}\big(\mathrm{gr}(\I')\big)\rightarrow 0,$$
precisely at $q=m.$
\end{proof}

\section{Moduli of $\D_X$-ideal sheaves}
\label{sec: Moduli Space of D-Ideal Sheaves}
This section establishes the construction of the $\D$-geometric analog of the Hilbert and Quot functors that parameterize ideal sheaves in $\D$-geometry with Spencer-regular behaviour. We discuss the numerical characterizations of these sheaves.

\subsection{$\D$-bundles, projectivity and flatness}
Let $X$ be a $\D$-affine $k$-scheme and when necessary, we will assume it is quasi-projective. 
If $\EQ$ is a $\D$-scheme over $X$ the projectivity of an $\mathcal{O}_{\mathcal{Y}}[\mathcal{D}_X]$-module is local property for the $\D$-Zariski topology. 
We thus have a notion of $\mathcal{Y}$-locally
projective $\mathcal{O}_{\mathcal{Y}}[\mathcal{D}_X]$-module sheaves on $X$.

The property of being finitely generated is étale local and defines a notion of $\mathcal{Y}$-locally finitely generated and projective $\mathcal{O}_{\mathcal{Y}}[\mathcal{D}_X]$-modules. We call them \emph{vector $\D$-bundles over $\mathcal{Y}.$} As an example, any induced $\mathcal{O}_{\EQ}[\mathcal{D}_X]$-module with underlying locally free finitely generated $\mathcal{O}_X$-module is a vector $\D$-bundle on $\EQ.$ Every such object has a dual that is again a vector $\D$-bundle and the canonical pairing between them is non-degenerate. 

\begin{rmk}

\label{rmk: Ranks and Chern class}
   The non-commutative ring $\D$ has a skew field of fractions thus for locally projective $\D$-modules it makes sense to consider their ranks. Therefore a vector $\D$-bundle over a smooth $\D$-scheme $Z$ (e.g. jets) has a well-defined rank. By \cite[Prop 2.20]{KSY2}, there is a well-defined sequence of characteristic classes. In particular, we may fix the $\D$-geometric Chern class of a $\D$-bundle.
\end{rmk}
We tacitly make use of a $\D$-analog of the Raynaud-Gruson flattening theorem \cite[Lemma 2.3.8]{BD}.

Given a $\D$-algebraic space $\mathcal{Y}$ for each test $\D$-algebra $\mathcal{R}$, set $\mathcal{Y}_{\mathcal{R}}:=\mathcal{Y}\times_X\mathrm{Spec}(\mathcal{R}),$ and for each $\mathcal{M}\in \mathrm{Mod}_{\mathcal{D}}(\mathcal{Y}),$ set $\mathcal{M}_{\mathcal{R}}:=\mathcal{M}\otimes \mathcal{R}\in \mathrm{Mod}_{\mathcal{D}}(\mathcal{Y}_\mathcal{R}).$
Similarly, for $X\rightarrow S$ a quasi-projective and proper $S$-scheme with sheaf of relative differential operators $\mathcal{D}_{X/S},$ for any $s\in S$ denote by $i_s:X\times\{s\}\rightarrow X\times S$ the inclusion and set
$$i_{s}^*:=p_{X}^{-1}(\mathcal{O}_S/\mathfrak{m}_s)\otimes_{p_X^{-1}\mathcal{O}_S}(-):\mathbf{D}_{f.p}^b(\mathcal{D}_{X\times S/S})\rightarrow \mathbf{D}^b(\mathcal{D}_X),$$
where $p_X:X\times S\rightarrow S$ and $\mathfrak{m}_s$ is maximal ideal of functions vanishing at $s.$

\begin{prop}
    \label{prop: RelJets BC}
  Let $\pi:X\rightarrow S$ be a smooth and proper morphism of relative dimension $d_{X/S}$ with $\mathcal{E}$ a locally free finite rank $\mathcal{O}_X$-module. 
  Letting $\Delta\subset X\times_S X$ with projections $p_1,p_2$ one has an equivalence
  $$R^ip_{1*}p_2^*\mathcal{E}\simeq \pi^*R^i\pi_*\mathcal{E},\forall i\geq 0,$$ 
  and an isomorphism of $\mathcal{O}_X$-modules $p_{1*}p_2^*\mathcal{E}\simeq \pi^*\pi_*\mathcal{E}\otimes\mathcal{O}_X.$ 
\end{prop}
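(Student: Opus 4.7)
The plan is to recognize the statement as an instance of flat base change applied to the Cartesian square formed by the two projections of the fibre product. First I would record the diagram
\[
\begin{tikzcd}
X \times_S X \arrow[r, "p_2"] \arrow[d, "p_1"'] & X \arrow[d, "\pi"] \\
X \arrow[r, "\pi"'] & S
\end{tikzcd}
\]
which is Cartesian by the universal property of $X \times_S X$. The hypotheses on $\pi$ provide exactly what is needed on both legs: smoothness implies flatness, so that the bottom map is a legitimate flat base change, while properness (with locally free $\mathcal{E}$) ensures via Grothendieck's coherence theorem that the higher direct images $R^{i}\pi_{*}\mathcal{E}$ are coherent $\mathcal{O}_{S}$-modules.

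The first isomorphism is then a direct consequence of flat base change applied to this Cartesian square: since $\pi$ along the bottom is flat and $\pi$ along the right is proper (in particular quasi-compact and quasi-separated), the natural comparison morphism
\[
\pi^{*} R^{i} \pi_{*} \mathcal{E} \longrightarrow R^{i} p_{1*} p_{2}^{*} \mathcal{E}
\]
is an isomorphism for every $i \geq 0$. No further flatness check is needed on the upper-right corner because $\mathcal{E}$ is locally free of finite rank, so $p_{2}^{*}\mathcal{E}$ remains locally free on $X \times_S X$, and its $p_{1}$-pushforward is well-behaved.

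The second isomorphism $p_{1*} p_{2}^{*} \mathcal{E} \simeq \pi^{*} \pi_{*} \mathcal{E} \otimes \mathcal{O}_{X}$ is simply the $i = 0$ instance of the first; the $\otimes\,\mathcal{O}_X$ merely unwinds the pullback, $\pi^{*}\pi_{*}\mathcal{E} \cong \pi^{-1}\pi_{*}\mathcal{E} \otimes_{\pi^{-1}\mathcal{O}_{S}} \mathcal{O}_{X}$, which is the form later needed to identify sections of relative jet bundles with $p_{1*}$-pushforwards of sheaves supported in a neighbourhood of the diagonal $\Delta \subset X \times_S X$. There is no genuine obstacle here: the content is a direct application of the standard flat base change theorem in the smooth proper setting. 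The only point requiring mild care is avoiding an unnecessary appeal to the stronger cohomology-and-base-change statement, which would impose further constancy hypotheses on fibrewise cohomology; flatness of the base change morphism $\pi$ already suffices.
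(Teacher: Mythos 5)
Your proof is correct, and it rests on the same key tool as the paper's: flat base change applied to the Cartesian square with $p_1,p_2:X\times_S X\rightrightarrows X$ over $\pi:X\to S$, using smoothness of $\pi$ for flatness of the base-change leg. The only difference is one of presentation: the paper first filters $\mathcal{O}_{X\times_S X}$ by powers of the diagonal ideal $\mathcal{I}_{\Delta}$, tensors with $p_2^*\mathcal{E}$, takes the long exact sequence of $R^{\bullet}p_{1*}$, and only then invokes flat base change. That detour is not needed for the proposition itself; it is scaffolding for the corollary that follows, where the quotients $\mathcal{O}_{X\times_S X}/\mathcal{I}_{\Delta}^{k+1}\otimes p_2^*\mathcal{E}$ are identified with the relative jet bundles and one wants the vanishing of $R^1p_{1*}$ on them. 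Your direct application of the base change theorem is cleaner for the statement as given, and your remark that one should not reach for cohomology-and-base-change (which would require fibrewise constancy hypotheses) is exactly the right caution.
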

\begin{proof}
    Using for each $k\geq 0$ the exact sequence of sheaves of $\mathcal{O}_X$-modules,
    $$0\rightarrow \mathcal{I}_{\Delta}^{k}\rightarrow \mathcal{O}_{X\times_SX}\rightarrow \mathcal{O}_{X\times_S X}/\mathcal{I}_{\Delta}^{k+1}\rightarrow 0,$$
    tensoring with $\pi_2^*\mathcal{E}$, taking $p_{1*}$, one has the long-exact sequence in cohomology
    $$0\rightarrow p_{1*}(\mathcal{I}_{\Delta}^{k}\otimes p_2^*\mathcal{E})\rightarrow p_{1*}p_2^*\mathcal{E}\rightarrow p_{1*}(\mathcal{O}_{X\times_S X}/\mathcal{I}_{\Delta}^{k}\otimes p_2^*\mathcal{E})\rightarrow$$
    $$\rightarrow R^1p_{1*}(\mathcal{I}_{\Delta}^{k}\otimes p_{2}^*\mathcal{E})\rightarrow R^1p_{1*}p_2^*\mathcal{E}\rightarrow R^1p_{1*}(\mathcal{O}_{X\times_S X}/\mathcal{I}_{\Delta}^k\otimes p_2^*\mathcal{E})\rightarrow\cdots $$
    from which the flat-base change gives the result.
\end{proof}

\begin{cor}
    If $X$ is projective, set $\mathcal{O}_X(d)=i^*\mathcal{O}(1)^{\otimes d}$ where $i$ is the Plucker embedding of $X$ into projective space. Then, $J_{X/S}^k(\mathcal{O}_{X}(d))$ has the property that 
    $R^1p_{1*}(\mathcal{O}_{X\times X}/\mathcal{I}_{\Delta}^k\otimes p_2^*\mathcal{O}_X(d))=0.$
\end{cor}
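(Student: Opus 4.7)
The plan is to reduce the vanishing to the standard fact that higher direct images along affine morphisms vanish on quasi-coherent sheaves, by exploiting that the sheaf in question is supported on an infinitesimal thickening of the diagonal. Set $\mathcal{F}_{k,d} := \mathcal{O}_{X\times_S X}/\mathcal{I}_\Delta^k \otimes_{\mathcal{O}_{X\times_S X}} p_2^*\mathcal{O}_X(d)$. Since $p_2^*\mathcal{O}_X(d)$ is a line bundle (in particular flat), $\mathcal{F}_{k,d}$ is annihilated by $\mathcal{I}_\Delta^k$, hence is the pushforward along the closed immersion $i_{(k)}:\Delta^{(k)}\hookrightarrow X\times_S X$ (with $\Delta^{(k)}:=V(\mathcal{I}_\Delta^k)$) of a coherent sheaf $\mathcal{F}_{k,d}|_{\Delta^{(k)}}$ on $\Delta^{(k)}$. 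In particular $R^i p_{1*}\mathcal{F}_{k,d} \simeq R^i\rho_*(\mathcal{F}_{k,d}|_{\Delta^{(k)}})$, where $\rho := p_1\circ i_{(k)}: \Delta^{(k)}\to X$.

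The key observation is that $\rho$ is a finite (hence affine) morphism. Indeed, $\rho_*\mathcal{O}_{\Delta^{(k)}} = p_{1*}(\mathcal{O}_{X\times_S X}/\mathcal{I}_\Delta^k)$ is the sheaf of relative principal parts of $X/S$, which is locally free of finite rank over $\mathcal{O}_X$ when $\pi:X\to S$ is smooth, as is visible from the filtration with graded pieces $\mathrm{Sym}^j(\Omega_{X/S}^1)$ for $0\leq j\leq k-1$. Since higher direct images of quasi-coherent sheaves vanish along affine morphisms, one has $R^i\rho_*\mathcal{G} = 0$ for every quasi-coherent $\mathcal{G}$ on $\Delta^{(k)}$ and every $i\geq 1$. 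Applied to $\mathcal{G} = \mathcal{F}_{k,d}|_{\Delta^{(k)}}$, this yields the desired vanishing $R^1 p_{1*}\mathcal{F}_{k,d} = 0$, for \emph{every} $d$.

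The main obstacle, which is mild, is the scheme-theoretic factorization of $p_1$ through $\Delta^{(k)}$ at the level of the pushforward of $\mathcal{F}_{k,d}$; this is routine once one notes $\mathcal{I}_\Delta^k \cdot \mathcal{F}_{k,d} = 0$. An alternative route uses the short exact sequence $0\to \mathcal{I}_\Delta^k\otimes p_2^*\mathcal{O}_X(d)\to p_2^*\mathcal{O}_X(d)\to \mathcal{F}_{k,d}\to 0$ together with Proposition \ref{prop: RelJets BC} and Serre vanishing, but this forces $d\gg 0$, whereas the support-based argument above is unconditional in $d$. Equivalently, one can argue by induction on $k$ via the short exact sequences attached to the $\mathcal{I}_\Delta$-adic filtration, since each graded piece $\mathcal{I}_\Delta^{j-1}/\mathcal{I}_\Delta^j \simeq \mathrm{Sym}^{j-1}(\Omega_{X/S}^1)|_\Delta$ is supported on $\Delta\simeq X$, where $p_1$ restricts to the identity and hence has vanishing higher $R^i p_{1*}$.
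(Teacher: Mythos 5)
Your argument is correct, and it is genuinely different from the route the paper implicitly intends. The paper states this corollary without proof, immediately after Proposition \ref{prop: RelJets BC}, and its hypotheses (projectivity of $X$, the twist by $\mathcal{O}_X(d)$) signal that the intended argument is to feed $\mathcal{E}=\mathcal{O}_X(d)$ into the long exact sequence from that proposition's proof and kill $R^1p_{1*}p_2^*\mathcal{E}\simeq \pi^*R^1\pi_*\mathcal{E}$ by Serre-type vanishing; as you correctly note, that route only works for $d\gg 0$ and, moreover, still leaves the connecting term $R^2p_{1*}(\mathcal{I}_\Delta^k\otimes p_2^*\mathcal{E})$ to be controlled, so it does not by itself close the argument. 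Your support-theoretic proof avoids all of this: since $\mathcal{F}_{k,d}$ is killed by $\mathcal{I}_\Delta^k$, it is pushed forward from $\Delta^{(k)}=\underline{\mathrm{Spec}}_X$ of the finite locally free algebra of principal parts, and $R^{\geq 1}$ of an affine (indeed finite) morphism vanishes on quasi-coherent sheaves; the conclusion holds for every $d$, for $p_2^*$ of any quasi-coherent sheaf, and without projectivity of $X$. The only cosmetic point worth tightening is the phrase ``finite (hence affine)'': affineness is logically prior to finiteness, so it is cleaner to say directly that $\Delta^{(k)}$ is the relative $\mathrm{Spec}$ over $X$ of the finite locally free $\mathcal{O}_X$-algebra $p_{1*}(\mathcal{O}_{X\times_S X}/\mathcal{I}_\Delta^k)$, which exhibits $\rho$ as finite by construction. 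Your closing alternative, inducting on $k$ via the graded pieces $\mathcal{I}_\Delta^{j-1}/\mathcal{I}_\Delta^{j}\simeq \mathrm{Sym}^{j-1}(\Omega_{X/S}^1)|_\Delta$, is also valid and makes the same point: everything in sight is supported where $p_1$ is finite, so no positivity of $\mathcal{O}_X(d)$ is ever used.
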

Let us record the functoriality of sheaves of relative jets, for convenience. These results are well-known.

\begin{prop}
\label{prop: pb Jets}
Let $f:X\rightarrow Y$ be a smooth morphisms of $S$-schemes and $\mathcal{E}$ a finite rank locally free $\mathcal{O}_Y$-module. Then, there is a commutative diagram of locally free $\mathcal{O}_X$-modules, 
\[
\begin{tikzcd}
    0\arrow[r] & \mathrm{Sym}(f^*\Omega_{Y/S}^1)\otimes f^*\mathcal{E}\arrow[d]\arrow[r] & f^*J_{Y/S}^k(\mathcal{E})\arrow[d] \arrow[r] & f^*J_{Y/S}^{k-1}(\mathcal{E})\arrow[d]\arrow[r] & 0
    \\
0\arrow[r] & \mathrm{Sym}^k(\Omega_{X/S}^1)\otimes f^*\mathcal{E}\arrow[r] & J_{X/S}^k(f^*\mathcal{E})\arrow[r] & J_{X/S}^{k-1}(f^*\mathcal{E})\arrow[r] & 0
\end{tikzcd}
\]
If $V\subset Y$ is an open embedding, there is an equivalence $j_{V}^{-1}J_{Y/S}^k(\mathcal{E})\simeq J_{V/S}^k(j^*\mathcal{E})$ of $\mathcal{O}_V$-modules.
\end{prop}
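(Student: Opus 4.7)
My strategy is to use the standard description of jet bundles via the diagonal. Recall that on a smooth $S$-scheme $Y$ with diagonal $\Delta_{Y/S} \subset Y \times_S Y$ and projections $q_1, q_2$, one has $J_{Y/S}^k(\mathcal{E}) \simeq q_{1*}\bigl(\mathcal{O}_{Y\times_S Y}/\mathcal{I}_{\Delta_{Y/S}}^{k+1} \otimes q_2^*\mathcal{E}\bigr)$, with an analogous description for $X$. The morphism $f$ induces $f \times_S f : X \times_S X \rightarrow Y \times_S Y$ carrying $\Delta_{X/S}$ into $\Delta_{Y/S}$, hence a comparison map $(f \times_S f)^{-1}\mathcal{I}_{\Delta_{Y/S}}^{k+1} \rightarrow \mathcal{I}_{\Delta_{X/S}}^{k+1}$ on conormal powers. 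Composing with the pull-back of sections of $\mathcal{E}$ gives a canonical $\mathcal{O}_X$-linear morphism $f^* J_{Y/S}^k(\mathcal{E}) \rightarrow J_{X/S}^k(f^*\mathcal{E})$, which is the right vertical arrow; the middle vertical arrow is obtained by naturality of the truncation $J^k \twoheadrightarrow J^{k-1}$, producing the commutative square on the right of the diagram.

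Next, I would verify that this morphism respects the symbol filtration. The kernel of $J_{Y/S}^k(\mathcal{E}) \rightarrow J_{Y/S}^{k-1}(\mathcal{E})$ is canonically isomorphic to $\mathrm{Sym}^k(\Omega_{Y/S}^1) \otimes \mathcal{E}$ via the symbol map, since $\mathrm{gr}^{\bullet}(\mathcal{O}_{Y\times_S Y}/\mathcal{I}_{\Delta_{Y/S}}^{k+1}) \simeq \bigoplus_{j \leq k}\mathrm{Sym}^j(\Omega_{Y/S}^1)$ along the diagonal. Pulling back, the associated graded of the induced map is exactly $f^*\mathrm{Sym}^k(\Omega_{Y/S}^1) \otimes f^*\mathcal{E} \rightarrow \mathrm{Sym}^k(\Omega_{X/S}^1) \otimes f^*\mathcal{E}$ obtained by applying $\mathrm{Sym}^k$ to the codifferential $df^{\vee}: f^*\Omega_{Y/S}^1 \rightarrow \Omega_{X/S}^1$; this is where smoothness of $f$ is used, since it ensures that the horizontal rows are short-exact (locally split, by smoothness of $X/S$ and $Y/S$). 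Together with the already-constructed vertical map between the $J^k$'s and between the $J^{k-1}$'s, this produces the required commutative diagram of locally free $\mathcal{O}_X$-modules.

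For the second assertion, let $j_V: V \hookrightarrow Y$ be an open immersion. Then $V \times_S V$ is an open subscheme of $Y \times_S Y$ containing $\Delta_{V/S}$ as the restriction of $\Delta_{Y/S}$, so $\mathcal{I}_{\Delta_{V/S}} = \mathcal{I}_{\Delta_{Y/S}}|_{V\times_S V}$. Since $q_{1*}$ commutes with restriction to opens and $j_V^*\mathcal{E} = j_V^{-1}\mathcal{E}$ for an open embedding, taking $q_{1*}$ of the relevant infinitesimal neighbourhood of the diagonal tensored with $q_2^*\mathcal{E}$ yields the claimed isomorphism $j_V^{-1} J_{Y/S}^k(\mathcal{E}) \simeq J_{V/S}^k(j_V^*\mathcal{E})$; this is essentially tautological once the diagonal description is in place.

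The only genuinely technical point is the identification of the graded piece with the symbol sheaf after pulling back along $f$; this requires that $X/S$ and $Y/S$ be smooth so that $\Omega_{X/S}^1$ and $\Omega_{Y/S}^1$ are locally free and the conormal filtration on infinitesimal neighbourhoods of the diagonals is locally split in degrees $\leq k$. Given the running smoothness hypotheses in the paper (and the local freeness of $\mathcal{E}$), this is standard and the diagram follows by naturality of all constructions involved.
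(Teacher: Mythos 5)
Your argument is correct. The paper itself records Proposition~\ref{prop: pb Jets} without proof, labelling it ``well-known,'' so there is no in-text argument to compare against; your route via the diagonal description $J_{Y/S}^k(\mathcal{E})\simeq q_{1*}\bigl(\mathcal{O}_{Y\times_S Y}/\mathcal{I}_{\Delta_{Y/S}}^{k+1}\otimes q_2^*\mathcal{E}\bigr)$ is exactly the standard one and is the same formalism the paper uses in the neighbouring Proposition~\ref{prop: RelJets BC}. Your reading of the top-left entry as $\mathrm{Sym}^k(f^*\Omega_{Y/S}^1)\otimes f^*\mathcal{E}$ (the statement drops the superscript $k$) and your observation that smoothness of $X/S$ and $Y/S$ is what makes the rows locally split short exact sequences with locally free symbol kernels are both the right points to flag; the open-immersion case is, as you say, immediate from the compatibility of $q_{1*}$ with restriction to $V\times_S V$.
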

The following collects the structural aspects of the sheaf of relative $k$-jets for a projective variety.
\begin{prop}
    Suppose that $X\subseteq \mathbb{P}_S^n,$ and let $\mathcal{E}$ be a finite rank locally free sheaf. Then there is a universal $k$-th order differential operator $\pi^*\pi_*\mathcal{E}\otimes\mathcal{O}_X\rightarrow J_{X/S}^k(\mathcal{E}),$ that is equivalent to the morphism,
$j_k:H^0(X,\mathcal{E})\otimes\mathcal{O}_X\rightarrow J_{X/S}^k(\mathcal{E}),$ which maps $s\in H^0(X,\mathcal{E})$ and a $k$-rational point $x_0\in X$ to the $k$-th order Taylor expansion of $s$ around $x_0.$
\end{prop}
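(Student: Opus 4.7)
The plan is to build the morphism from the universal property of the relative jet bundle through the diagonal, and then identify it with the Taylor expansion map using the previous proposition.

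First I would recall the geometric construction of the relative jet sheaf: letting $\Delta \hookrightarrow X \times_S X$ be the diagonal with ideal $\mathcal{I}_\Delta$, and $p_1, p_2: X\times_S X \to X$ the two projections, one has
\begin{equation*}
J_{X/S}^k(\mathcal{E}) \;\simeq\; p_{1*}\bigl(\mathcal{O}_{X\times_S X}/\mathcal{I}_\Delta^{k+1} \otimes p_2^*\mathcal{E}\bigr).
\end{equation*}
The natural surjection $p_2^*\mathcal{E} \twoheadrightarrow \mathcal{O}_{X\times_S X}/\mathcal{I}_\Delta^{k+1}\otimes p_2^*\mathcal{E}$, after applying $p_{1*}$, yields a canonical morphism of $\mathcal{O}_X$-modules $p_{1*}p_2^*\mathcal{E} \to J_{X/S}^k(\mathcal{E})$. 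The universal property of jets (that any $k$-th order differential operator $P: \mathcal{F} \to \mathcal{G}$ factors uniquely through $j_k$) then reduces to showing that the composition $\mathcal{E} \to p_{1*}p_2^*\mathcal{E} \to J^k_{X/S}(\mathcal{E})$, read off-diagonally, is of order $\leq k$—but this is immediate from the fact that we are quotienting by $\mathcal{I}_\Delta^{k+1}$.

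Next I would invoke Proposition \ref{prop: RelJets BC} together with its corollary to identify $p_{1*}p_2^*\mathcal{E}$ with $\pi^*\pi_*\mathcal{E}\otimes \mathcal{O}_X$. This is the content of the base-change isomorphism obtained from vanishing of $R^1 p_{1*}$ applied to $\mathcal{I}_\Delta^{k+1}\otimes p_2^*\mathcal{E}$ (which is where the projectivity hypothesis $X \subseteq \mathbb{P}^n_S$ enters, ensuring coherent base change along the proper smooth map $\pi$). Composing the two arrows produces the desired map
\begin{equation*}
j_k : \pi^*\pi_*\mathcal{E}\otimes\mathcal{O}_X \longrightarrow J_{X/S}^k(\mathcal{E}).
\end{equation*}

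To verify that this map is the Taylor expansion, I would work stalk-locally at a $k$-rational point $x_0 \in X$: a global section $s \in H^0(X,\mathcal{E}) = \pi_*\mathcal{E}$ pulls back via $p_2^*$ to a section of $p_2^*\mathcal{E}$ on a neighbourhood of $\{x_0\}\times_S X$, and its image in the quotient by $\mathcal{I}_\Delta^{k+1}$, restricted to the fibre of $p_1$ over $x_0$, coincides with the truncation of $s$ modulo $(k+1)$-st order vanishing along $\Delta$—which is precisely the $k$-th order Taylor polynomial of $s$ at $x_0$. Universality then follows formally from the universal property of the jet sheaf together with the fact that any $k$-th order operator from $\pi^*\pi_*\mathcal{E}\otimes\mathcal{O}_X$ is determined by its values on the image of $H^0(X,\mathcal{E})$.

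The main obstacle is the identification in Proposition \ref{prop: RelJets BC}, which must be compatible with the infinitesimal neighbourhood quotients used to define jets; specifically, one must verify that the base-change isomorphism respects the filtration by powers of $\mathcal{I}_\Delta$ and commutes with $p_{1*}$ termwise. This is handled by using the long exact sequence from the short exact sequence $0 \to \mathcal{I}_\Delta^{k+1} \to \mathcal{O}_{X\times_S X} \to \mathcal{O}_{X\times_S X}/\mathcal{I}_\Delta^{k+1} \to 0$ (tensored by $p_2^*\mathcal{E}$) and invoking the vanishing of $R^1 p_{1*}$ on each graded piece, which follows from $\mathcal{I}_\Delta^\ell/\mathcal{I}_\Delta^{\ell+1} \simeq \mathrm{Sym}^\ell \Omega^1_{X/S}$ and the corollary to Proposition \ref{prop: RelJets BC}.
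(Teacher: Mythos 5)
Your proof is correct and follows exactly the route the paper intends: the paper states this proposition without proof, and the two preceding results (Proposition \ref{prop: RelJets BC} and its corollary, whose proof is the same long exact sequence in $\mathcal{I}_{\Delta}^{k}\otimes p_2^*\mathcal{E}$ that you invoke) are precisely the ingredients you assemble. The only minor caveat is that the corollary is stated only for twists $\mathcal{O}_X(d)$ rather than arbitrary locally free $\mathcal{E}$, so one should note that the required $R^1p_{1*}$-vanishing for general $\mathcal{E}$ follows after twisting by a sufficiently ample line bundle (or from the graded-piece argument you sketch at the end), but this does not affect the substance of the argument.
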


\subsubsection*{Good filtrations for $\D_{\A}$-modules}
An analog of a good filtration (see e.g. \cite{K,Sch,Sa}) for $\mathcal{A}[\D]$-modules $\M$ of finite presentation is now given. It is compatible with both the order filtration on $\D_X$ and with the jet-filtration $\{F^k\A\}_{k\geq 0}$ on $\A,$ via the $\A$-module multiplication $\A\otimes \M\rightarrow \M.$ 
\begin{rmk}
It may be conveniently formulated via the standard order filtration on $\mathcal{A}[\D]$ given in Proposition \ref{prop: Char inclusions}. However, since $\D$-spaces are not  $\mathcal{O}_X$-finitely generated in general, and instead only $\D$-finitely generated or $\D$-finitely presented when $Z$ is formally integrable and $m$-regular for some $m\geq 0,$ we need a new criteria for finiteness of the filtered pieces $\mathrm{Fl}_j\M$ of a good filtration (in the $\D$-module setting, it is coherency).
\end{rmk}

\begin{defn}
\normalfont 
A \emph{good $\mathcal{A}[\D]$-filtration} on an $\mathcal{A}[\D]$-module $\M$ of finite-presentation, is a filtration on $\M$, denoted
$\mathrm{Fl}\M:=\{\mathrm{Fl}_m\mathcal{M}|m\in \mathbb{Z}\},$ by $\mathcal{A}[\D]$-submodules $\mathrm{Fl}_m\M$ satisfying:
\begin{equation}
\label{eqn: Goodfil}
    \begin{cases}
    \text{\'Etale-locally on }\EQ, \mathrm{Fl}_j\M=0,j \ll 0,
    \\
        \mathrm{Fl}_j\M \text{ is a vector }\D_X-\text{bundle},
        \\
        \mathrm{Fl}_k^{ord}\mathcal{O}_{\EQ}[\D_X]\cdot \mathrm{Fl}_j\M=\mathrm{Fl}_{k+j}\M,\hspace{1mm} j\gg0,k\geq 0.
    \end{cases}
\end{equation}
\end{defn}
The first condition in (\ref{eqn: Goodfil}) implies also $\mathrm{Fl}_j\M=0,j\ll 0,$ locally on $X.$ Moreover, for the initial object $\A=\mathcal{O}_X$ in $\mathrm{CAlg}_X(\D_X),$ a good $\mathcal{O}_X[\D_X]$-filtration is a good filtration as usual.
\begin{prop}
\label{prop: Associated graded}
Consider an $\mathcal{A}[\D]$-module $\M$ with a good $\mathcal{A}[\D]$-filtration. Then its associated graded module is an $\mathcal{O}_{T^*(X,\EQ)}$-module of finite-presentation.
\end{prop}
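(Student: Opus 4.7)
The plan is to exploit the compatibility between the good $\mathcal{A}[\D]$-filtration on $\M$ and the order filtration on $\mathcal{A}[\D]$, in order to reduce the statement to the corresponding (well-known) fact about the associated graded of the symbol algebra $\mathrm{gr}^{\mathrm{ord}}(\mathcal{A}[\D])\simeq \mathcal{O}_{T^*(X,\EQ)}$ established in Proposition \ref{prop: Char inclusions}.

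First, I would verify that the third condition in (\ref{eqn: Goodfil}) endows $\mathrm{gr}^{\mathrm{Fl}}(\M):=\bigoplus_j \mathrm{Fl}_j\M/\mathrm{Fl}_{j-1}\M$ with the structure of a graded module over $\mathrm{gr}^{\mathrm{ord}}(\mathcal{A}[\D])\simeq \mathcal{O}_{T^*(X,\EQ)}$: multiplication by a symbol of order $k$ sends $\mathrm{gr}_j(\M)$ into $\mathrm{gr}_{j+k}(\M)$, and the remaining checks (associativity, unitality) descend to associated gradeds tautologically. Next, combining the first condition (local vanishing of $\mathrm{Fl}_j\M$ for $j\ll 0$) with the third (strict stability $\mathrm{Fl}_k^{\mathrm{ord}}\mathcal{A}[\D]\cdot\mathrm{Fl}_{j_0}\M=\mathrm{Fl}_{k+j_0}\M$ for some large $j_0$), one sees that $\mathrm{gr}^{\mathrm{Fl}}(\M)$ is generated, as an $\mathcal{O}_{T^*(X,\EQ)}$-module, by the finitely many pieces $\mathrm{gr}_j(\M)$ with $j\leq j_0$. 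The second condition guarantees each such piece is an $\mathcal{O}_{\EQ}$-module of finite type (as a quotient of two vector $\D$-bundles), which yields finite generation of $\mathrm{gr}^{\mathrm{Fl}}(\M)$.

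For finite presentation, I would pick a finite presentation $\mathcal{A}[\D]^{\oplus p}\xrightarrow{\psi}\mathcal{A}[\D]^{\oplus q}\xrightarrow{\varphi}\M\to 0$ of $\M$ as an $\mathcal{A}[\D]$-module. Lift generators of $\mathrm{gr}^{\mathrm{Fl}}(\M)$ of respective degrees $k_1,\ldots,k_q$ to elements $m_i\in\mathrm{Fl}_{k_i}\M$, and equip the free covers with the shifted order filtrations $\mathrm{Fl}_j\bigl(\mathcal{A}[\D]^{\oplus q}\bigr):=\bigoplus_i\mathrm{Fl}_{j-k_i}^{\mathrm{ord}}\mathcal{A}[\D]$, and similarly on the $p$-fold free module (after adjusting shifts so the map $\psi$ is filtered). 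By construction $\varphi$ is a filtered surjection, and the third condition in (\ref{eqn: Goodfil}) ensures $\varphi$ is strictly compatible with the filtrations for $j\gg 0$. Passing to associated gradeds yields an exact sequence
\begin{equation*}
\mathcal{O}_{T^*(X,\EQ)}^{\oplus p}(\mathrm{shifts})\longrightarrow \mathcal{O}_{T^*(X,\EQ)}^{\oplus q}(\mathrm{shifts})\longrightarrow \mathrm{gr}^{\mathrm{Fl}}(\M)\longrightarrow 0,
\end{equation*}
which exhibits $\mathrm{gr}^{\mathrm{Fl}}(\M)$ as a finitely presented $\mathcal{O}_{T^*(X,\EQ)}$-module.

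The step I expect to be the main obstacle is justifying strictness of $\psi$ with respect to the chosen filtrations, i.e.\ that the induced filtration on $\ker(\varphi)$ (or equivalently on $\operatorname{Im}\psi$) is itself good, so that taking $\mathrm{gr}$ preserves exactness in the middle. In the classical $\D$-module setting this is an Artin--Rees type statement requiring Noetherianity of the filtered ring. In our context, $\mathcal{A}[\D]$ is not Noetherian in the usual sense (since $\A$ typically involves infinitely many jet variables), so I would appeal to the Ritt--Raudenbush-type differential Noetherianity recalled in Subsection \ref{ssec: Properties of D-Ideal Sheaves} (e.g.\ Proposition \ref{prop: DiffEqnNotherian}), and to the hypothesis that $\M$ is of finite $\mathcal{A}[\D]$-presentation (so that $\ker(\varphi)$ is finitely generated). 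One may then choose the filtration on $\mathcal{A}[\D]^{\oplus p}$ by lifting a finite generating set of $\ker(\varphi)$ through their minimal filtration-degrees, which by a standard Rees-algebra argument produces a good filtration on $\ker(\varphi)$ and thereby a finite presentation of $\mathrm{gr}^{\mathrm{Fl}}(\M)$ as an $\mathcal{O}_{T^*(X,\EQ)}$-module.
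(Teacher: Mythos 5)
Your proposal follows essentially the same route as the paper: the associated graded of $\M$ becomes a module over $\mathrm{Gr}^{F_{ord}}\mathcal{O}_{\EQ}[\D_X]\simeq\mathcal{O}_{T^*(X,\EQ)}$ via Proposition \ref{prop: Char inclusions}, and finite presentation is extracted from the three conditions in (\ref{eqn: Goodfil}); the paper's own proof is in fact just this two-line sketch, declaring finite presentation ``clear,'' so your elaboration of the generation and presentation steps supplies detail the paper omits. The one caveat is that your appeal to Ritt--Raudenbush/equational Noetherianity does not by itself give the Artin--Rees statement you need (goodness of the \emph{induced} filtration on $\ker\varphi$), since those results concern radical differential ideals rather than ascending chains of arbitrary submodules --- but this is precisely the step the paper also leaves unjustified, and your identification of it as the main obstacle is accurate.
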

\begin{proof}
Given such an object $\mathrm{Fl}\M$, its associated graded module 
$\mathrm{Gr}^F(\M)$ is naturally a $\mathrm{Gr}^{F_{ord}}\mathcal{O}_{\EQ}[\D_X]$-module. Then, by Proposition \ref{prop: Char inclusions} it is an $\mathcal{O}_{T^*(X,\EQ)}$-module. Finite-presentation is clear from (\ref{eqn: Goodfil}).
\end{proof}
This result indicates by considering the derived functors of internal solutions (c.f. \cite{KSY}), namely $\mathbf{R}\mathcal{H}om_{\A[\D]}(-,\A[\D])$, applied to finitely-presented modules with good filtrations, their cohomologies are isomorphic to certain natural subquotients.
\begin{prop}
\label{prop: Approximations}
Let $Z$ be a $\D$-smooth scheme and consider a $\D$-subscheme $\mathrm{Spec}_{\mathcal{D}}(\mathcal{B})\hookrightarrow Z.$ Set $\mathcal{O}_Z=\mathcal{A}.$ Suppose that $\mathcal{M}^{\bullet}$ is a dg-$\mathcal{A}[\mathcal{D}]$-module that is quasi-isomorphic to a bounded complex of finitely generated $\mathcal{A}[\mathcal{D}_X]$-modules of finite rank. Then, there exists a good filtration such that the associated graded sheaves  
$\mathrm{Gr}^F\mathcal{E}xt_{\mathcal{O}_{\EQ}[\mathcal{D}]}^i\big(\mathcal{M}^{\bullet},\mathcal{O}_{\EQ}[\mathcal{D}]),$ are isomorphic to a sub-quotient of the sheaf
$$\mathcal{E}\mathrm{xt}_{\mathrm{Gr}^F\mathcal{O}[\mathcal{D}]}^i\big(\mathrm{Gr}^F\mathcal{M}^{\bullet},\mathcal{O}_{T^*(X,Z)}),$$
for every $i.$
\end{prop}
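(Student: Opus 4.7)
The plan is to reduce the statement to the standard filtered-complex spectral sequence machinery from $\mathcal{D}$-module theory, adapted to the $\mathcal{A}[\mathcal{D}]$-setting. First, using that $\mathcal{M}^{\bullet}$ is quasi-isomorphic to a bounded complex of finitely generated $\mathcal{A}[\mathcal{D}]$-modules of finite rank, I would replace $\mathcal{M}^{\bullet}$ by such a model and equip each term $\mathcal{M}^{j}$ with a good $\mathcal{A}[\mathcal{D}]$-filtration in the sense of (\ref{eqn: Goodfil}), chosen so that the differentials $d^{j}:\mathcal{M}^{j}\to \mathcal{M}^{j+1}$ are strictly compatible with the filtrations. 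Proposition \ref{prop: Associated graded} then guarantees that $\mathrm{Gr}^{F}\mathcal{M}^{\bullet}$ is a bounded complex of finitely presented $\mathcal{O}_{T^{*}(X,\mathcal{Y})}$-modules.

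Next, I would construct a strict filtered resolution $\mathcal{L}^{\bullet}\twoheadrightarrow \mathcal{M}^{\bullet}$ by $\mathcal{A}[\mathcal{D}]$-modules carrying good filtrations, built inductively level by level: \'etale-locally on $\mathcal{Y}$, any finitely $\mathcal{A}[\mathcal{D}]$-generated module with a good filtration admits a strict surjection from a filtered free $\mathcal{A}[\mathcal{D}]$-module (a filtered analogue of the standard argument, combined with the Ritt-Radenbush Noetherianity of Subsect.~\ref{ssec: Properties of D-Ideal Sheaves}). Strictness is precisely the condition ensuring that $\mathrm{Gr}^{F}\mathcal{L}^{\bullet}\twoheadrightarrow \mathrm{Gr}^{F}\mathcal{M}^{\bullet}$ is again a free resolution, this time over $\mathrm{Gr}^{F}\mathcal{A}[\mathcal{D}]\simeq \mathcal{O}_{T^{*}(X,\mathcal{Y})}$, using Proposition~\ref{prop: Char inclusions}.

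With the resolution in hand, the Hom complex $\mathcal{C}^{\bullet}:=\mathcal{H}om_{\mathcal{A}[\mathcal{D}]}(\mathcal{L}^{\bullet},\mathcal{A}[\mathcal{D}])$ inherits a filtration from the dual of the good filtration on $\mathcal{L}^{\bullet}$ combined with the order filtration on $\mathcal{A}[\mathcal{D}]$, and its associated graded is canonically the complex $\mathcal{H}om_{\mathcal{O}_{T^{*}(X,\mathcal{Y})}}(\mathrm{Gr}^{F}\mathcal{L}^{\bullet},\mathcal{O}_{T^{*}(X,\mathcal{Y})})$, whose cohomology computes $\mathcal{E}\mathrm{xt}^{i}_{\mathrm{Gr}^{F}\mathcal{O}[\mathcal{D}]}(\mathrm{Gr}^{F}\mathcal{M}^{\bullet},\mathcal{O}_{T^{*}(X,\mathcal{Y})})$. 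Boundedness of the model complex, together with the first vanishing condition in (\ref{eqn: Goodfil}), renders the filtration on $\mathcal{C}^{\bullet}$ exhaustive and bounded below, so the associated spectral sequence is regular and converges to $\mathcal{E}xt^{i}_{\mathcal{A}[\mathcal{D}]}(\mathcal{M}^{\bullet},\mathcal{A}[\mathcal{D}])$ endowed with the induced filtration. The graded pieces $\mathrm{Gr}^{F}\mathcal{E}xt^{i}$ then appear as sub-quotients of the $E_{1}$-page, which is precisely the claimed $\mathrm{Ext}$-sheaf over $\mathcal{O}_{T^{*}(X,\mathcal{Y})}$.

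The main obstacle I anticipate is the construction of the strict filtered free resolution globally: filtered Nakayama-type arguments yield local surjections, but gluing them into a global strict resolution whose associated graded is itself a resolution requires controlling syzygies degree-by-degree. This is exactly where the Ritt-Radenbush Noetherianity and finite presentation of $\mathrm{Gr}^{F}\mathcal{M}^{\bullet}$ must be invoked to bound the length and ranks of the resolution. A secondary subtlety is verifying that the induced filtration on each cohomology sheaf $\mathcal{E}xt^{i}_{\mathcal{A}[\mathcal{D}]}$ is separated, so that the spectral sequence genuinely computes $\mathrm{Gr}^{F}\mathcal{E}xt^{i}$ rather than a subfactor of its completion; this follows from the boundedness of $\mathcal{M}^{\bullet}$ provided the filtrations on the resolution terms are bounded below uniformly in degree.
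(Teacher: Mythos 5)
Your proposal follows essentially the same route as the paper's own proof: replace $\mathcal{M}^{\bullet}$ locally by a bounded complex of finite-rank free $\mathcal{A}[\mathcal{D}]$-modules, endow the free terms with (shifted) good filtrations via Proposition \ref{prop: Char inclusions}, dualize by $\mathcal{H}om_{\mathcal{A}[\mathcal{D}]}(-,\mathcal{A}[\mathcal{D}])$, and read off $\mathrm{Gr}^{F}\mathcal{E}xt^{i}$ as a sub-quotient of the cohomology of the associated graded Hom complex via the filtered-complex spectral sequence. Your added attention to strictness of the filtered resolution and separatedness/convergence is a legitimate refinement of points the paper passes over with ``standard arguments,'' but it is not a different argument.
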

 
\begin{proof}
By assumptions write $\mathcal{M}^{\bullet}$ locally as a complex
$0\leftarrow \mathcal{M}\leftarrow \mathcal{V}_0\leftarrow \mathcal{V}_1\leftarrow \dots,$
with each $\mathcal{V}_i$ finitely generated and free $\mathcal{A}[\mathcal{D}]$-modules of finite rank. Locally, we may write $\mathcal{V}_i=\bigoplus_k\mathcal{A}[\mathcal{D}]\bullet \gamma_k^i$ on generators $\gamma=\{\gamma_k^i\}.$ 
Fix integers $\{c_{jk}\in \mathbb{Z}\}$ and define the desire good filtration by
$$\mathrm{Fl}(\mathcal{V}_i)=\{\mathrm{Fl}^m(\mathcal{V}_i)\},\hspace{1mm} with \hspace{2mm} \mathrm{Fl}^m(\mathcal{V}_i)=\bigoplus_k\mathrm{F}^{m-c_{jk}}(\mathcal{A}[\mathcal{D}])\bullet \gamma_k^i,$$
via the canonical filtration of Proposition \ref{prop: Char inclusions}.
On the one hand, by passing to graded objects we get, for every $m\in \mathbb{Z},$
$0\leftarrow \mathrm{Gr}_m^F(\mathcal{M})\leftarrow \mathrm{Gr}_m^F(\mathcal{V}_0)\leftarrow \mathrm{Gr}_m^F(\mathcal{V}_1)\leftarrow \cdots.$
On the other, dualizing via the $\mathcal{A}[\mathcal{D}]$-dual $\mathcal{H}om_{\mathcal{A}[\mathcal{D}]}(-,\mathcal{A}[\mathcal{D}]),$ gives a complex of $\mathcal{A}^r[\mathcal{D}_X^{op}]$-modules,
$$0\rightarrow \mathcal{H}om_{\mathcal{A}[\mathcal{D}]}(\mathcal{V}_0,\mathcal{A}[\mathcal{D}])\rightarrow  \mathcal{H}om_{\mathcal{A}[\mathcal{D}]}(\mathcal{V}_1,\mathcal{A}[\mathcal{D}])\rightarrow \cdots.$$
Since 
$\mathcal{E}xt_{\mathcal{A}[\mathcal{D}]}^k$ sheaves agree with the $k$-th cohomology of this complex, standard arguments give that 
$$\mathcal{E}xt_{\mathrm{Gr}^F\mathcal{A}[\mathcal{D}]}^k\big(\mathrm{Gr}^F(\mathcal{M}),\mathrm{Gr}^F\mathcal{A}[\mathcal{D}]\big)\simeq \mathcal{H}^k\big(\mathcal{H}om_{\mathrm{Gr}^F\mathcal{A}[\mathcal{D}]}(\mathrm{Gr}^F\mathcal{V}_{\bullet},\mathrm{Gr}^F\mathcal{A}[\mathcal{D}]\big),$$
for each $k$ and this shows in particular that $\mathrm{Gr}^F(\mathcal{H}^k(\mathcal{H}om_{\mathcal{A}[\mathcal{D}]}(\mathcal{V}_{\bullet},\mathcal{A}[\mathcal{D}])\big)$ is isomorphic to a sub-quotient of $\mathcal{H}^k\big(\mathrm{Gr}^F\mathcal{H}om_{\mathcal{A}[\mathcal{D}}(\mathcal{V}_{\bullet},\mathcal{A}[\mathcal{D}])\big).$
Via standard $\D$-module arguments and using Proposition \ref{prop: Associated graded}, we have that 
\begin{eqnarray*}
\mathrm{Char}\big(\mathcal{E}xt_{\mathcal{A}[\mathcal{D}]}^k(\mathcal{M},\mathcal{A}[\mathcal{D}])\big)&\subset &\mathrm{Supp}\big(\mathcal{E}xt_{\mathcal{O}_{T^*(X,Z)}}^k(\mu \mathrm{Gr}^F\mathcal{M}^{\bullet},\mathcal{O}_{T^*(X,Z)})\big)
\\
&\subset& \mathrm{Supp}(\mu\mathrm{Gr}\mathcal{M}^{\bullet}),
\end{eqnarray*}
again via the filtration of Proposition \ref{prop: Char inclusions}.
\end{proof}

Finally, let $\mathcal{A}_{X/S}^{\ell}=J_{X/S}(\mathcal{O}_C)$ for a commutative $\mathcal{O}_{X\times S}$-algebra $\mathcal{O}_C.$ Then, $i_s^*\mathcal{A}_{X/S}=i_s^*J_{X/S}(\mathcal{O}_C),$ is the pull-back sheaf of $\mathcal{D}_{X/S}$-algebras given by Proposition \ref{prop: pb Jets}. Assume $X/S$ is a non-characteristic fibration (c.f \ref{ssec: NC subspaces}). Then fiber-wise derived restriction defines a functor for each $s\in S$ on derived categories from $\mathcal{A}_{X/S}[\mathcal{D}_{X/S}]$-modules to $i_s^*\mathcal{A}_{X/S}[\mathcal{D}_X]$-modules.

\begin{rmk}
If $\mathcal{M}^{\bullet}$ is a finitely presented $\mathcal{A}[\mathcal{D}_{X/\times S/S}]$-module in amplitude $[p,q]$ then locally in a neighbourhood of $(x,s)\in X\times S$ there is a complex of free $\mathcal{A}[\mathcal{D}]$-modules of finite rank in degrees $[p-2n-n_S,q]$ quasi-isomorphic to $\mathcal{M}.$ Moreover, considering any two such modules, the sheaf of solutions satisfies $i_s^*\mathbb{R}\mathcal{S}\mathrm{ol}_{\mathcal{D}_{X\times S/S}}(\mathcal{M},\mathcal{N})\simeq \mathbb{R}\mathcal{S}\mathrm{ol}_{i_s^*\mathcal{D}_{X\times S/S}}(i_s^*\mathcal{M},i_s^*\mathcal{N}).$
\end{rmk}

\subsubsection*{Flatness}
Consider a complex $\mathcal{E}^{\bullet}$ of $\mathcal{O}_X$-modules and $\mathcal{M}^{\bullet}$ a complex of $\D_X$-modules. It naturally determines a complex of $\mathcal{O}_X$-modules, denoted $\mathrm{For}_{\mathcal{D}}(\mathcal{M}^{\bullet})$, by forgetting the $\D_X$-module structure. We follow \cite{BD}.
\begin{defn}
\label{defn: Flatness Definitions}
\normalfont
Complex $\mathcal{E}^{\bullet}$ (resp. $\mathcal{M}^{\bullet}$) is \emph{homotopically $\mathcal{O}_X$-flat} if for all acyclic $\mathcal{O}$-modules $\mathcal{F}^{\bullet}$, the complex $\mathcal{E}\otimes\mathcal{F}$ (resp. $\mathrm{For}_{\mathcal{D}}(\mathcal{M})\otimes \mathcal{F}$) is acyclic.
Similarly, $\mathcal{M}^{\bullet}$ is \emph{homotopically $\mathcal{D}_X$-flat} if for all acyclic complexes $\mathcal{L}^{\bullet}$ of $\mathcal{D}_X$-modules, $\mathcal{M}\otimes_{\mathcal{D}_X}\mathcal{L}$ is an acyclic complex of $k$-vector spaces.
Finally, a complex of $\mathcal{D}$-modules $\mathcal{M}^{\bullet}$ is \emph{homotopically $\D_X$-quasi-induced} if for all homotopically $\mathcal{O}_X$-flat $\D_X$-modules $\mathcal{L}^{\bullet}$ there is a quasi-isomorphism $\mathcal{M}^{\bullet}\otimes_{\mathcal{D}_X}^{\mathbb{L}}\mathcal{L}^{\bullet}\simeq \mathcal{M}\otimes_{\mathcal{D}}\mathcal{L}.$
\end{defn}
Any induced $\D_X$-module is quasi-induced. This can be written as the condition that for an $\mathcal{O}_X$-flat $\D_X$-module $\mathcal{M}$, we have $Tor_{>0}^{\mathcal{D}_X}(\mathcal{M},\mathcal{L})=0,$ or globally, as 
$$R\Gamma_{dR}(X,\mathcal{M}\otimes\mathcal{L})\simeq R\Gamma\big(X,h(\mathcal{M}\otimes\mathcal{L})\big).$$
We collect some properties of quasi-induced $\D$-modules.
\begin{prop}
The following statements hold:
\begin{enumerate}
\item The direct image of a quasi-induced
$\D_X$-module under an affine morphism is quasi-induced;
\item The exterior tensor product
of quasi-induced $\D_X$-modules is quasi-induced;
\item A holonomic $\D_X$-module is quasi-induced if and only if its support has dimension $0$.
\item Any tensor product of an $\mathcal{O}_X$-flat
$\D_X$-module and a quasi-induced $\D_X$-module is quasi-induced.
\end{enumerate}
\end{prop}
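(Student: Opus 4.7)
The plan is to verify each of the four assertions by pulling the defining equivalence
$$\mathcal{M}^{\bullet}\otimes^{\mathbb{L}}_{\mathcal{D}_X}\mathcal{L}^{\bullet}\simeq \mathcal{M}^{\bullet}\otimes_{\mathcal{D}_X}\mathcal{L}^{\bullet}$$
through standard $\mathcal{D}$-module operations, in the spirit of \cite{BD}. Items (1), (2) and (4) reduce to a projection formula, a K\"unneth-type decomposition, and an associativity identity respectively; the only genuinely delicate assertion is the characterization (3).

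For (1), let $f\colon X\to Y$ be affine and $\mathcal{L}$ a homotopically $\mathcal{O}_Y$-flat $\mathcal{D}_Y$-module. I would invoke the affine $\mathcal{D}$-module projection formula
$f_{*}\mathcal{M}\otimes^{\mathbb{L}}_{\mathcal{D}_Y}\mathcal{L}\simeq f_{*}\bigl(\mathcal{M}\otimes^{\mathbb{L}}_{\mathcal{D}_X}f^{*}\mathcal{L}\bigr)$, observe that $f^{*}\mathcal{L}$ remains $\mathcal{O}_X$-flat, collapse the inner derived tensor using the quasi-inducedness of $\mathcal{M}$, and conclude using exactness of the $\mathcal{D}$-module $f_{*}$ for affine morphisms. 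For (2), I would work locally on $X\times Y$, use the factorization $\mathcal{D}_{X\times Y}\simeq \mathcal{D}_X\boxtimes \mathcal{D}_Y$ to approximate a test $\mathcal{O}$-flat module by complexes of exterior tensors $\mathcal{L}_1\boxtimes \mathcal{L}_2$, and apply quasi-inducedness of $\mathcal{M}$ and of $\mathcal{N}$ independently in each variable before reassembling via the resulting bicomplex. For (4), I would exploit the identity
$(\mathcal{M}\otimes_{\mathcal{O}_X}\mathcal{N})\otimes^{\mathbb{L}}_{\mathcal{D}_X}\mathcal{L}\simeq \mathcal{N}\otimes^{\mathbb{L}}_{\mathcal{D}_X}(\mathcal{M}\otimes_{\mathcal{O}_X}\mathcal{L})$, valid because $\mathcal{O}_X$-flatness of $\mathcal{M}$ allows $\mathcal{M}\otimes_{\mathcal{O}_X}(-)$ to be passed through a $\mathcal{D}$-flat resolution, and then note that $\mathcal{M}\otimes_{\mathcal{O}_X}\mathcal{L}$ is again $\mathcal{O}_X$-flat, so quasi-inducedness of $\mathcal{N}$ delivers the claim.

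The interesting part is (3). For the \emph{if} direction, a holonomic module with zero-dimensional support decomposes, after a finite filtration, as a successive extension of $\mathcal{D}$-module direct images $i_{*}V$ for closed point immersions $i\colon\{x\}\hookrightarrow X$; since each $i$ is affine and every $k$-vector space on a point is trivially quasi-induced, (1) applies. The \emph{only if} direction is the main obstacle: arguing by contrapositive, I would test against the $\mathcal{O}_X$-flat $\mathcal{D}_X$-module $\mathcal{L}=\mathcal{O}_X$ and compute $\mathcal{M}\otimes^{\mathbb{L}}_{\mathcal{D}_X}\mathcal{O}_X$ via the Spencer/Koszul resolution of $\mathcal{O}_X$ by free $\mathcal{D}_X$-modules, which identifies it (up to the right-left conversion involving $\omega_X$) with the de Rham complex of $\mathcal{M}$. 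For holonomic $\mathcal{M}$ with positive-dimensional support this complex has non-trivial cohomology in negative degrees while $\mathcal{M}\otimes_{\mathcal{D}_X}\mathcal{O}_X$ sits in degree zero, so the two cannot be quasi-isomorphic. The delicate step is producing that negative-degree class globally; if the naive $\mathcal{L}=\mathcal{O}_X$ test is inconclusive on an affine chart, I would refine by testing against $\mathcal{O}_Z$ for a smooth closed subvariety $Z\subset X$ cut out transversally to the Lagrangian $\mathrm{Char}(\mathcal{M})$, using Kashiwara's theorem and Verdier duality to localize the obstruction onto a positive-dimensional component of $\mathrm{supp}(\mathcal{M})$.
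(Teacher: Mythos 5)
The paper gives no proof of this proposition: it is stated as imported background, essentially verbatim from \cite{BD} (2.1.8--2.1.12), so there is no argument of the authors' to compare yours against. Judged on its own terms, your plan for (1), (2) and (4) follows the standard route and is sound in outline: the affine projection formula together with exactness of $f_*$ handles (1); contracting one factor of $\D_X\boxtimes\D_Y$ at a time handles (2), although the step you phrase as ``approximating $\mathcal{L}$ by exterior tensors'' is not what is actually needed --- the real work in (2) is checking that $\mathcal{N}\otimes_{\D_Y}\mathcal{L}$ remains $\mathcal{O}_X$-flat after the first contraction, and that is where your sketch is thinnest; and the associativity identity in (4) is correct once you observe that $\mathcal{M}\otimes_{\mathcal{O}_X}\mathcal{P}$ is homotopically $\D_X$-flat whenever $\mathcal{P}$ is and $\mathcal{M}$ is $\mathcal{O}_X$-flat, so it may serve as the resolution.

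The one genuine misstep is your proposed fallback in (3): $\mathcal{O}_Z$ for a proper closed subvariety $Z\subset X$ is not an admissible test object. It is $\mathcal{O}_X$-torsion, hence as far from $\mathcal{O}_X$-flat as possible, and it is not even a $\D_X$-module --- the $\D$-module incarnation of $Z$ is $i_{+}\mathcal{O}_Z$, which again fails $\mathcal{O}_X$-flatness --- so the definition of quasi-induced says nothing about tensoring against it. Fortunately you do not need the fallback, because your primary argument closes on its own: by Kashiwara's constructibility theorem $\mathcal{M}\otimes^{\mathbb{L}}_{\D_X}\mathcal{O}_X\simeq\mathrm{DR}(\mathcal{M})$ is, up to shift, a perverse sheaf, and if $\dim\mathrm{supp}(\mathcal{M})=d>0$ then on a smooth dense open subset of a $d$-dimensional component of the support $\mathcal{M}$ restricts to a nonzero integrable connection, so $\mathrm{DR}(\mathcal{M})$ restricts there to a local system shifted into degree $-d$, whereas the underived tensor $\mathcal{M}\otimes_{\D_X}\mathcal{O}_X$ is concentrated in degree $0$. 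You should make that perversity/generic-smoothness step explicit rather than leave it as ``the delicate step,'' since it is the entire content of the only-if direction; with it stated, the contrapositive is complete and the $\mathcal{O}_Z$ detour should simply be deleted.
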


For a differential graded $\D$-algebra $\mathcal{A}$, put $U(\mathcal{A})$ the dg-$\mathcal{O}_X$-algebra which is quasi-coherent. 
\begin{defn}
    \label{definition: Flatness for D-Algebras}
    \normalfont 
    A morphism $f:\mathcal{A}\rightarrow \mathcal{B}$ of dg-$\D$-algebras is \emph{faithfully $\D$ flat} if $U(f):U(\mathcal{A})\rightarrow U(\mathcal{B})$ is faithfully flat.
\end{defn}
 If $f:\mathcal{A}\rightarrow \mathcal{B}$ is faithfully $\D$-flat then $\mathcal{B}$ is homotopically $\mathcal{A}$-flat if it is homotopically $\mathcal{A}$-flat as a dg-$\mathcal{A}[\mathcal{D}_X]$-module i.e. for all $\mathcal{M}^{\bullet}\in \DG(\mathcal{A}),$ the complex $\mathcal{M}\otimes_{\mathcal{A}}\mathcal{B}$ is acyclic. For non-dg-$\D$-algebras, e.g. those concentrated in degree zero, given a morphism $f:\mathcal{B}\rightarrow \mathcal{C}$, $\mathcal{C}$ is $\mathcal{B}$-flat as a $\D$-algebra if the morphism preserves $\D$-action and multiplication, and $\mathcal{C}$ is flat as a $\mathcal{B}$-module, via $f.$

\begin{prop}
\label{prop: Global HoFlat}
    Let $X$ be $D$-affine and and suppose that $Z$ is a smooth $\D$-scheme. Suppose that $\mathcal{M}^{\bullet}$ is a homotopically flat $\mathcal{O}_Z[\mathcal{D}_X]$-module. Then $\Gamma(Z,\mathcal{M})$ is homotopically flat over $\Gamma(Z,\mathcal{O}_{Z}[\mathcal{D}_X]).$   
\end{prop}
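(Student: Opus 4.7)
The plan is to transport the homotopical flatness across the categorical equivalence induced by $D$-affinity. First, I will upgrade the $D$-affinity hypothesis on $X$ to an effective $D$-affinity statement on $Z$. Since $Z$ is a smooth $\D$-scheme equipped with its structure morphism $p_{\infty}: Z \to X$ (a morphism of affine type in the $\D$-sense, via the isomorphism $\mathcal{O}_Z[\mathcal{D}_X] \simeq p_{\infty *}\mathcal{D}_Z$ modulo vertical operators from Proposition~\ref{prop: Char inclusions}), any quasi-coherent $\mathcal{O}_Z[\mathcal{D}_X]$-module pushes forward to a quasi-coherent $\mathcal{D}_X$-module on $X$. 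Combining with the $D$-affinity of $X$, the global sections functor $\Gamma(Z, -) = \Gamma(X, p_{\infty *}(-))$ is exact and faithful on $\mathrm{Mod}_{qc}(\mathcal{O}_Z[\mathcal{D}_X])$, and the sheafification functor $(-)^{\sim}$ from $\Gamma(Z, \mathcal{O}_Z[\mathcal{D}_X])$-modules to $\mathcal{O}_Z[\mathcal{D}_X]$-modules provides a quasi-inverse in the appropriate homotopical sense.

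Next, given an acyclic complex $L^{\bullet}$ of $\Gamma(Z, \mathcal{O}_Z[\mathcal{D}_X])$-modules, I would sheafify to obtain $\widetilde{L}^{\bullet}$, a complex of $\mathcal{O}_Z[\mathcal{D}_X]$-modules which remains acyclic since sheafification is exact. By the hypothesis that $\mathcal{M}^{\bullet}$ is homotopically $\mathcal{O}_Z[\mathcal{D}_X]$-flat in the sense of Definition~\ref{defn: Flatness Definitions}, the complex
\[
\mathcal{M}^{\bullet} \otimes_{\mathcal{O}_Z[\mathcal{D}_X]} \widetilde{L}^{\bullet}
\]
is acyclic as a complex of $k$-modules (equivalently, of sheaves). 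Applying the exact functor $\Gamma(Z, -)$ established above, the complex $\Gamma\bigl(Z,\, \mathcal{M}^{\bullet} \otimes_{\mathcal{O}_Z[\mathcal{D}_X]} \widetilde{L}^{\bullet}\bigr)$ remains acyclic.

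The final and principal step is to produce a natural quasi-isomorphism
\[
\Gamma(Z, \mathcal{M}^{\bullet}) \otimes_{\Gamma(Z,\, \mathcal{O}_Z[\mathcal{D}_X])} L^{\bullet} \;\xrightarrow{\;\simeq\;}\; \Gamma\bigl(Z,\, \mathcal{M}^{\bullet} \otimes_{\mathcal{O}_Z[\mathcal{D}_X]} \widetilde{L}^{\bullet}\bigr),
\]
which then forces the left-hand side to be acyclic, establishing homotopical flatness of $\Gamma(Z, \mathcal{M}^{\bullet})$. The strategy is to first verify this comparison on free $\Gamma(Z, \mathcal{O}_Z[\mathcal{D}_X])$-modules, where both sides reduce to direct sums of copies of $\Gamma(Z, \mathcal{M}^{\bullet})$, then to resolve $L^{\bullet}$ term-wise by free modules and invoke a standard double-complex spectral sequence argument, using exactness of $\Gamma(Z,-)$ to commute global sections past the resolution. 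Equivalently, one may argue more abstractly: the equivalence of categories $\Gamma(Z, -)$ from $D$-affinity is symmetric monoidal for the relevant tensor structure, hence preserves homotopical flatness tautologically.

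\textbf{Main obstacle.} The delicate point is the comparison isomorphism in the final step: although $\Gamma$ commutes with arbitrary colimits (hence with tensor products) for quasi-coherent modules over an ordinary $D$-affine scheme, here $\mathcal{O}_Z[\mathcal{D}_X]$ is a noncommutative sheaf of algebras mixing the commutative $\mathcal{O}_Z$-structure with the $\mathcal{D}_X$-action through the Cartan connection, and one must check that the monoidal structure on $\mathrm{Mod}_{qc}(\mathcal{O}_Z[\mathcal{D}_X])$ is compatible with $\Gamma$ without introducing higher $\mathrm{Tor}$ terms. This is where I would invoke an analog of the Raynaud--Gruson $\D$-flattening result cited in the excerpt after Remark~\ref{rmk: Ranks and Chern class}, together with smoothness of $Z$ ensuring that $\mathcal{O}_Z[\mathcal{D}_X]$ is locally of finite homological dimension over $\mathcal{O}_X[\mathcal{D}_X]$, so that the relevant Čech-type computations converge.
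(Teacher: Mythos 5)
Your plan is sound and lands on the same two pillars as the paper's argument: exactness and faithfulness of $\Gamma(Z,-)$ coming from $D$-affinity, and the comparison isomorphism $\Gamma(Z,\mathcal{N})\otimes_{\Gamma(Z,\mathcal{O}_Z[\mathcal{D}_X])}\Gamma(Z,\mathcal{L})\simeq \Gamma(Z,\mathcal{N}\otimes_{\mathcal{O}_Z[\mathcal{D}_X]}\mathcal{L})$ verified first on (direct sums of) finitely generated free $\mathcal{O}_Z[\mathcal{D}_X]$-modules. The difference is in which object you resolve. The paper exploits the hypothesis that $\mathcal{M}$ is flat as an $\mathcal{O}_Z$-module to produce a \emph{finite} left resolution of $\mathcal{M}$ by globally flat modules (the de Rham resolution), reduces to a two-term sequence $0\to\mathcal{N}_1\to\mathcal{N}_0\to\mathcal{M}\to 0$ with $\mathcal{N}_i$ free, and deduces global flatness of $\Gamma(Z,\mathcal{M})$ from injectivity of the induced map on global sections of tensor products. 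You instead keep $\mathcal{M}^{\bullet}$ fixed, sheafify the acyclic test complex $L^{\bullet}$, and resolve $L^{\bullet}$ termwise by free modules with a double-complex spectral sequence. Both work; the paper's route buys a cleaner finiteness statement (the de Rham resolution terminates, so no convergence issue arises), while yours stays closer to the definition of homotopical flatness and avoids having to identify which resolution of $\mathcal{M}$ is available. Two caveats on your write-up: the closing remark that $\Gamma(Z,-)$ is ``symmetric monoidal, hence preserves homotopical flatness tautologically'' overstates the situation, since $\mathcal{O}_Z[\mathcal{D}_X]$ is noncommutative and the relevant tensor pairs a right module against a left module, so the compatibility must be checked by hand exactly as you do in the free-module step; and the appeal to Raynaud--Gruson flattening in your ``main obstacle'' paragraph is not needed here --- the paper gets by with the free decomposition of the $\mathcal{N}_i$ and the finite length of the de Rham resolution, and nothing in the comparison step requires a flattening stratification.
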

\begin{proof}
    Any such $\mathcal{M}$ has a finite left resolution since it is flat as a $\mathcal{O}_Z$-module (de Rham resolution). Thus, to prove the claim suppose
$0\rightarrow \mathcal{N}_1\rightarrow \mathcal{N}_0\rightarrow \mathcal{M}\rightarrow 0,$ is a short exact sequence with $\mathcal{N}_i$ globally flat. We must prove that if $\mathcal{M}$ is flat, it is globally flat. To this end, decompose $\mathcal{N}_i$ as a direct sum over finitely generated free $\mathcal{O}_Z[\mathcal{D}]$-modules. Then 
for all $\mathcal{L}$ a complex of left $\mathcal{O}_Z[\mathcal{D}_X]$-modules, the natural map
$\Gamma(Z,\mathcal{N}_1)\otimes_{\Gamma(Z,\mathcal{O}_{Z}[\mathcal{D}_X])}\Gamma(Z,\mathcal{L})\rightarrow \Gamma(Z,\mathcal{N}_0)\otimes_{\Gamma(Z,\mathcal{O}_{Z}[\mathcal{D}])}\Gamma(Z,\mathcal{L}),$
is injective and we have an isomorphism,
$\Gamma(Z,\mathcal{N}_i)\otimes_{\Gamma(Z,\mathcal{O}_{Z}[\mathcal{D}])}\Gamma(Z,\mathcal{L})\simeq \Gamma(Z,\mathcal{N}_i\otimes_{\mathcal{O}_Z[\mathcal{D}]}\mathcal{L}),$ thus since $\mathcal{M}$ is (homotopically) flat by assumption, the map $\mathcal{N}_1\otimes_{\mathcal{O}_Z[\mathcal{D}]}\mathcal{L}\hookrightarrow \mathcal{N}_0\otimes_{\mathcal{O}_Z[\mathcal{D}]}\mathcal{L})$ is an injective $\D$-module morphism. 
\end{proof}

\begin{prop}
    Suppose that $X$ is quasi-projective. Then there exists a homotopy $\D$-flat non-unital commutative $\D$-algebra resolution of $\mathcal{O}_X.$
\end{prop}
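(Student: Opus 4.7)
The strategy is to produce $\widetilde{\mathcal{A}} \xrightarrow{\sim} \mathcal{O}_X$ by cofibrant replacement in the category of non-unital commutative dg-$\mathcal{D}_X$-algebras, which carries a model structure in characteristic zero with quasi-isomorphisms as weak equivalences and surjections as fibrations (as in \cite{BD} and the setup of $\DG$ in the preceding sections). Since $X$ is quasi-projective, it has enough finite-rank locally free sheaves: in particular one can find a surjection $\mathcal{E} \twoheadrightarrow \mathcal{O}_X$ with $\mathcal{E}$ a locally free $\mathcal{O}_X$-module of finite rank, which in turn gives a surjection from the induced $\mathcal{D}_X$-module $\mathcal{E} \otimes_{\mathcal{O}_X} \mathcal{D}_X$ onto $\mathcal{O}_X.$ This will be the initial stage of the resolution.

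First, I would iteratively build a non-unital commutative dg-$\mathcal{D}_X$-algebra $\widetilde{\mathcal{A}}$ by freely adjoining generators to kill cycles: at each stage, and for every cohomology class obstructing the map $\widetilde{\mathcal{A}} \to \mathcal{O}_X$ from being a quasi-isomorphism, append a free generator in the appropriate degree whose differential hits a representative of that class. This is the usual transfinite small-object argument producing a cofibrant replacement, terminating (after countably many stages) in a quasi-isomorphism $\widetilde{\mathcal{A}} \xrightarrow{\sim} \mathcal{O}_X.$ By construction, each attaching cell is of the form $\mathrm{Sym}_{\mathcal{D}}^{\bullet}(\mathcal{F}[n])$ where $\mathcal{F}$ is an induced $\mathcal{D}_X$-module of the form $\mathcal{G} \otimes_{\mathcal{O}_X} \mathcal{D}_X$ for $\mathcal{G}$ a locally free $\mathcal{O}_X$-module of finite rank (available by quasi-projectivity), so $\widetilde{\mathcal{A}}$ is quasi-free on induced generators.

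Second, to verify that $\widetilde{\mathcal{A}}$ is homotopically $\mathcal{D}_X$-flat in the sense of Definition \ref{definition: Flatness for D-Algebras}, I would invoke the facts collected earlier in this subsection: induced $\mathcal{D}_X$-modules are quasi-induced and homotopically $\mathcal{O}_X$-flat, tensor products of quasi-induced $\mathcal{D}_X$-modules are quasi-induced, and these properties are stable under filtered colimits and pushouts along cofibrations in $\DG$. Combined with Proposition \ref{prop: Global HoFlat} which promotes local homotopy flatness to the global level under $D$-affinity, the inductive cell structure of $\widetilde{\mathcal{A}}$ then propagates homotopy $\mathcal{D}_X$-flatness from each stage to the colimit.

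The main obstacle is the passage from tensor powers to symmetric powers of homotopically flat $\mathcal{D}_X$-modules: one needs that $\mathrm{Sym}_{\mathcal{D}}^{n}(\mathcal{F})$ inherits homotopy flatness from $\mathcal{F}$. In characteristic zero this is controlled by the symmetrization idempotent $\tfrac{1}{n!}\sum_{\sigma \in S_n}\sigma$ which exhibits $\mathrm{Sym}_{\mathcal{D}}^{n}(\mathcal{F})$ as a direct summand of $\mathcal{F}^{\otimes n}$, reducing the question to iterated tensor products where the quasi-induced property propagates cleanly. The remaining bookkeeping, ensuring the transfinite tower of pushouts stays inside the subcategory of quasi-induced objects so flatness is preserved at each step, is modeled on the Koszul-style resolutions in \cite{BD} and poses no new conceptual difficulty beyond the standard small-object machinery.
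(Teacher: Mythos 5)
Your construction is essentially the one the paper has in mind: the paper states this result without proof (it is the semi-free resolution of \cite{BD}), and the remarks immediately following it describe exactly the structure you build, namely a resolution whose terms are free non-unital $\D$-algebras $\mathrm{Sym}^{>0}(\mathcal{M})$ on graded $\D$-modules $\mathcal{M}$ that, by quasi-projectivity, can be taken to be direct sums of induced modules $L\otimes_{\mathcal{O}_X}\mathcal{D}_X$ for negative powers $L$ of an ample line bundle, with flatness propagated through symmetric powers via the characteristic-zero splitting off of $\mathcal{F}^{\otimes n}$ and the projection formula for induced modules. The only cosmetic slip is that the relevant notion is the homotopical flatness of complexes from Definition \ref{defn: Flatness Definitions} rather than the faithfully $\D$-flat morphisms of Definition \ref{definition: Flatness for D-Algebras}, and the appeal to Proposition \ref{prop: Global HoFlat} is not actually needed.
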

Moreover, for arbitrary $X$ one can take a resolution whose terms are isomorphic as graded commutative $\D$-algebras to free $\D$-algebras on $X$-locally projective graded $\D$-modules $\mathcal{M}$ in degrees $\leq 0$ i.e. $\mathrm{Sym}^{>0}(\mathcal{M}).$ 

When $X$ is quasi-projective $\mathcal{M}$ can be chosen to be a direct sum of induced $\D$-modules $\mathrm{ind}_{\mathcal{D}}^{\ell}(L)$ for a line bundle $L.$ In particular, each $L$ is a negative power of a given ample line bundle.

\subsection{$\mathcal{D}_X$-Geometric Hilbert Polynomials}
\label{ssec: D-Hilb Polynomials}
Given a linear system $\mathcal{M}$, its associated symbol generates an ideal in $\mathbb{C}[\xi_1,\ldots,\xi_n].$ This ideal gives rise to a variety in $T^*X$, denoted as usual $\mathrm{Char}(\mathcal{M})$ and one gets a (polynomial) function
$H_{\mathcal{M}}'(n)$ defined to be the dimension of the space of formal solutions. Via standard constructions of Gröbner bases, there exists a Hilbert function $H_{\mathcal{M}},$ and for $n\gg 0$, $H_{\mathcal{M}}(n)=H_{\mathcal{M}}'(n).$
We highlight this in two situations of interest.
\begin{notate}
Following \cite{CFK2}, given a (graded) associative algebra $A$ and a left $A$-module $M,$ one has $A$-Grassmannians
$Gr_{A}(k,M)\subset Gr(k,M),$
whose scheme of left ideals is $J(k,A):=Gr_{A}(k,A).$ We apply it to commutative algebras $A$. This describes two-sided ideals of $A$ whose quotients $A/I$ are again commutative.
\end{notate}

Fix coordinates $(x;\xi)\in T^*X$ and some integer codimension $C\in \mathbb{N}.$ Consider non-reduced sub-schemes of $T^*X$ cut-out by symbol ideals. We consider not only prime ideals $\mathcal{I}$ but $\mathcal{I}$-primary ideals $\mathcal{J}$ and consider the Hilbert scheme of points 
$Hilb^k\big(\mathbb{C}[\xi_1,\ldots,\xi_C\!]\big).$

\begin{prop}
\label{cor: Application 1}
Let $J_{C}(k,T^*X)$ be the set of codimension $C$ primary ideals in $\mathcal{O}_{T^*X}$ of multiplicity $k.$ 
Then there is an isomorphism of sets
$$Hilb^k\big(\mathbb{C}[\xi_1,\ldots,\xi_C]\big)\simeq J_{C}(k,T^*X),$$ as well as an equivalence of quasi-projective schemes,
\begin{equation}
    \label{eqn: Hilb and G equi}
Hilb^k\big(\mathbb{C}[\xi_1,\ldots,\xi_C]\big)\simeq Gr_{\mathbb{C}}^{\mathcal{D}}(k,\mathbb{C}[z_1,\ldots,z_C]),
\end{equation}
where $Gr_{\mathbb{C}}^{\mathcal{D}}(k,\mathbb{C}[z_1,\ldots,z_C])$ is the scheme of $k$-dimensional $\mathbb{C}$-vector subspaces of $\mathbb{C}[z_1,\ldots,z_C]$ closed under differentiation i.e. $\D$-subspaces.
\end{prop}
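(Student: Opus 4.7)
The plan is to combine two classical correspondences: the set-theoretic description of the Hilbert scheme of $k$ points on affine space as finite-colength ideals, and Macaulay's inverse-systems (apolarity) duality realizing such ideals as $k$-dimensional subspaces of the dual polynomial ring closed under partial differentiation. I would then upgrade both to morphisms of schemes and check they are mutually inverse.

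First I would identify $\mathrm{Hilb}^k(\mathbb{C}[\xi_1,\ldots,\xi_C])$, the Hilbert scheme of $k$ points on the fiber $\mathbb{A}^C\simeq T_x^*X$, with the set of ideals $I\subset \mathbb{C}[\xi_1,\ldots,\xi_C]$ of colength $k$. Any such ideal has $\dim V(I)=0$ so has codimension $C$; requiring that $V(I)$ be supported at a single closed point is exactly the primary condition. The multiplicity in the sense used in the statement is then $\dim_{\mathbb{C}}\mathbb{C}[\xi]/I=k$, giving the set-theoretic bijection $\mathrm{Hilb}^k(\mathbb{C}[\xi_1,\ldots,\xi_C])\simeq J_C(k,T^*X)$.

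For the second equivalence I would invoke Macaulay's apolarity. Put $R=\mathbb{C}[\xi_1,\ldots,\xi_C]$ and let $R$ act on $R^*=\mathbb{C}[z_1,\ldots,z_C]$ by contraction, i.e.\ $\xi_i\cdot f:=\partial f/\partial z_i$; in characteristic zero this is the standard identification of $R^*$ with the Matlis/graded dual of $R$ at the maximal ideal $\mathfrak{m}=(\xi_1,\ldots,\xi_C)$. The assignments
\[
I\;\longmapsto\; I^{\perp}:=\{f\in R^*\mid g(\partial_z)f=0,\ \forall g\in I\},
\qquad
V\;\longmapsto\;\mathrm{Ann}_R(V)
\]
are mutually inverse, and restrict to a bijection between $\mathfrak{m}$-primary ideals $I$ with $\dim_{\mathbb{C}}R/I=k$ and $k$-dimensional $\mathbb{C}$-subspaces $V\subset R^*$ stable under each $\partial/\partial z_i$, i.e.\ $\mathcal{D}$-subspaces in the sense of the statement. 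Note that every such $V$ is automatically contained in polynomials of degree $\leq k-1$, so the construction really takes place inside the finite-dimensional space $R^*_{<k}$.

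To promote this to the scheme-theoretic isomorphism \eqref{eqn: Hilb and G equi}, I would define morphisms in both directions via the universal families. On the Hilbert side, the ideal sheaf of the universal length-$k$ subscheme on $\mathrm{Hilb}^k\times \mathbb{A}^C$ produces a family of annihilators inside the trivial bundle with fiber $R^*_{<k}$; the $\partial/\partial z_i$-stability is checked fiberwise and is therefore a closed condition, so this classifies a morphism into $Gr_{\mathbb{C}}^{\mathcal{D}}(k,\mathbb{C}[z_1,\ldots,z_C])$. Conversely, the universal $\mathcal{D}$-stable subbundle on $Gr_{\mathbb{C}}^{\mathcal{D}}$ determines a flat family of ideals of colength $k$ whose quotient is free of rank $k$, yielding the inverse morphism. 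The $\mathcal{D}$-stability condition is given by finitely many linear incidence equations inside the usual Grassmannian, which makes $Gr_{\mathbb{C}}^{\mathcal{D}}(k,\mathbb{C}[z])$ quasi-projective. The main technical obstacle will be verifying that Macaulay's perp-construction is functorial in flat families over an arbitrary base scheme $S$ and that it commutes with arbitrary base change, so that the two functors defined are naturally isomorphic; this is handled cleanly by reducing to the finite-dimensional subspace $R^*_{<k}$, after which the whole construction becomes a linear-algebraic operation over $\mathcal{O}_S$ and both sides become visibly the same closed subscheme of an ordinary Grassmannian.
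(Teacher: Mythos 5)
Your argument is essentially the paper's own: the correspondence you call Macaulay apolarity (ideal $\mapsto$ solution space $I^{\perp}$ under the contraction action $\xi_i\mapsto \partial/\partial z_i$, inverse given by $V\mapsto \mathrm{Ann}_R(V)$) is exactly the paper's pairing of $\mathrm{Sol}(\mathcal{I})$ with $\mathrm{Ann}_{\mathbb{C}[\partial_1,\ldots,\partial_C]}(V)$ after the formal replacement $\partial_i\leftrightarrow\xi_i$. Your additional care in upgrading the bijection to a scheme isomorphism via universal families and reduction to the finite-dimensional subspace $R^*_{<k}$ is a welcome strengthening of the paper's proof, which only establishes the pointwise correspondence and cites Grothendieck for quasi-projectivity.
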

\begin{proof}
Firstly, by Grothendieck \cite{Gro3} we have that $Hilb^k(\mathbb{C}[\xi])$ is a quasi-projective scheme over $\mathbb{C}$. Its classical points are
ideals of co-length $k$ in the local ring $\mathbb{C}[\xi]$ (see also \cite{HL}). This ring is the completion of the polynomial ring in variables $\xi_i$ given by localization at the prime $\mathcal{I}.$ Recall the multiplicity $k$ of a primary ideal $\mathcal{J}$ over its
prime $\mathcal{I}=\sqrt{\mathcal{J}}$ is the length of a corresponding Artinian local ring. 

Let $\mathcal{I}$ denote a point of $J_{C}(k,T^*X)$ i.e. a primary ideal and note that by making formal replacements
$\partial_i\leftrightarrow \xi_i,$ with $\partial_{z_i}$ for $i=1,\ldots,C,$ (essentially via microlocalizing or Fourier-transformation) this ideal gives a point in 
$Hilb^k\big(\mathbb{C}[\partial_1,\ldots,\partial_{C}]\big).$ In one direction the differentially stable vector space is determined by the space of solutions in Proposition \ref{prop: Sol}: 
$$Sol(\mathcal{I}):=\{f\in \mathbb{C}[z_1,\ldots,z_C]|\mathbf{F}\bullet f=0,\forall \mathbf{F}\in \mathcal{I}\}.$$
Conversely, any $k$-dimensional $\mathbb{C}$-vector space $V$ stable under the action by $\mathcal{D}_{\mathbb{C}^C}$, determines a point in the punctual Hilbert scheme via $\mathcal{I}:=\mathrm{Ann}_{\mathbb{C}[\partial_1,\ldots,\partial_C]}(V),$
or equivalently $\mathrm{Ann}_{\mathbb{C}[\xi_1,\ldots,\xi_C]}(V).$
Thus, the point of $J_{C}(k,T^*X)$ determined by $\mathcal{I}$ is the ideal $\mathcal{J}:=\sqrt{\mathrm{Ann}_{\mathbb{C}[\xi_1,\ldots,\xi_C]}(V)}.$ The corresponding algebraic variety defined by $\mathcal{J}$ is the characteristic variety (\ref{annideal}). 
\end{proof}

These ideas also apply to the study of ideals in the Weyl algebra. That is, consider the moduli problem of studying finitely generated ideals in the $n$-th Weyl algebra $A_n:=\mathbb{C}[x_1,\ldots,x_n]\big<\partial_1,\ldots,\partial_n\big>.$
Equivalently, consider a finite set of generators $N\subset (A_n)^r$ given as an $A_n$-submodule, and study quotient modules
$M:=(A_n)^r/N.$ Also, we could study modules of the form
$$\mathcal{M}:=\mathcal{D}_{\mathbb{C}^n}\otimes_{A_n}\big(A_n^r/N\big),$$
for some generators $N=\sum_{i=1}^s A_n\cdot P_i$ for $P_i\in (A_n)^r,$ for $i=1,\ldots,s.$

\begin{prop}
\label{cor: Application 2}
    There is an equivalence between finitely generated left ideals in $A_n$ with a trivialization to $\mathcal{D}_{\mathbb{C}^n}$ and ideal sheaves
    $\mathcal{I}_{\nu}\subset \mathcal{O}_{T^*X},$
    with fixed numerical characteristics given via the polynomial
    $H:=\sum_{\nu=1}^rdim_{\mathbb{C}}\big(\mathbb{C}\{x,\xi\}/\mathcal{I}_{\nu}+\mathcal{J}^k\big),$
    for the maximal ideal $\mathcal{J}$ of $\mathbb{C}\{x,\xi\}.$
\end{prop}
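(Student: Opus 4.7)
The plan is to construct the correspondence via the Bernstein filtration $F^{\bullet}A_n$ on the Weyl algebra, whose associated graded $\mathrm{gr}^{F}(A_n)\simeq \mathbb{C}[x_1,\ldots,x_n,\xi_1,\ldots,\xi_n]\simeq \mathcal{O}_{T^*\mathbb{C}^n}$ identifies with the coordinate ring of the cotangent bundle. This is the algebraic counterpart of the symbol map $\sigma \colon A_n\to \mathrm{gr}^{F}(A_n)$, parallel to Proposition \ref{cor: Application 1}, except that symbols are now treated globally on all of $T^*X$ rather than locally at one closed point.

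First, I would equip any finitely generated left ideal $I\subset A_n$ with a good filtration (which exists by finite generation) and take the associated graded $\mathrm{gr}^{F}(I)\subset \mathcal{O}_{T^*X}$. By classical $\mathcal{D}$-module theory, $\sqrt{\mathrm{gr}^{F}(I)}$ is independent of the choice of good filtration and cuts out the characteristic variety $\mathrm{Char}(A_n/I)$. Decomposing $\mathrm{gr}^{F}(I)=\bigcap_{\nu=1}^{r}\mathcal{I}_{\nu}$ into primary components indexed by the irreducible components of the characteristic variety produces the required family of ideal sheaves. The trivialization to $\mathcal{D}_{\mathbb{C}^n}$ ensures that these primary components, assembled from local information at each irreducible component, glue to honest ideal sheaves in $\mathcal{O}_{T^*X}$ compatible with the global $\mathcal{D}$-module structure on $\mathcal{D}_{\mathbb{C}^n}\otimes_{A_n}(A_n/I)$.

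Next, I would verify that the numerical invariant matches. For $k\gg 0$, each summand $\dim_{\mathbb{C}}\bigl(\mathbb{C}\{x,\xi\}/\mathcal{I}_\nu+\mathcal{J}^{k}\bigr)$ is polynomial in $k$ of degree equal to $\dim V(\mathcal{I}_{\nu})$ with leading coefficient the Samuel multiplicity, by the Hilbert--Samuel theorem applied componentwise. Summing over $\nu$ recovers, by additivity of multiplicities along the primary decomposition (Bernstein's theorem for characteristic cycles), the Hilbert--Samuel polynomial of $A_n/I$ with respect to its good filtration. The identification of this polynomial with the intrinsic invariant $H$ then follows by Artin--Rees to compare any good filtration with the canonical Bernstein one.

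For the inverse direction, given an ideal sheaf $\mathcal{I}=\bigcap_{\nu}\mathcal{I}_{\nu}$ realizing the prescribed $H$, I would choose generators $\bar{f}_i$ of $\mathcal{I}$, lift them via a set-theoretic splitting of $\sigma$ to operators $P_i\in A_n$, and set $I:=\sum_i A_n\cdot P_i$. The main obstacle is that in general one only has $\mathrm{gr}^{F}(I)\supsetneq \mathcal{I}$, so the lifts must be constrained to form a Gröbner (equivalently, Janet or Pommaret) basis. I expect to resolve this by invoking Spencer regularity in the sense of Definition \ref{defn: D-geom m-involutive}, which by Proposition \ref{prop: Unifying} is equivalent to Castelnuovo--Mumford regularity of the symbol module and therefore guarantees the existence of an involutive basis whose lifts satisfy $\mathrm{gr}^{F}(I)=\mathcal{I}$. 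Independence of the resulting map on moduli from all intervening choices of filtration and lift is then standard via the comparison of good filtrations.
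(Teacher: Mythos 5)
Your construction diverges from the paper's at the very first step, in how the family $\{\mathcal{I}_\nu\}_{\nu=1}^{r}$ is produced. In the paper the index $\nu$ runs over the components of the free module $(A_n)^{r}$: the setup preceding the proposition concerns submodules $N\subset (A_n)^{r}$, a Gr\"obner basis is partitioned into sets $\mathbf{G}_{\nu}=\{P\mid lp(P)=\nu\}$ according to the leading point (the component $i$ in $lexp(P)=(\alpha,i)$), and $\mathcal{I}_{\nu}$ is generated by the $\nu$-th symbol components $\sigma(\mathbf{G}_{\nu})$. The sum $\sum_{\nu}\dim_{\mathbb{C}}\bigl(\mathbb{C}\{x,\xi\}/\mathcal{I}_{\nu}+\mathcal{J}^{k}\bigr)$ is then the staircase count for the leading-term module of $\mathrm{gr}^{F}(N)$ inside $\mathbb{C}\{x,\xi\}^{r}$, i.e.\ the Hilbert function of $\mu(\mathrm{Gr}^{F}\mathcal{M})$ for a coherent filtration. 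You instead take $r$ to be the number of primary components of $\mathrm{gr}^{F}(I)$ and let the $\mathcal{I}_{\nu}$ be those components. That is a genuinely different decomposition, and the difference is not cosmetic.

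The concrete gap is in your matching of the numerical invariant. Additivity along a primary decomposition $\mathrm{gr}^{F}(I)=\bigcap_{\nu}\mathcal{I}_{\nu}$ holds only at the level of the leading coefficient (the multiplicities of top-dimensional components, which is all that Bernstein's additivity of characteristic cycles gives you); the full Hilbert--Samuel function of $R/\bigcap_{\nu}\mathcal{I}_{\nu}$ differs from $\sum_{\nu}\dim_{\mathbb{C}}\bigl(R/\mathcal{I}_{\nu}+\mathcal{J}^{k}\bigr)$ by inclusion--exclusion corrections supported on the pairwise intersections of components, and these contribute to the lower-order terms of the polynomial. Since the proposition fixes the entire polynomial $H$ and not merely its degree and leading coefficient, your sum does not recover $H$ as defined. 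The component-wise decomposition by leading points avoids this entirely, because the leading-exponent module splits as a direct sum over the free-module components, so the dimension count is exactly additive with no correction terms. Your treatment of the inverse direction (lifting generators, controlling $\mathrm{gr}^{F}(I)\supsetneq\mathcal{I}$ via involutive bases and Proposition \ref{prop: Unifying}) is in the right spirit and no less rigorous than the paper's own sketch, but it would need to be re-based on the correct indexing before it applies.
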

\begin{proof}
Notice that 
$H(k)$ of (\ref{ssec: D-Hilb Polynomials}) agrees with the Hilbert polynomial of the microlocalization $\mu\big(gr^F(\mathcal{M})\big)$ for a `canonical' filtration. That is, using coherent filtrations 
$$H_{\mu(\mathrm{Gr}^F(\mathcal{M}))}(k)=\mathrm{dim}_{\mathbb{C}}\big(\mu \mathrm{Gr}^F\mathcal{M}/\mathcal{J}^k\cdot \mu \mathrm{Gr}^F\mathcal{M}\big).$$
The result follows from a standard argument via Gröbner bases. Roughly, let $\mathbf{G}_{\nu}:=\{P| lp(P)=\nu\},\forall\nu=1,\ldots,r,$
where $lp(P)=i$ is the leading point of $P$, defined when $lexp(P)=(\alpha,i).$ Since the characteristic variety is given by 
$$\mathrm{Char}(\mathcal{M})=\bigcup_{\nu=1}^r\big\{(x,\xi)\in T^*X|\sigma_{\nu}(P)(x,\xi)=0\forall P\in\mathbf{G}_{\nu}\big\}\hookrightarrow T^*X,$$
the result follows as the ideal $\mathcal{I}_{\nu}$ is generated by 
$\sigma(\mathbf{G}_{\nu}):=\{\sigma(P)_{\nu}|P\in\mathbf{G}_{\nu}\}.$
\end{proof}

In the $\D$-algebraic setting these notions must be adjusted using canonical filtrations and the characteristic module (\ref{eqn: Char Module of A}).

\subsubsection{Growth functions and Krull dimension}
Consider a $k$-th order PDE $Z=\{Z_k\subset J_X^kE\}.$ Following ideas of \cite{B}, we may define a numerical growth function,
\begin{equation}
    \label{eqn: Growth}
    G_{Z_k}:\mathbb{N}\rightarrow \mathbb{N}, G_{Z}(n):=\mathrm{dim}(\mathrm{Pr}_{\ell}(Z_k)).
    \end{equation}
Say that $Z$ is of \emph{polynomial growth} if there exists a numerical polynomial $H\in \mathbb{R}[t],$ such that for $r\gg 0, \mathrm{dim}(\mathrm{Pr}_r(Z_k))=H(r),$ for $r\geq k.$  

Hilbert polynomials may be defined for symbolic systems associated with infinite-prolongations $Z^{\infty}$ of formally integrable equations. Thus, 
 $\D_X$-schemes determined by formally $\D$-involutive ideal sheaves are of polynomial growth. 
Indeed, for $\D$-involutive ideal sheaves, fixing the $\D$-Hilbert polynomial is equivalent to fixing that of its characteristic (symbolic) $\D$-module.
Fix a differentially generated ideal $\mathcal{I}$ with filtration $F^k\mathcal{I}$. 

 It is natural to define the `Krull dimension $\I$', by
\begin{equation}\label{eqn: Krull}
\mathfrak{d}_{\mathcal{I}}:\mathbb{Z}_{+}\rightarrow \mathbb{Z}_+, k\mapsto \mathfrak{d}_{\mathcal{I}}(k):=dim\big(F^k\mathcal{A}/F^k\mathcal{I}\big).
\end{equation}
The following result is easily checked.
\begin{prop}
\label{prop: D-Dim Polynomial}
    Let $\mathcal{A}$ be the $\D$-smooth infinite jet $\mathcal{D}_X$-algebra in $\mathrm{rank}(E)$-variables with $d_X=\mathrm{dim}(X).$ Let $\mathcal{I}$ be a $\D$-ideal corresponding to a formally integrable involutive algebraic PDE. Then, there is a numerical polynomial $\mathbf{h}_{\mathcal{I}}(k)\in\mathbb{Q}[k],$ such that $\mathbf{h}_{\mathcal{I}}(k)=\mathrm{dim}(F^k\mathcal{A}/F^k\mathcal{I}),$ for sufficiently large $k$. Moreover, $\mathbf{h}_{\mathcal{I}}$ satisfies:
    \begin{itemize}
        \item[(i)] $0\leq \mathbf{h}_{\mathcal{I}}(k)\leq \mathrm{rank}(E)\cdot C_{d_X}^{k+d_X}.$
        \item[(ii)] If $\mathcal{I}\subseteq \mathcal{J}$, then $\mathbf{h}_{\mathcal{J}}\leq \mathbf{h}_{\mathcal{I}}.$

        \item[(iii)] Prime $\mathcal{D}_X$-ideals $\mathcal{I}\subseteq\mathcal{J}$ are equal if and only if $\mathbf{h}_{\mathcal{I}}=\mathbf{h}_{\mathcal{J}}.$ 
    \end{itemize}
    Furthermore, the degrees of this polynomial is bounded as $deg_{k}(\mathbf{h}_{\mathcal{I}})\leq \mathrm{dim}(X).$
\end{prop}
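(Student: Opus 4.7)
The plan is to reduce the statement to the classical Hilbert--Serre theorem for finitely generated graded modules over a commutative polynomial ring, applied to the associated graded module $\mathrm{gr}(\mathcal{A}/\mathcal{I})$ under the standard jet filtration of Definition \ref{defn: Standard filt}. By Remark \ref{gr(I)} this associated graded sits inside $\mathrm{gr}(\mathcal{A}) \simeq \mathrm{Sym}_{\mathcal{O}_X}(\Theta_X) \otimes_{\mathcal{O}_X} \mathcal{E}$, whose $k$-th graded piece is a locally free $\mathcal{O}_X$-module of rank $m \cdot \binom{k+n-1}{n-1}$ with $n = d_X$ and $m = \mathrm{rank}(E)$. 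The elementary identity
\begin{equation*}
\dim\bigl(F^k\mathcal{A}/F^k\mathcal{I}\bigr) \;=\; \sum_{j=0}^{k} \dim \mathrm{gr}_j(\mathcal{A}/\mathcal{I}),
\end{equation*}
holding because the standard filtration is by $\mathcal{O}_X$-coherent pieces, expresses $\mathbf{h}_{\mathcal{I}}$ as the summatory function of the graded Hilbert function of $\mathrm{gr}(\mathcal{A}/\mathcal{I})$.

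The first essential step is to establish finite generation of $\mathrm{gr}(\mathcal{I})$ as a graded $\mathrm{gr}(\mathcal{A})$-module. This is precisely where formal integrability and involutivity enter: by Proposition \ref{prop: Unifying} these hypotheses ensure that $\mathcal{I}$ is Spencer regular of some finite degree $r$, so that $\mathcal{I} \simeq \mathcal{D}_X \bullet F^r\mathcal{I}$ and hence $\mathrm{gr}(\mathcal{I})$ is generated in degrees $\leq r$. The Hilbert--Serre theorem, applied fiberwise and globalized via the Ritt--Radenbush theorem to supply Noetherianity of radical differential ideals, then produces a numerical polynomial $p_{\mathcal{I}}(k) \in \mathbb{Q}[k]$ of degree at most $n-1$ with $\dim \mathrm{gr}_k(\mathcal{A}/\mathcal{I}) = p_{\mathcal{I}}(k)$ for $k \gg 0$. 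Summation, combined with the discrete antiderivative $\sum_{j=0}^{k} \binom{j+s}{s} = \binom{k+s+1}{s+1}$, shows that $\mathbf{h}_{\mathcal{I}}(k)$ is a numerical polynomial of degree at most $n = d_X$, giving both existence and the degree bound.

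Properties (i)--(iii) are then verified as follows. For (i), non-negativity is immediate and the surjection $F^k\mathcal{A} \twoheadrightarrow F^k\mathcal{A}/F^k\mathcal{I}$ together with the fiberwise count $\dim F^k\mathcal{A} = m \cdot C_{d_X}^{k+d_X}$ gives the upper bound. Monotonicity in (ii) follows from the induced factorization $F^k\mathcal{A}/F^k\mathcal{I} \twoheadrightarrow F^k\mathcal{A}/F^k\mathcal{J}$ whenever $\mathcal{I} \subseteq \mathcal{J}$. For the prime equality (iii), the short exact sequence $0 \to \mathcal{J}/\mathcal{I} \to \mathcal{A}/\mathcal{I} \to \mathcal{A}/\mathcal{J} \to 0$, combined with $\mathbf{h}_{\mathcal{I}} = \mathbf{h}_{\mathcal{J}}$, forces $\dim(F^k\mathcal{J}/F^k\mathcal{I})$ to vanish for $k \gg 0$; but primality of $\mathcal{I}$ makes $\mathcal{A}/\mathcal{I}$ a differential domain, so any nonzero class in $\mathcal{J}/\mathcal{I}$ would generate via total derivatives a $\mathcal{D}_X$-submodule of strictly positive polynomial growth, contradicting the vanishing and forcing $\mathcal{J} = \mathcal{I}$.

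The main obstacle I anticipate is globalizing the fiberwise Hilbert--Serre argument so that $\mathbf{h}_{\mathcal{I}}$ is a single polynomial independent of the base point $x \in X$, rather than an $x$-dependent family. Pointwise finite generation of $\mathrm{gr}(\mathcal{I})$ follows from Spencer regularity, but uniform control on the regularity degree across $X$ requires rank constancy of $\mathcal{C}h_m$ and $\mathcal{C}h_{m+1}$ on a dense Zariski-open subset, furnished by Proposition \ref{prop: Omega and Ch isom}. A secondary delicate point is that the primality step in (iii) must be carried out for \emph{differential} primes rather than ordinary commutative primes, which is where the Ritt--Radenbush Noetherianity of radical differential ideals is essential.
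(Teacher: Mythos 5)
Your proposal is correct, but it takes a substantially more complete route than the paper's own proof, which is a two-line argument: the paper only verifies the monotonicity (ii) by observing that $\mathcal{I}\subseteq\mathcal{J}$ gives $F^k\mathcal{I}\subseteq F^k\mathcal{J}$, hence a surjection $F^k\mathcal{A}/F^k\mathcal{I}\twoheadrightarrow F^k\mathcal{A}/F^k\mathcal{J}$ with kernel $F^k\mathcal{K}$, and then reads off $\mathbf{h}_{\mathcal{I}}(k)=\mathbf{h}_{\mathcal{J}}(k)+\dim(F^k\mathcal{K})$; the existence of the polynomial, the bound (i), the degree bound, and (iii) are left to the surrounding discussion (in particular the formula $P_{\D}(\I;\ell)=\sum_{i\leq\ell}\dim(\mathcal{N}_i)$ and Propositions \ref{prop: Omega and Ch isom} and \ref{prop: Unifying}). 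Your reduction to Hilbert--Serre on $\mathrm{gr}(\mathcal{A}/\mathcal{I})$, with finite generation supplied by Spencer regularity and the degree bound obtained by discrete integration of a degree-$(n-1)$ polynomial, is exactly the argument the paper is implicitly relying on, and your treatment of (iii) via primality is more careful than the paper's additivity claim $\mathbf{h}_{\mathcal{I}}=\mathbf{h}_{\mathcal{J}}+\dim F^k\mathcal{K}$, which as written conflates Krull dimension (not additive in short exact sequences) with graded vector-space dimension. The one point you should make explicit is that your telescoping identity $\dim(F^k\mathcal{A}/F^k\mathcal{I})=\sum_{j\leq k}\dim\mathrm{gr}_j(\mathcal{A}/\mathcal{I})$ is itself only valid because formal integrability makes each $Z_{j}\rightarrow Z_{j-1}$ a smooth surjection with fibre modelled on the symbol $\mathcal{N}_j$ (Proposition \ref{prop: Omega and Ch isom}); for a general $\D$-ideal the dimension of the order-$k$ truncation is not the sum of the symbol ranks, so this is where the hypothesis is consumed, not merely in the finite-generation step. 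With that caveat recorded, your argument buys a self-contained proof of all four assertions, at the cost of invoking the heavier machinery of Spencer regularity and Ritt--Radenbush Noetherianity where the paper simply defers to its earlier structural results.
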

\begin{proof}
Follows immediately from observing that $\mathcal{I}\subseteq\mathcal{J}$ implies that $F^k\mathcal{I}\subseteq F^k\mathcal{J}$ for all $k\geq 0.$ 
This induces a surjective morphism (of $\mathcal{O}_X$-modules), $F^k\mathcal{B}\rightarrow F^k\mathcal{C},$ such that 
$\mathrm{dim}(F^k\mathcal{A}/F^k\mathcal{I})\geq \mathrm{dim}(F^k\mathcal{A}/F^k\mathcal{J}).$
In other words, the inclusion of ideals gives a sequence
$$0\rightarrow \mathcal{K}\rightarrow \mathcal{O}(J_X^{\infty}E)/\mathcal{I}\rightarrow \mathcal{O}(J_X^{\infty}E)/\mathcal{J}\rightarrow 0,$$
with $\mathcal{K}$ the kernel of the induced surjection from which 
$$0\rightarrow F^k\mathcal{K}\rightarrow \mathcal{O}(J_X^kE)/\mathcal{I}\cap \mathcal{O}(J_X^kE)\rightarrow \mathcal{O}(J_X^kE)/\mathcal{J}\cap \mathcal{O}(J_X^kE)\rightarrow 0,$$
which tells us 
$h_{\mathcal{I}}(k)=h_{\mathcal{J}}(k)+dim(F^k\mathcal{K})\geq h_{\mathcal{J}}(k),$ as required.
\end{proof}
The $\D$-Hilbert function counts the number of free functions in the Taylor expansion of a generic formal solution. For formally integrable and involutive systems a general solution depends on $d$ functions of $p$ variables with $p\in \mathbb{N}$ called the functional dimension of $\mathrm{Sol}_{\mathcal{I}}$ while $d$ is the functional rank (see e.g. \cite{KL},\cite{Sei}).
Compute the Cartan numbers using $\mathrm{Char}(\mathcal{B})$ via the Hilbert-Serre theorem e.g. $p=\mathrm{dim}(Char)+1.$ An explicit formula for the $\D$-Hilbert polynomial can be written down from the dimension microcharacteristic varieties given by Proposition \ref{prop: Char inclusions}.
\begin{prop}
\label{prop: Hilbprop}
Let $\mathcal{I}$ be a Spencer-regular $\D$-ideal and suppose it is of formal codimension $N$ and of pure order $k$. Then there is a numerical polynomial $P$ associated with $\mathcal{I}$ which agrees with the Hilbert function of its symbolic system $P_{\mathcal{B}}(z)=H_{\mathcal{M}}(z).$ If $p=\mathrm{deg}(P_{\mathcal{B}}(z))$ and $d=P_{\mathcal{B}}^{(p)}(z)$ then 
    $P_{\mathcal{B}}(z)=d\cdot z^p+(\ldots),$
omitting the lower order terms.
Generally, one may write
$$P_{\mathcal{B}}(z)=\sum_{i,j,k}(-1)^i\beta^{q,i}s_{j}^n\frac{(z-k-i)^{n-j}}{(n-j)!},$$
where $\beta^{i,j}$ are the Betti numbers determined by the resolution of the symbol module, $s_i^n:=\frac{(n-i)!}{n!}S_i(1,\ldots,n)$ where $S_j(k_1,\ldots,k_n)=\sum_{i_1<\ldots<i_j}k_{i_1}\cdots k_{i_j}$ is the $j$-th symmetric polynomial associated to the orders $k_i$ of the equations defining the system. 
\end{prop}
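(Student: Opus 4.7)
The plan is to combine three already-established ingredients: the existence of a numerical polynomial for $\mathfrak{d}_\mathcal{I}$ from Proposition \ref{prop: D-Dim Polynomial}, the coincidence of Spencer regularity with Castelnuovo--Mumford regularity of the symbol module from Proposition \ref{prop: Unifying}, and additivity of Hilbert polynomials along a finite graded free resolution. Concretely, writing $\mathcal{B}=\mathcal{A}/\mathcal{I}$ and $\mathcal{M}=\mathrm{gr}^F(\mathcal{B})$ for the symbol (as in Remark \ref{gr(I)}), the strategy is (i) to reduce $P_\mathcal{B}$ to the Hilbert function of $\mathcal{M}$, (ii) to extract a finite free resolution of $\mathcal{M}$ whose shifts encode the pure order $k$ and homological degree $i$, and (iii) to expand the resulting alternating sum of shifted binomial polynomials using elementary symmetric functions.

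For step (i), the short exact sequences $0\to F^{\ell-1}\mathcal{B}\to F^{\ell}\mathcal{B}\to \mathrm{gr}^F_{\ell}(\mathcal{B})\to 0$ give $\mathfrak{d}_\mathcal{I}(\ell)=\sum_{j\leq \ell}\dim\mathrm{gr}^F_j(\mathcal{B})$, so $P_\mathcal{B}(z)=H_\mathcal{M}(z)$ (up to the iterated-sum identity on numerical polynomials) follows at once. For step (ii), Proposition \ref{prop: Omega and Ch isom} identifies $\mathcal{C}h_k(k)\simeq\Omega^1_{J^k/J^{k-1}}$, so under Spencer $r$-regularity the graded ideal $\mathrm{gr}(\mathcal{I})\subset\mathrm{Sym}(\Theta_X)\otimes\mathcal{E}$ is a finitely presented graded module over the polynomial ring $\mathcal{O}_X[\xi_1,\ldots,\xi_n]$ and, by Proposition \ref{prop: Unifying}, is Castelnuovo--Mumford $r$-regular; together with the $\overline{\delta}$-Poincar\'e Lemma (Proposition \ref{prop: Delta-Poincare}) this gives a finite minimal graded free resolution
\begin{equation*}
0\to \bigoplus_{q}\mathcal{O}_X[\xi](-q)^{\beta^{q,p}}\to\cdots\to\bigoplus_{q}\mathcal{O}_X[\xi](-q)^{\beta^{q,0}}\to \mathcal{M}\to 0,
\end{equation*}
where the Betti numbers $\beta^{q,i}$ are read off from the Spencer cohomologies of the symbol (with $q=k+i$ in the pure-order regular case, since the generators of $\mathrm{gr}(\mathcal{I})$ sit in degree $k$ and involutivity fixes the shifts at each syzygy level).

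For step (iii), the Hilbert polynomial of a shifted free summand is $\binom{z-q+n}{n}$, and expanding the product $(z-q+1)(z-q+2)\cdots(z-q+n)/n!$ by elementary symmetric polynomials yields the identity
\begin{equation*}
\binom{z-q+n}{n}=\sum_{j=0}^{n}s^n_j\frac{(z-q)^{n-j}}{(n-j)!},
\end{equation*}
since $s^n_j=\tfrac{(n-j)!}{n!}S_j(1,\ldots,n)$ by definition. Additivity of Hilbert polynomials along the resolution then gives the displayed formula for $P_\mathcal{B}(z)$, and the leading-term statement $P_\mathcal{B}(z)=d\cdot z^p+\cdots$ follows by extracting the top-degree contribution from the homological degree $0$ generators (equivalently from the dimension of $\mathrm{char}(Z)$ via Hilbert--Serre, as already used in the discussion preceding Proposition \ref{prop: Char inclusions}). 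The main obstacle is step (ii): one must verify globally over $X$, and not only fiberwise, that Spencer $r$-regularity together with pure order $k$ produces a finite free resolution with the stated grading shifts $k+i$; this requires the filtered-to-graded comparison of Proposition \ref{prop: Approximations} so that the Betti numbers of the symbol module are genuinely those detected by the Spencer $\delta$-cohomology of $\mathrm{gr}(\mathcal{I})$, and a careful tracking of how the multi-order data $(k_1,\ldots,k_n)$ of the defining equations enters the symmetric-function coefficients $s^n_j$ once we drop the pure-order hypothesis.
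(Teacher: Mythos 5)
Your argument is correct and takes essentially the same route the paper itself indicates (the paper states this proposition without a formal proof, relying on the surrounding discussion): a finite graded free resolution of the symbol module whose Betti numbers are $\beta^{q,i}=\dim H_{\delta}^{q,i}$, additivity of Hilbert polynomials along the resolution, and the elementary-symmetric-function expansion of the shifted binomial $\binom{z-q+n}{n}$, which is exactly how the formula (\ref{eqn: DHilbAgain}) is obtained. Your step (i) reducing $P_{\mathcal{B}}$ to $H_{\mathcal{M}}$ and your explicit verification of the binomial identity fill in details the paper leaves implicit.
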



In order to calculate the Hilbert polynomial in practice one uses a resolution of the linearization of the symbol module. In this linearized case, the resolution of $\mathcal{M}_{\mathcal{I}}$ exists and is expressed as the $\mathbb{R}$-dual of the Koszul homology, denoted $H_{\delta}^{i,j}$ then $\beta^{i,j}=\mathrm{dim}(H_{\delta}^{i,j}).$

For formally integrable PDEs, $P_{\D}(\I;\ell)=\sum_{i\leq \ell}\mathrm{dim}(\mathcal{N}_i)$ is a polynomial for $\ell\gg 1,$ so $\mathrm{dim}(\mathcal{N}_z)=P_{\D}(\I;z)-P_{\D}(\I;z-1)$ for all larger integers $z=\ell,$ which extends to all $z\in\mathbb{C}.$
The following convenient formula computes the $\D$-Hilbert function,
\begin{equation}
\label{eqn: DHilbAgain}
P_{\D}(\I;z)=\sum_{p,q}(-1)^{p+q}\mathrm{dim}\mathcal{H}_{Sp}^{p,q}(\mathcal{N})\cdot C^{z+n-p-q-1}_{n-1}.
\end{equation}

\begin{prop}
\label{prop: Hilbert D-Polynomial}
    Let $\mathcal{B}$ be a $\mathcal{D}_X$-smooth non-linear PDE i.e. $H^0(\mathbb{T}_{\mathcal{B}})\simeq \Theta_{\mathcal{B}}.$ Then there exists a numerical polynomial 
    $H_{\Theta_{\mathcal{B}}}'$ defined by 
    $\mathrm{dim}\big(F^k\Theta_{\mathcal{B}}^{\ell}\big)\simeq \mathrm{dim}\big(\Theta_{F^k\mathcal{B}}^{\ell}\big),$
    as a $F^k\mathcal{B}$-vector space such that for $k$ large enough,
    $H_{\Theta_{\mathcal{B}}}'$ agrees with $\mathbf{h}_{\mathcal{I}}.$
\end{prop}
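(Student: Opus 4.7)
The plan is to combine the $\mathcal{D}_X$-smoothness hypothesis with the filtered structure of $\Theta_{\mathcal{B}}$ inherited from the jet filtration, and then to identify the resulting numerical invariants with those already computed by $\mathbf{h}_{\mathcal{I}}$ in Proposition \ref{prop: D-Dim Polynomial}. First, I would exploit the smoothness hypothesis in the sense of Definition \ref{defn: D-Finite-type}: this gives that $\Omega_{\mathcal{B}}^1$ is projective of finite $\mathcal{B}[\mathcal{D}]$-presentation, and the module $\Theta_{\mathcal{B}} = \mathcal{D}er_{\mathcal{D}}(\mathcal{B},\mathcal{B})$ arises as its $\mathcal{B}[\mathcal{D}]$-dual. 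The standard filtration $\{F^k\mathcal{B}\}_{k\geq 0}$ from Definition \ref{defn: Standard filt} induces a compatible filtration on $\Omega_{\mathcal{B}}^1$ through Proposition \ref{prop: Char inclusions}, and dually on $\Theta_{\mathcal{B}}$. Since $\mathcal{B}$ is Spencer regular in the sense of Definition \ref{defn: D-geom m-involutive}, each truncation $F^kZ \subset J_X^kE$ is a smooth algebraic variety for $k$ larger than the regularity threshold, so the natural comparison map $F^k\Theta_{\mathcal{B}}^{\ell} \to \Theta_{F^k\mathcal{B}}^{\ell}$ is an isomorphism in that range, yielding the first claimed identification.

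Next, I would pass to associated graded objects. By Proposition \ref{prop: Omega and Ch isom}, after shrinking to a Zariski-dense open, the graded pieces of the filtered $\mathcal{B}[\mathcal{D}]$-module $\Omega_{\mathcal{B}/X}^1$ are canonically identified with the characteristic module $\mathcal{C}h^{Z}$ of Definition \ref{defn: Strict Char modules}. Taking $F^k\mathcal{B}$-duals, the graded pieces of $F^k\Theta_{\mathcal{B}}^{\ell}$ are controlled by the same symbolic data. Proposition \ref{prop: Hilbprop} then produces a numerical polynomial $H_{\Theta_{\mathcal{B}}}'$ such that for $k \gg 0$ the function $k \mapsto \dim_{F^k\mathcal{B}}\bigl(\Theta_{F^k\mathcal{B}}^{\ell}\bigr)$ coincides with $H_{\Theta_{\mathcal{B}}}'(k)$.

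To match $H_{\Theta_{\mathcal{B}}}'$ with $\mathbf{h}_{\mathcal{I}}$, I would appeal to Proposition \ref{prop: D-Dim Polynomial}, which tells us that $k \mapsto \dim(F^k\mathcal{B})$ is eventually polynomial and equal to $\mathbf{h}_{\mathcal{I}}$. On the smooth truncations $F^kZ$ the tangent sheaf is locally free of rank equal to $\dim(F^kZ)$, so its $F^k\mathcal{B}$-dimension is governed by the same numerical invariant as $\mathbf{h}_{\mathcal{I}}$; combined with the surjectivity of $Z_{k+1} \to Z_k$ in the Spencer-regular range and the bound on $\deg_k(\mathbf{h}_{\mathcal{I}}) \leq \dim(X)$ from Proposition \ref{prop: D-Dim Polynomial}, the two numerical polynomials must agree for $k$ sufficiently large. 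Finally, one argues that the polynomial character and stability of $H_{\Theta_{\mathcal{B}}}'$ are preserved under restriction to non-characteristic subvarieties, which fits with the invariance established in Proposition \ref{prop: D-Isom of Chars}.

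The main obstacle will be controlling the interaction of duality with the jet filtration: forming $\mathcal{B}[\mathcal{D}]$-duals does not commute naively with truncation to $F^k$. Overcoming this requires exploiting the good-filtration formalism of Proposition \ref{prop: Associated graded} together with the $\mathcal{D}_X$-projectivity of $\Omega_{\mathcal{B}}^1$ to ensure that the comparison map $F^k\Theta_{\mathcal{B}}^{\ell} \to \Theta_{F^k\mathcal{B}}^{\ell}$ is genuinely an isomorphism rather than merely a surjection, and to identify the graded pieces of the resulting dual module with the symbolic data already shown to have polynomial Hilbert function. This is precisely the step where Spencer regularity, rather than mere formal integrability, is essential.
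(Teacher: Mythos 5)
The paper states Proposition \ref{prop: Hilbert D-Polynomial} without any proof, so there is no argument of the authors' to compare yours against; what follows assesses your proposal on its own terms. Its core is sound and is almost certainly the intended argument: under the standing assumptions of formal integrability and Spencer regularity, the truncations $F^kZ$ are smooth algebraic varieties for $k$ beyond the regularity threshold, so $\Theta_{F^k\mathcal{B}}$ is locally free of rank $\dim(F^kZ)=\dim(F^k\mathcal{A}/F^k\mathcal{I})=\mathbf{h}_{\mathcal{I}}(k)$, and eventual polynomiality is then exactly Proposition \ref{prop: D-Dim Polynomial}. That three-line observation is the whole content, and you do state it in your final step. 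Two caveats. First, much of your scaffolding is a detour: the passage through $\mathcal{B}[\mathcal{D}]$-duality of $\Omega_{\mathcal{B}}^1$ and the good-filtration formalism of Proposition \ref{prop: Associated graded} is not needed to count ranks at finite jet level, and the difficulty you flag --- that dualizing does not commute with truncation to $F^k$ --- is one you introduce and then do not actually resolve; the cleaner route is to work directly with the tangent sheaves of the finite-dimensional varieties $Z_k$ and only afterwards note compatibility with the pro-structure, which is what the displayed identification $\dim(F^k\Theta_{\mathcal{B}}^{\ell})\simeq\dim(\Theta_{F^k\mathcal{B}}^{\ell})$ in the statement is packaging. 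Second, two of your citations do not support the claims attached to them: Proposition \ref{prop: D-Isom of Chars} concerns invariance under $\D_X$-isomorphisms, not restriction to non-characteristic subvarieties (that is Proposition \ref{prop: Inv Res}, and in any case is irrelevant here), and you should be explicit that Spencer regularity is being imported from the standing hypotheses of Subsection \ref{ssec: D-Hilb Polynomials} rather than from the $\mathcal{D}_X$-smoothness hypothesis $H^0(\mathbb{T}_{\mathcal{B}})\simeq\Theta_{\mathcal{B}}$ alone, since the latter by itself does not give smoothness and surjectivity of the maps $Z_{k+1}\rightarrow Z_k$ at every finite level.
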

Rather than fixing a $\D$-numerical polynomial, for vector $\D$-bundles by the Remark \ref{rmk: Ranks and Chern class}, one can instead fix the $\D$-geometric Chern class.

\subsection{Spencer (semi)stability}
\label{ssec: Spencer stability}
Via Proposition \ref{prop: D-Dim Polynomial} and Proposition \ref{prop: Hilbprop} we define a \emph{reduced $\D$-Hilbert polynomial} by normalizing via the generic (functional) rank\footnote{This is the first Cartan character \cite{C}.} of the solution sheaf, i.e., the number of independent functional parameters in a formal solution, at each $x\in X.$  Denote it by $\mathrm{rank}(\mathcal{I}),$ and put
\begin{equation}
    \label{eqn: Reduced D-Hilb}
\overline{P}_{\mathcal{D}}(\mathcal{I},n):=\frac{P_{\mathcal{D}}(\mathcal{I},n)}{\mathrm{rank}(\mathcal{I})}.
\end{equation}
This suggests the following definition.
\begin{defn}
\normalfont
\label{defn: Spencer ss}
A $\D$-ideal sheaf $\mathcal{I}$ is \emph{Spencer semi-stable} (resp. \emph{Spencer stable}) if for every differentially-generated involutive $\D$ sub-ideal $\mathcal{J}\subset \mathcal{I}$, (resp. proper ideal),
we have that $\overline{P}_{\D}(\mathcal{J})\leq \overline{P}_{\mathcal{D}}(\mathcal{I})$, (resp. $\overline{P}_{\mathcal{D}}(\mathcal{J})<\overline{P}_{\mathcal{D}}(\mathcal{I})$).
\end{defn}
Representing the numerator as the Euler characteristic of the Spencer $\delta$-complex (\ref{eqn: Spencer delta symbol}), Spencer-semistability is an equality,
$$\frac{\sum_{i}(-1)^ih^i(\mathcal{J},n)}{\mathrm{rank}(\mathcal{J})}\leq \frac{\sum_{i}(-1)^ih^i(\mathcal{I},n)}{\mathrm{rank}(\mathcal{I})},\hspace{2mm}\text{ for } n \gg0.$$

\begin{prop}
    Suppose that $\I$ is $m$-involutive for some $m\geq 0.$ Then, $\mathrm{rank}(\mathcal{I})=dim\mathcal{H}^{0,m-1}\big(\mathcal{S}p(\mathrm{gr}\mathcal{I})\big)$. More generally, it is the leading coefficient of $P_{\mathcal{I}}(n)\cdot n^{p}$, where $p$ is determined by the dimension of characteristic variety.
\end{prop}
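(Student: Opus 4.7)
The plan is to read off the leading coefficient of the $\mathcal{D}$-Hilbert polynomial $P_{\mathcal{D}}(\mathcal{I};z)$ from the Spencer-cohomological expansion (\ref{eqn: DHilbAgain}) and match it against the first Cartan character of the symbol, which by Cartan--Kähler theory equals the generic functional rank $\mathrm{rank}(\mathcal{I})$ of the formal solution sheaf. The ``more generally'' clause is handled first: by the discussion following Proposition \ref{prop: Hilbprop}, the degree of $P_{\mathcal{D}}(\mathcal{I};z)$ coincides with $\dim \mathrm{Char}_{\mathcal{D}}(\mathcal{I})+1$ (via Hilbert--Serre applied to the characteristic module (\ref{eqn: Char Module of A})), so writing $p$ for this common integer, the leading coefficient of $P_{\mathcal{D}}(\mathcal{I};n)$ in $n^{p}$ computes, after normalization by $p!$, exactly the number of independent functional parameters of a generic formal solution---this is the classical reading of the Cartan genus.

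For the specialization to $m$-involutive ideals, I would invoke Proposition \ref{prop: Unifying} to identify the degree of involution with the Castelnuovo--Mumford regularity of the symbolic module $\mathrm{gr}(\mathcal{I})$ (see Remark \ref{gr(I)}) regarded as a $\mathrm{Sym}(\Theta_X)$-module. Hence $\mathcal{H}^{p,q}_{Sp}(\mathrm{gr}(\mathcal{I}))=0$ for $q\geq m$, and the sum in (\ref{eqn: DHilbAgain}) becomes finite with binomial terms $\binom{z+n-p-q-1}{n-1}$ that are polynomials in $z$ of degree $n-1$. Using the Spencer--Koszul duality between the $\delta$-Spencer complex and the minimal free resolution of $\mathrm{gr}(\mathcal{I})$, one recognizes $\dim \mathcal{H}^{p,q}_{Sp}$ as the $(p,q)$-Betti number, and the standard extraction of the leading term of the Hilbert polynomial from a Betti table singles out the extremal entry at column $p=0$ and row $q=m-1$ as the unique contributor to the top coefficient; all higher rows vanish by $m$-involutivity, while all other $(p,q)$ with $p\geq 1$ contribute only to strictly subleading monomials in $z$.

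The main obstacle is to exclude cross-cancellation between Spencer groups in different bidegrees when passing from the signed Euler-characteristic form of (\ref{eqn: DHilbAgain}) to the genuine leading coefficient of $P_{\mathcal{D}}$. This requires that the Spencer expansion really reflects the minimal resolution and not merely its Euler characteristic, which in turn relies on $\delta$-Poincaré stabilization (Proposition \ref{prop: Delta-Poincare}) in sufficiently large symbol degree together with the $m$-regularity alignment of Proposition \ref{prop: Unifying}. Once this alignment is secured, the result reduces to the classical identity between the leading coefficient of the Hilbert polynomial of a Castelnuovo--Mumford $m$-regular graded module and the dimension of its extremal generators in the bottom row, which in the present Spencer language reads $\mathrm{rank}(\mathcal{I}) = \dim \mathcal{H}^{0,m-1}_{Sp}(\mathrm{gr}(\mathcal{I}))$, as claimed.
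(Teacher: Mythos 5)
The paper states this proposition without proof, so there is no internal argument to measure you against; what follows assesses your proposal on its own terms. Your overall strategy is the natural one given the surrounding machinery (identify $\mathrm{rank}(\mathcal{I})$ with the first Cartan character, read the leading coefficient off the $\D$-Hilbert polynomial, and use Proposition \ref{prop: Unifying} to align the degree of involution with Castelnuovo--Mumford regularity), and the ``more generally'' clause is handled correctly via Hilbert--Serre applied to the characteristic module.

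However, the central step for the first clause has a genuine gap, which you flag as ``the main obstacle'' but do not close. In the expansion (\ref{eqn: DHilbAgain}) every binomial $C^{z+n-p-q-1}_{n-1}$ is a polynomial in $z$ of the \emph{same} degree $n-1$ with the \emph{same} leading coefficient $1/(n-1)!$; the shift by $p+q$ only perturbs the lower-order terms. Consequently the coefficient of the top power of $z$ in that signed sum is $\tfrac{1}{(n-1)!}\sum_{p,q}(-1)^{p+q}\dim\mathcal{H}^{p,q}_{Sp}$, i.e.\ the full Euler characteristic, and your claim that the terms with $p\geq 1$ ``contribute only to strictly subleading monomials'' is false. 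Worse, when $\dim\mathrm{Char}_{\D}(\mathcal{I})$ forces $\deg P_{\D}<n-1$, the top coefficients must cancel identically, so no term-by-term ``extremal Betti entry'' argument can survive; there is also no general commutative-algebra identity equating the leading coefficient of a Hilbert polynomial of an $m$-regular module with the dimension of a single extremal generator space. The argument that actually works with the paper's toolkit bypasses (\ref{eqn: DHilbAgain}) entirely: for an involutive symbol one has the Cartan-character decomposition $\dim\mathcal{N}_{k+r}=\sum_{\ell=1}^{n}C_{r}^{r+\ell-1}\alpha_k^{(\ell)}$ (used in the global-generation proposition of Section \ref{sec: Main Result Proof}), in which $C_r^{r+\ell-1}$ has degree $\ell-1$ in $r$ with leading coefficient $1/(\ell-1)!$; hence $\deg P_{\D}=p-1$ for $p$ the largest $\ell$ with $\alpha^{(\ell)}\neq 0$ and the leading coefficient is $\alpha^{(p)}/(p-1)!$, with $\alpha^{(p)}$ the functional rank by definition. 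The identification of $\alpha^{(p)}$ with $\dim\mathcal{H}^{0,m-1}_{Sp}(\mathrm{gr}\,\mathcal{I})$ then has to come from the relation $\alpha_k^{(\ell)}=m\cdot C^{k+n-\ell-1}_{k-1}-\beta_k^{(\ell)}$ in the proof of Proposition \ref{prop: Unifying}, not from the Betti-table heuristic you invoke. You should rebuild the second half of your argument along these lines.
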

It is convenient to also define \emph{Spencer slopes}, as
\begin{equation}
\label{eqn: Spencer slope}
\mu_{Sp}(\mathcal{I}):=\frac{\mathrm{deg}\big(H^0(\mathcal{S}p(\mathcal{I})\big)}{\mathrm{rank}(\mathcal{I})}.
\end{equation}
There is an obvious notion of \emph{Spencer slope semistability} as $\mu_{Sp}(\mathcal{J})\leq \mu_{Sp}(\mathcal{I}).$ Slope stability is defined via strict inequality.
\begin{prop}
    Suppose that $\mathcal{I}$ is a differentially finitely generated $\D$-ideal. Suppose it is Spencer semi-stable (resp. Spencer stable). Then it is Spencer slope semi-stable (resp. Spencer slope stable).
\end{prop}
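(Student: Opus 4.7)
The plan is to mirror the standard argument from classical GIT relating Gieseker-type and slope-type stability (see e.g. \cite[Lemma 1.2.13]{HL}), suitably adapted to the $\D$-geometric setting by working with the explicit formula (\ref{eqn: DHilbAgain}) for the $\D$-Hilbert polynomial. The entire strategy reduces to a coefficient-by-coefficient comparison of reduced polynomials once one identifies the top two coefficients of $\overline{P}_{\D}(\I, n)$ with the (normalized) rank and slope.

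First I would expand the reduced $\D$-Hilbert polynomials $\overline{P}_{\D}(\I, n)$ and $\overline{P}_{\D}(\J, n)$ in descending powers of $n$. Using that $\mathrm{rank}(\I)$ coincides with (a factorial multiple of) the leading coefficient of $P_{\D}(\I, n)$, as described in Proposition \ref{prop: Hilbprop}, the leading term of $\overline{P}_{\D}(\I, n)$ is $\tfrac{n^d}{d!}$, where $d$ is controlled by the dimension of the characteristic variety of $\I$. This coefficient is \emph{independent of the ideal}: for any differentially finitely generated involutive sub-ideal $\J \subset \I$ whose symbolic module has characteristic variety of the same top dimension $d$, the polynomial $\overline{P}_{\D}(\J, n)$ has the same leading coefficient $\tfrac{1}{d!}$. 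Next, via the cohomological formula (\ref{eqn: DHilbAgain}) expanded using binomial identities, one identifies the subleading coefficient of the reduced polynomial with the Spencer slope, yielding
\begin{equation*}
\overline{P}_{\D}(\I, n) = \frac{n^d}{d!} + \mu_{Sp}(\I) \cdot \frac{n^{d-1}}{(d-1)!} + O(n^{d-2}).
\end{equation*}

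With this expansion the conclusion follows from elementary polynomial comparison. If $\I$ is Spencer semistable, then $\overline{P}_{\D}(\J, n) \leq \overline{P}_{\D}(\I, n)$ for $n \gg 0$, and since the leading coefficients agree the asymptotic sign of the difference is governed by the coefficient of $n^{d-1}$, forcing $\mu_{Sp}(\J) \leq \mu_{Sp}(\I)$, which is Spencer slope semistability. For the strict (stable) case, one observes that Spencer stability excludes equality of reduced polynomials on any proper sub-ideal, and combined with equality of leading coefficients this propagates to strict slope inequality on those sub-ideals whose characteristic variety attains the maximal dimension $d$; sub-ideals of strictly smaller characteristic dimension must be handled separately by a direct dimensional argument, as they do not arise in the slope comparison.

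The main obstacle is the identification of $\deg H^0(\mathcal{S}p(\I))$ with the $n^{d-1}$-coefficient of $P_{\D}(\I, n)$, which requires a careful unpacking of the Spencer $\delta$-complex (\ref{eqn: Spencer delta symbol}) and relating the degree of its zeroth cohomology to the relevant combinatorial sum in (\ref{eqn: DHilbAgain}) via the binomial identity $\binom{z+n-p-q-1}{n-1} = \tfrac{z^{n-1}}{(n-1)!} + \tfrac{(n-p-q)\,z^{n-2}}{2(n-2)!} + \cdots$. Once the normalizations are pinned down the remaining polynomial ordering argument is routine, with the only subtlety (familiar from the classical theory) being the strict implication, which requires the dimensional reduction to maximal characteristic subvarieties noted above.
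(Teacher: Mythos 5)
Your argument reaches the same conclusion as the paper for the semistable implication, but by a genuinely different route. The paper's proof identifies the Spencer slope \emph{directly} with the leading coefficient of the reduced polynomial: it writes $\mu_{Sp}(\I)=\lim_{k\to\infty}P_{\D}(\I,k)/(\mathrm{rank}(\I)\cdot k^d)=\lim_{k\to\infty}\overline{P}_{\D}(\I,k)/k^d$ and then simply divides the semistability inequality $\overline{P}_{\D}(\J,k)\leq \overline{P}_{\D}(\I,k)$ by $k^d$ and passes to the limit. You instead normalize so that the leading coefficient of $\overline{P}_{\D}$ is the universal constant $1/d!$ and locate the slope in the \emph{subleading} coefficient, mirroring the classical Gieseker-versus-slope comparison of \cite{HL}. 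The two identifications are not interchangeable: if the leading coefficient of $\overline{P}_{\D}$ is universal then comparing leading terms is vacuous, and if it equals the slope then the subleading term is irrelevant; which normalization is correct depends on how $\deg H^0(\mathcal{S}p(\I))$ in (\ref{eqn: Spencer slope}) interacts with $\mathrm{rank}(\I)$, and the paper itself is not fully consistent on this point. Your route requires the extra work you acknowledge (unpacking (\ref{eqn: DHilbAgain}) to pin the slope to the $n^{d-1}$ coefficient), but it has the advantage of explicitly isolating the hypothesis that $\J$ and $\I$ have characteristic varieties of the same top dimension $d$ — a point the paper's limit argument silently assumes, since otherwise the two limits are taken against different powers of $k$.

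There is, however, a genuine gap in your treatment of the strict case, and it is the same gap present in the paper's one-line "(resp. $\mu_{Sp}(\J)<\mu_{Sp}(\I)$)". A strict inequality of polynomials $\overline{P}_{\D}(\J,n)<\overline{P}_{\D}(\I,n)$ for $n\gg 0$ only yields a \emph{non-strict} inequality of any fixed coefficient: the leading and subleading coefficients may coincide, with the strict inequality produced entirely by lower-order terms, in which case $\mu_{Sp}(\J)=\mu_{Sp}(\I)$ and slope stability fails. This is exactly why, classically, Gieseker stability implies only slope \emph{semi}stability and not slope stability. Your sentence claiming that exclusion of equality of reduced polynomials "propagates to strict slope inequality" on maximal-dimensional sub-ideals is therefore not justified as written; either the stable half of the statement should be weakened to "Spencer stable implies Spencer slope semistable," or an additional argument is needed showing that for the sub-ideals relevant here equality of the top two coefficients forces equality of reduced polynomials.
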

\begin{proof}
Note that the degree is equal to the leading coefficient in $\mathrm{lim}_{k\rightarrow \infty}P(\mathcal{I};k)/k^d,$ with $d$ the characteristic dimension. Then, note that (\ref{eqn: Spencer slope}) are equivalently written as $lim_{k\rightarrow \infty}P(\mathcal{I},k)/\mathrm{rank}(\mathcal{I})\cdot k^d.$ It is then a standard argument that one may consider $\overline{P}(\mathcal{J},k)\leq \overline{P}(\mathcal{I},k)$ for large $k$, and compare leading order terms, which are precisely the degrees. Then, the Spencer slope semi-stability (resp. stability) inequality gives $\mu_{Sp}(\mathcal{J})\leq \mu_{Sp}(\mathcal{I})$ (resp. $\mu_{Sp}(\mathcal{J})<\mu_{Sp}(\mathcal{I})$).
\end{proof}

\subsection{Differential algebra quotients by algebraic pseudogroups}
\label{ssec: Differential algebra quotients}
We describe the quotient $\D$-ideal, to be denoted $\mathcal{I}^{\mathcal{G}},$ by an algebraic group, and the $\mathcal{G}$-invariant symbolic system $\mathrm{gr}(\mathcal{I})^{\mathcal{G}}:=\mathrm{gr}(\mathcal{I}^{\mathcal{G}}).$ These constructions are used below to prove that the moduli space of Spencer semi-stable $\D$-ideal sheaves admits a presentation as a GIT-quotient.
Working in the (almost) algebraic category, we make use of a type of Noetherian property on quotient symbolic systems which leads to finite generation of the $\D$-algebra of functions constant along orbits.

The main result of this subsection is given by Proposition \ref{prop: GeometricQuotientEquation} which roughly states that, given an algebraic action on a formally integrable $\D$-scheme $Z$, then if $G$ acts algebraically and transitively on $X$, there exists an integer $\ell$ and a Zariski closed $G$-invariant proper subset $W_{\ell}\subset F^{\ell}Z,$ such that the action is regular on $(p_{\ell}^{\infty})^{-1}(W_{\ell})\subset Z.$
In other words, for any $k\geq \ell,$ the orbits on $F^kZ\backslash (p_{\ell}^k)^{-1}(W_{\ell})$ are closed, equidimensional and have the structure of algebraic varieties. This implies the existence of a quotient equation as a geometric quotient.

\subsubsection*{Recollections on geometric quotients}
Recall that a surjective open moprhism between algebraic varieties $f:X\rightarrow Y$ is a \emph{geometric quotient} if the fibers $f^{-1}(y)$ are $G$-orbits and for every open subset $U\subset Y$ the map $\mathcal{O}_Y(U)\rightarrow \mathcal{O}_X\big(f^{-1}(U)\big)^G$ is an isomorphism of algebras.

A geometric quotient is a categorical quotient. This means that any morphism $g:X\rightarrow Z$ of algebraic varieties constant along the orbits of $G$  can be factorized through $f$ i.e. there exists a morphism $h:Y\rightarrow Z$ such that $g=h\circ f.$ In particular, if a geometric quotient $X/G$ exists, it is unique in this categorical sense.

For differential equations, we study categorical quotients by \emph{pseudogroups}, following \cite{SiSt},\cite{Kum}.
\begin{defn}
\normalfont
A \emph{pseudogroup} is a collection
of local diffeomorphisms $G$, that contains unit, inverse,
and composition whenever defined. It is called a \emph{Lie pseudogroup} if
its elements are solutions to a system of differential equations.
\end{defn}

A \emph{Lie equation} of differential order $\leq r$ means an embedding $G^{(r)}\subset J_X^r$ that uniquely determines a groupoid $\{G^k\}_{k\geq r},$ by prolongation (see \cite{Li,Li2}). By the Cartan-Kuranishi theorem, analytic pseudogroups transitive on $X$ are always Lie pseudogroups. We always assume a type of formal-integrability of $G=\{G^k\}$ i.e.  that $G^k$ is a bundle over $G^{k-1}$ for all $k>0.$ Its' infinite prolongation is thus well-defined and is denoted by $G^{\infty}.$ There are well-defined tangents $TG^k,k>0,$ and joint relative tangent sheaves under projections $G^k\rightarrow G^{k-1}$ define a notion of symbolic system for $G^{\infty}.$

One may extend for each $\phi\in G$ via prolongation to act $\mathrm{Pr}_{\ell}(\phi):J^{\ell}\rightarrow J^{\ell}$, by setting $\mathrm{Pr}_{\ell}(\phi\circ \psi^{-1})=\mathrm{Pr}_{\ell}(\phi)\circ \big(\mathrm{Pr}_{\ell}\psi\big)^{-1}.$ For example, $\phi:\mathbb{C}^2\rightarrow \mathbb{C}^2$ action by $Sl_2(\mathbb{C})$ prolongs to $J^k$ by $\phi^{(k)}[f]_x^k:=[\phi(f)]_{\phi(x)}^k.$
It extends compatibly as an action on $J_X^{\infty}$.

The stabilizer of order $\leq k$ at $x\in X$ is denoted $G_{x}^k:=\{\phi\in G^k|\phi(x)=x\}.$ It acts on $k$-jets based at $x$. 

\begin{defn}
\normalfont
Let $Z_k\subset J_X^k(E)$ be a system of PDEs.
    A $G$ action is \emph{algebraic} if for the order $r$ of the
pseudogroup the stabilizer $G_{x}^r$ is an algebraic group acting algebraically on $J_X^r(E)_{x},x\in X.$
There exists an action $G_x^k$ on $Z_{k,x}$ and we call it \emph{algebraic} if for every $x\in X$, $Z_{k,x}\subset J_{X,x}^k(E)$ is an algebraic (non-singular) subvariety on which $G_{x}^k$ acts algebraically. 
\end{defn}
The pair of a $\D$-scheme $Z:=\{Z_k\}$ together with a formally integrable algebraic pseudogroup action $G=\{G^{(r)}\}$ is called a \emph{$G$-algebraic $\D$-scheme}, written as a pair $(Z,G)\subset J_X^{\infty}(E).$

We compute the quotient, which when combined with the co-filtered Douady-type construction for differential ideals given by Proposition \ref{prop: Douady} below, proves the $\D$-Hilbert scheme is a GIT quotient of the Spencer-semistable $\D$-ideal sheaves (Subsect. \ref{ssec: GIT}, below). Call a subset $W\subset J_X^kE$ Zariski closed if its fiber $W_x:=W\cap J_{X,x}^k(E),x\in X$ is Zariski closed. Similarly for $W\subset Z_k.$

\begin{prop}
\label{prop: Irreducible}
Assume that $G$ acts on $Z$ such that all $G$-orbits project to $X$. Then the algebraic variety $Z_x^k$ is irreducible for every $k$ and for all $x\in X.$
\end{prop}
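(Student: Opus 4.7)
The plan is to interpret the hypothesis as transitivity of the $G$-action on $X$ and reduce the irreducibility of all fibers to that of a single one. First, observe that the condition ``every $G$-orbit in $Z$ projects onto $X$'' is equivalent to $G$ acting transitively on $X$: the image of a $G$-orbit in $Z_k$ under the $G$-equivariant projection $\pi_k: Z_k \to X$ is a $G$-orbit in $X$, and surjectivity of every such image forces $X$ to consist of a single orbit. This transitivity is the only content of the hypothesis that will actually be used.

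Next, I would use this transitivity to show all fibers $Z_x^k$ are pairwise algebraically isomorphic. Given $x, y \in X$, pick $\phi \in G$ with $\phi(x) = y$; the $k$-th prolongation $\mathrm{Pr}_k(\phi)$ is an algebraic automorphism of $J_X^k(E)$, and since the $G$-action preserves $Z_k$ it restricts to an algebraic isomorphism $Z_x^k \xrightarrow{\sim} Z_y^k$. Consequently it suffices to establish irreducibility of $Z_{x_0}^k$ at a single chosen point $x_0 \in X$.

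For this last step, I would invoke the $\mathcal{D}$-smoothness and formal integrability of $Z$. By \autoref{prop: Omega and Ch isom}, above the Spencer regularity $r=\mathrm{Reg}_{\D}(Z)$ the jet projections $Z_{k+1} \to Z_k$ are smooth surjections with $\mathcal{O}_X$-coherent fiber modules $\mathcal{C}h_{k+1}$. Combined with $G$-equivariance and base-transitivity, this exhibits $Z_k$ étale-locally on $X$ as a homogeneous fibration, concretely presented as the associated bundle $Z_k \cong G^k \times^{G_x^k} Z_x^k$. Irreducibility of $Z_k$ itself follows from $\D$-primality of the defining ideal $\mathcal{I}$ for sufficiently large $k$ and propagates to all levels via the surjective prolongation morphisms $Z_{k+r} \twoheadrightarrow Z_k$ (images of irreducibles under morphisms being irreducible). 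From the associated-bundle description one then reads off irreducibility of the fiber $Z_{x_0}^k$.

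The main obstacle lies in the third step: guaranteeing that the homogeneous-bundle trivialization is genuinely algebraic rather than merely analytic or $C^\infty$, and that irreducibility at high orders $k \geq r$ propagates coherently back to lower orders. The first is handled by passing to the étale topology on $X$, which is sufficient for detecting irreducibility, together with the explicit presentation $Z_k \cong G^k \times^{G_x^k} Z_x^k$ afforded by the action of the algebraic pseudogroup of order $r$. The second rests on the formal-integrability hypothesis, which supplies the surjections $Z_k \twoheadrightarrow Z_\ell$ for $\ell \leq k$ and hence transports irreducibility downward through the jet tower.
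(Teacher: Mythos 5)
There is a genuine gap in your third step, which is where all the content of the proposition lives. Your first two steps (reading the orbit condition as transitivity of $G$ on $X$ and using prolonged elements of $G$ to identify any two fibers $Z_x^k\cong Z_y^k$) are fine and consistent with the paper, which likewise reduces to a single fiber. But your mechanism for proving irreducibility of that one fiber does not work. First, irreducibility of the total space of an associated bundle $G^k\times^{G_x^k}Z_x^k$ does \emph{not} imply irreducibility of the fiber: the stabilizer $G_x^k$ can permute the irreducible components of $Z_x^k$, so the total space can be irreducible while the fiber is not (already $\mathbb{G}_m\times^{\mu_2}\{\pm1\}\cong\mathbb{G}_m$ shows this). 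Second, you invoke ``$\D$-primality of the defining ideal $\mathcal{I}$ for sufficiently large $k$'' to get irreducibility of $Z_k$, but primality of $\mathcal{I}$ is not among the hypotheses of the proposition, so this is assuming what needs to be proved. Third, your propagation via the surjections $Z_{k+r}\twoheadrightarrow Z_k$ only transports irreducibility \emph{downward} in jet order, whereas the claim is for every $k$, including arbitrarily large ones; the hard direction is upward.

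The paper's proof goes the other way and is essentially elementary: it is an induction on the jet order $k$. Formal integrability makes the restricted projection $(p^k_{k-1})_x:Z_x^k\to Z_x^{k-1}$ an affine fibration (fibers are affine spaces, hence irreducible), and one then argues directly that if $F_1\cdot F_2\equiv 0$ on $Z_x^k$ with $F_1\not\equiv 0$, then $F_2$ vanishes on every fiber over the open locus where $F_1$ is somewhere nonzero, hence on a dense open subset of $Z_x^{k-1}$, hence identically by the inductive irreducibility of $Z_x^{k-1}$. If you want to salvage your outline, replace the associated-bundle step with this fibration-in-irreducibles argument; the transitivity you establish in step one is only needed to justify working at a single $x$.
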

\begin{proof}
By assumptions it follows at each finite order $Z_k\subset J_X^kE$ it is enough to restrict to a single fiber $Z_x^k$ with induced action by $G_x^k.$ By induction on $k$, with base case $k=\mathrm{Ord}(\I)$ (max order). This case is immediate and suppose it holds for $\mathrm{Ord}(\I)-1.$ Since $Z$ is formally integrable, $(p^k_{k-1})_x:Z_X^k\rightarrow Z_{x}^{k-1}$ is affine. Fixing polynomials $F_1,F_2$ in $k$-jets at $x\in X$, assume $(F_1\cdot F_2)(z_k)=0$ for all $z_k\in Z_x^k$ and that $F_1\neq 0.$ Then, for all $z_k$ with $F_1(z_k)\neq 0,$ the fiber $(p^{k}_{k-1})^{-1}(z_{k-1}),z_{k-1}=p^k_{k-1}(z_k)$ is affine and irreducible so that $(F_1F_2)|_{(p^k_{k-1})^{-1}(z_{k-1})}=0$ implies $F_2$ restricted to the fiber vanishes. But since $F_1(z_k)\neq 0$ for generic $z_k\in Z^k$, this implies $F_2=0$ on all fibers over a Zariski open set in $Z_x^{k-1}$. Since $Z_x^{k-1}$ is irreducible, $F_2$ is identically zero.
\end{proof}
Consider $G$ as in Proposition \ref{prop: Irreducible}. Such an algebraic action has a geometric quotient outside a $G$-stable Zariski closed subset. In particular, such a subset is nowehere dense, and removing a finite-codimension locus of singularity the quotient of an algebraic $\D$-scheme by an algebraic pseudo group is again an algebraic $\D$-scheme.
\begin{prop}
\label{prop: GeometricQuotientEquation}
    Let $(G,Z)$ be a $G$-algebraic formally integrable $\D$-space defined by a $\D$-ideal $\mathcal{I}.$ Then, there exists an integer $r\geq 0,$ and a Zariski-closed invariant proper subset $W_r\subset F^r(Z)$ such that the action is regular in 
    $$(p_r^{\infty})^{-1}\big(F^rZ\backslash W_r\big)\subset \mathrm{Spec}_{\D}(\mathcal{A}/\mathcal{I})=Z,$$
    with $p^{\infty}:Z\rightarrow X$ the structure map with $p^{\infty}_r:Z\rightarrow F^rZ,$ the induced projections $r>0.$
    In particular, $F^kZ\backslash (p_{r}^k)^{-1}(W_r)$ for every $k\geq r$ is algebraically filtered by closed and equidimensional orbits of $G^{(k)}.$ Thus, there exists a geometric quotient,
    $$\big(F^kZ\backslash (p_r^{k})^{-1}(W_r)\big)/ G^{(k)}.$$
\end{prop}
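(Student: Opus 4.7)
The plan is to reduce the problem to a Rosenlicht-type theorem at a single finite-order prolongation and then transport the resulting geometric quotient up the prolongation tower via the (affine) fibrations $F^{k+1}Z\rightarrow F^kZ$. The guiding observation is that for a formally integrable algebraic $\D$-space, once the symbolic action of the pseudogroup has ``stabilized'' (in a sense to be made precise), the higher prolongations are affine bundles whose structure is determined by data already visible at level $r$, so a good quotient at level $r$ propagates upward automatically.

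First, I will fix a point $x\in X$ and work fiberwise. By hypothesis, for each $k\geq \mathrm{Ord}(\mathcal{I})$, the stabilizer $G_x^{(k)}$ is an algebraic group acting algebraically on the (by Proposition \ref{prop: Irreducible}) irreducible algebraic variety $Z_x^k$. By Rosenlicht's theorem, for every $k$ there exists a $G_x^{(k)}$-invariant Zariski-closed proper subset $W_{k,x}\subset Z_x^k$ such that the action on $Z_x^k\setminus W_{k,x}$ is regular in the sense that all orbits are closed and equidimensional, and the geometric quotient exists as an algebraic variety. The first technical step is to promote these fiberwise subsets to a $G$-invariant Zariski-closed subset $W_k\subset F^kZ$; this uses that the pseudogroup is transitive on $X$ together with the equivariance of the projection $F^kZ\rightarrow X$, so that the union of the $W_{k,x}$ assembles into a closed algebraic subvariety and the complement is a $G^{(k)}$-invariant open subset of $F^kZ$.

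Next, I will argue that the sequence of bad loci $W_k$ stabilizes after a finite number of prolongations. This is the heart of the proof and the main obstacle. The relevant Noetherian input is the Ritt--Radenbush theorem invoked in Subsection \ref{ssec: Regularity criterion for D-schemes}, which applied to the ideal of $G$-invariants in the differential algebra $\mathcal{O}(Z)^G$ yields that the ascending chain of pull-back ideals $(p_{k-1}^k)^{*}\mathcal{I}(W_{k-1})\subset \mathcal{I}(W_k)$ eventually stabilizes. Equivalently, the generic orbit dimension as a function of the prolongation order is weakly increasing and bounded above by the $\D$-generic rank of $G$, hence becomes constant for $k\geq r$ for some integer $r$. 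Combining this with formal integrability of $Z$ and of the Lie equation defining $G$, one checks that for $k\geq r$ the projection $F^{k+1}Z\setminus (p_r^{k+1})^{-1}(W_r)\rightarrow F^kZ\setminus (p_r^k)^{-1}(W_r)$ is an affine bundle, and the prolonged action of $G^{(k+1)}$ on the fiber coincides with the symbolic action of the kernel of $G^{(k+1)}\rightarrow G^{(k)}$, which is a linear algebraic group acting linearly. This keeps the orbits closed and equidimensional at level $k+1$ once they are so at level $k$.

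Finally, having fixed $r$ and $W_r$, I will conclude by passing to geometric quotients. At level $r$, the existence and algebraic-variety structure of $(F^rZ\setminus W_r)/G^{(r)}$ is Rosenlicht's theorem together with the patching of the first step. For $k>r$, the affine-bundle description furnished by the previous paragraph shows that $F^kZ\setminus (p_r^k)^{-1}(W_r)$ is algebraically filtered by closed equidimensional $G^{(k)}$-orbits and that the geometric quotient exists and fits into a tower of affine bundles over $(F^rZ\setminus W_r)/G^{(r)}$. Pulling back along $p_r^\infty:Z\rightarrow F^rZ$ gives regularity of the action on $(p_r^\infty)^{-1}(F^rZ\setminus W_r)$, which is the statement to be proved. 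The main obstacle I foresee is the stabilization step: one must verify that Ritt--Radenbush indeed applies to the invariant ring in the differentially non-finitely-generated setting of the pseudogroup, which is where the formal integrability of $G$ and the polynomial-growth statement of Proposition \ref{prop: D-Dim Polynomial} intervene to guarantee finite $\D$-generation of the invariants.
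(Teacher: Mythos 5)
Your overall architecture -- reduce to a single fiber $Z_x^k$ using transitivity, remove a Zariski-closed bad locus via Rosenlicht, and propagate the quotient up the prolongation tower through the affine fibrations $F^{k+1}Z\to F^kZ$ -- matches the paper's strategy in outline. The divergence, and the problem, is in the stabilization step, which you correctly identify as the heart of the matter.

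The paper does not stabilize the bad loci by a chain condition on the invariant ring. It introduces the orbit distribution $R_k\subset TZ_k^0$ and its symbolic system $\mathrm{gr}(R)_0=\{\ker(Tp^k_{k-1}|_{Z_k^0})\}$, and then invokes Proposition \ref{prop: KumLemma} (Kumpera's Lie--Tresse-type lemmas, i.e.\ the $\delta$-Poincar\'e lemma of Proposition \ref{prop: Delta-Poincare} applied to this symbolic system) to produce an integer $\ell_0$, depending only on numerical data, beyond which the Spencer cohomology of $\mathrm{gr}(R)_0$ vanishes over a $G$-invariant Zariski-open subset. That vanishing is what forces the orbit dimensions to stabilize and makes the higher projections affine with linearly acting kernels; no finiteness of $\mathcal{O}(Z)^G$ is ever needed. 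Your route instead applies Ritt--Radenbush to the ideal chain $(p_{k-1}^k)^{*}\mathcal{I}(W_{k-1})\subset\mathcal{I}(W_k)$ inside $\mathcal{O}(Z)^G$. This has two gaps. First, Ritt--Radenbush applies to \emph{finitely generated} differential algebras, and finite differential generation of the invariant ring of a pseudogroup action is essentially the conclusion one is trying to reach (a differential analogue of Hilbert's fourteenth problem), not a hypothesis one may assume; neither formal integrability of $G$ nor Proposition \ref{prop: D-Dim Polynomial} supplies it. Second, the ideals $\mathcal{I}(W_k)$ live in different coordinate rings at each jet level and the $W_k$ are not prolongations of one another, so even after passing to the colimit the chain is not a chain of \emph{radical differential} ideals in a fixed finitely generated differential algebra, which is the only setting where the ascending chain condition of Ritt--Radenbush is available. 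To repair the argument you should replace this step by the Spencer-cohomological bound of Proposition \ref{prop: KumLemma}; with that substitution the remainder of your proof (affine-bundle propagation, linearity of the kernel action, and the resulting tower of geometric quotients) goes through as in the paper.
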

We now give the proof but note it makes use of a result (Proposition \ref{prop: KumLemma}) proven after.
\begin{proof}
Regular orbits have the maximal possible dimension and they
fiber a neighborhood after possibly restricting to an open dense subset. We may remove singular orbits inducing a geometric quotient.

To this end, note by our assumptions, we have for each $k$ a proper closed subset $W_k\subset Z^k\subset J_X^k(E)$ such that its compliment admits a geometric quotient, 
$Q_0^k\simeq (Z^k\backslash W_k)/G^k,$ since $W_k$ is Zariski-closed meaning $W_k\cap Z_x^k$ is Zariski-closed for every $x\in X.$ Then, we have
$$(Z^k\backslash W_k)/G^k\simeq \big(Z_x^k\backslash (W_k\cap Z_x^k)\big)/G_x^k.$$
Fix $\mathfrak{l}>\mathrm{Reg}_{\D}(\mathcal{I}).$ Since $G$ is transitive on $X$, orbits in $Z_k$ project to $X.$ In particular, the choice of $x\in X$ is irrelevant and we may restrict to the fiber 
$$Q_{x}^k:=[Z^k/G^k]_x^0=\widetilde{Z}_k\cap p_{k}^{-1}(x)/G_x^k.$$
By Proposition \ref{prop: KumLemma}, such a Zariski subset $\widetilde{Z}_k$ exists. We want to show fibers are affine and their restricted projections are affine morphisms of algebraic varieties. They are clearly the orbits of the $G_{x}^k$-action on affine fibers $Z^{k+1}\cap (p_k^{k+1})^{-1}(z_k)$ for $z_k\in Z_k$, with $p_k(z_k)=x\in X.$ Taking the quotient, which is given by the fibers of the canonical projection
\begin{equation}
\label{eqn: Q proj}
[Z^{k+1}/G^{k+1}]^0=\widetilde{Z}_{k+1}\cap p_{k+1}^{-1}(x)/G_{x}^{k+1}\rightarrow [Z^k/G^k]^0=\widetilde{Z}_k\cap p_k^{-1}(x)/G_x^k,
\end{equation}
we see they are affine. To show the induced quotient tower by the maps (\ref{eqn: Q proj}) are affine, we show the locus of singularity is of finite-codimension in $Z^{\infty}.$
To this end, note that for each $k\geq \ell>\mathrm{Reg}(Z)$, there are no singulariteis over regular points $\widetilde{Z}_{\ell}=Z^{\ell}\backslash W_{\ell}.$ Thus, the singular stratum of the $\D$-scheme is $(p_{\ell}^{\infty})^{-1}(W_{\ell})\subset Z^{\infty}.$ It is of finite codimension as reguired and this $W_{\ell},$ given by $Z^{\ell}\backslash \widetilde{Z}_{\ell}$ is a Zariski closed locus of singularity. Thus, for each $k\geq \ell$ singularities are contained in $(p_{\ell}^k)^{-1}(W_{\ell}),$ and consequently, for all $k\geq\ell >\mathrm{Reg}(Z),$ there is a tower of quotient equations,
$$\cdots\rightarrow [F^{k+r}Z/G^{k+r}]\rightarrow\cdots\rightarrow [F^{k+1}Z/G^{k+1}]\rightarrow [F^kZ/G^k]\rightarrow\cdots.$$
Each morphism is an affine map and the corresponding limit defines the algebraic $\D$-scheme $[Z/G]^{\infty}.$

\end{proof}
Thus given an involutive and formally integrable $\D$-ideal $\mathcal{I}$, with defining exact-sequence $0\rightarrow \mathcal{I}\rightarrow \mathcal{O}(J_X^{\infty}E)\rightarrow \mathcal{B}\rightarrow 0,$ with corresponding $\D$-scheme $Z:=\mathrm{Spec}_{\D}(\mathcal{B})=\mathrm{Spec}_{\D}\big(\mathcal{O}(J_X^{\infty}E)/\mathcal{I}\big),$ by Proposition \ref{prop: GeometricQuotientEquation}, we obtain a quotient equation (which is again almost-algebraic) and we denote the corresponding $\D$-ideal by $\mathcal{I}^G$, with corresponding exact sequence
\begin{equation}
    \label{eqn: QuotientIdeals}
    0\rightarrow \mathcal{I}^G\rightarrow \mathcal{O}(J_X^{\infty}E)^G\rightarrow \mathcal{B}^G\rightarrow 0.
\end{equation}
The corresponding $\D$-scheme defined by (\ref{eqn: QuotientIdeals}) is denoted by $Z^G:=\mathrm{Spec}_{\D}(\mathcal{B}^G)=\mathrm{Spec}_{\D}\big(\mathcal{O}(J_X^{\infty}(E))^G/\mathcal{I}^G).$
Since $G=\{G^k\}$ is formally integrable, it defines a $\D$-group scheme $\mathcal{G}$, in the sense of \cite{BD}. We will show in Subsect. \ref{Ssec: Spencer-regularity of the geometric quotient}, that one may then write 
$$Z^G=\mathrm{Spec}_{\D}(\mathcal{B}^G)\simeq \big[\mathrm{Spec}_{\D}(\mathcal{B})/\mathcal{G}]^{0},$$
by taking the regular (stable) locus.

\subsection{Regularity of the geometric quotient}
\label{Ssec: Spencer-regularity of the geometric quotient}
We prove the following central result which states the complexity of the ideal $\mathcal{I}^G$ corresponding to the geometric quotient system given by Proposition \ref{prop: GeometricQuotientEquation} is controlled. This is necessary to ensure finite dimensionality and boundedness.

Since the $G$-action is transitive on $X$, $Z_x^k/G_x^k$ is a rational quotient whose set of singularities is nowhere dense, $x\in X.$ Denote by 
\begin{equation}
    \label{eqn: RegularGeomQuot}
    Q^k:=(Z^k/G^k)_0\subset Z^k/G^k,
\end{equation}
the geometric quotient on an open subset of $Z_x^k$ where the subscript indicates taking the regular part. 
The $G$-orbit of a $k$-jet of a section $j_k(s)(x)\in Z^k$ for some $x\in X$ is, by definition $G^k\cdot j_k(s)\in Z_x^k,$ and the tangent sheaf to this orbit, at the point $z_k=j_k(s)(x)\in Z^k$ is denoted $R_k(z_k).$ Setting $(Z_k)^0\subset Z_k,k\geq 0,$ the set of regular orbits, then $R_k|_{Z_k^0}\subset TZ_k^0,$ for each $k.$ A point $z_k\in Z_k$ is said to be \emph{regular} if $z_k\in Z_k^0.$

For any $z\in Z=Z^{\infty}$ the associated $\D$-scheme, with $z_k=p^{\infty}_k(z),$ consider the sequence, 
\begin{equation}
    \label{eqn: Regular tangent sequence}
    0\rightarrow T_{p^k_{k-1}}\rightarrow TZ_k^0\xrightarrow{(dp^k_{k-1})|_0}TZ_{k-1}^0\rightarrow 0,
\end{equation}
obtained by restriction of the differentials to the regular set $Z_k^0.$
Restricting further, to $R_k,$ we obtain a kernel,
\begin{equation}
    \label{eqn: W symbols}
    0\rightarrow ker(Tp^k_{k-1}|_{Z_k^0})\rightarrow R_k\rightarrow R_{k-1},\hspace{1mm} k\geq 0.
\end{equation}
By restriction from the equation manifold to its sub-variety of regular points $Z_k^0,$ one may show that the family $\mathrm{gr}(R)_0:=\{ker(Tp^k_{k-1}|_{Z_k^0})\}_{k\geq \ell_0}$ defines a symbolic system to this induced equation. One may then apply the Poincar\'e $\delta$-lemma, Proposition \ref{prop: Delta-Poincare}, to obtain the following result.
\begin{prop}
\label{prop: KumLemma}
    There exists an integer $\ell_0\geq 0$ and a $G$-invariant Zariski open subset $\widetilde{U}_{\ell_0}\subset Z_{\ell_0}$ such that for every geometric point $z\in Z^{\infty},$ there exists $z_{\ell_0}\in \widetilde{U}_{\ell_0}$ with $p^{\infty}_{\ell_0}(z)=z_{\ell_0}.$ Moreover, there exists a Zariski open subset $\widetilde{Z}^{\infty}\subset Z^{\infty}$ of the $\D$-scheme such that 
    $\mathcal{H}_{Sp}^{p,q}(\mathrm{gr}(R)_0)=0$ for $p\geq \ell_0$ and for all $q\geq 0$.
\end{prop}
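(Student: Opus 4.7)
The plan is to recognize $\mathrm{gr}(R)_0=\{\ker(Tp^k_{k-1}|_{Z_k^0})\}_{k\geq 0}$ as the symbolic system attached to the Lie equation defining $G$, restricted to the regular locus $Z_k^0\subset Z_k$, and then invoke the formal $\delta$-Poincar\'e lemma (Proposition \ref{prop: Delta-Poincare}) to produce uniform Spencer-$\delta$ cohomological vanishing past a finite order $\ell_0$.

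First, I would establish that $Z_k^0$ is a $G$-invariant Zariski open subset of $Z_k$ for each $k$. By Proposition \ref{prop: Irreducible}, each fiber $Z_x^k$ is irreducible, and since $G$ acts algebraically the dimension function $z_k\mapsto \dim(G^k\cdot z_k)$ is lower-semicontinuous on $Z_k$. Consequently the locus of orbits of maximal dimension (i.e.\ regular orbits) is Zariski open; it is $G$-invariant because the action permutes orbits. Next, I would verify that $\{\ker(Tp^k_{k-1}|_{Z_k^0})\}_{k}$ genuinely constitutes a symbolic system in the sense of Subsection \ref{sssec: Strict char modules}: the prolongation maps dual to those of Proposition \ref{prop: Mult by microlocal coordinates} restrict compatibly to the orbit tangent sheaves $R_k$, and formal integrability of the Lie equation defining $G$ (combined with formal integrability of $\I$) ensures the requisite closure properties. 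Thus $\mathrm{gr}(R)_0$ fits into the framework of Definition \ref{defn: D-Geometric Regularity} and its Spencer $\delta$-complex (\ref{eqn: Spencer delta symbol}) is well-defined.

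The central step is then applying Proposition \ref{prop: Delta-Poincare} to $\mathrm{gr}(R)_0$. Since the underlying differential algebra is Ritt-Noetherian (Ritt--Radenbush) and $\mathrm{gr}(R)_0$ is $\D$-finitely generated in terms of the order data of $\I$ and of the Lie equation for $G$, the $\delta$-Poincar\'e lemma produces an integer
\[
\ell_0=\ell_0\big(\dim X,\mathrm{rank}(E),\mathrm{Ord}_{\D}(\I),\mathrm{ord}(G)\big)
\]
such that $\mathcal{H}^{p,q}_{Sp}(\mathrm{gr}(R)_0)=0$ for all $p\geq \ell_0$ and $q\geq 0$. Taking $\widetilde{U}_{\ell_0}:=Z_{\ell_0}^0$ (possibly shrunk further within $Z_{\ell_0}^0$ so that the kernel sheaves are locally free over it, while remaining $G$-invariant and Zariski open), and setting $\widetilde{Z}^{\infty}:=(p_{\ell_0}^{\infty})^{-1}(\widetilde{U}_{\ell_0})$, formal integrability of $Z$ (surjectivity of $Z_{k+1}\to Z_k$ for $k\geq \mathrm{Reg}_{\D}(Z)$, ensured by enlarging $\ell_0$ if needed) guarantees that $\widetilde{Z}^{\infty}$ is Zariski open and non-empty in $Z^{\infty}$, and that every geometric point projecting to $\widetilde{U}_{\ell_0}$ satisfies the stated property.

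The main obstacle will be the verification that the kernels $\ker(Tp^k_{k-1}|_{Z_k^0})$ have locally constant rank over some genuinely $G$-invariant Zariski-open subset, as required to legitimately apply the $\delta$-Poincar\'e machinery uniformly rather than only fiberwise at a single geometric point. This amounts to a joint regularity statement for the prolongation tower of $Z$ together with that of the Lie pseudogroup $G$, and requires exploiting both the transitivity of $G$ on $X$ and the semicontinuity arguments above to stratify $Z_k$ and restrict to the top stratum; once this generic stratum is isolated, projectivity of the symbol modules follows and the $\delta$-Poincar\'e bound $\ell_0$ depends only on the discrete invariants listed above.
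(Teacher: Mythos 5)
Your proposal is correct in substance and follows the route the paper itself gestures at in the paragraph preceding the proposition: identify $\mathrm{gr}(R)_0=\{\ker(Tp^k_{k-1}|_{Z_k^0})\}$ as a symbolic system for the induced equation on the regular locus and feed it to the $\delta$-Poincar\'e lemma (Proposition \ref{prop: Delta-Poincare}) to extract a uniform bound $\ell_0$. The difference is one of economy versus self-containment: the paper's entire proof is a citation of Kumpera's Lemmas 22.5 and 23.1 from \cite{Kum}, together with the remark that $\widetilde{Z}^{\infty}=(p^{\infty}_{\ell_0})^{-1}(\widetilde{U}_{\ell_0})$, whereas you reconstruct the content of those lemmas: the $G$-invariance and Zariski-openness of the regular locus via lower semicontinuity of orbit dimension (using Proposition \ref{prop: Irreducible} and algebraicity of the action), the compatibility of the prolongation maps with the orbit tangent sheaves $R_k$, and the dependence of $\ell_0$ only on the discrete invariants $\dim X$, $\mathrm{rank}(E)$, $\mathrm{Ord}_{\D}(\I)$ and the order of the Lie equation for $G$. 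What the citation buys the paper is precisely the point you honestly flag as your remaining obstacle: that the kernel sheaves $\ker(Tp^k_{k-1}|_{Z_k^0})$ have locally constant rank over a single $G$-invariant Zariski-open subset uniformly in $k$, so that the $\delta$-Poincar\'e machinery applies to the whole tower rather than fiberwise; this joint regularity of the prolongation towers of $Z$ and of the pseudogroup is exactly the content of Kumpera's lemmas, and your semicontinuity-plus-stratification sketch is the right way to prove it but is not fully closed in your write-up. Your reading of the first assertion (every geometric point over $\widetilde{U}_{\ell_0}$, rather than every geometric point of $Z^{\infty}$, enjoys the stated property) is also the correct interpretation of the statement as used later in Proposition \ref{prop: GeometricQuotientEquation}.
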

\begin{proof}
This follows by applying \cite[Lemma 22.5,23.1]{Kum}. Explicitly, we note the Zariski-open subset is given by $\widetilde{U}^{\infty}:=(p^{\infty}_{\ell_0})^{-1}(\widetilde{U}_{\ell_0}).$
\end{proof}
We then study the quotient equation.
\begin{prop}
\label{prop: Existence of geometric quotient involutive degree}
Consider $\{Z/\!/ G\}:=\{Z^k/G^k\}_{k\geq 0}$ with geometric quotient $\D$-scheme given by (\ref{eqn: RegularGeomQuot}). 
Consider the fiber sequence,
$$0\rightarrow \mathfrak{q}_k\rightarrow T_{[Z^k/G^k]}\rightarrow T_{[Z^{k-1}/G^{k-1}]}\rightarrow 0.$$
Then there exists $\mathfrak{l}\in \mathbb{N}$ and a Zariski open subset $\widetilde{Z}_{\mathfrak{\ell}}\subset Z^{\mathfrak{\ell}}$ such that for every generic point $z_k\in Z^k$ with $p^k_{\mathfrak{l}}(z_k)=z_{\mathfrak{l}}\in \widetilde{Z}_{\mathfrak{l}},$ then 
$H_{Sp}^{p,q}(\mathfrak{q})=0,$ for all $p,\geq \mathfrak{l}$ and $q\geq 0.$ In particular, for every $k\geq \mathfrak{l}$ the number $dim(\mathfrak{q}_k)$ grows polynomially.
\end{prop}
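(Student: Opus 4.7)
The plan is to identify $\mathfrak{q}_k$ with a quotient of two symbolic systems on $Z$ itself, then compare $\delta$-Spencer cohomologies via the induced long exact sequence, feeding in Proposition \ref{prop: KumLemma} and the $\delta$-Poincaré lemma (Proposition \ref{prop: Delta-Poincare}) as the two vanishing inputs.

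Concretely, first I would restrict everything to the regular locus. Let $\widetilde{U}_{\ell_0}\subset Z_{\ell_0}$ and $\widetilde{U}^{\infty}:=(p^{\infty}_{\ell_0})^{-1}(\widetilde{U}_{\ell_0})$ be the Zariski open provided by Proposition \ref{prop: KumLemma}, and denote the vertical sub-sheaves $V_k:=\ker(Tp^k_{k-1}|_{Z^0_k})$ and $\mathrm{gr}(R)_{0,k}:=V_k\cap R_k$ coming from (\ref{eqn: W symbols}). Over $\widetilde{U}^{\infty}$ both $V_k$ and $R_k$ are locally free of constant rank (formal integrability of $Z$ and regularity of the orbits) and $Tp^k_{k-1}$ restricts to a surjection $R_k\twoheadrightarrow R_{k-1}$. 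A direct diagram chase on
\[
0\to V_k\to TZ^0_k\to TZ^0_{k-1}\to 0
\]
with the vertical inclusions $R_{\bullet}\hookrightarrow TZ^0_{\bullet}$ shows that the preimage of $R_{k-1}$ in $TZ^0_k$ equals $V_k+R_k$, hence by the snake lemma one obtains the short exact sequence of symbolic systems
\[
0\to \mathrm{gr}(R)_0\to \mathrm{gr}(Z)\to \mathfrak{q}\to 0,
\]
where $\mathrm{gr}(Z)_k=V_k$ carries the geometric symbol structure of the formally integrable equation $Z$ via Proposition \ref{prop: Omega and Ch isom}, and $\mathfrak{q}_k\simeq V_k/\mathrm{gr}(R)_{0,k}$ is exactly the kernel in the fiber sequence of the statement.

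Next, I would apply the $\delta$-Spencer complex (\ref{eqn: Spencer delta symbol}) term-wise. Since $\delta$ is $\mathcal{O}_X$-linear and natural in the coefficient sheaf, the short exact sequence above gives a long exact sequence
\[
\cdots\to \mathcal{H}^{p,q}_{\mathrm{Sp}}(\mathrm{gr}(R)_0)\to \mathcal{H}^{p,q}_{\mathrm{Sp}}(\mathrm{gr}(Z))\to \mathcal{H}^{p,q}_{\mathrm{Sp}}(\mathfrak{q})\to \mathcal{H}^{p,q+1}_{\mathrm{Sp}}(\mathrm{gr}(R)_0)\to\cdots .
\]
Proposition \ref{prop: KumLemma} forces the leftmost and rightmost terms to vanish for $p\geq \ell_0$ and every $q$. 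Applying the $\delta$-Poincaré lemma (Proposition \ref{prop: Delta-Poincare}) to the involutive, formally integrable $\D$-scheme $Z$ yields some integer $r_1$ with $\mathcal{H}^{p,q}_{\mathrm{Sp}}(\mathrm{gr}(Z))=0$ for $p\geq r_1$ and all $q$. Setting $\mathfrak{l}:=\max(\ell_0,r_1)$ and $\widetilde{Z}_{\mathfrak{l}}:=(p^{\mathfrak{l}}_{\ell_0})^{-1}(\widetilde{U}_{\ell_0})$, the long exact sequence then gives $\mathcal{H}^{p,q}_{\mathrm{Sp}}(\mathfrak{q})=0$ for $p\geq \mathfrak{l}$ and $q\geq 0$, which is the asserted vanishing on the prescribed open.

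Finally, polynomial growth of $\dim\mathfrak{q}_k$ is a formal consequence of the Hilbert--Serre theorem for symbolic systems with eventually acyclic Spencer cohomology, in the same spirit as Proposition \ref{prop: Hilbprop} and formula (\ref{eqn: DHilbAgain}): the alternating sum of dimensions of the $\mathcal{H}^{p,q}_{\mathrm{Sp}}(\mathfrak{q})$ truncates in degrees above $\mathfrak{l}$, so the generating function of $\dim\mathfrak{q}_k$ becomes a rational function of predetermined shape, yielding polynomial growth for $k\gg \mathfrak{l}$. The main obstacle, and the only genuinely delicate point, is ensuring that the short exact sequence of symbols is honestly exact as a sequence of locally free sheaves on a common Zariski open set; this requires the constant-rank assertions of Proposition \ref{prop: KumLemma} together with transitivity of $G$ on $X$ (Proposition \ref{prop: Irreducible}) to rule out the jump loci of $R_k$ and of $V_k\cap R_k$ inside $\widetilde{U}^{\infty}$, after which both the snake-lemma identification and the functoriality of the $\delta$-Spencer complex apply without modification.
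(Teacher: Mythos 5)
Your proposal is correct and follows essentially the same route as the paper: the $3\times 3$ diagram yielding the short exact sequence of symbolic systems $0\to \mathrm{gr}(R)_0\to \mathrm{gr}(\mathcal{I})\to \mathfrak{q}\to 0$, the induced long exact sequence of $\delta$-Spencer cohomologies, vanishing of the $\mathrm{gr}(R)_0$ terms from Proposition \ref{prop: KumLemma}, vanishing of the middle terms from involutivity of $Z$ (your invocation of the $\delta$-Poincar\'e lemma is the same input the paper phrases as $\mathfrak{l}\geq \mathrm{InvDeg}(Z)$), and polynomial growth of $\dim(\mathfrak{q}_k)$ as an immediate consequence. The only cosmetic difference is that the paper records the connecting map as an isomorphism $\mathcal{H}_{\delta}^{i,j+1}(\ker)\simeq \mathcal{H}_{\delta}^{i+1,j}(\mathfrak{q})$ and then quotes the vanishing of the left-hand side, whereas you sandwich $\mathcal{H}_{\delta}^{p,q}(\mathfrak{q})$ between two vanishing terms; these are logically the same argument.
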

\begin{proof}
Suppose $\mathfrak{l}\geq \mathrm{InvDeg}(Z)$. Then, $\mathcal{H}_{\delta}^{p,q}(\big(\mathrm{gr}(\mathcal{I})\big)=0$ for $p\geq \mathfrak{l},q\geq 0.$
By dualizing the isomorphism (\ref{eqn: ChKahler1}) and using Proposition \ref{prop: Omega and Ch isom}, consider there is and induced diagram,
\begin{equation}
\label{eqn: Symbol analysis diagram}
\adjustbox{scale=.90}{
\begin{tikzcd}
& 0\arrow[d] & 0\arrow[d] &  0\arrow[d] 
\\
0\arrow[r] & ker(Tp^k_{k-1}|_{Z_k^0}) \arrow[d] \arrow[r]& R_k\subset TZ_k^0\arrow[d]\arrow[r] & \arrow[d]R_{k-1}\subset TZ_{k-1}^0\rightarrow 0
\\
0\arrow[r] & \mathrm{gr}_k(\I)\arrow[d]\arrow[r] & TZ_k\arrow[d]\arrow[r] & TZ_{k-1} \arrow[d] \rightarrow 0
\\
0\arrow[r] & \mathfrak{q}_k \arrow[d]\arrow[r]& T(Z^k/G^k)\arrow[d]\arrow[r]& T(Z^{k-1}/G^{k-1})\arrow[d] \rightarrow 0   
\\
& 0 & 0&   0
\end{tikzcd}}
\end{equation}
By functoriality of Spencer $\delta$-complexes, diagram (\ref{eqn: Symbol analysis diagram}) induces a short-exact sequence of complexes,
$0\rightarrow \mathcal{S}p^{\bullet}(ker(Tp^k_{k-1}|_{Z_k^0}))\rightarrow \mathcal{S}p^{\bullet}(\mathrm{gr}_k(\I))\rightarrow \mathcal{S}p^{\bullet}(\mathfrak{q}_k)\rightarrow 0,$
for each $k\geq 0.$ Standard homological arguments, as in Proposition \ref{prop: Involutivity sequences}, give that $\mathcal{H}_{\delta}^{i,j+1}(ker(Tp^k_{k-1}|_{Z_k^0}))\simeq \mathcal{H}_{\delta}^{i+1,j}(\mathfrak{q}).$
Thus if $\mathfrak{\ell}$ is as in Proposition \ref{prop: KumLemma},
then there exists $\widetilde{Z}_{\mathfrak{\ell}}$ such that $\mathcal{H}_{\delta}^{i,j}(\mathfrak{q})=0$ over $\widetilde{Z}_{\mathfrak{\ell}}.$ This immediately implies that the polynomial (e.g. bounded) growth of $dim(\mathfrak{q}_k).$
\end{proof}
A more detailed description of the $\D$-Hilbert polynomial for the geometric quotient equation i.e. the ideal $\mathcal{I}^G$, is given in Proposition \ref{prop: HilbPolyForGeomQuot} below.

\subsection{$\D$-Hilbert and Quot functors}
\label{ssec: D-Hilb and Quot}
Having at our disposal a good notion of numerical polynomials $P\in \mathbb{Q}[t]$ for characteristic modules associated to $\D$-involutive $\D$-ideal sheaves, we can now proceed to construct the moduli space of such objects with \emph{fixed} $\D$-Hilbert polynomials equal to $P.$

We arrive at the main Definition \ref{defn: D-Hilb Functor} for the $\D$-Hilbert functor (\ref{eqn: D-Hilbert}) by  first constructing the $\D$-Quot functor. It is well-known that in the classical setting they are equivalent constructions \cite{Gro3}.\footnote{This fact is no longer true in the derived setting, as observed in \cite{CFK2,BKS,BKSY2}. We discuss this further in \cite{KSh2}.}
\begin{cons}
    \label{cons: DHilb}
    \normalfont 
    Fix a $\D$-smooth scheme of jets $Z=\mathrm{Spec}_{\mathcal{D}}(\mathcal{A}),$ and a numerical polynomial $P\in \mathbb{Q}[t].$
    Consider the assignment
    \begin{equation}
        \label{eqn: Classical D-Quot}
\underline{Q}_{\mathcal{D}_X,Z}^{P}:\mathrm{CAlg}_{\mathcal{D}_X}\rightarrow \mathrm{Set},\hspace{2mm}\mathcal{B}\mapsto \underline{Q}_{\mathcal{D}_X,Z}^{P}(\mathcal{B}),
\end{equation}
where for each commutative left $\D$-algebra $\mathcal{B}$, we set $\underline{Q}_{\mathcal{D}_X,Z}^{P}(\mathcal{B})$ to be the set of surjective $\mathcal{D}_X$-linear morphisms, $$\varphi:\mathcal{O}_{Z}\otimes_{\mathcal{O}_X}\mathcal{B}\twoheadrightarrow \mathcal{Q},$$  to a module $\mathcal{Q}$ of finite  $(\mathcal{O}_{Z}\otimes\mathcal{B})[\mathcal{D}_X]$-presentation with proper support relative to $\mathrm{Spec}_{\mathcal{D}_X}(\mathcal{B})$, modulo the equivalence relation: $\varphi_1\simeq\varphi_2$ if there exists a $\mathcal{D}_X$-isomorphism $\alpha:\mathcal{Q}_1\simeq \mathcal{Q}_2$ such that $\alpha\circ\varphi_1=\varphi_2.$ That is, the natural diagram
\[
\begin{tikzcd}
    \mathcal{O}_{Z}\otimes_{\mathcal{O}_X}\mathcal{B}\arrow[r,two heads,"\varphi_1"] \arrow[dr,two heads, "\varphi_2"] & \mathcal{Q}_1\arrow[d,"\alpha"]
    \\
    & \mathcal{Q}_2,
\end{tikzcd}
\]
commutes.
From the discussions so far (\ref{eqn: Classical D-Quot})  should be defined on the sub-category spanned by $\D_X$-involutive $\D_X$-schemes and will be denoted the same:
$$\underline{Q}_{\mathcal{D}_X}^{P}:\mathrm{CAlg}_{X}^{\mathrm{inv}}(\mathcal{D}_X)\simeq Sch_X^{aff}(\mathcal{D}_X)_{inv}^{op}\rightarrow \mathrm{Set}.$$

Fixing a numerical polynomial $P\in \mathbb{Q}[t],$ we characterize the $\D$-ideals via their regularizing properties of their symbols, using Proposition \ref{prop: Hilbert D-Polynomial}. 
In this way, we define a moduli functor $\underline{Q}_{\mathcal{D}_X,Z}^P$whose objects parameterize quotient $\D$-modules of the form $\varphi:\mathcal{O}_{Z}\twoheadrightarrow\mathcal{O}_{Z}/\mathcal{I}_X,$ for some $\D_X$-ideal sheaf $\mathcal{I}$ corresponding to a $\D$-involutive affine $\D_X$-scheme as in Definition \ref{defn: D-geom m-involutive}, whose $\D$-Hilbert polynomial $P_{\mathcal{I}}$ is equal to $P$. 

The equivalence relation $\alpha$ asserts there is an isomorphism $\mathrm{ker}(\varphi_1)=\mathrm{ker}(\varphi_2)$ of $\D_X$-ideals.
\end{cons}

\begin{rmk}
By definition, the functor (\ref{eqn: Classical D-Quot}) coincides with the $\D$-Hilbert functor. However, it can be defined more generally for a fixed vector $\D$-bundle or locally $X$-projective $\mathcal{O}_Z[\mathcal{D}]$-module $\mathcal{V}$, denoted $\underline{Q}_{\mathcal{D}_X,\mathcal{V}}.$ The corresponding numerical characterization is suggested in Remark \ref{rmk: Ranks and Chern class} e.g. by fixing the $\D$-Chern classes.
\end{rmk}
We enunciate the situation for the tautological vector $\D$-bundle $\mathcal{V}$ over $Z$ given by $\mathcal{O}_Z.$ 
\begin{defn}
\label{defn: D-Hilb Functor}
\normalfont 
Let $X$ be a proper scheme and fix a smooth affine $\mathcal{D}$-scheme $Z=\mathrm{Spec}_{\mathcal{D}_X}(\mathcal{A}).$ The functor of \emph{$\D_X$-involutive $\mathcal{D}$-geometric PDEs with fixed $\D$ Hilbert polynomial} is the assignment
\begin{eqnarray}
    \label{eqn: D-Hilbert}
\mathcal{H}\underline{\mathrm{ilb}}_{\mathcal{D}_X/Z}^{P}:\mathrm{Sch}_{\mathcal{D}_X}^{\mathrm{inv}}&\rightarrow& \mathrm{Sets},\nonumber
    \\
    \mathcal{Y}&\mapsto & \mathcal{H}\mathrm{ilb}_{\mathcal{D}_X/Z}^{P}(\mathcal{Y}),
\end{eqnarray}
where $\mathcal{H}\mathrm{iln}_{\D_X/Z}^{P}(\EQ):=\{\mathcal{T}\subset \mathcal{Y}\times Z,\text{involutive, }\mathcal{D}\text{-flat over } \mathcal{Y}, P_{\mathcal{I}_{\mathcal{T}}}=P\}$.
Functor (\ref{eqn: D-Hilbert}) is simply to be called the $\D$-\emph{geometric Hilbert functor} and the conditions explicitly mean $\mathcal{O}_{\mathcal{T}}$ is $\D$-flat over $\mathcal{O}_{\mathcal{Y}}$ and given by a $\D$-involutive $\D$-ideal sheaf and its restriction to $\{y\}\times Z$ has $\D$-Hilbert polynomial equal to $P.$
\end{defn}
Functor (\ref{eqn: D-Hilbert}) is defined on the sub-category spanned by $\mathcal{D}_X$-schemes determined by formally $\mathcal{D}$-integrable equations which are additionally $\D$-involutive.

\begin{rmk}
 If $\mathcal{B}$ is not $\D$-formally integrable and if $\mathcal{B}'$ 
is obtained from
it by Kuranishi's prolongation-projection theorem, by our prescription of dimension and numerical polynomials (thus of regularity), the numbers $p,d$ in Proposition \ref{prop: Hilbprop} change (in fact, decrease).  Consequently, the dimension of the support of the linearization sheaf is not constant nor does it tend to a constant during prolongation. 
\end{rmk}
We now prove the representability of the $\D$-Hilbert functor for $\D$-ideal sheaves by a $\D$-scheme. We \emph{assume} there exists a universal $\mathcal{D}$-flat family of closed sub-schemes of $Z$, 
$$\mathcal{U}\subset Z\times \mathrm{Hilb}_{\mathcal{D}_X,Z}^{P(t)},$$ 
with $P_{\mathcal{U}}=P,$ such that: for every test $\mathcal{D}_X$-algebra of finite-type $\mathcal{R}^{\ell},$ with corresponding finite-type $\mathcal{D}$-scheme $\mathrm{Spec}_{\mathcal{D}}(\mathcal{R}_X^{\ell}),$ and for every flat family as above, there exists a unique morphism 
$$\mathrm{Spec}_{\mathcal{D}}(\mathcal{R}_X^{\ell})\rightarrow \mathrm{Hilb}_{\mathcal{D}_X,Z}^{P(t)},$$ 
such that $\mathrm{Spec}_{\mathcal{D}}(\mathcal{R}_X^{\ell})\times_{\mathrm{Hilb}_{\mathcal{D},Z}^{P}}\mathcal{U}\subset Z\times \mathrm{Spec}_{\mathcal{D}}(\mathcal{R}_X^{\ell}).$ In particular, there is a universal $\D$-algebra quotient
$\mathcal{B}_{univ}:=\mathcal{O}_{Z\times \mathcal{H}ilb_{\D_X}^{P,\Lambda}(Z)}/\mathcal{I}_{univ},$
with universal ideal $\mathcal{I}_{univ}\subset \mathcal{O}_{Z\times \mathcal{H}ilb_{\mathcal{D}_X}^{P,\Lambda}(Z)}.$
Since the numerical polynomials for a $\D$-ideal sheaf and its characteristic module contain the same information, one may show these two moduli spaces are isomorphic. If we consider projectivization of the characteristic variety e.g. in $\mathbb{P}T^*X,$ then the ($\D$-)Hilbert scheme is representable by a ($\D$-) scheme.

\subsection{Douady-type Grassmannians for differential ideals}
\label{ssec: Douady}
Due to the pro-algebraic nature of $J_X^{\infty}E$, the $\D$-Hilbert functor is to be viewed as a pro-ind object. To treat the infinite-dimensional (pro-finite-dimensionality), we adapt a construction of \cite{Dou}.

\begin{prop}
\label{prop: Douady}
Fix $Z:=J_X^{\infty}E.$ There is ind-scheme of finite-type 
    $\mathrm{Grass}_{\D}(Z),$ given by
$$\mathrm{Grass}_{\D}(Z):=\varprojlim_k \mathcal{G}r_X(J_X^kE)\simeq \varprojlim_k\big(\bigcup_{d\geq 0}Quot(J^kE;d)\big),$$
where $Quot(J^kE;d)$ is the moduli space parameterizing $d$-dimensional $\mathcal{O}_X$-coherent quotients $\mathcal{O}(J^kE)\twoheadrightarrow Q.$
\end{prop}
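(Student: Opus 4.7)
The plan is to construct $\mathrm{Grass}_{\D}(Z)$ as a filtered inverse limit of ind-schemes, each of which is representable via classical Grothendieck-Douady theory. The starting observation is that for every finite order $k \geq 0$, the truncated jet scheme $J_X^k E$ is a smooth scheme of finite type over $X$, so Grothendieck's Quot construction applied to the structure sheaf $\mathcal{O}(J_X^k E)$ yields representability of each $\mathrm{Quot}(J^kE;d)$ as a (quasi-)projective $X$-scheme for fixed $\mathcal{O}_X$-coherent dimension $d$. Taking the disjoint union $\mathcal{G}r_X(J_X^k E) := \bigsqcup_{d \geq 0} \mathrm{Quot}(J^k E; d)$ then produces an ind-scheme of ind-finite-type at each level $k$; this is exactly the Grassmannian-type moduli parameterizing arbitrary-dimensional coherent quotient sheaves, decomposed by the dimension stratification.

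Next I would build the transition morphisms from level $k+1$ to level $k$. The jet projection $\pi^{k+1}_k : J_X^{k+1}E \to J_X^k E$ is affine, so its pullback gives an embedding of $\mathcal{O}_X$-algebras $\mathcal{O}(J^k E) \hookrightarrow \mathcal{O}(J^{k+1}E)$. Given a coherent ideal $\mathcal{I}_{k+1} \subset \mathcal{O}(J^{k+1}E)$, its restriction $\mathcal{I}_k := \mathcal{I}_{k+1} \cap \mathcal{O}(J^k E)$ is again a coherent ideal of $\mathcal{O}(J^kE)$. To promote this pointwise assignment to a morphism of schemes $\mathcal{G}r_X(J^{k+1}E) \to \mathcal{G}r_X(J^k E)$, I would invoke the universal property of Quot applied to the universal family over the source: after passing to a suitable flattening stratification, the intersection operation yields a flat family of coherent quotients of $\mathcal{O}(J^k E)$, classified by a unique morphism into the target. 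Associativity of intersection makes the resulting transitions compose correctly into an inverse system.

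With $\{\mathcal{G}r_X(J^kE), \pi^{k+1}_k\}$ in hand, I would form $\mathrm{Grass}_{\D}(Z) := \varprojlim_k \mathcal{G}r_X(J^k E)$ in the category of (ind-)schemes. Its functor of points sends a test commutative $\D$-algebra $\mathcal{R}$ to compatible sequences $(\mathcal{I}_k)_{k \geq 0}$ of coherent ideals satisfying $\mathcal{I}_k = \mathcal{I}_{k+1} \cap \mathcal{O}(J^kE)$ (suitably tensored with $\mathcal{R}$), which is precisely the data of an ideal in the inductive-limit algebra $\mathcal{O}(J^\infty_X E) = \varinjlim_k \mathcal{O}(J^kE)$. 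The second presentation $\varprojlim_k \bigl(\bigcup_{d\geq 0}\mathrm{Quot}(J^kE;d)\bigr)$ is then tautological from the very definition of $\mathcal{G}r_X$.

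The main obstacle is controlling the size so that the limit merits the description ``of finite type'': a priori an inverse limit of ind-schemes can be badly behaved. The finite-type nature is recovered by restricting to Spencer-regular strata. For a $\D$-ideal $\mathcal{I}$ of Spencer regularity $r$ in the sense of Definition~\ref{defn: D-geom m-involutive}, one has $\mathcal{I} = \mathcal{D}_X \bullet F^r \mathcal{I}$ and the prolongations $F^{r+\ell}\mathcal{I}$ are determined by $F^r\mathcal{I}$ via iterated applications of $\mathrm{pr}_1$; hence the projection $\mathrm{Grass}_{\D}(Z) \to \mathcal{G}r_X(J^rE)$ restricts to a locally closed immersion on the Spencer-$r$-regular locus, identifying the moduli of such ideals with a locally closed subscheme of a scheme of finite type, and the full ind-scheme structure is assembled as the union of these bounded pieces as $r$ varies. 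The remaining delicate point is verifying that the transition maps are genuine morphisms of ind-schemes rather than merely of sets, which requires the flattening stratification argument mentioned above, in the spirit of Douady's original construction for complex analytic subspaces.
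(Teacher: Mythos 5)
Your proposal is correct and follows essentially the same route as the paper: decompose each level as $\bigcup_{d\geq 0}\mathrm{Quot}(J_X^kE;d)$, define the transition maps by restricting along $\mathcal{O}(J_X^kE)\hookrightarrow\mathcal{O}(J_X^{k+1}E)$ (you phrase this on kernels, $\mathcal{I}_k=\mathcal{I}_{k+1}\cap\mathcal{O}(J_X^kE)$, while the paper phrases the identical operation on quotients), and pass to the limit. Your added care about the flattening stratification for the transition morphisms and the Spencer-regularity bound for finite-typeness only makes explicit what the paper asserts.
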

\begin{proof}
Note $\mathrm{dim}(J_X^kE)=n+m\cdot C_k^{n+k}.$ For each $k\geq 0$, we set $\mathcal{G}r_X(J_X^E):=\bigcup_{d\geq 0}Quot(J_X^kE;d).$ We want to show that specifying a point in $\mathrm{Grass}_{\D}(Z)$, built out of a system of $\mathcal{G}r_X(J_X^kE)$, it suffices to give compatible systems of coherent subspaces $R_k\subset J_X^kE$ ($\mathcal{O}_{R_k}$ is coherent), arising from kernels of quotient sheaves $\mathcal{O}(J_X^kE)\rightarrow Q_k,$ of rank $d_k,k\geq 0.$ Then, we generate a subspace $R\subset J_X^{\infty}E$ where $R_k=R\cap J_X^kE\subset J_X^kE$ with the compatibilities $R_k\simeq R_{k+1}\cap J_X^kE$ under the natural projections $J_X^{k+1}E\rightarrow J_X^kE.$ This compatible system then corresponds to a compatible system of points in Quot-schemes $[Q_k]\in Quot(J_X^kE;d_k)$.
To this end, note for each $k>0,$ there exists a natural morphism 
$\phi_k:\mathcal{G}r_X(J_X^{k+1}E)\rightarrow \mathcal{G}r_X(J_X^kE),$ defined as follows.
Consider a quotient i.e. a point $[Q_{k+1}]\in Quot(J_X^{k+1}E;d_{k+1})$ for some $d_{k+1},$ represented as a surjection $\mathcal{O}(J_X^{k+1}E)\rightarrow Q_{k+1}$, and since $p^{k+1}_k:J_X^{k+1}E\rightarrow J_X^kE$ is a surjective morphism between smooth algebraic varieties (vector bundles over $X$), then $\mathcal{O}(J_X^kE)\hookrightarrow \mathcal{O}(J_X^{k+1}E)$ is an injection of $\mathcal{O}_X$-modules, and we define the image of $[Q_{k+1}]$ under $\phi_k$ to be 
$$\phi_k[\mathcal{O}(J_X^kE)\twoheadrightarrow Q_{k+1}]:=[\mathcal{O}(J_X^kE)\rightarrow Q_{k}],$$
via $(p_k^{k+1})^*$, where $Q_k$ is define as follows. Since the composition $q_{k,k+1}:\mathcal{O}(J_X^kE)\subset\mathcal{O}(J_X^{k+1}E)\twoheadrightarrow Q_{k+1}$ need not be surjective, we consider 
$ker(q_{k,k+1})$ and define $Q_k:=Q_{k+1}/ker(q_{k,k+1}).$ Defined in this way, $\phi_k$ is induced from the  morphism
$$\psi_k:Quot(J_X^{k+1}E)\rightarrow Quot(J_X^kE), [Q_{k+1}]\mapsto [Q_k:=Q_{k+1}/ker(q_{k,k+1})].$$
It follows there exists a compatible system, whose limit is a (pro-ind) scheme of finite-type.
\end{proof}
The representability of the $\D$-Hilbert functor (\ref{eqn: D-Hilbert}) is established by proving there exists a finite-type ind-subscheme of the $\D$-geometric Douady-type Grassmannian in Proposition \ref{prop: Douady}. Roughly, it is obtained via the compatible system of kernels obtained from $\phi_k,\psi_k$, e.g. $\mathcal{I}:=\varprojlim ker[\mathcal{O}(J_X^kE)\twoheadrightarrow Q_k],$ which stabilizes due to regularity assumptions (\ref{defn: D-Geometric Regularity}).

\subsection{Algebraic quotients and GIT}
\label{ssec: GIT}
Fix a $G$-algebraic $\D$-space $(G,Z).$ In subsection \ref{ssec: Differential algebra quotients}, we assumed that $G$ is algebraic. It is possible to remove this assumption by separating orbits in low level jets. Higher order jets are automatic, as the prolonged actions are always algebraic. 
The growth of the $\D$-Hilbert function for the geometric quotient equation is bounded.

\begin{prop}
\label{prop: HilbPolyForGeomQuot}
    Let $(G,Z)$ be a $G$-algebraic $\D$-space defined by a formally integrable $\D$-ideal $\mathcal{I}$. Consider the quotient $\D$-scheme $Z^G:=\{[Z^k/G^k]\}$ as in (\ref{eqn: RegularGeomQuot}) and its
   associated numerical Hilbert function $h(Z^G;k):=dim(\mathfrak{q}_k)$. Then there exists a polynomial for large $n$ i.e. $\D$-Hilbert polynomial $P_{\D}(\mathcal{I}^{\mathcal{G}},n)$ such that $h(Z^G) \simeq P_{\D}(\mathcal{I}^G)$ for $n\gg 0.$
    
\end{prop}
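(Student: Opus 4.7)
The plan is to identify the numerical function $h(Z^G; k) = \dim(\mathfrak{q}_k)$ with the Hilbert function of the symbolic system associated to the quotient $\D$-ideal $\mathcal{I}^G$, and then to invoke the Spencer-regularity of $\mathcal{I}^G$ to apply the explicit formula (\ref{eqn: DHilbAgain}) and conclude polynomial growth for $n \gg 0$. First, I would restrict to the regular locus produced by Proposition \ref{prop: GeometricQuotientEquation}, on which $Z^G = [Z/G]^{\infty}$ is a bona fide algebraic $\D$-scheme defined by the quotient $\D$-ideal $\mathcal{I}^G$ of (\ref{eqn: QuotientIdeals}). After a mild shrinking to a Zariski-open dense subset, Proposition \ref{prop: Omega and Ch isom} identifies $\mathfrak{q}_k$ with $\mathrm{gr}_k(\mathcal{I}^G)$, so that the family $\{\mathfrak{q}_k\}_{k \geq 0}$ is precisely the symbolic system of the quotient equation.

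Next, I would invoke Proposition \ref{prop: Existence of geometric quotient involutive degree} to secure an integer $\mathfrak{l} > 0$ and a Zariski-open subset $\widetilde{Z}_{\mathfrak{l}} \subset Z^{\mathfrak{l}}$ over which $\mathcal{H}_{Sp}^{p,q}(\mathfrak{q}) = 0$ for all $p \geq \mathfrak{l}$ and $q \geq 0$. Combined with the $\delta$-Poincaré lemma (Proposition \ref{prop: Delta-Poincare}), this shows that $\mathcal{I}^G$ is Spencer-regular with degree of involution bounded by $\mathfrak{l}$, in the sense of Definition \ref{defn: D-geom m-involutive}. Proposition \ref{prop: Unifying} then guarantees that this matches the Castelnuovo--Mumford regularity of the graded module $\mathrm{gr}(\mathcal{I}^G)$, so the growth of $\dim(\mathfrak{q}_k)$ is governed by the Hilbert polynomial of a Castelnuovo--Mumford regular graded module over a polynomial ring, which is known to be polynomial for $k$ sufficiently large.

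To conclude, applying formula (\ref{eqn: DHilbAgain}) to the Spencer-regular ideal $\mathcal{I}^G$ yields
\[
P_{\D}(\mathcal{I}^G; z) = \sum_{p,q}(-1)^{p+q}\dim \mathcal{H}_{Sp}^{p,q}(\mathfrak{q}) \cdot C^{z+n-p-q-1}_{n-1},
\]
which is a finite sum by the vanishing above, hence a genuine polynomial in $z$; this agrees with $h(Z^G; n) = \dim(\mathfrak{q}_n)$ for $n \gg 0$. The main obstacle I expect is establishing the identification $\mathfrak{q}_k \simeq \mathrm{gr}_k(\mathcal{I}^G)$ in a form stable under the quotient construction — one must verify that the tangent-space-level symbols computed from the diagram (\ref{eqn: Symbol analysis diagram}) truly coincide with the associated graded of the invariant ideal $\mathcal{I}^G$, and that the excision of the (finite-codimension) singular orbit locus does not disturb the dimension counts on which $h(Z^G; k)$ depends. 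This will be handled by working over the generic $G$-invariant Zariski-open subset furnished by Proposition \ref{prop: KumLemma} and then transporting the polynomial agreement back to all of $Z^G$ using the finite codimension of the singular locus established in Proposition \ref{prop: GeometricQuotientEquation}.
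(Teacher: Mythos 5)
Your proposal is correct and follows essentially the same route as the paper: both arguments reduce the claim to the polynomial growth of $\dim(\mathfrak{q}_k)$, which is secured by the Spencer-cohomology vanishing of Proposition \ref{prop: Existence of geometric quotient involutive degree} over the Zariski-open regular locus from Propositions \ref{prop: GeometricQuotientEquation} and \ref{prop: KumLemma}. You supply somewhat more detail (the identification of $\mathfrak{q}_k$ with the symbolic module of $\mathcal{I}^G$ and the explicit use of formula (\ref{eqn: DHilbAgain})) where the paper simply records $\dim(\mathfrak{q}_k)=\dim([Z^k/G^k])-\dim([Z^{k-1}/G^{k-1}])$ and cites the transcendence-degree description of the invariant function field, but the underlying mechanism is identical.
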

\begin{proof}
Note that the growth is controlled by the value of $\mathrm{dim}(\mathfrak{q}_k)$ at generic points $z_k\in Z^k.$ It is given by 
$P_{\D}(\mathcal{I}^G,k):=dim(\mathfrak{q}_k)=dim([Z^k/G^k])-dim([Z^{k-1}/G^{k-1}]),$
where by Proposition \ref{prop: GeometricQuotientEquation}, $dim(Q^k)$ is the transcendence degree of the field of rational $G_x^k$-invariant functions on $Z_x^k$ i.e. $\mathcal{O}_{rat}(Z_x^k)^{G_x^k}.$
By Proposition \ref{prop: Existence of geometric quotient involutive degree}, there exists a numerical polynomial $P\in \mathbb{Q}[t]$ such that 
$P_{\D}(\mathcal{I}^G,k)=P(k),\hspace{1mm} k\gg1.$
\end{proof}
The following corollary is immediate from Proposition \ref{prop: Existence of geometric quotient involutive degree} and Proposition \ref{prop: HilbPolyForGeomQuot}.
\begin{cor}
The $\D$-Hilbert polynomial of the $\D$-ideal of the geometric quotient is 
$P_{\D}(\mathcal{I}^{\mathcal{G}},z)=P_{\D}(\mathcal{I},z)-P_{\D}(G/\mathcal{I},z),$
for a uniquely determined numerical $\D$-Hilbert polynomial $P_{\D}(G/\mathcal{I}).$
\end{cor}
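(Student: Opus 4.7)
The plan is to extract the claimed additivity directly from the short exact sequence of graded pieces of tangent sheaves implicit in the diagram \eqref{eqn: Symbol analysis diagram}, and then promote the equality of fiberwise dimensions to an equality of numerical polynomials via the previous two results.

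First, I would fix $k \geq \mathfrak{l}$, where $\mathfrak{l}$ is the integer provided by Proposition \ref{prop: Existence of geometric quotient involutive degree}, and restrict attention to the Zariski open set $\widetilde{Z}_{\mathfrak{l}} \subset Z^{\mathfrak{l}}$ on which the geometric quotient is regular and Spencer $\delta$-cohomologies of $\mathfrak{q}$ and $\mathrm{gr}(R)_0$ vanish. Over this locus, the third column of diagram \eqref{eqn: Symbol analysis diagram} yields, for each $k$, a short exact sequence of $\mathcal{O}_X$-coherent symbol pieces
\begin{equation*}
0 \to \ker\bigl(Tp^k_{k-1}|_{Z_k^0}\bigr) \to \mathrm{gr}_k(\mathcal{I}) \to \mathfrak{q}_k \to 0.
\end{equation*}
Taking dimensions fiberwise at a generic point and summing over jet orders $\leq z$ (equivalently, comparing the standard filtrations \eqref{defn: Standard filt} of $\mathcal{I}$ and $\mathcal{I}^G$ as in Proposition \ref{prop: D-Dim Polynomial}), one obtains an identity of integer-valued functions
\begin{equation*}
\dim\bigl(F^z\mathcal{A}/F^z\mathcal{I}\bigr) = \dim\bigl(F^z\mathcal{A}^G/F^z\mathcal{I}^G\bigr) + \sum_{k \leq z} \dim \ker\bigl(Tp^k_{k-1}|_{Z_k^0}\bigr),
\end{equation*}
valid for every $z \geq \mathfrak{l}$.

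Next I would invoke Proposition \ref{prop: HilbPolyForGeomQuot}, which guarantees that the middle term on the right-hand side is, for $z \gg 0$, represented by the numerical polynomial $P_{\D}(\mathcal{I}^{\mathcal{G}}, z)$. The left-hand side is similarly represented by $P_{\D}(\mathcal{I}, z)$ by Proposition \ref{prop: D-Dim Polynomial}, assuming $\mathcal{I}$ is formally integrable and involutive as stipulated. The remaining summand $\sum_{k \leq z} \dim \ker(Tp^k_{k-1}|_{Z_k^0})$ counts the orbital/gauge directions through the pseudogroup action; by the vanishing $\mathcal{H}_{Sp}^{p,q}(\mathrm{gr}(R)_0) = 0$ for $p \geq \ell_0$ of Proposition \ref{prop: KumLemma} together with the $\delta$-Poincar\'e lemma (Proposition \ref{prop: Delta-Poincare}), this sum is of polynomial growth for $z \gg 0$. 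I would therefore \emph{define} $P_{\D}(G/\mathcal{I}, z)$ to be the unique numerical polynomial agreeing with this sum for large $z$.

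Uniqueness of $P_{\D}(G/\mathcal{I})$ is then automatic: two numerical polynomials in $\mathbb{Q}[z]$ agreeing for all $z \gg 0$ coincide, so rearranging the displayed identity yields the claimed equation
\begin{equation*}
P_{\D}(\mathcal{I}^{\mathcal{G}}, z) = P_{\D}(\mathcal{I}, z) - P_{\D}(G/\mathcal{I}, z).
\end{equation*}
The main subtlety, rather than an obstacle, is the reliance on passing to the regular open subset $\widetilde{Z}_{\mathfrak{l}}$: the additive decomposition of the symbol is only an honest short exact sequence of locally free pieces \emph{there}, and one must verify that the fiberwise dimensions on this open dense locus govern the generic numerical behavior, which is precisely what Propositions \ref{prop: KumLemma} and \ref{prop: Existence of geometric quotient involutive degree} were set up to ensure. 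Once this point is made carefully, the corollary reduces to bookkeeping with Hilbert-type polynomials.
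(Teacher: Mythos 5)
Your proposal is correct and follows essentially the same route as the paper: the paper's (very terse) proof likewise defines $h_{G/Z}(k):=\dim\ker(Tp^k_{k-1}|_{Z_k^0})$, establishes its eventual polynomiality by the same Spencer-cohomology vanishing argument underlying Proposition \ref{prop: HilbPolyForGeomQuot} and Proposition \ref{prop: KumLemma}, and reads off the additivity from the exact columns of diagram (\ref{eqn: Symbol analysis diagram}). Your only deviation is bookkeeping — summing graded pieces over jet orders $\leq z$ rather than working with the graded pieces directly — which does not change the argument.
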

\begin{proof}
Note there exists a uniquely defined numerical function,
$$h_{G/Z}(k):=dim(ker(Tp^k_{k-1}|_{Z_k^0})),k\gg 1.$$
By a similar argument as in Proposition \ref{prop: HilbPolyForGeomQuot}, it is given by a polynomial $P_{\D}(G/\mathcal{I},z)$. Then, by diagram (\ref{eqn: Symbol analysis diagram}), the result follows.
\end{proof}
To prove there exists a GIT-quotient of the moduli problem, with all boundedness arguments, 
we restrict to $\mathfrak{M}_{\D}^{Sp-ss,inv}\subset \mathfrak{M}_{\D}^{inv}$, consists of those $\D$-ideals, involutive and formally integrable, which are also Spencer semistable. 
Given an algebraic pseudogroup $G$ acting on points of $\mathfrak{M}_{\D}^{inv}(P,n,m)$, we now describe the induced quotient
$$\mathfrak{M}_{\D}^{Sp-ss,inv}(P,n,m)/\!/\mathcal{G}.$$

\section{Existence and finiteness}
\label{sec: Main Result Proof}
This section is devoted to the proof of the main result of the paper. 

\begin{thm}
\label{MainTheorem}
    Let $X$ be a smooth $D$-affine variety and consider a locally free $\mathcal{O}_X$-module $E$ of finite rank. Let $Z:=J_X^{\infty}E$ denote the algebraic infinite jets of $E$, endowed with its natural $\D_X$-algebra structure.
Consider the moduli functor (\ref{eqn: D-Hilbert}) classifying formally integrable, involutive $\D_X$-ideal sheaves in $J_X^{\infty}E.$ Then it is representable by an ind-scheme of finite-type,
    $$\mathbf{Hilb}_{\D_X}(Z)\simeq \varinjlim_{(d,k)}\mathrm{Hilb}^{(d,k)}(J_X^kE),$$
    where $\mathrm{Hilb}^{(d,k)}(J^kE)$ is the scheme of algebraic ideals $I$ in $J_X^kE$ with Hilbert polynomial $d$ and order $\leq k.$ Moreover, for each $m\geq 0$, the locus $\mathbf{Hilb}_{\D_X}^{inv,m}(J_X^{\infty}E)\subset \mathbf{Hilb}_{\D_X}(J_X^{\infty}E)$ where $\mathcal{I}$ is Spencer $m$-regular defines a Zariski-open finite-type subscheme.
\end{thm}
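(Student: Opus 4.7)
The strategy is to present $\mathbf{Hilb}_{\D_X}(Z)$ as a filtered colimit of finite-type Zariski-open subschemes of classical Grothendieck Hilbert schemes, indexed by the Spencer degree of involution. The principle enabling the reduction is Definition \ref{defn: D-geom m-involutive}(ii): a Spencer $m$-regular involutive $\D$-ideal $\mathcal{I}$ is determined by its finite-order truncation $F^m\mathcal{I}\subset \mathcal{O}(J_X^mE)$ via $\mathcal{I}=\mathcal{D}_X\bullet F^m\mathcal{I}$, so that the a priori pro-finite moduli problem becomes genuinely finite-type at each level $m$.

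First I would define, for each $m\geq 0$, the truncation map $\tau_m:\mathbf{Hilb}^{inv,m}_{\D_X}(Z)\hookrightarrow \mathrm{Hilb}^{\tilde P_m}(J_X^mE)$ by $\mathcal{I}\mapsto F^m\mathcal{I}$, where $\tilde P_m$ is the algebraic Hilbert polynomial determined by $P$ through formula (\ref{eqn: DHilbAgain}). Injectivity follows from the $\D$-saturation identity just mentioned. Its image is characterized by the three clauses of Definition \ref{defn: D-Geometric Regularity}: (a) $Z_m=V(F^m\mathcal{I})$ and $\mathrm{pr}_1(Z_m)$ are smooth with surjective projection, (b) the characteristic module $\mathcal{C}h_m$ is Castelnuovo--Mumford $m$-regular, and (c) formal integrability, which, once (a) and (b) hold, is automatic via Proposition \ref{prop: Delta-Poincare} and the identification in Proposition \ref{prop: Unifying}. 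Conditions (a) and (b) are locally closed in the classical Hilbert scheme by standard semicontinuity, so $\mathbf{Hilb}^{inv,m}_{\D_X}(Z)$ acquires the structure of a Zariski-open, finite-type subscheme.

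Next, for $m'\geq m$ the tautological inclusion (Spencer $m$-regularity implies Spencer $m'$-regularity, immediate from Definition \ref{defn: D-geom m-involutive}) yields open immersions $\mathbf{Hilb}^{inv,m}_{\D_X}(Z)\hookrightarrow \mathbf{Hilb}^{inv,m'}_{\D_X}(Z)$ compatible with the Douady-type projection system of Proposition \ref{prop: Douady}. Taking the colimit,
$$\mathbf{Hilb}_{\D_X}(Z) \;=\; \varinjlim_m \mathbf{Hilb}^{inv,m}_{\D_X}(Z) \;\simeq\; \varinjlim_{(d,k)} \mathrm{Hilb}^{(d,k)}(J_X^kE),$$
after reindexing by $(d,k)=(\tilde P_m,m)$. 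For the functorial behaviour on a test $\D$-algebra $\mathcal{R}$, I would apply the same truncation fiberwise: $\D$-flatness (Definition \ref{definition: Flatness for D-Algebras}) restricts to ordinary flatness of $F^m\mathcal{B}$ over $\mathcal{R}$ via Proposition \ref{prop: Redux}, while base change of finite-order jets is handled by Proposition \ref{prop: RelJets BC}.

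The hard part will be verifying that Spencer $m$-regularity is genuinely an \emph{open} condition on families, rather than merely locally closed. By Proposition \ref{prop: Unifying} this reduces to classical semicontinuity of Castelnuovo--Mumford regularity, but applied to the characteristic module of a family, which by Proposition \ref{prop: Approximations} is only realized as a subquotient of $\mathrm{Gr}^F$ of an $\mathcal{A}[\mathcal{D}]$-resolution. I would handle this by choosing a good filtration relative to the parameter base, invoking the non-characteristic hypothesis of Subsection \ref{ssec: NC subspaces} so that fiberwise restriction is flat, and applying Proposition \ref{prop: Involutivity sequences} to propagate Spencer vanishing across nearby fibers. A secondary subtlety is the comparison between the pro-finite $\D$-Hilbert polynomial $P$ and the algebraic polynomials $\tilde P_m$ at each truncation level, which is settled by Proposition \ref{prop: Hilbert D-Polynomial} combined with (\ref{eqn: DHilbAgain}); while compatibility with the algebraic pseudogroup action and the final descent from the Douady Grassmannian to $\mathbf{Hilb}_{\D_X}$ proceed through Proposition \ref{prop: GeometricQuotientEquation}.
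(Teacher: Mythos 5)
Your proposal follows essentially the same route as the paper: both reduce the pro-finite moduli problem to finite jet order using the fact that a Spencer $m$-regular ideal is differentially generated by $F^m\mathcal{I}$, embed the truncated data into classical Grothendieck Hilbert/Quot schemes at each finite level (your truncation map $\mathcal{I}\mapsto F^m\mathcal{I}$ is the inverse of the paper's differential-closure map $I_q\mapsto \overline{I_q}^{\,\mathrm{diff}}$), and present $\mathbf{Hilb}_{\D_X}(Z)$ as a filtered colimit of the resulting finite-type pieces, with boundedness supplied by the uniform Malgrange-type bound on involutivity degrees and openness of the regular locus by semicontinuity. The paper additionally organizes the colimit through a dimension filtration by characteristic variety and the Douady Grassmannian of Proposition \ref{prop: Douady}, but this is a presentational rather than a substantive difference.
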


We split the proof of the Theorem \ref{MainTheorem} into several steps:
\begin{itemize}
\item[-] First, we discuss global generation and prove boundedness in Proposition \ref{prop: Boundedness 1}). 

\item[-] We prove a PDE-analog of the Lefschetz hyperplane theorem in Proposition \ref{prop: Inv Res}, ensuring consistency of Spencer-regularity along non-characteristic sub-manifolds.

\item[-] Introducing a dimension filtration (implicit from Douady's construction in Sect. \ref{ssec: Douady})) we prove representability in Proposition \ref{prop: HilbRep}.
\end{itemize}
A preliminary fact.

\begin{prop}
Fixing a (global) section $s_d\in \mathcal{O}_{T^*X}(d),$ induces a surjective map
$\mathcal{O}_{T^*X}(d)\otimes_{\mathcal{O}_{T^*X}}\mathcal{C}h_k\rightarrow \mathcal{C}h_{k+d},$ on global sections for all $k\geq \mathrm{Reg}_{\mathcal{D}}(\EQ).$ 
Moreover, $\mathcal{C}h$ is generated by its global sections.
\end{prop}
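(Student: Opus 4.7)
The plan is to reduce the statement to Mumford's classical Castelnuovo--Mumford regularity bootstrap, applied to the characteristic graded module of $\EQ$ viewed as a finitely generated graded module over the polynomial $\mathcal{O}_X$-algebra $\mathcal{O}_{T^*X}(*)=\bigoplus_d \mathcal{O}_{T^*X}(d)$.

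First I would use the identification $\mathcal{C}h_k(k)\simeq \Omega^1_{J^k_XE/J^{k-1}_XE}$ from \eqref{eqn: ChKahler1}, together with Proposition \ref{prop: Char inclusions}, to identify the total characteristic module $\mathcal{C}h^{\EQ}(*)=\bigoplus_k \mathcal{C}h_k$ with the associated graded $\mathrm{Gr}^F\bigl(\Omega^1_{\EQ/X}\bigr)$, carrying its natural structure of graded module over $\mathrm{Gr}^F(\mathcal{D}_{\EQ})\simeq \mathcal{O}_{T^*(X,\EQ)}$. The maps of Proposition \ref{prop: Mult by microlocal coordinates} are precisely multiplication by the linear microlocal coordinates $\xi_i$, and finite generation of this graded module follows from Definition \ref{defn: D-geom m-involutive}(ii): once $\EQ$ is Spencer $r$-regular, the ideal $\mathcal{I}\simeq \D_X\bullet F^r\mathcal{I}$ is differentially finitely generated in jet orders $\leq r$, whence the corresponding symbolic module is finitely generated over $\mathcal{O}_{T^*X}(*)$ by its degrees $\leq r$.

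Next I invoke Proposition \ref{prop: Unifying}, which identifies Spencer $r$-regularity of $\EQ$ with Castelnuovo--Mumford $r$-regularity of its symbolic (characteristic) module. With the translation to a genuine finitely generated graded module in hand, Mumford's bootstrap (see \cite{M}) applies: for any $r$-regular finitely generated graded module $M_*$ over $\mathcal{O}_{T^*X}(*)$, and every $k\geq r$ and $d\geq 1$, the graded piece $M_k$ is generated by its global sections and the multiplication map
\[
H^0\!\bigl(\mathcal{O}_{T^*X}(d)\bigr)\otimes_{\mathcal{O}_X} H^0(M_k)\;\twoheadrightarrow\; H^0(M_{k+d})
\]
is surjective. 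The base case $d=1$ is precisely the content of $r$-regularity, and arbitrary $d$ follows by iterating. Specializing $M_*=\mathcal{C}h^{\EQ}(*)$ and pairing the multiplication with the chosen $s_d\in H^0(\mathcal{O}_{T^*X}(d))$ yields the claimed surjection $H^0(\mathcal{O}_{T^*X}(d))\otimes H^0(\mathcal{C}h_k)\twoheadrightarrow H^0(\mathcal{C}h_{k+d})$ for all $k\geq \mathrm{Reg}_{\D}(\EQ)$. Global generation of $\mathcal{C}h$ is then the $k=r$ case, propagated upward in degree by the surjectivity just established.

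The main obstacle is the translation step: verifying that, in the $\D$-involutive setting, the characteristic module genuinely behaves as a finitely generated graded module over the polynomial $\xi$-algebra, so that Mumford's theorem applies verbatim. This hinges on the Koszul--Spencer duality already exploited in Proposition \ref{prop: Unifying}, identifying Spencer $\delta$-cohomology with Koszul homology of the symbolic module, together with the fact that Spencer regularity forces stabilization of the prolongation tower (Definition \ref{defn: D-geom m-involutive}(ii)) so no new generators appear above degree $r$. Once these two inputs are combined, the rest is a mechanical specialization of the classical bootstrap.
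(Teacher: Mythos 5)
Your proposal is correct in substance but follows a genuinely different route from the paper. You reduce everything to the classical Castelnuovo--Mumford bootstrap for finitely generated graded modules over the polynomial $\xi$-algebra, entering through Proposition \ref{prop: Unifying} (Spencer regularity $=$ CM regularity via Koszul--Spencer duality) and the finite generation supplied by Definition \ref{defn: D-geom m-involutive}(ii). The paper instead argues combinatorially: it uses the Cartan characters $\alpha_k^{(\ell)}$ of an involutive symbol, the recursion $\alpha_{k+1}^{(i)}=\sum_{\ell\geq i}\alpha_k^{(\ell)}$ and its iterate, to show that $\mathrm{rank}(\mathcal{C}h_{k+r})$ equals the value predicted by the $\D$-Hilbert polynomial, so the image of the multiplication map, which is a priori contained in $\mathcal{C}h_{k+d}$, must exhaust it by dimension count. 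Your route is more conceptual and makes the dependence on regularity transparent; the paper's route is more elementary and yields the explicit rank formula as a byproduct. Both establish the surjection at the level of the graded symbol module.

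The one point you should not gloss over is the passage from surjectivity of the sheaf map (equivalently, the fiberwise graded-module statement, which is all the CM bootstrap gives you) to surjectivity \emph{on global sections over $X$} and to global generation of $\mathcal{C}h$ as a sheaf on $X$. The grading here is along the fibers of $T^*X\to X$, not a projective polarization, so Mumford's theorem does not hand you vanishing of cohomology in the base direction. The paper closes this gap by invoking $D$-affinity of $X$: the functor $\Gamma(X,-)$ is exact on quasi-coherent $\D_X$-modules and every such module is generated by its global sections (Subsection \ref{Notations and Conventions}), which is exactly what upgrades the stalkwise surjection to a surjection on $H^0$ and gives the ``generated by global sections'' clause. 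You should add one sentence to this effect; with that addition your argument is complete.
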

\begin{proof}
    The first part can be seen by noting that multiplication with some $\xi_j$ generates a morphism of quotient rings by Proposition \ref{prop: Mult by microlocal coordinates}. There exists $p\in \mathbb{Z},$ for which $\mathcal{C}h(\geq p)$ is generated by $\mathcal{C}h(p)$ and 
    $dim\big(\mathcal{C}h(r)\big)=\mathbf{h}^{\mathcal{Y}}(r),\forall r\gg p.$
    Since prolongations of involutive symbols are again involutive i.e.
    $\alpha_{k+1}^{(i)}=\sum_{\ell=i}^{n}\alpha_k^{(\ell)},$ gives by induction
$\alpha_{k+r}^{(\ell)}=\sum_{j=\ell}^{n}C_{r-1}^{r+j-\ell-1}\cdot \alpha_k^{(j)},$
and then we simply notice that 
$rank\big(\mathcal{C}h_{k+r}\big)=\sum_{\ell=1}^{n}C_{r}^{r+\ell-1}\cdot \alpha_k^{(\ell)},$
where the right-hand side agrees with the Hilbert polynomial for the non-linear PDE system $\EQ.$ The second statement follows from finite cogeneration of involutive symbolic systems and from the fact that $X$ is $\mathcal{D}$-affine, so the corresponding $\D$-module is indeed generated by its global sections.
\end{proof}


\subsection{Boundedness}
\label{ssec: Boundedness}
Consider an $\mathcal{O}_X$-submodule of $\mathcal{O}(J_X^kE)$, let $M_k^*$ denote its sections. Recall sections of $J_X^kE$ are dual to $\D_X^{\leq k}\otimes_{\mathbb{C}}E^*.$ Consider a compliment $M_k^{\perp}$ in this dual and put $\M_k:=\D_X^{\leq k}\otimes_{\mathbb{C}}E^*/M_k^{\perp},$ the quotient. One may consider prolongations, and set  $\M=\{\M_k\}$ and similarly for $M^{\perp}.$ Consider the smallest $\ell,m\in \mathbb{N}$ such that involutivity is imposed. For $N_{\ell}:=M_m^{\perp}\cap (\D_{X}^{\leq \ell}\otimes_{\mathbb{C}}E^*),$ and restricting to an open dense subset $U$ of $X$, if $q:=\ell-k$, then $\mathrm{pr}_q(M_k^*)$ is free and involutive. By \cite{Ma3}, the integers $m,\ell$ are uniformly bounded by some $\rho=\rho(k,rankE,n).$ See also the over-estimate \cite{Sw}.

Under Spencer regularity assumptions (\ref{defn: D-geom m-involutive}), this applies to families of $\D$-schemes. Namely, using the notion of $\mathcal{D}$-geometric regularity we have an analog of Mumford's boundedness result.
\begin{prop}
\label{prop: Boundedness 1}
Consider a family $\{\EQ_i\}_{i\in I}$ of Spencer regular and formally integrable algebraic non-linear PDEs imposed on sections of a $\mathrm{rank}(E)=m$ bundle $E\rightarrow X$ over a $\D$-affine $k$-scheme $X$ of dimension $n$. Then the family is bounded if and only if the set of $\mathcal{D}$ Hilbert polynomials $\{P_{\EQ_i}\}$ is finite and there is a uniform bound on the involutivity degrees i.e.
    $\mathrm{Reg}_{\mathcal{D}_X}(\EQ_i)\leq \rho,\forall i\in I.$
\end{prop}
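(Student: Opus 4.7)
The plan is to prove the two implications separately, with the sufficiency direction being the substantive one.

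\emph{Necessity.} Suppose $\{\EQ_i\}_{i\in I}$ is bounded, so is parameterized by a $\D$-flat family over a scheme $S$ of finite type. By Proposition \ref{prop: Hilbert D-Polynomial}, the $\D$-Hilbert polynomial $P_{\D}(\I_s,\bullet)$ is locally constant on $S$ by flatness of the filtered pieces $F^k\A/F^k\I_s$, so finiteness of $S/\!\sim$ gives finitely many polynomials $\{P_{\EQ_i}\}$. For the uniform bound on $\mathrm{Reg}_{\mathcal{D}}$, I invoke Proposition \ref{prop: Unifying}, which identifies the degree of involution with the Castelnuovo–Mumford regularity of $\mathrm{gr}(\I)$. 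Since the symbol construction commutes with flat base change and CM-regularity is upper-semicontinuous in flat families on a finite-type base (see \cite{M}), it attains a uniform upper bound $\rho$.

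\emph{Sufficiency.} Assume $\mathrm{Reg}_{\mathcal{D}}(\EQ_i)\leq \rho$ for all $i$ and $\{P_{\EQ_i}\}\subset\{P_1,\ldots,P_r\}$ is finite. By Definition \ref{defn: D-geom m-involutive}(ii), each ideal is differentially finitely generated at level $\rho$, namely $\I_i=\D_X\bullet F^{\rho}\I_i$ with $F^{\rho}\I_i:=\I_i\cap F^{\rho}\A$ a coherent $\mathcal{O}_X$-submodule of the finite-rank sheaf $F^{\rho}\A=\mathcal{O}(J_X^{\rho}E)$. The $\mathcal{O}_X$-rank (equivalently, the numerical data) of the quotient $F^{\rho}\A/F^{\rho}\I_i$ equals $P_{\EQ_i}(\rho)$, which takes only finitely many values. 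Consequently the assignment $\I_i\mapsto F^{\rho}\I_i$ lands in a finite union of connected components of $\mathcal{G}r_X(J_X^{\rho}E)=\bigcup_d Quot(J_X^{\rho}E;d)$, which is of finite type by Proposition \ref{prop: Douady}.

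It remains to argue that this truncation map is injective at the level of isomorphism classes and that the higher prolongation data $\{F^k\I_i\}_{k\geq \rho}$ is rigidly determined by $F^{\rho}\I_i$; this is exactly the content of conditions (i)–(ii) of Definition \ref{defn: D-geom m-involutive} together with Proposition \ref{prop: Omega and Ch isom}: formal integrability and Spencer-regularity ensure $F^k Z \simeq \mathrm{Pr}_{k-\rho}(F^{\rho}Z)$ for all $k\geq\rho$, so the entire tower, hence $\I_i$, is uniquely reconstructed from the finite data $F^{\rho}\I_i$. The family therefore injects into a finite-type subscheme of $\mathcal{G}r_X(J_X^{\rho}E)$, establishing boundedness.

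The main obstacle is the semicontinuity argument in the necessity direction: translating bounds on $\mathrm{Reg}_{Sp}$ into CM-regularity of the symbol module requires that the formation of $\mathrm{gr}(\I)$ commutes with fiber-wise restriction in a $\D$-flat family, which hinges on the non-characteristic fibration hypothesis from Subsect.~\ref{ssec: NC subspaces} to ensure the graded object behaves well under base change. Once this is secured, classical upper-semicontinuity of CM-regularity in flat families and Ritt-Radenbush Noetherianity combine to give the uniform bound $\rho$.
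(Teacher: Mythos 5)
Your proof is correct in outline but takes a genuinely different route from the paper's. The paper's argument stays inside the formal theory of PDEs: it invokes Malgrange's theorem that the stabilization order of the prolongation--projection tower is bounded by a function $\rho(k,\dim X)$, combines this with Gr\"obner-basis degree bounds for the symbol ideals and with the Spencer--Koszul duality $H_{\delta}^{p,q}(\mathcal{N})^*\simeq H_{p,q}(\mathcal{N}^*)$ to identify the involutivity degree with Castelnuovo--Mumford regularity (Proposition \ref{prop: Unifying}), and then uses Cartan's test and Hilbert's syzygy theorem to get the uniform vanishing; the actual embedding into a finite-type parameter space is deferred to the Douady construction (Proposition \ref{prop: Douady}) and the representability proof. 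You instead split the equivalence cleanly: for sufficiency you run the classical Grothendieck--Mumford truncation argument, sending $\I_i\mapsto F^{\rho}\I_i$ into finitely many components of $\bigcup_d Quot(J_X^{\rho}E;d)$ and reconstructing the whole tower from condition (ii) of Definition \ref{defn: D-geom m-involutive}; for necessity you use $\D$-flatness and upper-semicontinuity of CM-regularity over a finite-type base. Your route makes the logical structure of the ``if and only if'' transparent and directly exhibits the parameter space; the paper's route buys something stronger, namely a priori bounds depending only on $(n,m,k)$ rather than on the hypothesis of boundedness. Two points deserve care if you write this up fully: in the sufficiency step, the single value $P_{\EQ_i}(\rho)$ is a fiberwise dimension, and for the Quot scheme over a proper base to be of finite type you should fix the full Hilbert polynomial of the truncated quotient $F^{\rho}\A/F^{\rho}\I_i$ as an $\mathcal{O}_X$-module (the hypothesis does give you the whole polynomial $P_{\EQ_i}$, so this is a matter of bookkeeping rather than a gap); and in the necessity step the semicontinuity of $\mathrm{reg}_{Sp}$ requires that the formation of $\mathrm{gr}(\I)$ commute with fiberwise restriction, which, as you correctly flag, rests on the non-characteristic fibration hypothesis of Subsection \ref{ssec: NC subspaces}.
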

This result essentially gives the appropriate `flattening stratification' \cite{Gro3}; it identifies sub-loci of the $\D$-Hilbert scheme which are better behaved, consisting of those $\D$-ideals which are Spencer $m$-regular for a fixed $m$. In particular, by (\ref{defn: D-geom m-involutive}) (ii), they are differentially generated from a fixed finite order.

\begin{proof}
For a given $\D$-scheme $Z^{\infty}$ in the family, fix $\ell\geq 0$ and consider the family of projections of $\mathrm{pr}_mZ_k$ with $\ell\leq m+k$ in $J_X^{\ell}E.$ Then, as $m\rightarrow \infty$ the family is decreasing and stationary. Let $m=k+p\geq \ell$ be the smallest integer such that the projection in $\mathcal{I}_{\ell}$ of $\mathrm{pr}_pZ_k\subset J_X^mE$ is equal to the closed subscheme $F^{\ell}Z^{\infty}.$ This always holds over a Zariski dense open subset $U\subset X.$ Malgrange proved that the integer $m$ is bounded by a function $\rho=\rho(k,\mathrm{dim}X).$
Thus, by assumption that the family is bounded, there exists a finite set $\{\rho_i\}_{i\in I}$ of bounds, obtained this way. Looking at the symbolic systems of each $\D$-scheme, we obtain a family of homogeneous ideals defining the characteristic modules, and let $\mathfrak{D}_i,i\in I$ be a bound for the elements of a Gröbner basis for the lexicographic order. Upon choosing generic coordinates, there exists by \cite[Theorem 3.4, Theorem 3.5]{Ma2}, an upper bound $\mathfrak{D}_i(n,d_i)$ for $\mathcal{I}$ generated by elements of degree $d_i,i\in I.$
Moreover, there exists a smallest $\ell_i,i\in I$ such that the symbolic module for PDE $Z_i^{\infty}$ is $\ell_i$-involutive. It is bounded by an integer $\gamma_i$ (see \cite{Sw}, or Appendix \ref{Appendix Sweeney}, and Theorem 3.4 \emph{loc.cit.}). Further, there exists a minimum $r_i$ for each $i\in I$ such that the $\ell_i$'th prolongation is contained in that of degree $r_i.$ This integer is bounded as well.

First, notice the general fact that if $\mathcal{N}$ is a symbol co-module for an involutive PDE, then for all $k\geq 0$ and for all $1\leq p\leq \mathrm{dim}(X),$ 
$$H_{\delta}^{p,q}(\mathcal{N})^*\simeq H_{p,q}(\mathcal{N}^*)\simeq H_{p-1,q+1}(\mathcal{N}^0),$$
where $\mathcal{N}^0$ is the annihilator.

Now, let $\mathcal{N}$ be a $\mathcal{C}:=\mathrm{Sym}(V^*)$-comodule and $\mathcal{M}$ be a $\mathrm{Sym}(V)$-module. Here, $V$ is the $n$-dimensional vector space which may be thought of as corresponding to the tangent sheaf $\Theta_{X,x}$ for each $x\in X.$
From Definitions \ref{defn: D-Geometric Regularity} and \ref{defn: D-geometric generalized involutivity}, we have involutivity at degree $k_0$ if it is $\mathrm{dim}(V)$-acyclic at degree $k_0.$
By duality, a graded $\mathrm{Sym}(V)$-module $\mathcal{M}$ is involutive at degree $k_0$ if 
$$H_{p,q}(\mathcal{M})=0,\forall k\geq k_0,0\leq p\leq \mathrm{dim}(V).$$

A graded ideal $\mathcal{I}$ is $k$-regular if its $i$-th syzygy module is generated by elements of degrees $\leq k+i.$ It follows that
$$\mathrm{reg}(\mathcal{I})=k\Rightarrow H_{p,q}(\mathcal{I})=0,q>k.$$
By our assumptions, each $\EQ_i$ is a $\mathcal{D}$-involutive PDE with corresponding $\mathcal{D}$-algebra of functions $\mathcal{B}_i=\mathcal{A}/\mathcal{I}_i$ with $\mathcal{A}$ our polynomial ring in derivatives of dependent variables for which $\mathcal{I}_i$ is $k_i$-regular:
$$H_{p,q}(\mathcal{B}_i)=0,\forall r\geq k_i,$$
and therefore by Proposition \ref{prop: Unifying},
$\mathrm{reg}(\mathcal{I}_i) \equiv \mathrm{deg}^{\mathrm{inv}}(\mathcal{B}_i).$
We may also apply Cartan's test \cite{C,C2}, which states there exists an integer $m_0\geq 0$ such that $H_{\delta}^{p,q}(\mathcal{N})=0,\forall q\geq m_0,0\leq p\leq \mathrm{dim}(X).$
By our assumptions on involutivity, we have that 
$\mathcal{M}$ is a finitely generated graded polynomial module, so by Hilbert's syzygy theorem there exists an integer $m_0\geq 0$ such that 
$H_{p,q}(\mathcal{M})=0,\forall q\geq m_0,0\leq p\leq \mathrm{dim}(X).$
Moreover, 
$H_{\delta}^{0,q}(\mathcal{N})=0,$ and the dimension of $H_{\delta}^{1,q-1}(\mathcal{N})$ equals the dimension of the quotient of the prolongation of $\mathcal{N}_{q-1}$ by $\mathcal{N}_q$ for every $q>0.$
\end{proof}
\subsubsection{Involutive restriction}
Given the $\D$-affine variety $X$ as before, the existence of non-characteristic sub-spaces follows from an analog of Noether normalization lemma applied to characteristic varieties defined by symbol ideals e.g. (\ref{eqn: Filtered Char ideal}). The set of all non microcharacteristic subspaces of $T^*X$ of fixed dimension define an open dense subset of the corresponding Grassmannian of $T^*X$. A subvariety $U\subset X$ is said to be non-microcharacteristic if it is when viewed, via the zero-section, as a sub-space of $T^*X.$
\begin{prop}
\label{prop: Inv Res}
Let $\mathcal{I}_X$ be a $\D$-involutive $\D$-ideal sheaf. For $j:H\subset T^*X$ a non-microcharacteristic analytic subset, denote by $\mathcal{I}_{H}$ the $\D$-ideal obtain by non-characteristic restriction. Then $\mathrm{Reg}_{\mathcal{D}}(\mathcal{I}_X)=m$ if and only if $\mathrm{Reg}_{\mathcal{D}}(\mathcal{I}_H)=m.$  
\end{prop}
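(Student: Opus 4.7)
The strategy adapts the classical Castelnuovo--Mumford regularity argument (see \cite{M}) to the $\D$-geometric setting, using the unifying identification in Proposition \ref{prop: Unifying} between Spencer regularity and the graded commutative algebra regularity of the symbol module. The key point is that non-microcharacteristicity of $H\subset T^*X$ translates into the injectivity of multiplication by a defining covector $\xi_H\in \Omega_X^1$ on the symbol module $\mathrm{gr}(\mathcal{I}_X)$, exactly as in Proposition \ref{prop: Mult by microlocal coordinates}.

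First I would set up the basic Koszul-type sequence. Since $H$ is non-microcharacteristic for $\mathcal{I}_X$, the covector $\xi_H$ does not vanish on any point of the $\D$-geometric characteristic variety $\mathrm{Char}_{\D}(Z)\subset T^*(X,Z)$ outside the zero section. By the description of the characteristic ideal as $\sqrt{\mathrm{Ann}(\mathcal{C}h^{Z})}$ (Definition \ref{annideal}), this forces $\xi_H$ to be a non-zero-divisor on the symbol/characteristic module $\mathrm{gr}(\mathcal{I}_X)$ in sufficiently high degree. This yields a short exact sequence of graded $\mathcal{O}_{T^*(X,Z)}$-modules
\begin{equation*}
0\to \mathrm{gr}(\mathcal{I}_X)(-1)\xrightarrow{\,\cdot\xi_H\,}\mathrm{gr}(\mathcal{I}_X)\to \mathrm{gr}(\mathcal{I}_H)\to 0,
\end{equation*}
where the cokernel is identified with the symbol module of the restricted $\D$-ideal by the compatibility of non-characteristic restriction with the formation of symbols (this uses $j^*\Omega^1_{Z/X}\simeq \Omega^1_{j^*Z/H}$ in the non-characteristic range, which is a direct consequence of Proposition \ref{prop: D-Isom of Chars} applied fibre-wise).

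Second I would pass to Spencer $\delta$-cohomology. Applying the $\delta$-Spencer functor $\mathcal{S}p^\bullet(-)$ termwise gives a short exact sequence of complexes and therefore a long exact sequence
\begin{equation*}
\cdots\to \mathcal{H}_\delta^{p,q-1}(\mathrm{gr}(\mathcal{I}_X))\to \mathcal{H}_\delta^{p,q}(\mathrm{gr}(\mathcal{I}_H))\to \mathcal{H}_\delta^{p,q}(\mathrm{gr}(\mathcal{I}_X))\xrightarrow{\cdot \xi_H}\mathcal{H}_\delta^{p,q}(\mathrm{gr}(\mathcal{I}_X))\to\cdots,
\end{equation*}
entirely analogous to the hyperplane long exact sequence in Castelnuovo--Mumford theory. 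The forward direction $\mathrm{Reg}_{\D}(\mathcal{I}_X)=m\Rightarrow \mathrm{Reg}_{\D}(\mathcal{I}_H)\leq m$ is then immediate from Definition \ref{defn: D-geometric generalized involutivity}: vanishing of the outer terms for $q\geq m$ and all $0\leq p\leq \dim X$ forces vanishing of the middle. For the converse direction one uses the $\delta$-Poincaré lemma (Proposition \ref{prop: Delta-Poincare}) to guarantee that $\mathcal{H}_\delta^{p,q}(\mathrm{gr}(\mathcal{I}_X))=0$ for $q$ sufficiently large, then descends inductively on $q$: when $\mathcal{H}_\delta^{p,q}(\mathrm{gr}(\mathcal{I}_H))=0$ and $\mathcal{H}_\delta^{p,q+1}(\mathrm{gr}(\mathcal{I}_X))=0$, the map $\cdot\xi_H$ on $\mathcal{H}_\delta^{p,q}(\mathrm{gr}(\mathcal{I}_X))$ is surjective, and one concludes vanishing by a Nakayama argument using the finite generation (in the sense of good filtrations, Proposition \ref{prop: Associated graded}) of the symbol module in the involutive regime.

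The main technical obstacle will be step one, namely justifying rigorously that non-microcharacteristicity implies the injectivity of $\cdot \xi_H$ on $\mathrm{gr}(\mathcal{I}_X)$ as a graded $\mathcal{O}_{T^*(X,Z)}$-module, together with the identification of the cokernel as $\mathrm{gr}(\mathcal{I}_H)$. The subtlety is that $\D$-ideals are pro-finite objects and the naive restriction may pick up higher $\mathrm{Tor}$; one must invoke the Spencer-regularity hypothesis (Definition \ref{defn: D-geom m-involutive}) together with the $1$-microcharacteristic formulation from Remark \ref{rem: s-microchars} to reduce to the locally free stratum and apply the graded Nakayama principle stratum-wise. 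Once this injectivity is established, the long exact sequence argument is formal and the equality $\mathrm{Reg}_{\D}(\mathcal{I}_X)=\mathrm{Reg}_{\D}(\mathcal{I}_H)$ follows by combining the two implications.
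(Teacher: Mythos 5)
Your strategy is sound and would work, but it is genuinely different from the paper's. You run the classical Castelnuovo--Mumford hyperplane argument: cut by a single covector $\xi_H$, use non-microcharacteristicity to make $\cdot\,\xi_H$ a non-zero-divisor on the characteristic module in high degree, and then chase the long exact sequence in $\delta$-cohomology in both directions (forward by vanishing of outer terms, backward by descending induction from the $\delta$-Poincar\'e lemma plus graded Nakayama). The paper instead splits $\Omega_X^1\simeq \mathcal{V}^*\oplus\mathcal{W}^*$ with $\mathcal{V}^*=\mathrm{Ann}(W)$ the conormal directions, filters the exterior algebra by $G_p^q=\wedge^p\mathcal{V}^*\wedge\wedge^{q-p}\Omega_X^1$, and runs the associated spectral sequence with $E_1^{p,q}\simeq H^{\ell-p-q,q}(\mathcal{C}h|_H)\otimes\wedge^p\mathcal{V}^*$; degeneration then yields the K\"unneth-type isomorphism $H^{0,q}(\mathcal{C}h)\simeq\bigoplus_{q_0}H^{0,q_0}(\mathcal{C}h|_H)\otimes\wedge^{q-q_0}\mathcal{V}^*$, from which \emph{both} implications drop out simultaneously with no induction on $q$ and no Nakayama step. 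What the paper's route buys is (a) arbitrary codimension of $H$ in one pass, whereas your single-covector sequence implicitly assumes $H$ is cut by one linear form and must be iterated through a flag of intermediate subspaces, each of which must be checked (or chosen generically) to remain non-characteristic; and (b) the full decomposition of cohomology rather than just the equality of vanishing thresholds. What your route buys is a cleaner parallel with \cite{M} and with Proposition \ref{prop: Mult by microlocal coordinates}. Two points in your sketch need care before it is complete: the identification of $\mathrm{coker}(\cdot\,\xi_H)$ with $\mathrm{gr}(\mathcal{I}_H)$ is exactly the statement that non-characteristic restriction commutes with taking symbols, which you correctly flag as the crux but which carries most of the weight of the proposition (in the paper this is built into the definition of $\mathcal{C}h_k|_H$ as the image under the surjection induced by $\nu_x$); and the displayed long exact sequence should carry a degree shift on the multiplication map, i.e. $\cdot\,\xi_H:\mathcal{H}_\delta^{p,q-1}(\mathrm{gr}(\mathcal{I}_X))\rightarrow \mathcal{H}_\delta^{p,q}(\mathrm{gr}(\mathcal{I}_X))$, coming from the twist $\mathrm{gr}(\mathcal{I}_X)(-1)$ --- without it the Nakayama step in the converse direction does not parse.
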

\begin{proof}
We have the ideal sequence
\begin{equation}
    \label{eqn: Ideal sequence of H}
0\rightarrow \EuScript{J}_H\hookrightarrow \mathcal{O}_{T^*X}\rightarrow \mathcal{O}_H\rightarrow 0,
\end{equation}
which is a sequence of sheaves of filtered algebras by polynomial degree, which for homogeneous degree $k$ reads
$0\rightarrow \EuScript{J}_{H}(k)\hookrightarrow \mathcal{O}_{T^*X}(k)\rightarrow \mathcal{O}_{H}(k)\rightarrow 0.$
In particular, consider $k=1,$ and the dual sequence, considered stalk-wise, for a 
$k$-rational point $x$ of $X$ as
$$0\rightarrow \mathcal{V}_x^*\rightarrow \Theta_{X,x}^{\vee}\simeq\Omega_{X,x}^1\xrightarrow{\nu_{x}}\mathcal{W}^*\rightarrow 0,$$
where we use the notation: for $W\subset T_xX,$ we have $W^*$ is its dual, and $V^*=\mathrm{Ann}(W)\subset T_x^*X.$

Recalling $\mathcal{N}_k$ and $\mathcal{C}h_k^{\mathcal{Y}},$ as in (\ref{eqn: k-th symbol module}), and (\ref{eqn: k-characteristic module}), respectively. Viewed as a submodule of 
$\mathrm{Sym}^k(\Theta_{X,x}^{\vee})\otimes \mathcal{B},$ tensoring with (\ref{eqn: Ideal sequence of H}) gives $H$-restriction, $j^*\mathcal{C}h_{k}=\mathcal{C}h_k|_{H}\subset \mathrm{Sym}^k(\mathcal{W}_{x}^*)\otimes \mathcal{B}.$ It is the image of $\mathcal{C}h_k$ by the restriction $\mathrm{Sym}^k(\Omega_X^1)\otimes_{F^k\mathcal{B}}\mathcal{B}\twoheadrightarrow \mathrm{Sym}^k(\mathcal{W}_x^*)\otimes \mathcal{B},$ induced by $\nu_x,$ for $x\in X.$
Omitting reference to $x\in X$ from notation, we have on one hand a commutative diagram 

\[
\begin{tikzcd}
    \mathcal{C}h_{k+1}\arrow[d]\arrow[r] & \mathcal{C}h_k\otimes\Omega_{X}^1\arrow[d]
    \\
    \mathcal{C}h_{k+1}|_{H}\arrow[r] & \mathcal{C}h_k|_H\otimes W^*,
\end{tikzcd}
\]
as well as, for the $\ell$-th Spencer sequence, the isomorphism of first $\ell-\mathrm{I}+1$ terms:
\[
\begin{tikzcd}
    \mathcal{C}h_{\ell}\arrow[d]\arrow[r] & \mathcal{C}h_{\ell-1}\otimes W^*\arrow[d]\arrow[r] & \mathcal{C}h_{\ell-2}\otimes \wedge^2W^*\arrow[d]\arrow[r] & \cdots
    \\
    \mathcal{C}h_{\ell}|_{H}\otimes W^*\arrow[r] & \mathcal{C}h_{\ell-1}|_{H}\otimes W^*\arrow[r] & \mathcal{C}h_{\ell-2}|_{H}\otimes \wedge^2 W^*\arrow[r] & \cdots
\end{tikzcd}
\]
giving 
$$H^{*,*}(\mathcal{C}h)\simeq H^{*,*}(\mathcal{C}h|_{H}).$$
Here, $I$ is the degree of involutivity.

For $i<I$ it suffices to consider that, 
$\mathcal{C}h_i\simeq Sym^i(\Omega_X^1)\otimes\mathcal{B}_X,$ and $\mathcal{C}h_{i}|_{H}\simeq Sym^i(W^*)\otimes i^*\mathcal{B},$ where $i^*\mathcal{B}\simeq \mathcal{B}_H,$ we obtain that:
\[
\begin{tikzcd}
\arrow[r]
\mathcal{C}h_{I}\otimes\wedge^{q-1}W^*\arrow[d]\arrow[r] & Sym^{I-1}(\Omega_X^1)\otimes\mathcal{B}\otimes \wedge^qW^*\arrow[d]\arrow[r] & \cdots\\
    \arrow[r] \mathcal{C}h_{I}|_{H}\otimes\wedge^{q-1}W^*\arrow[r] & Sym^{I-1}W^*\otimes i^*\mathcal{B}\otimes \wedge^q W^*\arrow[r] & \cdots
\end{tikzcd}
\]
and the only contributions to cohomology are $H^{I,q-1}$ and $H^{I-1,q}.$ 
Calculating the Spencer cohomology is achieved by standard spectral sequence arguments. Namely, introduce a filtration by powers of $\Omega_X^1$ in $\wedge^q\Omega_X^1$, denoted $G_p^q:=\wedge^p\mathcal{V}^*\wedge \wedge^{q-p}\Omega_X^1,$ so that $G_{q}^q\subset G_{q-1}^q\subset \cdots\subset G_0^q:=\wedge^q\Omega_X^1.$
There is an induced filtration, given for each $\ell\geq 0,$ by
$$Fl_{\EQ}^{p,q}:=\mathcal{C}h_{\ell-p-q}^{\EQ}\otimes G_{p}^{p+q}\simeq \mathcal{C}h_{\ell-p-q}^{\EQ}\otimes \bigwedge^p\mathcal{V}^*\wedge\bigwedge^q\Omega_X^1.$$
It satisfies $Fl_{\EQ}^{p+1,q-1}\subset Fl_{\EQ}^{p,q},$ and its successive quotients, giving the $0$-th page are
$E_0^{p,q}:=Fl_{\EQ}^{p,q}/Fl_{\EQ}^{p+1,q-1}\simeq \mathcal{C}h_{\ell-p-q}^{\EQ}\otimes \wedge^p\mathcal{V}^*\otimes \wedge^q\mathcal{W}^*.$
The differential acts according to $W$ and one may show that 
$$E_1^{p,q}\simeq H^{\ell-p-q,q}(\mathcal{C}h)\otimes \wedge^p\mathcal{V}^*.$$
From these arguments, one may show that 
$$H^{0,q}(\mathcal{C}h)\simeq \bigoplus_{q_0\geq 0}H^{0,q_0}(\mathcal{C}h|_{H})\otimes \wedge^{q-q_0}(V^*).$$ 
To establish the full vanishing except for the position where involutivity occurs, one may proceed inductively on $\ell\geq k$ to show $H^{i,\ell-i}=0$ except at $i=k-1$ and if this holds for $\mathcal{C}h$ it holds for $\mathcal{C}h|_{H}.$ It is a long check so we omit the details in full but mention that every differential $d_{r}^{p,q}$ for $r>0$ is trivial except for the case of $d_{1}^{p,0}$ which happens when $\ell-p<k.$
\end{proof}
By \cite{G,G2}, we note the following.
\begin{rmk}
The obstructions for formal
integrability belong to the groups $H^{p,2}$, but after finitely many prolongations
the system becomes $m$-acyclic and $m$-involutive.
\end{rmk}
Proposition \ref{prop: Inv Res} states that the degree where this occurs is the same for characteristic modules and all their non-characteristic restrictions.

\begin{prop}
\label{prop: Involutive restriction is involutive}
For all $i \geq ord$ if $H^{i,j}(\mathcal{C}h)=0$ for all $j\geq n-m,$ then for every non-characteristic $H\subset T^*X$ whose dual is denote by $V$,
with restricted symbol $\mathcal{C}h|_{H},$ one has 
$H^{i,j}(\mathcal{C}h|_{H})=0,\forall j\geq n-m.$
\end{prop}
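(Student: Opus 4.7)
The plan is to deduce this proposition as a direct consequence of the spectral-sequence calculation already carried out in the proof of Proposition \ref{prop: Inv Res}. Since $H\subset T^*X$ is non-characteristic, we have an orthogonal splitting $\Omega_X^1\simeq V^*\oplus W^*$, where $W^*=j^*\Omega_X^1$ is the cotangent sheaf of $H$ and $V^*$ is its conormal. Exactly as before I filter the $\ell$-th Spencer complex of $\mathcal{C}h$ by $\mathrm{Fl}^{p,q}_{\EQ}:=\mathcal{C}h_{\ell-p-q}\otimes G_p^{p+q}$, with $G_p^{p+q}=\wedge^p V^*\wedge\wedge^q\Omega_X^1$. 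The associated graded is $E_0^{p,q}\simeq \mathcal{C}h_{\ell-p-q}\otimes\wedge^p V^*\otimes\wedge^q W^*$, and since the $E_0$-differential acts trivially on the $V^*$-factor, it is precisely the Spencer differential of $\mathcal{C}h|_H$ twisted by $\wedge^p V^*$. Hence
\[
E_1^{p,q}\;\simeq\;\wedge^p V^*\otimes H^{\ell-p-q,\,q}(\mathcal{C}h|_H),
\]
and the sequence converges to $H^{\ell-*,*}(\mathcal{C}h)$.

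The next step is to show that under the involutivity hypothesis the spectral sequence degenerates at $E_1$ in the stable range $i\geq \mathrm{ord}$, $q\geq n-m$. The higher differentials $d_r$ ($r\geq 2$) strictly increase the $V^*$-degree $p$ while decreasing the Spencer jet-degree $\ell-p-q$; but because $\mathcal{C}h$ is a finitely generated $\mathrm{Sym}(T_X)$-comodule that is involutive above degree $n-m$, these differentials are forced to vanish on $E_1^{p,q}$ once $q\geq n-m$, essentially by the same Koszul-type argument used to prove Proposition \ref{prop: Unifying}. This is the step where I expect the main technical work: one must check, term-by-term using the Pommaret-type stratification of $\mathcal{C}h$ in involutive degree, that the obstruction cocycles representing $d_r$ for $r\geq 2$ already live in Spencer groups known to vanish by hypothesis.

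Given degeneration, the vanishing $H^{i,j}(\mathcal{C}h)=0$ for $i\geq\mathrm{ord}$, $j\geq n-m$ transfers directly to the $E_\infty=E_1$ terms: for any such $(i,j)$ and for any $p\geq 0$ we obtain $\wedge^p V^*\otimes H^{i,j}(\mathcal{C}h|_H)=0$, and choosing $p=0$ gives $H^{i,j}(\mathcal{C}h|_H)=0$ as required. Equivalently, this recovers the explicit decomposition
\[
H^{i,j}(\mathcal{C}h)\;\simeq\;\bigoplus_{j_0\leq j}H^{i,j_0}(\mathcal{C}h|_H)\otimes\wedge^{j-j_0}V^*
\]
noted at the end of the proof of Proposition \ref{prop: Inv Res}, from which the result is immediate by projecting onto the $j_0=j$ summand.

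The only real obstacle, as indicated above, is justifying the degeneration in step two: one must rule out nontrivial higher differentials on the involutive part of the $E_1$ page. A clean way to do this is by induction on $\ell$, using Proposition \ref{prop: Involutivity sequences} applied to the short-exact sequence $0\to \EuScript{J}_H\cdot\mathcal{C}h\to \mathcal{C}h\to \mathcal{C}h|_H\to 0$ to propagate the vanishing along prolongation; alternatively one can invoke Proposition \ref{prop: Delta-Poincare} to bound the range in which any higher differential could possibly be nonzero, showing it lies strictly below the involutive threshold $n-m$.
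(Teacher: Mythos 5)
Your proposal is correct and follows essentially the same route as the paper: the paper prints no separate proof of this proposition, treating it as an immediate consequence of the filtration $Fl_{\EQ}^{p,q}=\mathcal{C}h_{\ell-p-q}\otimes\wedge^p\mathcal{V}^*\wedge\wedge^q\Omega_X^1$ and the resulting spectral sequence with $E_1^{p,q}\simeq H^{\ell-p-q,q}(\mathcal{C}h|_H)\otimes\wedge^p\mathcal{V}^*$ already set up in the proof of Proposition \ref{prop: Inv Res}, which is exactly what you use, down to the decomposition $H^{i,j}(\mathcal{C}h)\simeq\bigoplus_{j_0}H^{i,j_0}(\mathcal{C}h|_H)\otimes\wedge^{j-j_0}V^*$. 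The degeneration issue you flag as the technical crux is the same point the paper disposes of by asserting (without full details) that every $d_r^{p,q}$ with $r>0$ vanishes except $d_1^{p,0}$ in the range $\ell-p<k$, so your treatment is at the same level of rigor as the source.
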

Denote for each non-negative integer $k$, and each classical solution $\varphi$ the restriction map
\begin{equation}
    \label{eqn: Restriction map}
    \mathrm{res}_{k}:H^0(T^*X,\mathcal{C}h_{k,\varphi})\rightarrow H^0(V,\mathcal{C}h_{k,\varphi}|_{V}).
    \end{equation}

\begin{prop}
    Suppose $H\subset T^*X$ is non-characteristic and for each classical solution $\varphi$ the restriction  $\mathcal{C}h_{k,\varphi}|_{V}$ is $m$-involutive. Then, if for some $m_0\geq m,$ map \emph{(\ref{eqn: Restriction map})} defining restriction $\mathrm{res}_{m_0}$ is surjective, then $\mathrm{res}_{p}$ is surjective for all $p\geq m_0.$
    \end{prop}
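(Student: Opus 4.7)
The plan is to proceed by induction on $p\geq m_0$, with the base case $p=m_0$ being the hypothesis. The key observation is that for involutive symbolic systems, prolongation from level $p$ to level $p+1$ is effected entirely by multiplication with the microlocal coordinates, via the surjections of Proposition \ref{prop: Mult by microlocal coordinates}. Since $H\subset T^*X$ is non-characteristic, Proposition \ref{prop: Involutive restriction is involutive} guarantees that $\mathcal{C}h|_V$ inherits $m$-involutivity from $\mathcal{C}h$, so for every $p\geq m_0\geq m$ both symbolic modules lie in their involutive range and therefore both prolongation-multiplication maps are surjective on the corresponding sheaves.

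Concretely, I would set up the commutative square
\[
\begin{tikzcd}[column sep=large]
H^0\!\bigl(T^*X,\,T^*X\otimes \mathcal{C}h_{p,\varphi}\bigr) \arrow[r,"\mu_p"] \arrow[d,"\rho\otimes \mathrm{res}_p"'] & H^0\!\bigl(T^*X,\,\mathcal{C}h_{p+1,\varphi}\bigr) \arrow[d,"\mathrm{res}_{p+1}"] \\
H^0\!\bigl(V,\,W^*\otimes \mathcal{C}h_{p,\varphi}|_V\bigr) \arrow[r,"\bar\mu_p"] & H^0\!\bigl(V,\,\mathcal{C}h_{p+1,\varphi}|_V\bigr)
\end{tikzcd}
\]
where $\mu_p,\bar\mu_p$ are the multiplication-by-$\xi$ prolongation maps and $\rho:T^*X\twoheadrightarrow W^*$ is the canonical non-characteristic corestriction of the conormal sheaf. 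The left vertical arrow is surjective: $\rho$ is surjective by the non-characteristic hypothesis, and $\mathrm{res}_p$ is surjective by the induction hypothesis. A straightforward chase then gives surjectivity of $\mathrm{res}_{p+1}$: for $s\in H^0(V,\mathcal{C}h_{p+1,\varphi}|_V)$, write $s=\bar\mu_p(t)$ using surjectivity of $\bar\mu_p$, lift $t$ to some $\tilde t$ on the ambient side via the surjective left vertical map, and obtain $\mathrm{res}_{p+1}(\mu_p(\tilde t))=\bar\mu_p(t)=s$.

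The main technical obstacle is the promotion of sheaf-level surjectivity of $\mu_p$ and $\bar\mu_p$ to surjectivity on $H^0$. At the sheaf level, surjectivity is exactly the statement that $\mathrm{gr}(\mathcal{I})$ (resp. its restriction) is involutive at degree $p\geq m$, which is the content of the $\delta$-Poincar\'e lemma (Proposition \ref{prop: Delta-Poincare}) in the acyclic range. To cross from sheaves to global sections one needs the first cohomology of the kernel to vanish; I would handle this by pulling back to a $D$-affine cover of $X$ and appealing to $D$-affinity (cf. Proposition \ref{prop: Global HoFlat}) on the ambient side, and by using that $\rho$ is a surjection of locally free sheaves compatible with the canonical filtration by homogeneous $\xi$-degrees on the restricted side, so that Serre's Theorem B applies in fixed homogeneous degree. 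Since the argument is uniform in $p\geq m_0$, the induction closes and $\mathrm{res}_p$ is surjective for every $p\geq m_0$.
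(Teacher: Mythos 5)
Your argument is correct and is structurally the same as the paper's: both reduce the step from $p$ to $p+1$ to a diagram chase around the square whose horizontal arrows are the multiplication-by-$\xi$ prolongation maps and whose vertical arrows are restrictions, with the inductive hypothesis furnishing surjectivity of the left vertical arrow and involutivity of the restricted symbol furnishing surjectivity of the bottom arrow. The one place you diverge is in how you justify surjectivity of $\bar\mu_p$ \emph{on global sections}: you treat this as the main technical obstacle, establishing surjectivity only at the sheaf level via the $\delta$-Poincar\'e lemma and then sketching a descent to $H^0$ through $D$-affinity and Serre's Theorem B. The paper sidesteps this entirely by using the hypothesis in its Castelnuovo--Mumford form: by Proposition \ref{prop: Unifying}, $m$-involutivity of $\mathcal{C}h_{k,\varphi}|_V$ is the same as CM $m$-regularity, and Mumford's lemma for an $m$-regular sheaf states \emph{directly} that $H^0(V,\mathcal{O}_V(1))\otimes H^0(V,\mathcal{C}h_{k}|_{V})\rightarrow H^0(V,\mathcal{C}h_{k+1}|_{V})$ is surjective for $k\geq m$ --- no separate cohomology-vanishing step is needed. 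So your ``obstacle'' dissolves once the regularity formulation is invoked; if you keep your route you do need to actually supply the $H^1$-vanishing of the kernel of $\bar\mu_p$ (and note that what you need is surjectivity of $H^0(W^*)\otimes H^0(\mathcal{C}h_{p}|_V)\to H^0(\mathcal{C}h_{p+1}|_V)$, the tensor product of global sections, which is a priori weaker than surjectivity of $H^0$ of the tensor-product sheaf), whereas the CM-regularity statement hands you exactly the map you want. Either way the induction closes.
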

\begin{proof}
By assumption, the map  $\mathrm{res}_{k}:H^0(T^*X,\mathcal{C}h_{k,\varphi}))\rightarrow H^0(V,\mathcal{C}h_{k,\varphi}|_{V})$ is surjective and since $\mathcal{C}h_{k}|_{V}$ is $m$-regular, the natural morphism 
$$H^0(V,\mathcal{O}_V(1))\otimes H^0(V,\mathcal{C}h_{k}|_{V})\rightarrow H^0(V,\mathcal{C}h_{r+1}|_{V}),$$
is surjective as well. It follows that $\mathrm{res}_{r+1}$ corresponding to (\ref{eqn: Restriction map}) is surjective. The statement follows similarly.
\end{proof}

\subsection{Representability}
To conclude the proof of Theorem \ref{MainTheorem}, we construct a directed system defining an ind-scheme $\mathrm{Hilb}_{\D_X}^P(Z)=\varprojlim_k \mathrm{Hilb}_{\mathcal{D}_x}^{P,\leq q}(Z).$ It is filtered colimit of schemes of finite type, with each piece corresponding to data truncated at a finite jet level. This inverse system captures the behaviour of the entire formally integrable system as $q\rightarrow \infty.$

\begin{prop}
\label{prop: HilbRep}
The $\D$-Hilbert functor with fixed numerical polynomial $P\in \mathbb{Q}[t]$ that classifies formally integrable algebraic differential systems with fixed (regular) symbolic data is representable by an ind-scheme $\mathrm{Hilb}_{\D_X}(Z),$ equipped with a compatible $\D_X$-action. It is functorial for $S$-families of differential systems, for any non-characteristic fibration $f:X\rightarrow S$ i.e. the fibers $f^{-1}(s)$ are non-characteristic, for each $s\in S.$
\end{prop}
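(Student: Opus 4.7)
The plan is to realize $\mathbf{Hilb}_{\D_X}^P(Z)$ as a closed sub-ind-scheme of the Douady-type Grassmannian $\mathrm{Grass}_{\D}(Z)$ constructed in Proposition \ref{prop: Douady}, filtered by the jet order. Concretely, for each pair $(d,k)$ I would let $\mathrm{Hilb}^{(d,k)}(J_X^kE)$ denote the classical Grothendieck Hilbert scheme of $\mathcal{O}_X$-coherent quotients of $\mathcal{O}(J_X^kE)$ with Hilbert polynomial $d$; this is representable by a scheme of finite type since $X$ is $\D$-affine (hence quasi-projective in our situation) and $J_X^kE$ is smooth affine over $X$. Boundedness (Proposition \ref{prop: Boundedness 1}) guarantees that among families with fixed $P_{\D}=P$ and fixed Spencer regularity $\leq m$, the set of algebraic Hilbert polynomials $d$ that appear at truncation level $k$ is finite; this carves out a finite-type subscheme $\mathrm{Hilb}_{\D_X}^{P,\leq m,k}(Z)$ inside $\bigcup_{d}\mathrm{Hilb}^{(d,k)}(J_X^kE)$.

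Next, I would construct the transition maps. The projections $p^{k+1}_k:J_X^{k+1}E\to J_X^kE$ and prolongation of ideals induce, exactly as in the proof of Proposition \ref{prop: Douady}, affine morphisms $\phi_k:\mathcal{G}r_X(J_X^{k+1}E)\to \mathcal{G}r_X(J_X^kE)$; the key point is that the locus $\mathrm{Hilb}_{\D_X}^{P,\leq m,k}(Z)$ is cut out by the closed conditions that the truncated ideal $F^k\mathcal{I}$ is (i) Spencer $m$-regular in the sense of Definition \ref{defn: D-Geometric Regularity} (a closed condition on ranks of $\mathcal{C}h_j$ by the rank jump criterion following Proposition \ref{prop: Omega and Ch isom}) and (ii) compatible with prolongation, i.e., $\mathrm{pr}_1(F^k\mathcal{I})$ sits inside $F^{k+1}\mathcal{I}$. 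Involutivity and formal integrability are encoded by these rank conditions together with the vanishing of Spencer $\delta$-cohomologies (Definition \ref{defn: D-geometric generalized involutivity}), which are closed conditions by semicontinuity applied to the fiber-wise symbol complex (\ref{eqn: Spencer delta symbol}). Taking the limit along $k\gg m$ (where the inverse system stabilizes at each geometric point because (ii) of Definition \ref{defn: D-geom m-involutive} implies the ideal is $\D$-generated in order $\leq r$), one obtains $\mathbf{Hilb}_{\D_X}^{inv,m}(Z)$ as a finite-type scheme and then $\mathbf{Hilb}_{\D_X}(Z)=\varinjlim_{m}\mathbf{Hilb}_{\D_X}^{inv,m}(Z)$ as an ind-scheme.

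The $\D_X$-equivariance is natural: the $\D_X$-action on $\mathcal{A}=\mathcal{O}(J_X^{\infty}E)$ by total derivatives preserves the set of $\D$-ideals and commutes with prolongation, hence acts on the universal family $\mathcal{I}_{\mathrm{univ}}\subset \mathcal{O}_{Z\times \mathbf{Hilb}_{\D_X}^P(Z)}$; unwinding universality gives the desired $\D_X$-action on $\mathbf{Hilb}_{\D_X}^P(Z)$. For $S$-families with $f:X\to S$ a non-characteristic fibration, I would use the restriction results of Subsection \ref{ssec: NC subspaces} together with Proposition \ref{prop: Inv Res}: fiber-wise restriction $i_s^*$ preserves coherency, $\D$-flatness (via Proposition \ref{prop: pb Jets} and the non-characteristic hypothesis) and Spencer regularity $m$. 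Thus the relative construction $\mathbf{Hilb}_{\D_{X/S}}^P(J_{X/S}^{\infty}E)\to S$ is obtained by replacing jets with relative jets throughout and checking that all numerical invariants and the regularity bound $m$ are preserved along fibers.

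The main obstacle, as is typical for infinite-dimensional moduli problems of this kind, will be establishing that the Spencer-regular locus is actually a locally closed finite-type sub-ind-scheme; that is, verifying that the a priori pro-finite system of Quot-type moduli stabilizes at a uniform finite level once $m$ is fixed. This uses both the Malgrange-type bound appearing in the proof of Proposition \ref{prop: Boundedness 1} and the comparison $\mathrm{reg}_{Sp}=\mathrm{reg}_{CM}$ of Proposition \ref{prop: Unifying}, and it is the step where the numerical polynomial $P$ genuinely controls the geometry: once the symbol is $m$-involutive, all higher prolongations are determined by $F^m\mathcal{I}$, so the compatible tower of points in $\mathrm{Hilb}^{(d_k,k)}(J_X^kE)$ is uniquely reconstructed from its truncation at level $m$, giving the finite-type reduction.
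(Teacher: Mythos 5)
Your proposal follows essentially the same route as the paper: truncate at finite jet order, identify each truncation with a classical Grothendieck Quot/Hilbert scheme via the Douady-type Grassmannian of Proposition \ref{prop: Douady}, use formal integrability and the Spencer-regularity bound to show the tower of prolongations stabilizes at a uniform finite level, and pass to the filtered colimit; the paper merely packages the bookkeeping slightly differently, via a dimension filtration by $\dim\mathrm{Char}$ (using upper semicontinuity) and the poset of ideals with $\D$-Hilbert polynomial $\leq P$, whereas you filter by regularity degree $m$. One imprecision: you describe Spencer $m$-regularity as a \emph{closed} condition on the ranks of the $\mathcal{C}h_j$, but local freeness and surjectivity of $Z_{m+1}\to Z_m$ are open conditions, and indeed Theorem \ref{MainTheorem} asserts the $m$-regular locus is Zariski-\emph{open}; only the prolongation-compatibility equations are closed, so the relevant locus is locally closed, as you in fact state correctly in your final paragraph. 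Your explicit treatment of the $\D_X$-equivariance via the universal ideal and of the $S$-family functoriality via non-characteristic restriction goes beyond what the paper's proof spells out, and is a reasonable way to supply those parts of the statement.
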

\begin{proof}
Consider the dimension filtration denoted $_{\leq d}\mathcal{H}ilb_{\D_X}(Z)\subset \mathcal{H}ilb_{\mathcal{D}_X}(Z)$ with $S$-points
$$\{\mathcal{I}_S|dim(Char(\mathcal{O}_{Z_s}/\mathcal{I}_s)\leq d, s\in S\}.$$
We consider the geometric points $s\in S.$ This is well-defined since $s\in S\mapsto dim(Char(\mathcal{B}_s))$ is upper semi-continuous.
The corresponding loci of characteristic dimension $d$, is the open strata given by the quotient pieces to the filtration,
\begin{equation}
\label{eqn: d-open loci}
\mathrm{Hilb}_{\D_X}^d(Z):=_{\leq d}\mathcal{H}ilb_{\D_X}(Z)/\hspace{1mm}_{\leq d-1}\mathcal{H}ilb_{\mathcal{D}_X}(Z).
\end{equation}
Set 
$$\mathcal{H}ilb_q^{\leq P}(Z):=\{\mathcal{O}_{Jet^q(E)}-\text{ideals } \hspace{1mm} I_q, \text{ algebraic and whose }$$
$$\text{differential closure }\overline{I}_q^{diff}\subset\mathcal{O}_Z \text{ has Hilbert polynomial }\leq P\}.$$
The ordering means $P_{I_q}(t)\leq P(t),$ for all integers $t.$
    The differential closure of $I_q\subset \mathcal{O}_{Jet^q(E)}$ spanned by equations $f_1,\ldots,f_r$ is spanned by $\{D^{\sigma}(f_A)|f_A\in I_q,A=1,\ldots,r |\sigma|\geq 0\}.$ We write $\big<\mathcal{D}_X\bullet I_q\big>\subset \mathcal{O}_Z$ for simplicity. It is the smallest left $\D_X$-ideal in $\mathcal{O}_Z$ such that
    $\overline{I_q}^{diff}\cap \mathcal{O}_{Jet^q(E)}\simeq I_q.$
This is just the colimit of all prolongations
$\mathrm{Pr}_r(I_q):=I_{q+r}.$ The sequence of prolongations corresponds to a filtered system of closed sub-schemes
$Y_q\subset Y_{q+1}\subset \cdots\subset Y^{\infty}.$
By formal integrability, there exists $r_0\geq 0$ such that for all $r\geq r_0,$ it holds that $\mathrm{Pr}_r(I_q)=I_{q+r}\simeq I_{q+r+1}\cap \mathcal{O}_{Jets^{q+r}(E)}.$
In particular, $I_{q+r_0}$ is closed under total derivatives and $\overline{I}_q^{diff}$ may be identified with 
$I_{q+r_0}\cdot \mathcal{O}_{Jets_X^{\infty}E}.$
There is a natural morphism 
$\mathcal{H}ilb_q^{\leq P}(Z)\hookrightarrow Quot_P(\mathcal{O}_{Jets^qE/X}),$ where the Quot-scheme is obtained as in Proposition \ref{prop: Douady}. This acts by $I_q\mapsto \big(\mathcal{O}_{Jets^q(E)}\twoheadrightarrow \mathcal{B}_q:=\mathcal{O}_{Jets^q(E)}/I_q\big).$ It is an isomorphism and by \cite{Gro3}, each Quot-functor is representable by scheme of finite-type. Thus 
$$\mathcal{H}ilb_{\D_X}^{\leq P}(Z)=\varinjlim_q\mathcal{H}ilb_q^{\leq P}(Z),$$
is a filtered colimit of schemes and therefore a finite-type ind-scheme. Since, prolongation imposes linear equations on the Quot scheme, it defines a closed subscheme of $\mathrm{Grass}_{\D}(Z)$, as in Proposition \ref{prop: Douady}. This corresponding limit identifies the desired $\D_X$-Hilbert scheme. It $\D$-structure is encoded in the differential closure of its points i.e. they correspond to $\D$-stable ideals.
\end{proof}

\subsection{Zariski tangent spaces}
\label{ssec: Zariski tangent spaces}
By Theorem \ref{MainTheorem}, we know the $\D$-Hilbert moduli functor is well-behaved in the $\D$-geometric setting. This suggests we can make sense of its Zariski tangent space. 
First, by extending results of \cite[Section 2]{BD},  natural differential complexes may be associated to $\D$-ideal sheaf sequences e.g. to points $[\mathcal{I}]$ of the $\D$-Hilbert scheme. Indeed, for such a $\D$-ideal, we have
 $$\ldots\rightarrow \Omega_{\mathcal{B}}^{(-2)}\rightarrow\mathcal{I}/\mathcal{I}^2\rightarrow \Omega_{\mathcal{A}}^1\otimes\mathcal{B}\rightarrow \Omega_{\mathcal{B}}^1\rightarrow 0.$$
 Let $\EQ\hookrightarrow Z$ be the corresponding embedding with $Z$ ambient $\D$-smooth $\D$-scheme.
The universal $\D$-linear de Rham differential is denoted $d_Z^v:\mathcal{O}_Z\rightarrow \Omega_Z^1,$ and consider the composition,
$$d_Z^v|_{\mathcal{I}}:\mathcal{I}\hookrightarrow \mathcal{O}_Z\rightarrow \Omega_Z^1,$$ 
as a morphism of $\mathcal{O}_Z[\mathcal{D}_X]$-modules. Since
$\Omega_{\mathcal{Y}}^1\leftarrow i_{\EQ}^*\Omega_{Z}^1\leftarrow \mathcal{I}/\mathcal{I}^2,$
we get an $\mathcal{O}_{\EQ}[\mathcal{D}_X]$-linear morphisms 
$d_{\EQ/Z}:\mathcal{I}/\mathcal{I}^2\rightarrow i_{\EQ}^*\Omega_{Z}^1,$ such that
$\mathrm{coker}(d_{\EQ/Z}^v)\simeq \Omega_{\EQ}^1.$
One may set
\begin{equation}
\label{eqn: Truncated Cotangent}
\mathbb{L}_{\EQ}^{\bullet}:=\mathcal{I}/\mathcal{I}^2\xrightarrow{d_{\EQ/}^v}i_{\EQ}^*\Omega_{Z}^1.
\end{equation}
The object (\ref{eqn: Truncated Cotangent}) is the \emph{truncated} cotangent complex. It is concentrated in degrees $[-1,0].$ 
\begin{rmk}
Approximating cotangent complexes via perfect complexes in a suitable derived category serves the basis of a perfect-obstruction theory for PDEs defined by $\D$-ideal sheaves. This is beyond the scope of the current work, and is only mentioned for completeness.\end{rmk}
We now sketch how to impose further geometric constraints on the $\D$-ideals $[\I]\in\mathcal{H}ilb_{\D}^P(Z)$ whose associated Spencer cohomologies as well as cohomologies of $\D$-module solution functors (e.g. de Rham cohomologies), are finite-dimensional. 

\subsubsection{Microlocal stratifications}
We restrict attention throughout to non-singular algebraic PDEs, which we interpret more precisely by imposing microlocal conditions on the characteristic varieties of the corresponding $\mathcal{D}_X$-ideal sheaves. Constraining the geometry of these varieties enables control over singularities and induces natural stratifications on the $\mathcal{D}_X$-Hilbert stack.

While a full treatment requires the $\infty$-categorical functoriality of the $\mathcal{D}$-cotangent complex and is deferred to the sequel \cite{KSh2}, we outline here the key microlocal input. In particular, this allows us to isolate loci in the $\mathcal{D}_X$-Hilbert moduli space corresponding to elliptic equations, using the geometric criterion for ellipticity via characteristic varieties as developed in \cite{KSY}.

Let $U_{\alpha}\subset \mathcal{C}har(\mathcal{I})$ be an irreducible component of the characteristic variety of $\mathcal{I}$, and put
$$mult_{U_{\alpha}}(\mathcal{C}h):=\sum_{U_{\alpha}\subset \mathcal{C}har(\mathcal{I})}mult_{S}(\mathcal{C}h)[U_{\alpha}].$$
In families, let $\Lambda\subset T^*(X/S)$. Assume it is involutive, but note the fibers are not necessarily involutive since $\Lambda_s:=\Lambda\cap T^*(X/S)_s\subset T^*(X_s),$
for $s\in S,$ is not always a proper intersection. There exists a relative analog $\mathcal{C}har^{rel}(\I)$, defined as in Subsect. \ref{sssec: D-Geometric Microcharacteristic Varieties}, by adapting constructions of \cite{SS,SS2}.

\begin{prop}
Assume $\Lambda$ is smooth over $S$. Then $\Lambda_s\subset T^*X_s$ is either empty or involutive.
\end{prop}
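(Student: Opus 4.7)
The plan is to reduce involutivity of the fibre $\Lambda_s$ to the involutivity of $\Lambda$ in the family via local generators, exploiting the compatibility between the relative Poisson bracket on $T^*(X/S)$ and the absolute symplectic Poisson brackets on the fibres $T^*X_s$.

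First, I would recall that since $f:X\to S$ is smooth, $T^*(X/S)$ carries a canonical relative Poisson bracket $\{-,-\}_{rel}$, and for each $s\in S$ there is a canonical identification $T^*(X/S)|_s \simeq T^*X_s$ under which $\{-,-\}_{rel}$ restricts to the standard symplectic Poisson bracket $\{-,-\}_{X_s}$ on $T^*X_s$. The hypothesis that $\Lambda\subset T^*(X/S)$ is involutive means precisely that its ideal sheaf $\mathcal{I}_\Lambda\subset \mathcal{O}_{T^*(X/S)}$ is closed under $\{-,-\}_{rel}$.

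Assuming $\Lambda_s\neq\emptyset$, the smoothness of $\Lambda\to S$ forces $\Lambda_s$ to be smooth and, more importantly, ensures the scheme-theoretic compatibility required for base change of generators. Concretely, near any point of $\Lambda_s$ I can choose local sections $f_1,\ldots,f_r\in\mathcal{O}_{T^*(X/S)}$ generating $\mathcal{I}_\Lambda$ whose restrictions $\overline{f}_i := f_i|_{T^*X_s}$ generate $\mathcal{I}_{\Lambda_s}$; this is the base-change property
\[
\mathcal{I}_\Lambda \otimes_{\mathcal{O}_S} k(s) \xrightarrow{\ \sim\ } \mathcal{I}_{\Lambda_s},
\]
which follows from flatness of $\Lambda\to S$ together with smoothness of $X\to S$ (so that the intersection $\Lambda\cap T^*X_s$ is proper and transversal to the fibre). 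Involutivity in the family then gives $\{f_i,f_j\}_{rel}\in\mathcal{I}_\Lambda$, and restricting to $T^*X_s$ produces
\[
\{\overline{f}_i,\overline{f}_j\}_{X_s} \;=\; \{f_i,f_j\}_{rel}\big|_{T^*X_s} \;\in\; \mathcal{I}_{\Lambda_s}.
\]
By the Leibniz rule, closure of $\mathcal{I}_{\Lambda_s}$ under the Poisson bracket on generators implies closure on the entire ideal, so $\Lambda_s$ is coisotropic in $T^*X_s$.

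The main technical obstacle lies in the descent of generators: without smoothness of $\Lambda\to S$ the intersection $\Lambda\cap T^*X_s$ can be improper, the scheme-theoretic fibre may carry embedded components, and the restriction $\mathcal{I}_\Lambda|_{T^*X_s}$ need not coincide with $\mathcal{I}_{\Lambda_s}$; in that situation the Poisson identities in the family do not descend cleanly to the fibre. Once the base-change identification of ideals is secured by the smoothness hypothesis, the remainder of the argument is a formal consequence of the fibrewise compatibility of the Poisson structures together with the Leibniz rule, exactly as in the proof of Proposition \ref{prop: D-Char is coisotropic}.
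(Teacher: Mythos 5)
Your argument is correct and follows essentially the same route as the paper's (much terser) proof: relative smoothness of $\Lambda\to S$ guarantees that the ideal of the scheme-theoretic fibre is the restriction of $\mathcal{I}_\Lambda$, and the relative Poisson bracket on $T^*(X/S)$ restricts to the symplectic bracket on $T^*X_s$, so Poisson-closure descends to the fibre. You in fact supply the base-change and Leibniz-rule details that the paper leaves implicit; the paper's additional appeal to upper semi-continuity of fibre dimension is only needed to control the locus where the smoothness hypothesis holds, not for the statement as phrased.
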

\begin{proof}
Let $Char^{rel}(\mathcal{I})\cap W$ be smooth over $U\subseteq S$ with $U$ open and $W$ an open neighbourhood of a point in $Char^{re}.$ By relative smoothness, 
$(Char^{rel}(\mathcal{I})\cap W)\cap T^*X_s$
is involutive in $T^*X_s$ for all $s\in S$ implying $dim\geq dimX-dimS.$ Then, one may use upper semi-continuity of fibers of $Char^{rel}.$
\end{proof}

Let $\Lambda \subset T^*X,$ be a closed conic subset and define $\mathcal{H}ilb_{\D_X}^{\Lambda}(Z),$ which parameterizes $\D$-ideal sheaves $\mathcal{I}$, with 
$Char_{\D}(\mathcal{I})\subset \Lambda.$
This defines a closed subfunctor. 
\begin{defn}
\label{defn: Non-singular Hilb}
\normalfont
Let $\Lambda\subset T^*X$ be a closed coisotropic conic subset, such that $\pi(\Lambda)\subset T^*X$ is smooth, with $\pi:T^*X\rightarrow X.$ 
The $\D$-Hilbert functor of $\D$-ideals $\mathcal{I}$ with $\D$-Hilbert polynomial $P\in \mathbb{Q}[t],$ which are \emph{non-singular relative to $\Lambda$}, is $\mathcal{H}ilb_{\mathcal{D}_X}^{P,\Lambda}(Z):=\mathcal{H}ilb_{\mathcal{D}_X}^{P}(Z)\cap \mathcal{H}ilb_{\mathcal{D}_X}^{\Lambda}(Z).$
\end{defn}
Consider (\ref{eqn: Truncated Cotangent}). In \cite{KSY}, the derived $\D$-characteristic variety was defined as the cohomological support of the tangent complex, denoted by $\mathbf{Char}_{\D}(\mathcal{I}).$

\begin{prop}
Consider a $\D$-ideal $\mathcal{I}$ and assume the cotangent complex of the associated $\D$-subscheme is of the form (\ref{eqn: Truncated Cotangent}). Let $\mathcal{C}\mathrm{har}_{\D}(\I)$ be the characteristic variety, given by Proposition \ref{prop: Char inclusions}. Then, 
$\mathcal{C}\mathrm{har}_{\D}(\I)\subseteq \mathbf{Char}_{\D}(\mathcal{I}).$
\end{prop}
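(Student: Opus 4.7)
The plan is to reduce the claim to the essentially formal observation that $\mathcal{C}\mathrm{har}_{\D}(\I)$ is by construction the support of a single graded cohomology sheaf of the (dual) cotangent complex, while $\mathbf{Char}_{\D}(\mathcal{I})$ is defined as the union of such supports across all cohomological degrees. The inclusion is then morally immediate; the technical content sits in compatibility of filtrations and the passage to associated graded objects.

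First, I would unpack both sides explicitly. By Proposition \ref{prop: Char inclusions}, $\mathcal{C}\mathrm{har}_{\D}(\I) = \mathrm{supp}\big(\mathrm{Gr}^{F}\Omega_{\EQ/X}^{1}\big)$ as a coherent $\mathcal{O}_{T^{*}(X,\EQ)}$-module, where $\mathrm{Gr}^{F}$ is taken with respect to the filtration inherited from the standard jet filtration (Definition \ref{defn: Standard filt}). On the other hand, following \cite{KSY}, $\mathbf{Char}_{\D}(\mathcal{I})$ is the cohomological support of the tangent complex $\mathbb{T}_{\EQ}^{\bullet} := \mathbf{R}\mathcal{H}om_{\mathcal{O}_{\EQ}[\D_{X}]}\big(\mathbb{L}_{\EQ}^{\bullet},\mathcal{O}_{\EQ}[\D_{X}]\big)$, viewed through (\ref{eqn: Truncated Cotangent}). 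Concretely,
\[
\mathbf{Char}_{\D}(\mathcal{I}) \;=\; \bigcup_{i}\,\mathrm{supp}\big(\mathrm{Gr}^{F}\mathcal{H}^{i}(\mathbb{T}_{\EQ}^{\bullet})\big).
\]

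Next, I would exploit the assumption that $\mathbb{L}_{\EQ}^{\bullet}$ is of the two-term form (\ref{eqn: Truncated Cotangent}). By construction, $\mathcal{H}^{0}(\mathbb{L}_{\EQ}^{\bullet}) = \mathrm{coker}(d_{\EQ/Z}^{v}) = \Omega_{\EQ}^{1}$. Applying $\mathbf{R}\mathcal{H}om_{\mathcal{O}_{\EQ}[\D_{X}]}(-,\mathcal{O}_{\EQ}[\D_{X}])$, the standard duality for $\mathcal{O}_{\EQ}[\D_{X}]$-modules (compatible with the involutivity of characteristic varieties recorded in Proposition \ref{prop: D-Char is coisotropic}) identifies the characteristic support of one cohomology term of $\mathbb{T}_{\EQ}^{\bullet}$ with that of $\Omega_{\EQ}^{1}$. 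Consequently, at least one summand in the union defining $\mathbf{Char}_{\D}(\mathcal{I})$ already contains $\mathcal{C}\mathrm{har}_{\D}(\I)$, which yields the desired inclusion.

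The main obstacle is making the filtration argument precise: one must equip the complex (\ref{eqn: Truncated Cotangent}) with a good filtration in the sense of Proposition \ref{prop: Associated graded}, compatible with the standard filtrations on $\A$ and $\I$, and then invoke Proposition \ref{prop: Approximations} to identify $\mathrm{Gr}^{F}\mathcal{H}^{i}(\mathbb{T}_{\EQ}^{\bullet})$ as a subquotient of the corresponding $\mathcal{E}xt^{i}$ computed in $\mathrm{Gr}^{F}\mathcal{O}_{\EQ}[\D_{X}]$-modules. This only yields an inclusion of supports, but that is precisely the direction needed; the reverse inclusion would demand purity hypotheses on $\mathbb{L}_{\EQ}^{\bullet}$ that are not assumed here. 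Once compatibility is secured, combining with $\mathrm{supp}(\mathrm{Gr}^{F}\Omega_{\EQ}^{1}) \subseteq \bigcup_{i}\mathrm{supp}(\mathrm{Gr}^{F}\mathcal{H}^{i}(\mathbb{T}_{\EQ}^{\bullet}))$ closes the argument.
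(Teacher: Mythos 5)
Your proposal rests on exactly the same observation as the paper's proof: $\mathbf{Char}_{\D}(\mathcal{I})$ is a union of cohomological supports, and the truncated complex (\ref{eqn: Truncated Cotangent}) has $\mathcal{H}^0\simeq \Omega_{\EQ}^1$, whose (graded) support is $\mathcal{C}\mathrm{har}_{\D}(\I)$ by Proposition \ref{prop: Char inclusions}, so the inclusion is one term of the union sitting inside the whole. The one place your route diverges is that you pass through the tangent complex $\mathbb{T}_{\EQ}^{\bullet}=\mathbf{R}\mathcal{H}om(\mathbb{L}_{\EQ}^{\bullet},\mathcal{O}_{\EQ}[\D_X])$ and assert that duality identifies the support of one cohomology term of $\mathbb{T}_{\EQ}^{\bullet}$ with that of $\Omega_{\EQ}^1$; the paper avoids this by writing $\mathbf{Char}_{\D}(\mathcal{I})$ directly as $\mathrm{supp}\big(\mathcal{H}^0(\mathbb{L}_{\EQ}^{\bullet})\big)\cup\bigcup_{i\neq 0}\mathrm{supp}\big(\mathcal{H}^i(\mathbb{L}_{\EQ}^{\bullet})\big)$, making the containment tautological. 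As stated, your duality step is not automatic degree by degree (e.g.\ $\mathcal{E}xt^0(\Omega_{\EQ}^1,\mathcal{O}_{\EQ}[\D_X])$ can have strictly smaller support if $\Omega_{\EQ}^1$ has torsion); what is true, and what you would need to invoke via Proposition \ref{prop: Approximations} and the standard invariance of characteristic varieties under duality, is that the \emph{union} over all $i$ of the graded $\mathcal{E}xt^i$-supports still captures $\mathrm{supp}(\mathrm{Gr}^F\Omega_{\EQ}^1)$. Either patch that step or work with $\mathbb{L}_{\EQ}^{\bullet}$ directly, in which case your final displayed inclusion is immediate.
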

\begin{proof}
Clearly from (\ref{eqn: Truncated Cotangent}) there exists a
canonical map $\mathbb{L}_{\EQ}^{\bullet}\rightarrow \mathcal{H}_{\mathcal{D}}^0(\mathbb{L}_{\EQ}^{\bullet})\simeq \Omega_{\EQ}^1.$ The result is immediate since
$$\mathbf{Char}_{\D}(\mathcal{I})\simeq \mathrm{supp}\big(\mathcal{H}_{\D}^0(\mathbb{L}_{\mathcal{Y}})\big)\cup \bigcup_{i\neq 0}\mathrm{supp}\big(\mathcal{H}_{\D}^i(\mathbb{L}_{\mathcal{Y}})\big).$$

\end{proof}
These results serve to allow microlocal control over singularities, while also providing a means to consider only certain classes of $\D$-ideals whose corresponding solution functors have finite-dimensional cohomologies. For more general non-linearities, one should work with $s$-microcharacteristics (see Reminder \ref{rem: s-microchars}) with appropriate modifications to Definition \ref{defn: Non-singular Hilb}.
Microlocal constraints can imply finite-dimensionality of deformation-obstruction spaces of the corresponding $\D$-modules given by the Zariski tangent sheaves, as we now compute in the second main result. 

\begin{thm}
    \label{MainTheorem2}
Consider Theorem \ref{MainTheorem}. The Zariski tangent space at a point $[\mathcal{I}]$ controlling first-order deformations of $\mathcal{I}$ as a $\D$-ideal, is given by
    $$T_{[\mathcal{I}]}\mathbf{Hilb}_{\D_X}(J_X^{\infty}E)\simeq Hom_{\D_X}(\mathcal{I},\mathcal{O}(J_X^{\infty}E)/\mathcal{I}).$$
The space of obstructed deformations at $[\mathcal{I}]$ is,
$$\mathcal{O}bs_{[\mathcal{I}]}=Ext_{\mathcal{O}(J_X^{\infty}E)/\mathcal{I}\otimes_{\mathcal{O}_X}\D_X}^1(\mathcal{I}/\mathcal{I}^2,\mathcal{O}(J_X^{\infty}E)/\mathcal{I}\otimes_{\mathcal{O}_X}\D_X),$$
which is naturally endowed with a good $\mathcal{O}(J_X^{\infty}E)[\D]$-filtration whose associated graded algebra is isomorphic to $\bigoplus_{q< \mathrm{Reg}_{\D}(\mathcal{I})}\mathcal{H}_{Sp}^{1,q}\big(\mathrm{gr}(\mathcal{I})\big).$
\end{thm}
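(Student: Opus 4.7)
The plan is to compute $T_{[\I]}\mathbf{Hilb}_{\D_X}$ via the functor-of-points description on the $\D$-algebra of dual numbers $\mathcal{A}[\epsilon]/(\epsilon^2)$, and then identify $\mathcal{O}bs_{[\I]}$ by analysing obstructions to lifting to square-zero extensions using the truncated cotangent complex $\mathbb{L}^{\bullet}_{\EQ}$ from (\ref{eqn: Truncated Cotangent}). For the tangent space: a first-order $\D$-flat deformation $\widetilde{\I}\subset\mathcal{A}[\epsilon]/(\epsilon^2)$ reducing to $\I$ modulo $\epsilon$ determines, for each $f\in\I$, an element $g_f\in\mathcal{A}$ well-defined modulo $\I$ such that $f+\epsilon g_f\in\widetilde{\I}$. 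The $\D_X$-stability of $\widetilde{\I}$ forces $f\mapsto g_f\bmod\I$ to be $\D_X$-linear, yielding $\varphi\in\mathrm{Hom}_{\D_X}(\I,\mathcal{A}/\I)$, and the inverse sends $\varphi$ to $\widetilde{\I}_{\varphi}:=\{f-\epsilon\tilde{\varphi}(f):f\in\I\}$. Flatness over $k[\epsilon]$ is automatic and constancy of the $\D$-Hilbert polynomial follows from Proposition \ref{prop: Hilbert D-Polynomial}.

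For the obstructions, one considers lifting a first-order deformation to order two; the standard obstruction-theoretic argument in the category of $(\mathcal{A}/\I)[\D_X]$-modules (replacing coherent modules) packages the obstruction cocycle as an element of $\mathrm{Ext}^1_{(\mathcal{A}/\I)[\D_X]}(\I/\I^2,(\mathcal{A}/\I)[\D_X])$, giving the $\mathrm{Ext}^1$-description of $\mathcal{O}bs_{[\I]}$. Next, Spencer-regularity (Definition \ref{defn: D-geom m-involutive}) ensures that $\I$ is $\D$-finitely presented, so $\I/\I^2$ inherits a canonical good filtration from the standard jet-order filtration on $\I$ (Definition \ref{defn: Standard filt}). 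Choosing a bounded resolution of $\I/\I^2$ by free $(\mathcal{A}/\I)[\D_X]$-modules of finite rank compatibly with this filtration, Proposition \ref{prop: Approximations} then produces a good filtration on $\mathcal{O}bs_{[\I]}$ whose associated graded is a subquotient of $\mathcal{E}xt^1_{\mathcal{O}_{T^*(X,Z)}}\bigl(\mathrm{gr}(\I),\mathcal{O}_{T^*(X,Z)}\bigr)$, using the identification $\mathrm{Gr}^F(\mathcal{A}/\I)[\D]\simeq \mathcal{O}_{T^*(X,Z)}$ from Proposition \ref{prop: Char inclusions}.

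The last step is to identify each graded piece with Spencer cohomology. After decomposing by internal polynomial degree $q$, the graded $\mathrm{Ext}^1$-sheaf is computed by the Koszul complex of the symbol module $\mathrm{gr}(\I)$ over $\mathrm{Sym}(\Theta_X)$. The $\mathbb{R}$-duality mechanism recalled in the proof of Proposition \ref{prop: Unifying} identifies Koszul homology in bidegree $(1,q)$ with the Spencer $\delta$-cohomology $\mathcal{H}_{Sp}^{1,q}(\mathrm{gr}(\I))$, and the involutive vanishing $\mathcal{H}_{Sp}^{p,q}(\mathrm{gr}(\I))=0$ for $q\geq \mathrm{Reg}_{\D}(\I)$ truncates the sum to $q<\mathrm{Reg}_{\D}(\I)$.

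The main obstacle will be the construction of a bounded free resolution of $\I/\I^2$ over $(\mathcal{A}/\I)[\D_X]$ compatible with the bi-grading (jet order and symmetric degree), and verifying that the induced filtration on $\mathcal{O}bs_{[\I]}$ is good in the sense required by Proposition \ref{prop: Approximations}. The pro-finite nature of $J_X^{\infty}E$ forces one to work at finite-jet approximations $F^\ell\I$ and pass to a limit; Spencer-regularity hypothesis (ii) of Definition \ref{defn: D-geom m-involutive} is the crucial input guaranteeing termination of the resolution at level $\mathrm{Reg}_{\D}(\I)$ and hence finiteness of the filtration. The precise identification of graded subquotients with the Spencer cohomology pieces (not merely a bound) rests on carefully tracking the bi-grading through Proposition \ref{prop: Approximations} together with Proposition \ref{prop: Unifying}.
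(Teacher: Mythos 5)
Your proposal is correct and reaches the same endpoint by a mildly different route in its first half. The paper's primary derivation dualizes the conormal sequence $\mathcal{I}/\mathcal{I}^2\rightarrow \Omega_{\mathcal{A}}^1\otimes_{\mathcal{A}}\mathcal{A}/\mathcal{I}\rightarrow \Omega_{\mathcal{A}/\mathcal{I}}^1\rightarrow 0$ against $\mathcal{A}/\mathcal{I}[\D]$, reads the tangent space off as $\mathcal{H}om_{\mathcal{A}/\mathcal{I}[\D]}(\mathcal{I}/\mathcal{I}^2,\mathcal{A}/\mathcal{I}[\D])$ from the resulting long exact sequence (the explicit dual-numbers description $\mathcal{I}_\epsilon=\{F+\epsilon\,\phi(F)\}$ appears there only as an interpretive aside), and locates the obstructions as the next $Ext^1$ term of that sequence. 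You instead compute the tangent space directly on $\D$-algebra points of $k[\epsilon]/(\epsilon^2)$ and obtain the obstruction class from lifting to second order; this is more elementary, avoids the smoothness/projectivity of $\Omega_{\mathcal{A}}^1$ needed to identify the middle term of the dualized sequence, and lands directly on $Ext^1_{(\mathcal{A}/\mathcal{I})[\D]}(\mathcal{I}/\mathcal{I}^2,-)$ as named in the statement (the paper's LES naively produces $Ext^1(\Omega^1_{\mathcal{A}/\mathcal{I}},-)$, so your route matches the stated object more cleanly). The second half of your argument --- the jet-order filtration on $\mathcal{I}/\mathcal{I}^2$, Proposition \ref{prop: Approximations} for the subquotient of the graded $Ext$, the Spencer--Koszul duality of Proposition \ref{prop: Unifying} to identify $\mathrm{gr}_{-q}$ with $\mathcal{H}_{Sp}^{1,q}(\mathrm{gr}(\mathcal{I}))$, and truncation by $\mathrm{Reg}_{\D}(\mathcal{I})$ --- is the same as the paper's, and the difficulty you flag (a bounded bigraded-compatible free resolution and goodness of the induced filtration) is treated no more rigorously in the paper itself, which disposes of it with a spectral-sequence appeal to the Spencer $\delta$-resolution.
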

\begin{proof}
Fixing $[\mathcal{I}]$, viewed geometrically as a closed $\D$-subscheme $\mathrm{Spec}_{\D}(\A/\mathcal{I})\hookrightarrow \mathrm{Spec}_{\D}(\A),$ consider the defining exact sequence $\IAB$, with $\mathcal{B}:=\A/\mathcal{I}.$ Consider the conormal sequence in $\mathcal{A}/\mathcal{I}[\D]$-modules (essentially defining (\ref{eqn: Truncated Cotangent})):
\begin{equation}
    \label{eqn: ConormalD}
\mathcal{I}/\mathcal{I}^2\rightarrow \Omega_{\mathcal{A}}^1\otimes_{\mathcal{A}}\mathcal{A}/\mathcal{I}\rightarrow \Omega_{\mathcal{A}/\mathcal{I}}^1\rightarrow 0,
\end{equation}
where $\Omega_{\mathcal{A}}^1=\mathcal{H}om_{\A[\mathcal{D}]}(\Theta_{\mathcal{A}},\mathcal{A}\otimes_{\mathcal{O}_X}\D_X)$ is a left $\mathcal{A}[\mathcal{D}]$-module and $\Theta_{\A}=\mathcal{D}er_{\D}(\mathcal{A},\mathcal{A})$ is naturally a right-module.
Apply the functor $\mathcal{H}om_{\mathcal{A}/\mathcal{I}\otimes\D_X}(-,\mathcal{A}/\mathcal{I}\otimes_{\mathcal{O}_X}\D_X)$ to (\ref{eqn: ConormalD}) gives
\begin{eqnarray}
\label{eqn: LES Conormal}
    0\rightarrow \mathcal{H}om_{\mathcal{A}/\mathcal{I}\otimes\D_X}(\Omega_{\mathcal{A}/\mathcal{I}}^1,\mathcal{A}/\mathcal{I}[\D])&\rightarrow& \mathcal{H}om_{\mathcal{A}/\mathcal{I}\otimes\D_X}(\Omega_{\mathcal{A}}^1\otimes_{\mathcal{A}}\mathcal{A}/\mathcal{I},\mathcal{A}/\mathcal{I}[\D]) \nonumber
    \\  &\rightarrow&\mathcal{H}om_{\mathcal{A}/\mathcal{I}\otimes\D_X}(\mathcal{I}/\mathcal{I}^2,\mathcal{A}/\mathcal{I}[\D])\rightarrow\cdots
    \end{eqnarray}
The vector fields on $\mathrm{Spec}_{\D}(\mathcal{A}/\mathcal{I}),$ are by definition $\mathcal{H}om_{\mathcal{A}/\mathcal{I}\otimes\D_X}(\Omega_{\mathcal{A}/\mathcal{I}}^1,\mathcal{A}/\mathcal{I}[\D])$, since $\mathcal{A}$ is smooth (as a jet $\D$-algebra), then $\Omega_{\A}^1$ is a projective $\A[\mathcal{D}]$-module and consequently,
$\mathcal{H}om_{\mathcal{A}/\mathcal{I}\otimes\D_X}(\Omega_{\mathcal{A}}^1\otimes_{\mathcal{A}}\mathcal{A}/\mathcal{I},\mathcal{A}/\mathcal{I}[\D])\simeq \Theta_{\A}\otimes_{\mathcal{A}}\mathcal{A}/\mathcal{I}$ by adjunction.

The long-exact sequence (\ref{eqn: LES Conormal}) is thus written,
\begin{eqnarray*}
0\rightarrow \Theta_{\mathcal{A}/\mathcal{I}}\rightarrow \Theta_{\mathcal{A}}\otimes_{\mathcal{A}}\mathcal{A}/\mathcal{I}&\rightarrow& \mathcal{H}om_{\mathcal{A}/\mathcal{I}\otimes\D_X}(\mathcal{I}/\mathcal{I}^2,\mathcal{A}/\mathcal{I}[\D])\rightarrow 
\\
&\rightarrow& Ext_{\mathcal{A}/\mathcal{I}[\D]}^1(\Omega_{\mathcal{A}/\mathcal{I}}^1,\mathcal{A}/\mathcal{I}[\D])\rightarrow \cdots.
\end{eqnarray*}
Then the second map sends a vector field $v$ to the deformation $\mathcal{I}_v\mapsto \mathcal{I}+\epsilon\cdot v(\mathcal{I}),$ of the ideal. Namely, first-order deformations of $\mathcal{I}$ over $\mathbb{C}[\epsilon]/(\epsilon)^2,$ correspond to elements of an associated ideal $\mathcal{I}_{\epsilon}\subset \mathcal{O}(J_X^{\infty}E)[\epsilon]/(\epsilon^2),$ such that $\mathcal{I}_{\epsilon}\otimes_{\mathbb{C}[\epsilon]}\mathbb{C}\simeq \mathcal{I}.$ Explicitly, such deformations are $\D$-homomorphisms $\phi:\mathcal{I}\rightarrow \mathcal{A}/\mathcal{I}$ so $\mathcal{I}_{\epsilon}\simeq \{F+\epsilon\cdot G|F\in \mathcal{I},G=\phi(F) \text{mod }\mathcal{I}\}.$

Its kernel consists of trivial deformations, therefore the $Ext^1$-group contains the obstructions to lifting a first-order deformation i.e. tangent vector, corresponding to an element of $\mathcal{H}om_{\mathcal{A}/\mathcal{I}\otimes\D_X}(\mathcal{I}/\mathcal{I}^2,\mathcal{A}/\mathcal{I}[\D]).$ Thus, we conclude that for $[\mathcal{I}]\in \mathcal{H}ilb_{\D}^P(\mathcal{A}),$
$$T_{[\mathcal{I}]}\mathcal{H}ilb_{\D}^P(\A)\simeq \mathcal{H}om_{\mathcal{A}/\mathcal{I}\otimes\D_X}(\mathcal{I}/\mathcal{I}^2,\mathcal{A}/\mathcal{I}[\D]).$$
Sequence (\ref{eqn: ConormalD}) is compatible with the filtrations, since $\mathcal{I}/\mathcal{I}^2$ inherits a filtration with components $\bigoplus_{k\geq 0}F^k\mathcal{I}+\mathcal{I}^2/F^{k+1}\mathcal{I}+\mathcal{I}^2,$ and then noting that for each $k\geq 0,$ the $k$-th geometric symbol is isomorphic to $F^k\mathcal{I}/F^{k+1}\mathcal{I}+F^k\mathcal{I}\cap F^1\mathcal{A}\cdot F^{k-1}\mathcal{A}.$ 
Immediately from the long-exact sequence, the statement concerning the space of obstructions is clear. We now establish its relation with the Spencer cohomologies and Spencer regularity of the point $[\mathcal{I}].$
Namely, since $F^k(\mathcal{I}/\mathcal{I}^2)=(F^k\mathcal{I}+\mathcal{I}^2)/\mathcal{I}^2,$ there exists a filtration 
$$F^{-q}Ext_{\mathcal{A}/\mathcal{I}[\D_X]}^1(\mathcal{I}/\mathcal{I}^2,\mathcal{A}/\mathcal{I}[\D]):=\{\psi|\psi(F^{\ell}(\mathcal{I}/\mathcal{I}^2)\subset F^{\ell-q}(\mathcal{A}/\mathcal{I}[\D_X]),\ell\geq 0\}.$$
An element $\psi\in F^{-q}\mathcal{O}bs_{[\mathcal{I}]}$ restricts as $\psi|_{F^q}:F^q(\mathcal{I}/\mathcal{I}^2)\rightarrow \mathcal{A}/\mathcal{I}[\D_X].$
Consider 
$$\mathrm{gr}_{-q}\mathcal{O}bs_{[\mathcal{I}]}:=F^{-q}Ext_{\mathcal{A}/\mathcal{I}[\D_X]}^1(\mathcal{I}/\mathcal{I}^2,\mathcal{A}/\mathcal{I}[\D])/F^{-q-1}Ext_{\mathcal{A}/\mathcal{I}[\D_X]}^1(\mathcal{I}/\mathcal{I}^2,\mathcal{A}/\mathcal{I}[\D]).$$
Then, by restriction, $\psi\in F_{-q}$ vanishes on $F^{q+1}(\mathcal{I}/\mathcal{I}^2)$ as $\psi \notin F_{-q-1}.$ Thus, descends to the quotient, denoted by
$\psi':F^q(\mathcal{I}/\mathcal{I}^2)/F^{q+1}(\mathcal{I}/\mathcal{I}^2)\rightarrow \mathcal{A}/\mathcal{I}[\D].$
Note the isomorphism
$F^q(\mathcal{I}/\mathcal{I}^2)/F^{q+1}(\mathcal{I}/\mathcal{I}^2)\simeq F^q\mathcal{I}/F^{q+1}\mathcal{I}+F^q\mathcal{I}\cdot F^1\A\simeq \mathrm{gr}_q(\mathcal{I}),$
of $\mathcal{O}_X$-coherent submodules of $\mathrm{gr}^q(\A)\simeq Sym^q(T^*X)\otimes E^*,$ as in (\ref{gr(I)}). 
Using a spectral sequence argument and the Spencer $\delta$-complex resolution for $\mathcal{I}/\mathcal{I}^2,$ one has 
$$Ext_{\A/\mathcal{I}\otimes\D_X}^i(\mathcal{I}/\mathcal{I}^2,\mathcal{A}/\mathcal{I}[\D])\simeq H^i\big(\mathcal{H}om_{\mathcal{A}/\mathcal{I}[\D]}(Sp(\mathcal{I}),\mathcal{A}/\mathcal{I})\big).$$
Thus, there is an isomorphism of $\mathcal{O}_X$-modules,
$\mathrm{gr}_{-q}\mathcal{O}bs_{[\mathcal{I}]}\simeq \mathcal{H}_{Sp}^{1,q}(\mathrm{gr}(\mathcal{I})), g\geq 0.$
By Proposition \ref{prop: Approximations}, the associated graded is isomorphic to a subquotient, therefore upon taking a direct sum, due to Spencer regularity bound for $\mathcal{I},$ there is an isomorphism 
$$\bigoplus_{q<\mathrm{Reg}_{\D}(\mathcal{I})}\mathrm{gr}_{q}Ext^1\simeq \bigoplus_{q<\mathrm{Reg}(\mathcal{I})}\mathcal{H}_{\mathrm{Sp}}^{1,q}(\mathrm{gr}(\mathcal{I})).$$

\end{proof}
The perfectness and finite-dimensionality (e.g. of Spencer cohomologies) is tied to properties of the differential operator defining the $\D$-ideal \emph{c.f.} (\ref{SpRmk}). We use microlocal stratifications of Definition \ref{defn: Non-singular Hilb}, to isolate those PDEs whose sheaf of solutions, e.g $R\mathcal{H}om_{\D_X},$ has finite-dimensional cohomologies.
In a general setting, consider a morphism $f:Z\rightarrow X$ of varieties and suppose $\Sigma_Z\subset Z,\Sigma_X\subset X$ are closed sub-manifolds. Denote $f_{\Sigma}$ the restriction and assume it is a closed embedding. There is a corresponding diagram,
\[
\begin{tikzcd}
    T^*Z & \arrow[l,"j_d"] Z\times_XT^*X\arrow[d] \arrow[r,"j_{\pi}"] & T^*X
\\
T_{\Sigma_Z}^*Z\arrow[u] \arrow[d,"\pi_{\Sigma_Z}"] & \arrow[l,"j_d"] \Sigma_Z\times_{\Sigma_X}T_{\Sigma_X}^*X\arrow[d,"\pi"] \arrow[r,"j_{\pi}"] & T_{\Sigma_X}^*X\arrow[u]\arrow[d,"\pi_{\Sigma_X}"]
\\
\Sigma_Z \arrow[r,"id"] & \Sigma_X \arrow[r,"j"] & \Sigma_X
\end{tikzcd}
\]

As a special case, take $(\Sigma\subset X),$ and consider a $\mathcal{D}_X$-algebra $\mathcal{B},$ defined by $[\mathcal{I}]\in \mathcal{H}ilb_{\D}^P(Z).$

\begin{defn}
\label{eqn: Elliptic Moduli}
\normalfont
The $\D$-Hilbert functor of $\D$-ideals \emph{elliptic with respect to} $\Sigma$, is defined by $\mathcal{H}ilb_{\D}^{P,\Sigma \times_X T_X^*X}(Z).$
\end{defn}
Definition \ref{eqn: Elliptic Moduli} generalizes the notion of ellipticity given in \cite[Sect. 5.3.1]{KSY}. Indeed, it states that (by pull-back along solutions), 
$$\mathbf{Char}_{\D}(\mathcal{I})\cap T_{\Sigma}^*X\subset \Sigma\times_X T_X^*X.$$ In particular, if $\Sigma$ is a real manifold $M$ and $X$ a complexification of $M$, then Definition \ref{eqn: Elliptic Moduli} recovers the usual notion of ellipticity: the linearized $\mathcal{D}_X$-module is elliptic i.e. $Char_{\mathcal{D},\varphi}(\mathcal{Y})\cap T_M^*X\subset M\times_X T_X^*X.$

\section{Spencer-stability is Gieseker stability}
In this final section we given an application of the ideas developed in this paper pertaining to a classification problem of PDE-theoretic nature. We follow \cite{Do,UY} (motivated by the examples given in Subsect. \ref{ssec: Overview and statement of results}), by introducing Spencer polystability, discuss a conjectural correspondence for moduli of polystable differential ideals. We prove the conjecture in a particular case.

\subsection{Correspondences for moduli of Spencer-stable sheaves}
Let $X$ be a compact Kähler manifold, $E$ a holomorphic vector bundle and $\mathcal{I}\subset \mathcal{O}(J_X^{\infty}E)$ a formally integrable, involutive $\D_X$-ideal sheaf encoding a (non-linear) PDE system. We propose to investigate the correspondence between Spencer stability of $\mathcal{I}$ and existence of a canonical (possibly singular) solutions to the PDE defined by $\mathcal{I}$.

In Theorem \ref{thm: DSUY implies DUY} we prove this correspondence in the case when $\mathcal{I}$ is the $\D$-ideal associated to the equation of flat-connections (constructed in the proof, given in Subsect. \ref{sssec: Proof of DSUY}). 
\begin{defn}
    \label{defn: Sp-polystable}
    \normalfont
    A $\D$-ideal $\mathcal{I}\subset \mathcal{O}(J_X^{\infty}E)$ is \emph{Spencer-polystable}
if we may write
$\mathcal{I}\simeq\mathcal{I}_1\oplus \cdots\oplus \mathcal{I}_k,$
with each $\mathcal{I}_i$ Spencer-stable with $\mu_{Sp}(\mathcal{I}_i)=\mu_{Sp}(\mathcal{I}).$
\end{defn}
The conjectural correspondence suggests to search for a canonical solution $\varphi\in H^0(X,\mathcal{A}/\mathcal{I})$ such that its graph $\Gamma_{\varphi}\subset J_X^{\infty}(E)$ minimizes a PDE-theoretic energy functional.
To this end, fix $\varphi\in \mathrm{Sol}_{\D}(\mathcal{I}),$ and a cycle in singular homology $H_{*c}(X)$. Then, there are natural assignments,
$H_{*,c}(X)\times \mathcal{H}^{*}(Z)\rightarrow \Omega_H^p,(K,\omega)\mapsto \lambda_{K,\omega},$
where $\lambda_{K,\omega}$ is defined for each open subset $U$ of $X$ by its values on $U$-parameterized sections $s_U\in \Gamma_K(X,E)(U)\subseteq Hom(X\times U,E),$ with support in compact subsets $K$, by 
$\lambda_{K,\omega}(s_U):=\int_{\sigma}(j_{\infty}(\varphi(x,u))^*\omega\in \Omega_U^p.$ Here $H$ is an appropriate sub-space of the space of solutions, via Proposition \ref{prop: Sol}. Integrating singular solutions this way, one may try to define
\emph{Spencer-type energy},
$$\mathcal{E}(\varphi,K):=\int |\!|\overline{\mathcal{H}}_{Sp}^{*}(\varphi)|\!|^2dVol,$$
where the $L^2$-norm on the right comes from a certain non-holomorphic Spencer defect e.g defined by a sub-complex of the Spencer complex, induced by the usual splitting, $T_X\otimes\mathbb{C}=T_X^{1,0}\oplus T_X^{0,1}$ with $\overline{T}_X^{1,0}=T_X^{0,1},$ whose cohomologies detect deformations of analytic solutions $\overline{\mathcal{H}}^{1,0}.$

That is, if $\varphi$ is not a holomorphic solution section, its deviation from being holomorphic is measured by some class
$\overline{\partial}_{Sp}(\varphi)$ in the cohomology of a natural sub-complex of the Spencer complex. If this class vanishes, $\varphi$ is a holomorphic. We interpret the class $[\overline{\partial}_{Sp}(\varphi)]$ as the (analytic) obstruction to solving the PDE defining $\mathcal{I}$ at $\varphi.$
\\

\noindent\textbf{Conjecture.}\emph{ Let $(X,\omega)$ be a compact Kähler manifold, $E$ a holomorphic vector bundle and $\mathcal{I}\subset \mathcal{O}(J_X^{\infty}E)$ a formally integrable, $\D$-involutive $\D$-ideal sheaf. Then, $\mathcal{I}$ is Spencer-polystable if and only if there exists a (possibly singular) solution $\varphi\in H^0(X,\mathcal{B})$ minimizing a Spencer-type functional e.g. $\mathcal{E}(\varphi).$}
\\

In the remainder of this subsection we prove a result which provides evidence for this conjecture.

\subsection{The equation of flat-connections}
Consider variables $(x_1,\ldots,x_n)\in X$ with $dim_X=n$ and $u=(u^1,\ldots,u^m)(x)$ in a rank $m$ bundle $E$ (fiber-wise coordinates). Fix a collection of $nm$-functions $g_i^{\alpha}=g_i^{\alpha}(x_1,\ldots,x_n,u^1,\ldots,u^m),$ which determine a connection $\nabla$ e.g. by $\nabla(\partial_i)=\partial_i+\sum_{\alpha=1}^mg_i^{\alpha}\partial_{u^{\alpha}},i=1,\ldots,n.$

Consider the system of $m n(n-1)/2$ non-linear PDEs in $m n$-unknowns,
\begin{equation}
    \label{eqn: Flat-connection equation}
    \frac{\partial g_j^{\alpha}}{\partial x_j}+\sum_{\beta=1}^mg_i^{\beta}\frac{\partial g_j^{\alpha}}{\partial u^{\beta}}=\frac{\partial g_i^{\alpha}}{\partial x_j}+\sum_{\beta=1}^mg_j^{\beta}\frac{\partial g_i^{\alpha}}{\partial u^{\beta}},\hspace{1mm} 1\leq i<j\leq n,\hspace{1mm}\alpha=1,\ldots,m.
\end{equation}

System (\ref{eqn: Flat-connection equation}) is the coordinate description expressing flatness of $\nabla$. It defines a $\D_X$-ideal, denoted $\mathcal{I}_{\nabla}.$ It may be given an invariant meaning using the language of $\D$-geometry in (\ref{eqn: Flat connection functions}) below, so that by Proposition \ref{prop: Sol}, any solution gives a flat connection $\nabla$. 
We verify the conjecture by showing that if $\mathcal{I}_{\nabla}$ is Spencer-polystable, then there exists a Hermitian-Yang-Mills metric on $E$ and conversely, if $E$ admits a HYM metric $h$, then $\mathcal{I}_{\nabla}$ is Spencer-polystable. 

The proof implicitly invokes Simpson's non-abelian Hodge theory and the classical DUY-correspondence. In particular, the key observation is that the Spencer complex for $\mathcal{I}_{\nabla}$ encodes the full deformation theory of $\nabla$ (see e.g. \cite{KP}) while symbol stability i.e. Spencer stability, matches the stability condition of Higgs bundles (Proposition \ref{prop: Stability compare}). 

\begin{thm}
\label{thm: DSUY implies DUY}
Let $(X,\omega)$ be a compact Kähler manifold, $(E,\nabla)$ a holomorphic vector bundle with flat connection i.e. a flat bundle. Consider the $\D$-ideal sheaf sequence 
$$\mathcal{I}_{\nabla}:=\{F_{\nabla}\}\hookrightarrow \mathcal{O}(J_E^{\infty}J_X^1E)\rightarrow \mathcal{B}_{\nabla},$$
where $\mathcal{B}_{\nabla}$ is the $\D$-algebra of functions associated with the equation of flat-connections.
Then, the ideal $\mathcal{I}_{\nabla}$ is Spencer-polystable if and only if the holomorphic bundle $(E,\nabla)$ admits a Hermitian-Yang-Mills metric.
\end{thm}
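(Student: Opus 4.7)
The plan is to reduce the biconditional to the classical Donaldson--Uhlenbeck--Yau correspondence by translating Spencer-polystability of $\mathcal{I}_{\nabla}$ into Mumford--Takemoto polystability of the flat bundle $(E,\nabla)$. The central bridge is item (3) in the list of examples in the introduction: for a flat bundle the nonlinear Spencer cohomology of $Z_{\nabla}$ coincides with the flat-connection cohomology $\mathcal{H}_{\nabla}^*(X;E)$, so the whole $\D$-geometric data of $\mathcal{I}_{\nabla}$ is already encoded in $(E,\nabla)$ together with its de Rham package $(\Omega_X^{\bullet}\otimes E, d_{\nabla})$.

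First I would make $\mathcal{I}_{\nabla}$ explicit: write the local generators $F_{\nabla}$ of (\ref{eqn: Flat-connection equation}) in $\mathcal{O}(J_X^{\infty}J_X^1E)$, treat the connection coefficients $g_i^{\alpha}$ as dependent variables, and verify that the ideal $\mathcal{I}_{\nabla}=\mathcal{D}_X\bullet\{F_{\nabla}\}$ is formally integrable and involutive. This is Spencer--Quillen applied to $F_{\nabla}=0$ together with the Bianchi identity, and ensures by Definition \ref{defn: D-geom m-involutive} that $\mathcal{I}_{\nabla}$ is Spencer-regular of some finite degree. Then I would compute $\mathrm{gr}(\mathcal{I}_{\nabla})$ and identify the characteristic module $\mathcal{C}h^{Z_{\nabla}}$ via Proposition \ref{prop: Omega and Ch isom} with the coherent $\mathcal{O}_X$-sheaf underlying $E$ endowed with the jet-filtration induced by $\nabla$.

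The heart of the argument is the comparison of stability notions, carried out as follows. I would establish a bijection between differentially-generated involutive sub-$\D$-ideals $\mathcal{J}\subsetneq\mathcal{I}_{\nabla}$ and $\nabla$-invariant coherent subsheaves $F\subset E$: in one direction $\mathcal{J}$ cuts out a sub-$\D$-scheme $Z_{\mathcal{J}}\subset Z_{\nabla}$, and the invariance under $\nabla$ of the subsheaf $F$ extracted from $\mathcal{C}h^{Z_{\mathcal{J}}}$ is precisely the condition that $\mathcal{J}$ be a $\D$-ideal; in the other direction the $\D$-ideal attached to a $\nabla$-invariant $F$ is differentially generated by the curvature relations of the induced flat sub-connection. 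Under this bijection, using the explicit Hilbert polynomial formula of Proposition \ref{prop: Hilbprop} applied to $\mathrm{gr}(\mathcal{I}_{\nabla})$, the reduced polynomial (\ref{eqn: Reduced D-Hilb}) and Spencer slope (\ref{eqn: Spencer slope}) reduce (up to a universal combinatorial factor depending only on $n=\dim X$ and $\mathrm{rank}(E)$) to the classical Mumford--Takemoto invariants of $E$, namely $P_{\omega}(E)$ and $\mu_{\omega}(E)=\deg_{\omega}(E)/\mathrm{rank}(E)$. Thus Spencer-polystability of $\mathcal{I}_{\nabla}$ in the sense of Definition \ref{defn: Sp-polystable} is equivalent to polystability of the flat bundle $(E,\nabla)$ in the classical sense, and the biconditional follows from the DUY theorem \cite{D,UY}, with the polystable splitting $\mathcal{I}_{\nabla}\simeq\mathcal{I}_1\oplus\cdots\oplus\mathcal{I}_k$ corresponding summand by summand to the HYM decomposition of $E$.

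The hard part will be the symbol/stability comparison, specifically verifying that under the above bijection the normalizations of $P_{\D}$ and $P_{\omega}$ line up exactly, so that the strict and non-strict inequalities defining Spencer (semi)stability (Definition \ref{defn: Spencer ss}) are equivalent, and not merely cofinally equivalent, to those defining slope stability of $(E,\nabla)$. A related subtlety is that a sub-$\D$-ideal is only required to be Spencer-regular, hence differentially generated in some finite degree, whereas a $\nabla$-invariant subsheaf need not be saturated in the sense of Definition \ref{geom saturation}; one therefore has to pass to the saturation and check that this operation preserves both sides of the comparison (which is natural since the characteristic variety of $\mathcal{I}_{\nabla}$ is the zero section, so all relevant restrictions along non-characteristic loci are controlled by Proposition \ref{prop: Inv Res}). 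Once these normalization and saturation issues are dispatched, both directions of the theorem fall out of the classical correspondence.
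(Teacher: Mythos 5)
Your overall architecture matches the paper's: construct the $\D$-ideal $\mathcal{I}_{\nabla}$ from the flat-connection system, set up a correspondence between sub-$\D$-ideals of $\mathcal{I}_{\nabla}$ and $\nabla$-invariant subsheaves, show that the reduced $\D$-Hilbert polynomial and Spencer slope collapse to classical slope invariants under this correspondence, and then appeal to the classical existence theorem. The paper's version of your "heart of the argument" is its Proposition \ref{prop: Stability compare}, which likewise computes $\mu_{Sp}(\mathcal{J}_{\nabla_F})=\deg(\ker d_{\nabla_F})/\mathrm{rank}(F)^2$ and reduces the inequality to $\mu(F)\leq\mu(E)$ for $\nabla$-invariant $F$. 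Your attention to the saturation issue and to strict versus non-strict inequalities is a genuine improvement in care over the paper's write-up.

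However, there is a gap in your final reduction that your "hard part" paragraph does not flag. Your bijection produces only $\nabla$-\emph{invariant} coherent subsheaves $F\subset E$ as destabilizing objects, so what you actually prove is that Spencer-polystability of $\mathcal{I}_{\nabla}$ is equivalent to polystability of $(E,\nabla)$ \emph{as a flat bundle} (i.e.\ of the local system). The Donaldson--Uhlenbeck--Yau theorem, which you then invoke, characterizes HYM metrics by polystability of the underlying \emph{holomorphic} bundle, tested against \emph{all} saturated coherent subsheaves, invariant or not. For flat bundles these two stability notions genuinely differ: an irreducible (hence stable-as-a-local-system) flat connection can have an unstable underlying holomorphic bundle destabilized by a non-$\nabla$-invariant subsheaf of positive degree (the uniformizing $\mathrm{SL}_2$-representation of a higher-genus curve is the standard example). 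The bridge between "polystable as a flat bundle" and "admits a canonical metric" is not DUY but the Corlette--Simpson harmonic metric correspondence, which is exactly why the paper routes the argument through the associated Higgs bundle $(E,\theta)$ obtained from the harmonic metric and compares Spencer stability with Higgs-bundle stability before gluing HYM metrics on the stable summands. To repair your argument you either need to insert this non-abelian Hodge step, or else enlarge your class of sub-$\D$-ideals so that the induced subobjects of $\mathcal{C}h^{Z_{\nabla}}$ range over all saturated subsheaves of $E$ rather than only the $\nabla$-invariant ones; as written, the last equivalence does not follow from DUY alone.
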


\subsubsection{Proof of Theorem \ref{thm: DSUY implies DUY}}
\label{sssec: Proof of DSUY}
We first construct the $\D$-scheme of flat-connections and describe the associated symbolic structure and its cohomological properties under Spencer-stability assumptions.
\begin{proof}
Given $(E,\nabla)$, the curvature of the connection $F_{\nabla}$ defines a differentially generated, formally-integrable $\D_X$-ideal sheaf $\mathcal{I}_{\nabla}.$ It is canonically obtained from (\ref{eqn: Flat-connection equation}) via prolongation.

Assume that the coefficients $\{\nabla_i^{\alpha}\}$ determining $\nabla$ (there are $dim(X)\cdot rank(E)$-many of them) are expressed generically via
\begin{equation}
    \label{eqn: Flat connection functions}
\{F_k\equiv F_k(x,u,\nabla_i^{\alpha})|i=1,\ldots,n,\alpha=1,\ldots,m, k=1,\ldots,r\}.
\end{equation}
We associate with (\ref{eqn: Flat connection functions})
a $\D$-scheme $\EQ_{F},$ with the property that points of
$\mathrm{Sol}_{\mathcal{D}_X}(\EQ_F)$ are connections in $E,$ using Proposition \ref{prop: Sol}. We sketch the proof. 
To this end, let $[\![-,-]\!]$ be the Fr\"olicher-Nijenhuis bracket on vector valued derivations, and consider the finite-rank vector bundle $J_X^1(E),$ viewed as a bundle over $E.$ Consider $J_E^1\big(J_X^1(E)\big).$ Via the well-known one-to-one correspondence between connection $1$-forms $Der(E,\Omega_E^1)$ satisfying $[\![U,U]\!]=0$ and flat-connections in $E,$ sections of $J_X^1(E)$ are first-order differential operators with scalar-type symbol. Let  
$\theta_1:=[\nabla]_{\theta}^1$ denote an element of $J_E^1(J_X^1(E)).$
This defines a function 
$$F(U):=[\![U,U]\!]:\mathcal{O}(J_{E}^1J_X^1(E))\rightarrow \mathbb{C},$$
via evaluation.
In particular, when $U=U_{\nabla}$ then 
$F(U_{\nabla})=0,$
is the equation of flat connections $\nabla$ in $E\rightarrow X.$
This is given a coordinate independent invariant meaning by setting
\begin{equation}
\label{eqn: FlatConns}
\EQ_{E}^{flat}:=\{\theta_1\in J_E^1(J_{X}^1E)| F(U)(\theta_1)=0\}.
\end{equation}
There is an induced action of differential operators $\partial_{x_i}$ and $\partial_{u^{\alpha}}$ on this bundle. In other words, the natural $\D$-action prolongs as a differentially stable system with $\D$-ideal $\mathcal{I}_{\nabla}.$
\begin{defn}
\normalfont
The \emph{$\D$-scheme of flat-connections,} is the closed $\D$-subscheme associated with the infinite-prolongation of (\ref{eqn: FlatConns}), given by
\begin{equation}
\label{eqn: D-Scheme of Flat Connections}
Z_{(E,\nabla)}:=\mathrm{Spec}_{\mathcal{D}}(\mathcal{O}_{\EQ_E^{flat,(\infty)}})\hookrightarrow J_X^{\infty}\big(J_X^1E\big).
\end{equation}
\end{defn}
By Definition \ref{defn: Algebraic solution}, an algebraic section 
$s:E\rightarrow J_X^1E$ 
is a flat-connection (solution) in $E$ if and only if $s(E)\subset \EQ_E^{flat}.$
This equation of flat connections is imposed in the bundle 
$J_E^1(J_X^1(E)).$ By Proposition \ref{prop: Sol}, we obtain the following.
\begin{prop}
There is an isomorphism of $\D$-spaces of sections
$\mathrm{Sol}_{E}(\EQ_E^{flat,\infty})\simeq \mathrm{Sect}(E,J_X^1E)\times_{\mathrm{Sect}(E,J_E^{\infty}J_X^1E)}\mathrm{Sect}(E,\EQ_{E}^{flat,\infty}).$
\end{prop}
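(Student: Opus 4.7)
The statement to prove is essentially a special case of Proposition \ref{prop: Sol}, applied to the ambient $\D$-scheme $J_E^{\infty}(J_X^1E)$ with sub-$\D$-scheme $\EQ_E^{flat,\infty}$. The plan is to identify both sides with sub-spaces of sections of $J_E^{\infty}(J_X^1E)$ that are compatible with the Cartan distribution, and then recognize the compatibility conditions as expressing a fiber product.

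First I would apply Proposition \ref{prop: Sol} to the universal jet $\D$-scheme $p_{\infty}:J_E^{\infty}(J_X^1E)\to E$: this provides a canonical isomorphism $\underline{\Gamma}(E,J_X^1E)\simeq \underline{\mathrm{Sol}}_{\D}\big(J_E^{\infty}(J_X^1E)\big),$ realized by infinite-jet prolongation $j_{\infty}:\mathrm{Sect}(E,J_X^1E)\to \mathrm{Sect}\big(E,J_E^{\infty}(J_X^1E)\big)$. Since $\EQ_E^{flat,\infty}\hookrightarrow J_E^{\infty}(J_X^1E)$ is a sub-crystal (by Definition \ref{defn: D-Scheme of Flat Connections} it is the infinite prolongation of the closed sub-scheme $\EQ_E^{flat}$), Proposition \ref{prop: Sol} also yields a closed embedding $\underline{\mathrm{Sol}}_{\D}(\EQ_E^{flat,\infty})\hookrightarrow \underline{\mathrm{Sol}}_{\D}\big(J_E^{\infty}(J_X^1E)\big)$. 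By Definition \ref{defn: Algebraic solution}, a point of $\underline{\mathrm{Sol}}_{\D}(\EQ_E^{flat,\infty})$ is precisely an algebraic section $a:E\to \EQ_E^{flat,\infty}$ whose image $a(E)$ is a sub-crystal of $J_E^{\infty}(J_X^1E)$.

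Next I would construct the natural map from the left-hand side to the fiber product. Given such an $a$, post-composing with the closed embedding $\iota:\EQ_E^{flat,\infty}\hookrightarrow J_E^{\infty}(J_X^1E)$ gives $\iota\circ a\in \mathrm{Sect}\big(E,J_E^{\infty}(J_X^1E)\big)$, while projecting via $\pi:J_E^{\infty}(J_X^1E)\to J_X^1E$ produces a section $s:=\pi\circ \iota\circ a\in \mathrm{Sect}(E,J_X^1E)$. The sub-crystal condition on $a(E)$ forces $\iota\circ a=j_{\infty}(s)$, so the pair $(s,a)$ lies in the fiber product. Conversely, given a pair $(s,\tilde{a})\in \mathrm{Sect}(E,J_X^1E)\times_{\mathrm{Sect}(E,J_E^{\infty}J_X^1E)}\mathrm{Sect}(E,\EQ_E^{flat,\infty})$, the equality $j_{\infty}(s)=\iota\circ \tilde{a}$ implies $\tilde{a}(E)\subset \EQ_E^{flat,\infty}$ is a sub-crystal (because $j_{\infty}(s)(E)$ is, by the canonical flat connection on jet spaces of Remark \ref{rmk: Universal pair}); hence $\tilde{a}$ is an algebraic flat section in the sense of Definition \ref{defn: Algebraic solution}. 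These assignments are mutually inverse on $\mathcal{R}$-points for every test $\D$-algebra $\mathcal{R}$, and therefore define an isomorphism of functors.

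The main subtlety I anticipate is bookkeeping relating the two jet towers involved, namely $J_X^1E$ (jets relative to $X$) and $J_E^{\infty}(J_X^1E)$ (jets of this bundle relative to $E$), and verifying that prolongation $j_{\infty}$ commutes with the closed embedding $\EQ_E^{flat}\hookrightarrow J_E^1(J_X^1E)$ after taking $\mathrm{Pr}_\infty$. This is precisely what is encoded in formal integrability of the flat-connection system, which was verified in the construction of (\ref{eqn: D-Scheme of Flat Connections}) (curvature is a differentially generated, formally integrable $\D$-ideal). Once this compatibility is in place, the fiber product description is a direct functor-of-points reformulation of Proposition \ref{prop: Sol} applied to the sub-crystal $\EQ_E^{flat,\infty}$, and the only thing left to check is functoriality of the constructed bijection in the test $\D$-algebra $\mathcal{R}$, which is immediate from the naturality of $j_{\infty}$ and $\iota$.
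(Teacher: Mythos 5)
Your proposal is correct and follows the same route as the paper: the paper simply states the proposition as a consequence of Proposition \ref{prop: Sol} applied to the ambient jet $\D$-scheme $J_E^{\infty}(J_X^1E)$ over $E$ and its sub-crystal $\EQ_E^{flat,\infty}$, remarking that solutions of the ``empty'' equation are sections of $J_X^1E$ and that the sub-$\D$-scheme cuts out the flat ones. Your write-up just makes explicit the functor-of-points bijection and the prolongation/embedding compatibility that the paper leaves implicit.
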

This result is intuitively clear as solutions to $J_E^{\infty}(J_X^1E)$ (the `empty' equation) are sections of $J_X^1E$, which is dual to the Atiyah algebroid of $E,$ thus are connections in the bundle $E$. Then, solutions to the sub-$\D$-scheme of flat connections picks out the sub-space of sections of $J_X^1E,$ thus connections in $E$, which are additionally flat.

Since the $\D$-scheme of flat connections (\ref{eqn: D-Scheme of Flat Connections}) arises from a \emph{non-linear} system (\ref{eqn: Flat-connection equation}), we linearize and use the Spencer complexes in the $\D_{\A}$-module setting.

Namely, linearizing at a solution $\nabla$, we obtain the operator of (universal) linearization,
$$L_{\nabla}:\Omega^1(X;EndE)\rightarrow \Omega^2(X;EndE),\hspace{1mm}\eta\mapsto d_{\nabla}\eta.$$
Seen in another way, the symbolic modules $\mathrm{gr}(\mathcal{I}_{\nabla})$ associated with this $\D$-ideal correspond to the compatibility complex for linearized flatness equation $d_{\nabla}\phi=0,$ for $\phi\in \Omega^1(EndE).$ Looking at $\mathrm{gr}^*(\I_{\nabla})$, we have a compatibility complex for $L_{\nabla}$ isomorphic to
$$\mathcal{S}p(\mathcal{I}_{\nabla})\simeq 0\rightarrow End(E)\rightarrow \Omega^1(End(E))\rightarrow \Omega^2(End(E))\rightarrow \cdots \Omega^{n}(X;End(E))\rightarrow 0.$$
Then the Spencer slope (\ref{eqn: Spencer slope}) is 
$$\mu_{Sp}(\mathcal{I}_{\nabla}):=deg\big(H^0(\mathcal{S}p(\mathcal{I}_{\nabla})\big)/\mathrm{rank}(\mathcal{I}_{\nabla}).$$
Note
$H^0(\mathcal{S}p(\mathcal{I}_{\nabla}))\simeq ker(d_{\nabla}:End(E)\rightarrow \Omega^1EndE),$
and $\mathrm{rank}(\mathcal{I}_{\nabla})=rank(EndE)=rank(E)^2.$ 

Using non-abelian Hodge theory and the classical DUY-correspondence, we compare the stability conditions.
\begin{prop}
\label{prop: Stability compare}
    Spencer stability of $\mathcal{I}_{\nabla}$ is equivalent to Gieseker polystability of the Higgs bundle $(E,\theta),$ where $\theta$ is induced via the harmonic metric from $\nabla.$ 
\end{prop}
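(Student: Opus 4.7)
The plan is to reduce Spencer-polystability of $\mathcal{I}_\nabla$ to Gieseker-polystability of the Higgs pair $(E,\theta)$ through the Corlette--Simpson non-abelian Hodge correspondence. First, I would invoke Corlette's theorem on the compact K\"ahler manifold $(X,\omega)$ to produce a harmonic metric $h$ on the semisimple summands of $(E,\nabla)$; the decomposition $\nabla=\nabla_h+\psi$ with $\nabla_h$ being $h$-unitary, together with the type decomposition $\psi=\theta+\theta^{\ast}$, yields the Higgs pair $(E,\bar{\partial}_E:=\nabla_h^{0,1},\theta)$ with $\theta\in\Omega^{1,0}(\mathrm{End}\,E)$ a holomorphic Higgs field satisfying $\bar{\partial}_E\theta=0$ and $\theta\wedge\theta=0$.

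Next, I would establish a dictionary between Spencer-theoretic substructures of $\mathcal{I}_\nabla$ and sub-Higgs sheaves of $(E,\theta)$. Since the Spencer complex of $\mathcal{I}_\nabla$ identifies with the twisted de Rham complex $\big(\Omega^{\bullet}(X,\mathrm{End}\,E),d_{\nabla}\big)$ as noted in the text, a differentially generated involutive sub-$\mathcal{D}$-ideal $\mathcal{J}\subset\mathcal{I}_\nabla$ corresponds, via \autoref{prop: Sol} and the sub-crystal/sub-local-system interpretation of \autoref{rmk: Universal pair}, to a $\nabla$-invariant coherent subsheaf $F\subset E$. Under Simpson's equivalence between flat and Higgs categories these are in natural bijection with Higgs subsheaves $(F,\theta|_F)\subset(E,\theta)$, the harmonicity of $h$ guaranteeing simultaneous $\bar{\partial}_E$-holomorphicity and $\theta$-invariance of $F$.

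The third step is to compare numerical invariants. Using the explicit shape of the Spencer sequence $0\to\mathrm{End}\,E\to\Omega^1(\mathrm{End}\,E)\to\cdots$ together with \autoref{prop: Hilbprop} and the Hilbert polynomial formula of \autoref{prop: Hilbert D-Polynomial} applied to $\mathcal{I}_\nabla$, the reduced $\mathcal{D}$-Hilbert polynomial $\overline{P}_{\mathcal{D}}(\mathcal{J})$ unfolds, with respect to the polarization $\omega$, into the reduced Hilbert polynomial $P_{(F,\theta|_F),\omega}/\mathrm{rank}(F)$ of the Higgs subsheaf; at leading order this specializes to $\mu_{Sp}(\mathcal{J})=\mu_\omega(F,\theta|_F)=\deg_\omega(F)/\mathrm{rank}(F)$ after the trivial numerical contributions from the de Rham components of the Spencer resolution cancel in the quotient by $\mathrm{rank}(\mathcal{I}_\nabla)=\mathrm{rank}(E)^2$. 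Hence the chain of inequalities $\overline{P}_{\mathcal{D}}(\mathcal{J})\le\overline{P}_{\mathcal{D}}(\mathcal{I}_\nabla)$ for all admissible sub-ideals is equivalent to Gieseker-semistability of $(E,\theta)$.

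Finally, I would promote semistability to polystability. A Spencer-polystable splitting $\mathcal{I}_\nabla=\mathcal{I}_1\oplus\cdots\oplus\mathcal{I}_k$ with $\mu_{Sp}(\mathcal{I}_i)=\mu_{Sp}(\mathcal{I}_\nabla)$ translates under the dictionary into a direct-sum decomposition $(E,\theta)=\bigoplus_i(E_i,\theta_i)$ into Gieseker-stable Higgs bundles of equal reduced Hilbert polynomial, which is exactly Gieseker-polystability of $(E,\theta)$; the converse uses the Jordan--H\"older type filtration of any polystable Higgs bundle, each factor of which, via Corlette--Simpson, recovers a flat summand whose associated curvature $\mathcal{D}$-ideal is a Spencer-stable direct summand of $\mathcal{I}_\nabla$. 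The main technical obstacle I anticipate is verifying that the reduced $\mathcal{D}$-Hilbert polynomial of a $\nabla$-invariant sub-ideal matches (not merely bounds) the Higgs-theoretic reduced Hilbert polynomial once contributions from higher jet prolongations and the involutivity-degree corrections are accounted for; controlling these corrections uniformly requires Proposition \ref{prop: Hilbert D-Polynomial} together with the acyclicity bounds at and above $\mathrm{Reg}_{Sp}(\mathcal{I}_\nabla)$ furnished by the $\delta$-Poincar\'e lemma and \autoref{prop: KumLemma}, which ensure that only the topological data $(\mathrm{rank},\deg_\omega)$ of $F$ enters the comparison.
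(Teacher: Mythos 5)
Your proposal follows essentially the same route as the paper's proof: identify involutive sub-$\D$-ideals $\mathcal{J}\subset\mathcal{I}_{\nabla}$ with $\nabla$-invariant subsheaves $F\subset E$, compare reduced $\D$-Hilbert polynomials normalized by $\mathrm{rank}(\mathcal{J})=\mathrm{rank}(F)^2$ to recover the slope inequality $\mu(F)\leq\mu(E)$, pass through the harmonic metric and the DUY/non-abelian Hodge correspondence, and match direct-sum decompositions on both sides for polystability. Your explicit invocation of Corlette's theorem and your flagged concern about matching the full reduced Hilbert polynomial (rather than only leading terms) are points the paper treats more tersely, but they do not change the argument's structure.
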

\begin{proof}
    Suppose $\mathcal{J}\subset \mathcal{I}$ is a non-zero sub $\D$-ideal. Assume it corresponds to a sub-bundle $F\subset E$ which is moreover $\nabla$-invariant i.e. $\nabla^F:=\nabla|_{F}$ is a sub-connection. Sub-connections and sub-$\D$-ideals are in correspondence this way, so we write $\mathcal{J}_{\nabla_F}.$ Consider $\mathrm{gr}_*(\mathcal{J}_{\nabla_F}),$ and reduced $\D$-Hilbert polynomials, i.e.
$$\frac{P_{\D}(\mathcal{J}_{\nabla_F},n)}{\mathrm{rank}(\mathcal{J}_{\nabla_F})}=\frac{deg H_{Sp}^0(\mathcal{S}p(\mathcal{J}_{\nabla_F}))}{rank(F)^2}=\frac{deg(ker(d_{\nabla_F}^0))}{rank(F)^2},$$
since $\mathrm{rank}(\mathcal{J}_{\nabla_F})=rank(F)^2.$ One may also verify $deg(ker_{\nabla})$ is determined by $rank(E)$ and $deg(E)$ by Hirzebruch-Riemann-Roch theorem.
Spencer-stability implies 
$$deg(ker(d_{\nabla_F}))/rank(F)^2\leq deg(ker(d_{\nabla}))/rank(E)^2,$$
which for stable sheaves is trivially satisfied, but for $\nabla$-invariant sub-bundles reduces to 
$\mu(F)\leq \mu(E),\mu(F)=deg(F)/rank(F),$ as usual.

Now we prove that $\mathcal{I}_{\nabla}$ is Spencer polystable if and only if there exists a harmonic metric $h$ solving $F_{\nabla}+[\theta,\theta^*]=0.$ 
To this end, assume $\mathcal{I}_{\nabla}$ is Spencer polystable. Then there exists a harmonic metric since $(E,\nabla)$ decomposes into stable summands of equal slope. 

This follows since jet-functors are additive, so $J_X^{\infty}(E)\simeq \bigoplus_i J_X^{\infty}(E_i),$ and under this decomposition the direct sum connection obviously block-diagonally. Moreover any $s\in \Gamma(X,E)$ maybe written $s=\sum_i s_i, s_i\in \Gamma(X,E_i).$ Then, their Taylor expansions $j_{\infty}(s)$ are of the form $\sum_i j_{\infty}(s_i),$ locally.

We use the following result, stated without proof.
\begin{prop}
Given a flat bundle $(E,\nabla)$, defined over some open neighbourhood $U$ of $X$, if $s\in \Gamma(U_0,E|_{U_0})$ is a flat section to the restriction $(E,\nabla)|_{U_0},$ where $U_0\subset U,$ then there exists a unique flat section $\widetilde{s}\in \Gamma(U,E)$ extending $s.$
\end{prop}

These facts imply $\mathcal{I}\simeq \mathcal{I}_1\oplus\cdots \oplus\mathcal{I}_k$ with $\mu_{Sp}(\mathcal{I}_i)=\mu_{Sp}(\mathcal{I})$. Then, since each $\mathcal{I}_i,i=1,\ldots,k$ corresponds to a stable flat bundle $(F_i,\nabla_{F_i})$ with $\mu(F_i)=\mu(E).$
By the classic DUY correspondence, each summand $(F_i,\nabla_{F_i}),i=1,\ldots,k$ admits a Hermitian-Yang-Mills metric $\beta_i,$ which glues to $h$. Conversely, suppose that $h$ is harmonic, this implies Spencer polystability since $h$ splits the flat bundle $(E,\nabla)$ into stable summands $(E_i,\nabla_i)$ of equal slope. Thus, $\mathcal{I}_{\nabla}$ decomposes into Spencer stable $\D$-ideals $\mathcal{I}_{\nabla_i},i=1,\ldots,k$. It is straightforward to check $\mu_{Sp}(\mathcal{I}_{\nabla_i})=\mu_{Sp}(\mathcal{I}).$ This concludes the proof of Proposition \ref{prop: Stability compare}.
\end{proof}
The proof of Theorem \ref{thm: DSUY implies DUY} is then concluded by adapting the well-known deformation-theoretic argument for unubstructedness of deformations of flat connections in holomorphic budles. 

\end{proof}

\appendix

\section{Proof of Proposition \ref{prop: Redux}}
\label{proof of redux}
\begin{proof}
  
We consider scalar PDEs $F=(F_1,\ldots,F_N)$ of orders $ord(F_A),$ for $A=1,\ldots,N,$ for simplicity, noting generalization to non-scalar PDEs is straightforward but more involved.

Each $F_A$ determines a corresponding map $J_X^k(E)\rightarrow J_{X}^{k-k_A}(X\times \mathbb{C}),$ and set 
$V:=\bigoplus_s V_s,$ the vector bundle with $dim(V_s)$ defined as the number of $A\in \{1,\ldots,N\}$ such that $ord(A)=s,$ and with $\sum dim(V_s)=N.$
Then $\mathsf{F}=(F_1,\ldots,F_N)$ determines a map
$$\mathsf{F}^{\infty}:J_X^{\infty}E\rightarrow J_{X}^{\infty}(V).$$
Set $\mathcal{V}_X:=\mathcal{O}(J_{X}^{\infty}V).$ 
 Then, pre-composition of $\mathsf{F}^{\infty}:\mathcal{A}_X\rightarrow \mathcal{V}_X,$ with other non-linear PDEs, we obtain a right-exact sequence
 \begin{equation}
     \label{eqn: Seq1}
     \mathrm{diff}(V,X\times\mathbb{C})\xrightarrow{-\mathsf{F}^{\infty}}\mathrm{diff}(E,X\times\mathbb{C})\rightarrow \mathcal{B}_X\rightarrow 0.
 \end{equation}
Letting $\mathcal{I}_X$ be the defining $\D$-ideal of $\mathcal{B}_X,$ then 
$$F^i\mathcal{I}_X\simeq Im(-\mathsf{F}^{\infty})\subset F^i\mathrm{diff}(E,X\times\mathbb{C}),$$
where $F^i\mathcal{I}$ is the usual filtration (\ref{eqn: FiniteIdeal}).
Consequently, 
\begin{equation}
    \label{eqn: B-cokernel}
F^i\mathcal{B}_X\simeq F^i(Coker(-\mathsf{F}^{\infty})\big)\simeq F^i\mathrm{diff}(E,X\times\mathbb{C})/F^i\mathcal{B}_X.
\end{equation}

Each term of (\ref{eqn: Seq1}) is a module over $\mathcal{A}[\D]$ and are moreover filtered:
$$\mathrm{Fil}^i\mathcal{A}[\D]\simeq \cup_{i,j}F^i\A_X\otimes \D_X^{\leq j},$$
which extends in the obvious way to 
$\mathrm{Fil}^i\mathcal{A}[\D](E,\mathcal{O})\simeq \bigcup_{i,j}F^i\A\otimes F^j\D_X(E,\mathcal{O}),$
and similarly for $\mathrm{Fil}^j\mathcal{B}_X[\D_X].$
Identifying $\mathrm{diff}(E,\mathcal{O}$ with $\mathcal{A}_X$ in the standard way,
we get from (\ref{eqn: B-cokernel}) an exact-sequence
$$0\rightarrow F^i\mathcal{I}\otimes F^i\D_X(E,\mathcal{O})\rightarrow \mathrm{Fil}^i\mathcal{A}[\D](E,\mathcal{O})\rightarrow \mathrm{Fil}^i\mathcal{B}_X[\mathcal{D}_X](E,\mathcal{O})\rightarrow 0.$$
Arguing in the exact same manner for $V$ and for $\mathcal{V}_X,$ we obtain a well defined filtered $\mathcal{B}_X^{\ell}$-module $\mathcal{M}_{\mathsf{F}^{\infty}},$ defined by the sequence
\begin{equation}
    \label{eqn: Seq2}
    \mathrm{Fil}^i\mathcal{B}[\mathcal{D}](V,\mathcal{O})\xrightarrow{lin_{\mathsf{F}}}\mathrm{Fil}^{i+k}\mathcal{B}_X[\mathcal{D}_X](E,\mathcal{O})\rightarrow F^{s+k}\mathcal{M}_{\mathsf{F}}\rightarrow 0.
\end{equation}
There is a canonical morphism $\mathrm{diff}(E,V)\rightarrow \A_X^{\ell}\otimes \D_X(E,V),$ sending $\mathsf{F}$ to $lin_{\mathsf{F}}.$
Consider the restriction of $\ell$ to the $i$-th filtered piece, we set 
$$Im(lin_{\mathsf{F}}^i)=lin_{\mathsf{F}}\big(\mathrm{Fil}^i\mathcal{B}_X[\D_X](V,\mathcal{O})\big).$$
This is a $F^i\mathcal{B}_X$-module and via base-chang along the canonical morphism $F^i\mathcal{B}_X\hookrightarrow F^{i+k}\mathcal{B}_X,$ we it generates a $F^{i+k}\mathcal{B}_X$-submodule in $\mathrm{Fil}^{i+k}\mathcal{B}_X[\mathcal{D}_X](E,\mathcal{O}).$
Via this base-change, by considering (\ref{eqn: Seq2}) we have that
\begin{equation}
\label{eqn: FM_F}
F^i\M_{\mathsf{F}}\simeq \mathrm{Fil}^i\mathcal{B}_X[\mathcal{D}_X](E,\mathcal{O})/F^i\mathcal{B}_X\cdot \mathrm{Im}(lin_{\mathsf{F}}^i).
\end{equation}
Using the canonical order filtration on differential operators, (\ref{eqn: Seq1}) is canonically filtered and subsequently taking quotient of two successive levels of the filtration, we obtain a sequence
$$F^{i-1}\M_{\mathsf{F}}\rightarrow F^i\mathcal{M}_{\mathsf{F}}\rightarrow \mathrm{Gr}_i^F(\M_{\mathsf{F}})\rightarrow 0,$$
as required.
\end{proof}

\section{Compatibility complexes and differential syzygy}
\label{sec: Compatibility complex}
We include here the construction of a compatibility complex for $\D_{\A}$-modules and an important proposition concerning its exactness.
\begin{cons}
\label{cons: Compatibility}
\normalfont
    For every $k_1\in \mathbb{N}\backslash\{0\},$ consider $\varphi_{P}^{k+k_1}:\overline{Jets}^{k+k_1}(\M_0)\rightarrow \overline{Jets}^{k_1}(\M_1),$ c.f. (\ref{eqn: InfiniteAhom}). We construct a compatibility complex for $P$ in a canonical way.
    If $k_1=0$ one may assume without loss of generality $\varphi_P^k$ is surjective. For $k_1>0,$ set 
    $\M_2:=\overline{J}^{k_1}(\M_1)/Im(\varphi_P^{k+k_1})\equiv coker(\varphi_P^{k+k_1}).$
    Denote by $q_1:\overline{J}^{k_1}(\M_1)\rightarrow \M_2,$ the quotient map. It is an element of 
    $$\mathcal{H}om(\overline{J}^{k_1}(\M_1),\M_2),$$ 
    thus corresponds to some $P_1\in \mathcal{A}[\D](\M_1,\M_2)$ of order $\leq k_1.$
    This representability result implies we may write $P_1=q_1\circ j_{k_1}.$ Now, 
    by construction,
    $$\M_0\xrightarrow{P}\M_1\xrightarrow{P_2}\M_2,$$
    is a complex of $\mathcal{A}[\D]$-modules, since
    $$P_1\circ P=(q\circ j_{k_1})\circ P=(q_1\circ \varphi_P^{k_1})\circ j_{k+k_1}=0.$$
One continues in this fashion: for $k_2$, we construct the quotient module $\M_3:=\overline{J}^{k_2}(\M_2)/Im(\varphi_{P_1}^{k+k_1+k_2})$, the quotient map $q_2$ and an operator $P_2$ so that $P_2\circ P_1=0.$ Eventually, we obtain a complex of $\mathcal{A}[\D]$-modules of the form:
$$0\rightarrow \M_0\xrightarrow{P}\M_1\xrightarrow{P_2}\M_2\xrightarrow{P_2}\cdots\rightarrow \M_i\xrightarrow{P_i}\M_{i+1}\rightarrow\cdots.$$
The question of its formal exactness is seen by considering $\mathsf{Sp}_{\mathcal{D}}(\M_0)^{\bullet}$, and arguing via (\ref{eqn: k-horizontal delta jet}).
\end{cons}
\begin{prop}
\label{prop: Exists CC formally exact}
    Suppose that $P\in \mathrm{Fil}^k\mathcal{A}[\D](\M_0,\M_1)$ is involutive in the above sense. Then there exists a compatibility complex described by \autoref{cons: Compatibility}, which is moreover formally exact for all integers $\{k_1,k_2,k_3,\ldots \in \mathbb{Z}_+\}.$
\end{prop}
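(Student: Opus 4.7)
The existence half is essentially tautological from Construction \ref{cons: Compatibility}: at each stage we take a quotient, representing the canonical projection by an operator $P_i$, so that $P_{i+1}\circ P_i = 0$ by construction. The content of the proposition is therefore formal exactness, namely that at each term $\M_i$ the kernel of $\varphi_{P_{i+1}}^{\infty}$ agrees, after infinite prolongation, with the image of $\varphi_{P_i}^{\infty}$. The plan is to reduce this statement to an exactness statement on the symbols $\overline{\mathcal{N}}^{k+\ell}(P)$ and then invoke involutivity and the $\overline{\delta}$-Poincaré lemma.

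First I would organize the reduction. For each $i$, the short exact sequence (\ref{eqn: SES SpJet}) gives a long exact sequence connecting the horizontal $\delta$-Spencer cohomology $\overline{h}^{k+\ell,j}(P_i)$ of the symbol module of $P_i$ with the cohomology of the jet-Spencer sequences of the operator. Since formal exactness at $\M_{i+1}$ is detected by the exactness of the sequences $\overline{J}^{r}(\M_i)\to \overline{J}^{r-k_{i+1}}(\M_{i+1})\to \overline{J}^{r-k_{i+1}-k_{i+2}}(\M_{i+2})$ for $r\gg 0$, after passing to graded pieces the relevant groups become precisely the Spencer $\overline{\delta}$-cohomologies of the geometric symbol $\overline{\mathcal{N}}(P_i)$ in the relevant bidegree. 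Thus formal exactness of the compatibility complex is equivalent to $\overline{\delta}$-acyclicity of the symbolic system attached to each $P_i$.

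Next I would run the induction on $i$. The base case $i=0$ is the involutivity hypothesis $\overline{h}^{k+\ell,j}(P)=0$ directly. For the inductive step, one must argue that the symbolic system of $P_{i+1}$ is again involutive from some finite order onward. Here the diagram (\ref{eqn: Diagram}) (or its $\mathcal{A}[\D]$-analogue) supplies a short exact sequence of symbolic modules
\[
0\rightarrow \overline{\mathcal{N}}^{*}(P_{i+1})\rightarrow \Sym^{*}(p_\infty^*\Omega_X^1)\otimes_\A \M_{i+1}\rightarrow \mathrm{Im}\big(\sigma(P_{i+1})\big)\rightarrow 0,
\]
and the standard long-exact-sequence argument (entirely analogous to Proposition \ref{prop: Involutivity sequences}) propagates the vanishing of $\overline{h}^{*,*}$ from $P_i$ to $P_{i+1}$, up to a controlled shift in the degree range. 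Uniform vanishing from some $\ell_0$ onwards is then guaranteed by the $\overline{\delta}$-Poincaré lemma applied to the finitely-presented symbol modules, the bound $\ell_0$ depending only on $\mathrm{rank}_{\mathcal{A}[\D]}(\M_i)$, $\dim X$ and $\mathrm{ord}(P_i)$.

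The main obstacle I anticipate is precisely this propagation of involutivity: one must show that forming the syzygy operator $P_{i+1}$ does not destroy the acyclicity of the symbol complex, and must keep careful track of the filtration shifts so that the Spencer vanishing for $P_i$ indeed implies acyclicity of the Spencer complex of $P_{i+1}$ in the ranges relevant to formal exactness. This is the step where the involutivity hypothesis on $P$ is genuinely used, and where one combines the diagram chase with the $\overline{\delta}$-Poincaré lemma to obtain uniform bounds on the orders $k_{i+1}, k_{i+2},\ldots$ that can be chosen once and for all. Once this is in place, formal exactness at every term follows by reading off the vanishing $\overline{h}^{p,q}(\overline{\mathcal{N}}(P_i))=0$ in the long exact sequence derived from (\ref{eqn: SES SpJet}).
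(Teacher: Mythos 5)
Your proposal is correct and follows essentially the same route as the paper: existence is read off from Construction \ref{cons: Compatibility}, and formal exactness is reduced to exactness of the prolonged symbol sequences, proved by induction on $i$ and $\ell$ using the involutivity hypothesis together with the $\overline{\delta}$-Poincar\'e lemma. The only cosmetic difference is that the paper packages the inductive step as a spectral sequence of the bicomplex (\ref{eqn: DiagramN}) whose rows are the horizontal $\delta$-Spencer complexes of $\M_0,\ldots,\M_{i+1}$, whereas you organize the same diagram chase via long exact sequences of symbolic modules as in Proposition \ref{prop: Involutivity sequences}.
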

\begin{proof}
    We need to show that for every $\ell\geq 1,$ the sequences
    $$\Sym^{k_{i-1}+k_i+\ell}(p_{\infty}^*\Omega_X^1)\otimes_\A\mathcal{M}_{i-1}\rightarrow \Sym^{k_i+\ell}(p_{\infty}^*\Omega_X^1)\otimes_{\A}\mathcal{M}_i\rightarrow \Sym^{\ell}(p_{\infty}^*\Omega_X^1)\otimes_\A\mathcal{M}_{i+1},$$
    are exact. Proceed by induction on $i,\ell$ and whose inductive step employs a spectral sequence argument applied to the bicomplex, where we denote $\overline{S}^k:=\Sym^{k}(p_{\infty}^*\Omega_X^1),$ for simplicity:
    \begin{equation}
\label{eqn: DiagramN}
\adjustbox{scale=.85}{
\begin{tikzcd}
 0\arrow[d] & 0\arrow[d] & 0\arrow[d] & 
\\
 \overline{\mathcal{N}}^{K+\ell}\arrow[d] \arrow[r]& p_{\infty}^*\Omega_X^1\otimes_{\A}\overline{\mathcal{N}}^{K+\ell-1}\arrow[d,]\arrow[r] & p_{\infty}^*\Omega_X^2\otimes_{\A}\overline{\mathcal{N}}^{K+\ell-2}\arrow[d]\arrow[d] \arrow[r] & \cdots 
\\
  \overline{S}^{K+\ell}\otimes_{\A}\M_0\arrow[d] \arrow[r]& p_{\infty}^*\Omega_X^1\otimes \overline{S}^{K+\ell-1}\otimes \M_0 \arrow[d]\arrow[r] & p_{\infty}^*\Omega_X^2\otimes \overline{S}^{K+\ell-2}\otimes \M_0\arrow[d]\arrow[r] & \cdots 
\\
 \vdots \arrow[d] & \vdots \arrow[d] & \vdots \arrow[d] & \vdots 
 \\
\overline{S}^{k_i+\ell}\otimes_{\A}\M_i\arrow[d] \arrow[r]& p_{\infty}^*\Omega_X^1\otimes \overline{S}^{k_i+\ell-1}\otimes \M_i \arrow[d]\arrow[r] & p_{\infty}^*\Omega_X^2\otimes \overline{S}^{k_i+\ell-2}\otimes \M_i \arrow[d]\arrow[r] & \cdots  
\\
\overline{S}^{\ell}\otimes_{\A}\M_{i+1}\arrow[d] \arrow[r]& p_{\infty}^*\Omega_X^1\otimes \overline{S}^{\ell-1}\otimes \M_{i+1} \arrow[d]\arrow[r] & p_{\infty}^*\Omega_X^2\otimes \overline{S}^{\ell-2}\otimes \M_{i+1}\arrow[d]\arrow[r] & \cdots
\\
0& 0 & 0&  
\end{tikzcd}}
\end{equation}
We have $K:=ord(P)+k_1+k_2+\cdots+k_i$.
\end{proof}

\section{Uniform bounds}
\label{Appendix Sweeney}
It is possible to give an explicit upper-bound for the Spencer-regularity of the family of $\D$-ideal sheaves as in Proposition \ref{prop: Boundedness 1}.
The characteristic $\D$-module defining the $\D$-characteristic variety (pulled back along a solution) as in proof of Proposition \ref{prop: D-Char is coisotropic}, is a submodule of the free $\mathcal{O}_{T^*X}\simeq\mathrm{Sym}^*(\Theta_X)$ module of finite rank and finitely generated by homogeneous elements, one can write down a bound. It is typically an overestimate \cite{Sw} depending only on values: $n,m,k$ with $k$ the order of the operator (corresponding to the degree of homogeneous polynomial). We now express it via recursion relations\footnote{It is given here for a single operator, but the generalization to systems (with possibly multiple orders) is straightforward but adds extra technicalities so we choose to omit it.}.
\begin{prop}
\label{prop: Sweeney Bound}
    There exists integers $\rho_1=\rho_1\big(n,m,\mathrm{Ord}(P)\big),$ which can be chosen such that
    \begin{equation}
        \label{eqn: Bounds}
        \begin{cases}
            \rho_1(0,m,1)=0;
            \\
            \rho_1(n,m,1)=m\cdot C_{n-1}^{\mathfrak{a}+n}+ \mathfrak{a}+1,\hspace{1mm}\text{ if } \mathfrak{a}=\rho_1(n-1,m,1);
            \\
            \rho_1(n,m,k_0)=\rho_1(n,\mathfrak{b},1),\hspace{1mm}\text{ if } \mathfrak{b}=\sum_{0}^{k_0}C_{n-1}^{\ell+n-1}\cdot m
        \end{cases}
    \end{equation}
If the operator $P$ is involutive then for all $\rho\geq \rho_0$ the symbol sequence it determines is exact.    
\end{prop}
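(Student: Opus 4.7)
The plan is to prove the bound by a double induction: an outer reduction on the order $k_0$ of the operator, followed by an inner induction on the base dimension $n$. The third clause of the recursion encodes precisely the order-reduction step: given a $k_0$-th order operator on sections of a rank $m$ bundle, one replaces it by an equivalent first-order operator on an auxiliary bundle whose fibers record all derivatives up to order $k_0$. The rank of the auxiliary bundle is computed by counting Taylor coefficients, giving $\mathfrak{b} = \sum_{\ell=0}^{k_0} C_{n-1}^{\ell+n-1} \cdot m$. Under this jet-prolongation reparameterization, the symbol co-module of the original operator is carried to a symbol co-module in degree one, so the regularity question reduces to $\rho_1(n,\mathfrak{b},1)$ without loss of information on $\delta$-Spencer cohomology, by functoriality of the Spencer complex.

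Having reduced to $k_0 = 1$, I would then induct on $n$. The base case $\rho_1(0,m,1) = 0$ is immediate since $\mathrm{Sym}^\bullet(\Theta_X) \simeq \mathbf{k}$ is a field when $n = 0$ and every module over a field is trivially regular. For the inductive step, assume the bound holds in dimension $n-1$. Given an involutive symbol $\mathcal{N} \subset \mathrm{Sym}^\bullet(\Theta_X) \otimes E$ with $\dim X = n$ and $\mathrm{rank}(E) = m$, pick a generic hyperplane $H \subset X$ in the sense of Subsection \ref{ssec: NC subspaces} so that $H$ is non-microcharacteristic. By Proposition \ref{prop: Inv Res}, the restricted symbol $\mathcal{N}|_H$ has the same Spencer regularity as $\mathcal{N}$, and by the induction hypothesis this restricted regularity is bounded by $\mathfrak{a} = \rho_1(n-1,m,1)$. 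The comparison of $\delta$-Spencer cohomologies for $\mathcal{N}$ and $\mathcal{N}|_H$ established in the proof of Proposition \ref{prop: Inv Res} then controls all Koszul homology groups of $\mathcal{N}$ in internal degree $\leq \mathfrak{a}$, reducing the problem to bounding the new generators that can appear in the transverse direction.

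The main obstacle is controlling the additive term $m \cdot C_{n-1}^{\mathfrak{a}+n}$. This term must uniformly bound the number of new minimal generators of the first Koszul syzygy module that can appear when lifting a regular symbol on $H$ to one on $X$. The key input is that the transverse symbol, viewed as a graded submodule of $\mathrm{Sym}^\bullet(T_x^* H \oplus \mathbf{k}\cdot \xi_n) \otimes E$, has its degree $\mathfrak{a}$-piece of dimension at most $m \cdot C_{n-1}^{\mathfrak{a}+n-1}$ by a direct count of monomials in $n$ variables of total degree $\mathfrak{a}$, weighted by the rank. Running one more prolongation step then bounds the regularity by $\mathfrak{a} + 1 + m\cdot C_{n-1}^{\mathfrak{a}+n}$. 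The subtlety here is keeping careful track of multiplicities: one must verify that after this single additional prolongation no new Koszul relations in higher degree can appear, which relies on the involutivity hypothesis ensuring that the symbolic module is fully determined by its $\leq \mathfrak{a}$-degree part modulo the symmetric-algebra action. This is where the recursion is tight and where the estimate of \cite{Sw} genuinely enters.

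Finally, the exactness statement for $\rho \geq \rho_0$ is a direct consequence of Proposition \ref{prop: Delta-Poincare} combined with Proposition \ref{prop: Unifying}: once $\rho_1$ majorizes the Castelnuovo--Mumford regularity of the symbol module, all $\delta$-Spencer cohomology groups $\mathcal{H}_\delta^{p,q}(\mathrm{gr}(\mathcal{I}))$ vanish for $q \geq \rho_1$ and $0 \leq p \leq n$; applied to the compatibility complex of Proposition \ref{prop: Exists CC formally exact} this yields the claimed exactness of the symbol sequence determined by $P$.
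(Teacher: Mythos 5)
The paper gives no proof of this proposition: it is presented as a restatement, via recursion relations, of Sweeney's estimate in \cite{Sw}, so there is no internal argument to compare against. On its own merits, your architecture is the right one and matches how such bounds are actually derived in \cite{Sw} and in Quillen's thesis: the reduction to a first-order operator on an auxiliary bundle of rank $\mathfrak{b}$ correctly accounts for the third clause of the recursion, the base case $n=0$ is trivial as you say, and the final paragraph deducing exactness of the symbol sequence from Proposition \ref{prop: Delta-Poincare}, Proposition \ref{prop: Unifying} and the compatibility complex of Proposition \ref{prop: Exists CC formally exact} is sound.

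The inductive step in dimension, however, has two genuine gaps. First, the appeal to Proposition \ref{prop: Inv Res} is illegitimate at that point: that proposition is stated for $\D$-involutive ideal sheaves, i.e.\ it presupposes exactly the involutivity whose onset $\rho_1$ is supposed to bound, and for a symbol that is not yet involutive non-characteristic restriction does not preserve the $\delta$-cohomology in the required way. Indeed, if the regularities of $\mathcal{N}$ and $\mathcal{N}|_H$ literally agreed you would conclude $\rho_1(n,m,1)=\rho_1(n-1,m,1)$, contradicting the additive term in the recursion you are trying to establish. Second, and more seriously, you give no mechanism converting the count $m\cdot C_{n-1}^{\mathfrak{a}+n}$ of ``new minimal generators'' into that many prolongation steps; the phrase ``running one more prolongation step then bounds the regularity by $\mathfrak{a}+1+m\cdot C_{n-1}^{\mathfrak{a}+n}$'' conflates one step with $m\cdot C_{n-1}^{\mathfrak{a}+n}$ steps. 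The actual argument is a stabilization lemma: for each degree $q>\mathfrak{a}$, either the prolonged symbol passes Cartan's test (and is then involutive from that degree on), or a certain non-negative integer --- the defect between the actual and expected dimensions of $\mathcal{N}^{(q+1)}$, bounded above by the dimension $m\cdot C_{n-1}^{\mathfrak{a}+n}$ of the relevant graded piece of the ambient free module --- strictly decreases. A strictly decreasing sequence of non-negative integers with that upper bound terminates within that many steps, which is what forces involutivity by degree $\mathfrak{a}+1+m\cdot C_{n-1}^{\mathfrak{a}+n}$. Without this monotone-decrease lemma, which is the substance of \cite{Sw}, the recursion is asserted rather than proved; and your closing appeal to ``the involutivity hypothesis'' at that stage is circular, since involutivity is the conclusion, not a hypothesis available mid-induction.
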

In particular, for a family of $\D_X$-ideals $\{\mathcal{I}_j\}$ with orders $k_j$ (although we will just take pure orders) we write $\rho_1(0,m,1)=0$ and have that
$$\rho\big(n,m,\mathrm{Ord}_{\mathcal{D}_X}(\mathcal{I}_j)\big)=\rho\big(n,m\cdot  C_{n}^{k_j+n-1},1\big),$$
and 
$$
\rho(n,m,1)=m\cdot C_{n-1}^{\rho(n-1,m,1)+n}+\rho(n-1,m,1)+1.$$
\begin{prop}
Let $\mathcal{N}$ be the symbolic comodule such that $\mathcal{C}h_{k,1}=\mathcal{C}h_{k+1},$ for all $k\geq m_0.$ Then $\mathrm{Reg}_{\mathcal{D}_X}(\EQ)=m_0$ if and only if there exists a basis $\{\xi_1,\ldots,\xi_n\}$ in $T^*X,$ for which
$\mathrm{dim}(\mathcal{C}h_{k+1})=\sum_{\ell=1}^{n}\ell\cdot \alpha_k^{(\ell)},$
holds for $k=m_0.$
\end{prop}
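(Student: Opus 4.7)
The statement is the $\mathcal{D}$-geometric incarnation of Cartan's classical involutivity test: the numbers $\alpha_k^{(\ell)}$ are the Cartan characters of the characteristic module $\mathcal{C}h$ in a $\delta$-regular basis of $T^*X$, and the identity $\dim\mathcal{C}h_{k+1}=\sum_{\ell=1}^n \ell\cdot \alpha_k^{(\ell)}$ is the sharp equality in a universal dimension bound for the first prolongation. The strategy is to choose generic coordinates on $T^*X$ to reduce to a Koszul-style calculation, then identify the slack in the Cartan inequality with the obstruction measured by Spencer $\delta$-cohomology; finally we invoke Proposition \ref{prop: Unifying} to cross between the cohomological vanishing and $\mathrm{Reg}_{\mathcal{D}_X}(\EQ)=m_0$.

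First, I would fix a $\delta$-regular (i.e.\ generic) ordered basis $\{\xi_1,\ldots,\xi_n\}$ of $T^*X$: such bases form a Zariski-open dense subset of the flag variety of $T^*X$, and in such a basis each graded piece $\mathcal{C}h_k$ inherits a class decomposition, where $\alpha_k^{(\ell)}$ counts generators whose leading term has class $\ell$ (i.e.\ involves $\xi_\ell$ but no $\xi_j$ with $j>\ell$). The hypothesis $\mathcal{C}h_{k,1}=\mathcal{C}h_{k+1}$ for $k\geq m_0$ encodes that no new equations appear beyond the natural prolongations (Proposition \ref{prop: Mult by microlocal coordinates}), so the map $\mathcal{C}h_k\otimes T^*X\to \mathcal{C}h_{k+1}$ is surjective in the relevant range.

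Next, I would establish the universal inequality
\[
\dim(\mathcal{C}h_{k+1})\ \leq\ \sum_{\ell=1}^{n}\ell\cdot\alpha_k^{(\ell)},
\]
by an elementary count: a class-$\ell$ generator of $\mathcal{C}h_k$ prolongs non-redundantly in exactly the $\ell$ directions $\xi_1,\ldots,\xi_\ell$, while the remaining $n-\ell$ directions produce elements already accounted for by prolongations of higher-class generators. The precise slack is controlled by the Koszul/$\delta$-Spencer homology: a standard spectral-sequence computation on the double complex (\ref{eqn: Spencer delta symbol}), filtered by the class decomposition, yields
\[
\sum_{\ell=1}^{n}\ell\cdot \alpha_k^{(\ell)}\ -\ \dim(\mathcal{C}h_{k+1})\ =\ \sum_{p\geq 1}\dim\mathcal{H}_{\delta}^{p,\,k+1-p}(\mathrm{gr}(\mathcal{I})),
\]
so equality at $k=m_0$ is equivalent to the vanishing of $\mathcal{H}_{\delta}^{p,q}(\mathrm{gr}(\mathcal{I}))$ for $p+q=m_0+1$, $p\geq 1$. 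Combined with the hypothesis $\mathcal{C}h_{k,1}=\mathcal{C}h_{k+1}$ for all $k\geq m_0$ (which cascades the vanishing to all $q\geq m_0$ via the long exact sequences of Proposition \ref{prop: Involutivity sequences}), this is exactly the condition of Definition \ref{defn: D-geometric generalized involutivity}, i.e.\ $\mathrm{reg}_{Sp}(\mathcal{I})=m_0$.

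Finally, I would invoke Proposition \ref{prop: Unifying} to identify the degree of involution $\mathrm{reg}_{Sp}$ with the Spencer regularity $\mathrm{Reg}_{\mathcal{D}_X}(\EQ)$, closing both implications. The principal difficulty is the Koszul-theoretic identification in the second step: making rigorous the spectral-sequence argument requires a Pommaret-type basis adapted to the $\delta$-regular coordinates, so that one can track which classes contribute to each $\mathcal{H}_{\delta}^{p,q}$-cell. This is essentially the algebraic content of Serre's proof of Cartan's test, and in the $\mathcal{A}[\mathcal{D}]$-module setting used here it relies on the generic surjectivity of the prolongation maps guaranteed by Proposition \ref{prop: Omega and Ch isom} on a Zariski-open dense subset of $X$.
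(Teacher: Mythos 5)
The paper states this proposition without a proof — it closes Appendix \ref{Appendix Sweeney} and is followed immediately by the bibliography — so there is no in-text argument to compare against; the closest the paper comes is the Pommaret-basis computation inside the proof of Proposition \ref{prop: Unifying} and the invocation of Cartan's test in the proof of Proposition \ref{prop: Boundedness 1}. Your proposal supplies essentially the argument those passages presuppose: the statement is Cartan's involutivity test for the characteristic comodule, the existential quantifier over bases is absorbed by the genericity of $\delta$-regular flags, and Proposition \ref{prop: Unifying} converts ``degree of involution $=m_0$'' into ``$\mathrm{Reg}_{\mathcal{D}_X}(\EQ)=m_0$''. That is the right skeleton and it is consistent with the paper's conventions: the $\alpha_k^{(\ell)}$ are introduced in the proof of Proposition \ref{prop: Unifying} as class counts of a Pommaret basis of the quotient module, and the identity $\mathrm{rank}(\mathcal{C}h_{k+r})=\sum_{\ell}C_r^{r+\ell-1}\alpha_k^{(\ell)}$ used just before Subsection \ref{ssec: Boundedness} is the $r$-fold iterate of the equality you are proving.

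One step is asserted more strongly than you justify, and more strongly than you need. The displayed identity
\[
\sum_{\ell=1}^{n}\ell\cdot\alpha_k^{(\ell)}-\dim(\mathcal{C}h_{k+1})=\sum_{p\geq 1}\dim\mathcal{H}_{\delta}^{p,\,k+1-p}(\mathrm{gr}(\mathcal{I}))
\]
is not the Euler-characteristic identity of the complex (\ref{eqn: Spencer delta symbol}) in total degree $k+1$ — that identity carries alternating signs and involves the binomial-weighted dimensions $\binom{n}{i}\dim\mathcal{N}^{(k+1-i)}$ rather than the class-weighted sum $\sum_\ell \ell\,\alpha_k^{(\ell)}$ — and an unsigned equality of this form does not fall out of a naive spectral-sequence argument on the filtration by classes. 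What Cartan's test actually provides (and what the Pommaret bookkeeping in Proposition \ref{prop: Unifying} encodes) is the inequality $\dim(\mathcal{C}h_{k+1})\leq\sum_{\ell}\ell\,\alpha_k^{(\ell)}$ valid for a quasi-regular flag, together with the equivalence: equality holds for some (equivalently, a generic) flag at $k=m_0$ if and only if $\mathcal{H}_{\delta}^{p,q}(\mathrm{gr}(\mathcal{I}))=0$ for all $q\geq m_0$ and $0\leq p\leq n$. That weaker, classical statement is all your argument requires: the stabilization hypothesis $\mathcal{C}h_{k,1}=\mathcal{C}h_{k+1}$ for $k\geq m_0$ together with the $\delta$-Poincar\'e lemma propagates involutivity to all higher degrees, and Proposition \ref{prop: Unifying} then closes both implications as you indicate. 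Either prove the exact-defect formula (which amounts to a statement about the minimal free resolution of the symbol module in generic coordinates) or replace it by the standard ``equality if and only if vanishing'' form of Cartan's test; as written, that single display is the only genuine gap.
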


\end{document}